\renewcommand*{\backref}[1]{}
\renewcommand*{\backrefalt}[4]{\quad \tiny
  \ifcase #1 (\textbf{NOT CITED.})%
  \or    (Cited on page~#2.)%
  \else   (Cited on pages~#2.)%
  \fi}
\numberwithin{equation}{section}     
\setlist[enumerate,1]{label={\upshape(\roman*)},ref=\roman*}
\setlist[enumerate,2]{label={\upshape(\alph*)},ref=\alph*}
\newcommand{\Z}{\mbox{$\mathbb{Z}$}}
\newcommand{\R}{\mbox{$\mathbb{R}$}}
\newcommand{\tild}[1]{\widetilde{#1}}
 \def\RR{{\mathbb R}}  
 \def\ZZ{{\mathbb Z}}
\def\cA{\mathcal{A}}  \def\cG{\mathcal{G}}  \def\cS{\mathcal{S}}
\def\cB{\mathcal{B}}    \def\cT{\mathcal{T}}
\def\cC{\mathcal{C}}  \def\cI{\mathcal{I}}  
\def\cD{\mathcal{D}}  \def\cJ{\mathcal{J}}  \def\cV{\mathcal{V}}
    \def\cW{\mathcal{W}}
\def\cF{\mathcal{F}}  \def\cL{\mathcal{L}}  
\def\cV{\mathcal{V}}
\newtheorem*{teo*}{Theorem}
\newtheorem{teo}{Theorem}[section]
\newtheorem{addendum}[teo]{Addendum}
\newtheorem{cor}[teo]{Corollary}
\newtheorem{af}[teo]{Claim}
\newtheorem{claim}[teo]{Claim}
\newtheorem{lema}[teo]{Lemma}
\newtheorem{lemma}[teo]{Lemma}
\newtheorem{prop}[teo]{Proposition}
\newtheorem{thmintro}{Theorem}
\theoremstyle{definition}
\newtheorem{defi}[teo]{Definition}
\theoremstyle{remark}
\newtheorem{remark}[teo]{Remark}
\newtheorem{notation}[teo]{Notation}
\newtheorem{example}[teo]{Example}
\newcommand{\eps}{\varepsilon}
\newcommand{\mt}{\widetilde{M}}
\newcommand{\fh}{\hat{f}}
\newcommand{\gh}{\hat{g}}
\newcommand{\ft}{\widetilde{f}}
\newcommand{\gt}{\widetilde{g}}
\newcommand{\Ft}{\widetilde{\cF}}
\newcommand{\cFe}{\widetilde{\cF}_{\eps}}
\newcommand{\cs}{\cW^{cs}}
\newcommand{\cu}{\cW^{cu}}
\newcommand{\ws}{\cW^{s}}
\newcommand{\wc}{\cW^{c}}
\newcommand{\wcs}{\tild{\cs}}
\newcommand{\wcsb}{\tild{\cs_2}}
\newcommand{\wcub}{\tild{\cu_2}}
\newcommand{\wcu}{\tild{\cu}}
\newcommand{\wwc}{\tild{\wc}}
\newcommand{\wwca}{\tild{\wc_1}}
\newcommand{\wwcb}{\tild{\wc_2}}
\newcommand{\wws}{\tild{\ws}}
\newcommand{\red}[1]{{\color{red} #1}}
\newcommand{\wT}{\widetilde{\cT}}
\newcommand{\Su}{S^1_{univ}}
\newcommand{\blue}[1]{{\color{blue} #1}}
\title[Partial hyperbolicity and pseudoAnosov dynamics]{Partial hyperbolicity and pseudo-Anosov dynamics}
\author[S.~Fenley]{Sergio R.\ Fenley} 
\address{Florida State University, Tallahassee, FL 32306, United States}
\email{fenley@math.fsu.edu}
\author[R.~Potrie]{Rafael Potrie} 
\address{Centro de Matem\'atica, Universidad de la Rep\'ublica, Uruguay}
\email{rpotrie@cmat.edu.uy}
\urladdr{http://www.cmat.edu.uy/~rpotrie/}
\thanks{S.F was partially supported by Simons Foundation grants numbered 280429
and 637554, by National Science Foundation grant
DMS-2054909, and by the Institute for
Advanced Study. R. P. was partially supported by CSIC. Part of this work was completed while R.P. was a Von Neumann fellow at the Institute for Advanced Study, funded by Minerva Research Fundation Membership Fund and NSF DMS-1638352. We would like to thank the IAS for the great working environment.}
\keywords{Partial hyperbolicity, 3-manifold topology, foliations, classification.}
\subjclass[2010]{37D30,57R30,37C15,57M50,37D20}
\begin{document}

\begin{abstract}
We show that if a hyperbolic 3-manifold admits a partially hyperbolic diffeomorphism then it also admits an Anosov flow. 
Moreover, we give a complete classification of partially hyperbolic diffeomorphisms in hyperbolic 3-manifolds as well as partially hyperbolic diffeomorphisms in Seifert manifolds inducing pseudo-Anosov dynamics in the base. 
This classification is given in terms of the structure of their center (branching) foliations and the notion of collapsed Anosov flows. 
\end{abstract}

\maketitle

\section{Introduction}
A diffeomorphism $f:M \to M$ of a closed 3-manifold is \emph{partially hyperbolic} if its tangent bundle $TM$ splits as a $Df$-invariant sum $TM =E^s \oplus E^c \oplus E^u$ of one-dimensional continuous subbundles and there exists $\ell>0$ so that if $v^s,v^c,v^u$ are unit vectors in $E^s(x)$, $E^c(x)$ and $E^u(x)$ respectively, then:$$ \|Df^\ell v^s \| < \min \{1, \|Df^\ell v^c\| \} \qquad \text{and } \qquad \|Df^\ell v^u \| > \max \{ 1, \|Df^\ell v^c\| \}. $$
This paper is concerned with the classification problem of partially hyperbolic diffeomorphisms in dimension 3. 

It has become apparent that there is a strong link between partially hyperbolic diffeomorphisms and Anosov flows in dimension 3, at least when the manifold is ``sufficiently large''. This goes back at least to Pujals' conjecture \cite{BW} $-$ which roughly states that under certain very general conditions,
the diffeomorphism is a variable time map of a topological
Anosov flow. Recently new examples \cite{BPP,BGP,BGHP} have been constructed which fail Pujals' conjecture, for instance
in Seifert manifolds.  This has challenged our understanding of the topological structure of these systems.   
This paper aims to solve the classification problem (as formulated
in \cite[Question 1]{BFP}) 
completely for some particularly relevant classes of manifolds and isotopy classes of maps. 

The first result concerns the problem of finding topological obstructions,
or in other words to determine exactly
when  a manifold admits a partially hyperbolic diffeomorphism. This problem is well understood when the manifold has (virtually solvable) fundamental group \cite{HP-survey}, or when it is Seifert fibered under some
assumptions \cite{HaPS}. 
It is always possible to construct a partially hyperbolic diffeomorphism from an Anosov flow (in any manifold) by taking its time-one map. 
However it is expected that partially hyperbolic diffeomorphisms
are much more abundant amongst manifolds than Anosov flows.
For example the $3$ torus $\mathbb{T}^3$ or nil manifolds admit
partially hyperbolic diffeomorphisms, but do not admit Anosov flows.
In fact in $\mathbb{T}^3$ there are infinitely many essentially
distinct partially hyperbolic diffeomorphisms.
The reason these manifolds do not admit Anosov flows is because
the fundamental group does not have exponential  
growth, a necessary condition for the existence of an Anosov flow,
by work of Margulis \cite{Margulis}.

One big focus of this paper is
the case of hyperbolic 3-manifolds, that is, those homeomorphic to a quotient of $\mathbb{H}^3$ by a cocompact group of isometries. 
These 3-manifolds are by far the most abundant in the class of
closed, irreducible $3$-manifolds with infinite fundamental group,
by the famous work of Thurston and Perelman. Our first result is the following:

\begin{thmintro}\label{teoA}
Let $M$ be a closed hyperbolic 3-manifold admitting a partially hyperbolic diffeomorphism. Then, $M$ admits an Anosov flow.
\end{thmintro}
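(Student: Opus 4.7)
The plan is to extract an Anosov flow from the dynamics of the partially hyperbolic diffeomorphism $f$, using the branching center-stable and center-unstable foliations as the bridge, and the large-scale geometry of $\HH^3$ to provide the necessary rigidity. First I would invoke the Burago-Ivanov construction to produce $f$-invariant branching foliations $\bs$ and $\bu$, together with $\eps$-approximating genuine foliations $\cF^{cs}_{\eps}$ and $\cF^{cu}_{\eps}$. Since $\pi_1(M)$ is hyperbolic and $M$ is atoroidal, Novikov's theorem combined with standard 3-manifold topology excludes Reeb components, so both approximating foliations are taut with all leaves lifting to properly embedded planes in $\mt \cong \HH^3$.

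Next I would analyze the leaf spaces $\lcs, \lcu$ (simply-connected, possibly non-Hausdorff 1-manifolds) together with the $\pi_1(M)$-actions on them. Using Fenley's theory of the asymptotic behavior of taut foliations in hyperbolic 3-manifolds, the limit set of each leaf of $\wfes$ on $\pin \HH^3 = S^2_\infty$ should be well-controlled, and the branching points in $\lcs, \lcu$ coming from $\bs, \bu$ must satisfy strong combinatorial constraints in the word-hyperbolic setting.

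The heart of the argument is the construction of a topological Anosov flow $\Phi$ on $M$. I would use the center direction $E^c$ of the partially hyperbolic splitting (approximately tangent to the joint structure of $\bs$ and $\bu$) together with a Calegari-Dunfield-style universal circle $\Su$ associated to $\cF^{cs}_{\eps}$ to produce a pseudo-Anosov flow transverse and compatible with the partially hyperbolic splitting, essentially exhibiting $f$ as a \emph{collapsed Anosov flow} in the terminology of the abstract. To upgrade from pseudo-Anosov to Anosov, one rules out singular orbits: any such orbit of $\Phi$ forces a branching pattern of the weak foliations that is incompatible with being an $\eps$-approximation of a partially hyperbolic branching foliation, so $\Phi$ is genuinely Anosov.

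The main obstacle will be converting the branching-foliation data into an actual continuous center flow: leaves of $\bs$ and $\bu$ can merge and split, and the combinatorics of this branching must be compatible with a single continuous flow direction on $M$. In the Seifert or solvable settings the analogous construction genuinely fails, producing the new examples alluded to in the introduction; in the hyperbolic case, however, the rigidity provided by $\HH^3$ and the word-hyperbolicity of $\pi_1(M)$ should rule out those pathologies and yield a continuous, and ultimately Anosov, flow.
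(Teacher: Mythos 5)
Your proposal misidentifies which flow becomes the Anosov flow, and the step ``upgrade from pseudo-Anosov to Anosov by ruling out singular orbits'' is not just incomplete but would fail. The pseudo-Anosov flow produced from the universal circle of $\cF^{cs}_\eps$ is \emph{transverse and regulating} to the foliation, and in a hyperbolic $3$-manifold such a flow can never be Anosov (this is Theorem \ref{teo.pAdynamics}, resting on Barbot--Fenley; the obstruction is the regulating position relative to an $\RR$-covered foliation, not merely the presence of prongs), so no amount of analysis of its singular orbits will convert it into the flow Theorem \ref{teoA} asks for. In the paper that transverse flow is only an auxiliary device: deck transformations associated to its periodic orbits act on $\Su$ with super attracting/repelling fixed points, giving the pseudo-Anosov pairs, and these are used to prove the genuinely hard statements about the \emph{center} direction -- every ray of a center leaf lands at a single ideal point of its leaf $L\in\wcs$, the two ideal points of a center are distinct, the center leaf space inside each leaf of $\wcs$ is Hausdorff, and hence the centers are uniform quasigeodesics forming a quasigeodesic fan in each leaf of $\wcs$ and $\wcu$. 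Only then is the Anosov flow constructed, as a different object: the intersection of the approximating foliations $\cF^{cs}_\eps\cap\cF^{cu}_\eps$ generates an expansive flow preserving two transverse (honest) foliations, hence a topological Anosov flow, which is transitive because $M$ is atoroidal and is orbit equivalent to a true Anosov flow by Shannon's theorem. Your sketch compresses all of this into ``the rigidity provided by $\HH^3$ should rule out those pathologies,'' but that rigidity is exactly what the pseudo-Anosov pair machinery (plus the dichotomy of \cite{BFFP-3} between discretized Anosov flows and double translations, which you do not invoke) is needed to supply.

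There is a second concrete gap: Burago--Ivanov only produces the branching foliations for an \emph{iterate} of $f$ lifted to a \emph{finite cover} on which the bundles are orientable and their orientations preserved. An Anosov flow on a finite cover does not descend to $M$, and the known obstructions to admitting Anosov flows on hyperbolic $3$-manifolds are sensitive to finite covers, so for Theorem \ref{teoA} you cannot simply run the whole argument upstairs. The paper closes this gap by proving uniqueness of the invariant branching foliations (Theorem \ref{teo-uniqueness}, using the quasigeodesic fan structure), which forces them to be invariant under the full deck group and under $f$ itself, so that the construction takes place on $M$ for $f$, not on a cover for an iterate. Your proposal applies the Burago--Ivanov construction directly on $M$ and never addresses this descent problem.
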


One consequence of Theorem A is that
it gives a complete set of obstructions up to the problem of determining which hyperbolic 3-manifolds admit Anosov flows. 
It is unknown which hyperbolic 3-manifolds admit Anosov flows, though some obstructions and examples are known \cite{Calegari} (see \S \ref{s.context} for more discussion).

The results that follow, of which Theorem \ref{teoA} is a consequence, pertain the topological classification of partially hyperbolic diffeomorphisms. 
More specifically we analyze the structure of a partially hyperbolic
diffeomorphism in a closed hyperbolic 
$3$-manifold and we show that the structure
we obtain allows us to construct a topological Anosov flow in
the manifold. We obtain this even if the diffeomorphism is not
at all the time one (or variable time) map of a flow.
The classification in hyperbolic $3$-manifolds
is built upon our previous work 
\cite{BFFP-announce,BFFP-2,BFFP-3} which deals, amongst many other
things, with general partially hyperbolic diffeomorphisms in hyperbolic 3-manifolds; and in addition
\cite{BFFP-4} which considers partially hyperbolic diffeomorphisms in certain isotopy classes of diffeomorphisms of Seifert manifolds. Our presentation aims to give a unified framework for both hyperbolic and Seifert $3$-manifolds, which can also be applied in other situations (see \S \ref{ss.generalres}). Note that while the previous work was mostly reliant on the homotopic to the identity hypothesis, the ideas and tools here as well as the overall strategy for classification that we introduce are applicable to wider situations.


Motivated by the present work, in \cite{BFP} we propose a notion of \emph{collapsed Anosov flows}. This relates partially hyperbolic diffeomorphisms with Anosov flows and their self-orbit equivalences. The class of
collapsed Anosov flows covers all known examples of partially hyperbolic diffeomorphisms in manifolds
with non solvable fundamental group
\cite{BFP}.
The concept of collapsed Anosov flow generalizes the notion of leaf conjugacy to the case when $f$ may not be \emph{dynamically coherent}\footnote{Dynamical coherence means that there exist $f$-invariant foliations $\cF^{cs}$ and $\cF^{cu}$ tangent respectively to $E^{cs}=E^s \oplus E^c$ and $E^{cu}=E^c \oplus E^u$. This notion has been used in several works. In this paper we obtain more precise integrability or non-integrability statements, so we will not need to use this notion.}. In other
words $f$ may not admit 
$f$-invariant foliations tangent to the center stable and center unstable bundles. The dynamically incoherent situation is unavoidable
and very common, as shown for example in Seifert manifolds
\cite{BGHP} (see also \cite{BFFP-2,BFFP-3,PotICM}).  
These works show that one needs new tools and models to attack a complete classification of partially hyperbolic diffeomorphisms. Our results provide a complete topological classification of partially hyperbolic diffeomorphisms up to the center direction in both hyperbolic 3-manifolds and some isotopy classes of partially hyperbolic diffeomorphisms in Seifert manifolds.

\subsection{Collapsed  Anosov flows}

Recall that an \emph{Anosov flow} is a $C^1$-flow $\phi_t: M \to M$ whose time one map is a partially hyperbolic diffeomorphism (see \S \ref{ss.anosovPA} for the standard definition and other properties). A \emph{self orbit equivalence} of $\phi_t$ is a homeomorphism $\beta: M \to M$ such that sends orbits of $\phi_t$ to orbits of $\phi_t$ preserving
orientation. These notions are discussed in more 
detail in \cite{BFP} (see \S \ref{ss.discretisedandcollapsed}
of this article). 

\begin{defi}[Collapsed Anosov flow]\label{def.cafw} 
A partially hyperbolic diffeomorphism $f: M \to M$ is a \emph{collapsed Anosov flow} if there is a topological Anosov flow $\phi_t: M \to M$, a self orbit equivalence $\beta: M \to M$ of $\phi_t$ and a continuous map $h: M \to M$ homotopic to the identity such that: 
\begin{enumerate}
\item $h$ maps orbits of the flow injectively onto $C^1$ curves tangent\footnote{With the results in this article we also prove 
that the map $h$ maps weak stable and weak unstable leaves 
of $\phi_t$ into $C^1$ surfaces tangent respectively to $E^{cs}$ 
and $E^{cu}$. See the discussion in \cite{BFP}. For the purposes of this introduction, this definition will be ok.} to the center direction $E^c$ of $f$,
\item one has that $f \circ h = h \circ \beta$. 
\end{enumerate}
\end{defi}

For a precise definition of a topological
Anosov flow, see \S~\ref{ss.discretisedandcollapsed}. 
It is a generalization of an Anosov flow.

In \cite{BFP} we studied Definition \ref{def.cafw},
and many of its possible variants. We showed that there are many examples of partially hyperbolic diffeomorphisms in 3-manifolds verifying this definition. We also studied different equivalent formulations and conditions that ensure that a partially hyperbolic diffeomorphism verifies this property. 
As we already remarked the conception of the idea of a collapsed
Anosov flow originated directly from the results and properties
proved in this article.

\subsection{Statements}

In this paper we will show the following result:

\begin{thmintro}\label{teoB}
Let $f: M \to M$ be a partially hyperbolic diffeomorphism on a hyperbolic 3-manifold. Then, it is a collapsed Anosov flow. 
\end{thmintro}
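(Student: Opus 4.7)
The plan is to combine the structural results on $f$-invariant branching foliations from our previous works \cite{BFFP-2,BFFP-3} with the new pseudo-Anosov pair machinery developed in the body of this paper. As a first step, I would invoke \cite{BFFP-2} to produce $f$-invariant branching foliations $\bs$ and $\bu$ tangent respectively to $E^s\oplus E^c$ and $E^c\oplus E^u$; after possibly replacing $f$ by an iterate and passing to a finite cover so that each branching foliation is preserved with consistent transverse orientation, one checks that in the hyperbolic setting the lifts $\wcs$ and $\wcu$ to $\widetilde M$ are $\R$-covered and uniform branching foliations, so that their leaf spaces are honest copies of $\R$ on which $\pi_1(M)$ and a good lift $\tilde f$ act continuously.

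Next, I would run a dichotomy on the action of $\tilde f$ on these two leaf spaces. If $\tilde f$ fixes every leaf of both $\wcs$ and $\wcu$, then by the classification in \cite{BFFP-3} the map $f$ is a discretized Anosov flow, and the collapsed Anosov flow property holds with $h$ homotopic to the identity and $\beta$ the time-one shift along the center foliation of the corresponding Anosov flow. The heart of the argument is the opposite, ``mixed'' case, where $\tilde f$ displaces leaves nontrivially.

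In that case, the key step is to organize the coarse behavior of $\tilde f$ on the pair $(\wcs,\wcu)$ into a \emph{pseudo-Anosov pair}: one composes $\tilde f$ with a carefully chosen deck transformation along the displacement direction and shows that the resulting pair acts in a uniformly contracting/expanding way on the two uniform $\R$-covered leaf spaces. From this coarse Perron--Frobenius behavior, together with the action of $\pi_1(M)$ on the universal circle of $\wcs$ and on the sphere at infinity $\pin \widetilde M$, I would build a topological pseudo-Anosov flow on $M$; since $M$ is hyperbolic (hence atoroidal), the singular orbits are excluded, and the flow is a genuine topological Anosov flow $\phi_t$.

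Finally, to realize $f$ as a collapsed Anosov flow over $\phi_t$, I would define $h: M\to M$ by collapsing the center branching leaves $\bs\cap\bu$ onto orbits of $\phi_t$, verify using a bounded-distance estimate in $\widetilde M$ that $h$ is continuous and homotopic to the identity, and check that $h$ sends orbits of $\phi_t$ injectively to $C^1$ curves tangent to $E^c$. The self-orbit equivalence $\beta$ is then forced by the relation $f\circ h=h\circ\beta$, and the fact that $\beta$ preserves orbits of $\phi_t$ with correct orientation follows from the compatibility of $f$ with the pseudo-Anosov pair structure. The principal obstacle is producing $\phi_t$ itself: translating purely coarse data on leaf spaces into an honest continuous Anosov flow on $M$ is where all the hyperbolic geometry of $M$ enters decisively, and where the pseudo-Anosov pair framework does the essential work that previous leaf-conjugacy arguments cannot.
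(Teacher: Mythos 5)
Your overall skeleton (dichotomy discretized Anosov flow versus translation, then pseudo-Anosov pair machinery, then a collapsing map $h$) matches the paper, but two of your central steps have genuine problems. First, your construction of the flow $\phi_t$ is wrong as stated: you propose to build a topological pseudo-Anosov flow directly from the coarse action at infinity and then argue that ``since $M$ is hyperbolic (hence atoroidal), the singular orbits are excluded.'' Atoroidality does not exclude prong singularities -- hyperbolic $3$-manifolds carry genuinely singular pseudo-Anosov flows, and in fact the flow that arises naturally from the translation structure (the flow transverse and regulating to the $\RR$-covered foliation, Theorem \ref{teo-regpA}) is \emph{never} Anosov in this situation (Theorem \ref{teo.pAdynamics}). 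In the paper that regulating flow is only an auxiliary tool: its periodic orbits supply the deck transformations making $(\ft,\gamma)$ into full pseudo-Anosov pairs, and those pairs are used to prove the real technical content, namely that rays of center leaves land at single, distinct ideal points, that the center leaf space inside each leaf of $\wcs$ and $\wcu$ is Hausdorff, and hence that center curves are uniform quasigeodesics forming a quasigeodesic fan (Theorems \ref{teo.curvesgeneral}, \ref{teo.hsdff}, \ref{teo.QG}). The Anosov flow of the collapsed structure is then obtained by intersecting the Burago--Ivanov approximating (true) foliations of $\cs$ and $\cu$: the fan structure yields expansivity, the flow preserves two transverse genuine foliations, so it is topologically Anosov with no singularities for local reasons, and is orbit equivalent to an Anosov flow by Shannon since it is transitive (Theorem \ref{teo.CAFabstract}). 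Your proposal skips all of this quasigeodesic analysis, which is where the pseudo-Anosov pairs actually do their work.

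Second, you pass to a finite cover and an iterate at the very beginning (``after possibly replacing $f$ by an iterate and passing to a finite cover'') and never come back. The theorem is about $f$ itself on $M$, and the collapsed Anosov flow property is not formally inherited from an iterate of a lift: one must show that the branching foliations, which Burago--Ivanov only provide on the orientation cover for an iterate, are invariant under the full deck group of that cover and under $f$ itself, so that they descend to $M$ and are $f$-invariant. The paper does this through the uniqueness theorem for branching foliations with full pseudo-Anosov behavior (Theorem \ref{teo-uniqueness}), applied to the deck-translated foliations $\gamma\wcs,\gamma\wcu$ and to $f(\cs),f(\cu)$; this step also recovers orientability of the bundles a posteriori. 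Without an argument of this kind your proof only yields the conclusion for a finite iterate on a finite cover, which is strictly weaker than Theorem \ref{teoB} (and, as the paper stresses, the distinction matters because the known obstructions to Anosov flows on hyperbolic manifolds are sensitive to finite covers).
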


Theorem \ref{teoB} builds on \cite{BFFP-3} where a dichotomy is given for partially hyperbolic diffeomorphisms 
in a closed hyperbolic $3$-manifold: an iterate of
$f$ is either a \emph{discretized Anosov flow} (cf. \S \ref{ss.discretisedandcollapsed}), 
or is virtually a \emph{double translation} (cf. \S \ref{ss.dichotomy}). 
In a hyperbolic $3$-manifold any homeomorphism has a finite iterate
which is homotopic to the identity, so if needed we consider an
iterate of the original map.
A discretized Anosov flow is a map $f = \phi_{t(x)}(x)$ where
$\phi_t$ is a topological Anosov flow.
This is a generalization
of the time one map of an Anosov flow.
In particular, partially hyperbolic diffeomorphisms
in  dimension $3$ which are 
discretized Anosov flows are collapsed Anosov flows.
In this paper we further study the other possibility.
In other words we study the
double translation case to obtain that in this case it must also be a collapsed Anosov flow. We remark that in the double translation case $f$ cannot be dynamically coherent and the topological
Anosov flow we will construct is $\RR$-covered
\cite{BFFP-3}. 

Any topological Anosov flow in an atoroidal manifold is
transitive \cite{Mos}. Shannon \cite{Shannon} showed that any transitive topological Anosov flow
is orbitally equivalent to an Anosov flow.
This implies
that Theorem \ref{teoA} is a direct consequence of Theorem \ref{teoB}. 
Two flows are orbitally equivalent if there is a 
homeomorphim sending orbits of the first into orbits of the second
and preserving flow direction.

It is important to emphasize here that among the difficulties in 
showing Theorem \ref{teoB}  is the need to show that one does not need to take a finite cover or an iterate of $f$ 
to obtain the result. In order to deal with this problem, we  need to obtain strong uniqueness properties of the curves tangent to the center direction. 
More specifically, the results are obtained using branching foliations.
The fundamental results of Burago and Ivanov \cite{BI} show that these
exist for an iterate of $f$ lifted to a finite cover. The finite cover
has to do with orienting the bundles of the partially hyperbolic diffeomorphism and so that $Df$ preserves orientation. So a priori we obtain our
results on a finite cover of $M$ and for a lift of an iterate
of $f$. We show uniqueness of branching foliations in  this 
context, and this allows us to go back to $M$ to obtain the announced result in $M$ and for $f$ itself.

In \cite[Theorem A]{BFFP-3} we got a complete classification of partially hyperbolic diffeomorphisms on Seifert manifolds homotopic to the identity. Further results in this class of manifolds were obtained in \cite{BFFP-4} and will be used here. Here we treat new isotopy classes:

\begin{thmintro}\label{teoC}
Let $f: M \to M$ be a partially hyperbolic diffeomorphism on a 
Seifert manifold with hyperbolic (and orientable) base, so that $f$
acts as a pseudo-Anosov in the base, then, it is a collapsed Anosov flow. 
\end{thmintro}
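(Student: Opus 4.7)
The plan is to adapt the strategy used for Theorem \ref{teoB} to the Seifert setting, building on the results of \cite{BFFP-4} and deploying the pseudo-Anosov pair machinery developed earlier in this paper. First I would pass to a finite cover and an iterate in order to orient the bundles $E^s, E^c, E^u$ and ensure $Df$ preserves those orientations; then apply Burago--Ivanov to produce branching foliations $\cW^{cs}$ and $\cW^{cu}$ tangent respectively to $E^{cs}$ and $E^{cu}$. Uniqueness properties for these branching foliations, established earlier in the paper in settings where the dynamics is sufficiently chaotic in the base, will be used at the end to descend from the finite cover/iterate back to $M$ and $f$ themselves.

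Next I would analyze the interaction of these branching foliations with the Seifert fibration under the hypothesis that $f$ acts as a pseudo-Anosov on the hyperbolic base orbifold $B$. Let $\varphi: B \to B$ be the pseudo-Anosov induced by $f$, with invariant singular foliations $\cF^s, \cF^u$ on $B$. Using results from \cite{BFFP-4}, one expects to show that $\cW^{cs}$ and $\cW^{cu}$ project in an appropriate sense to structures compatible with $\cF^s$ and $\cF^u$, and in particular that $\cW^{cs}$ and $\cW^{cu}$ are $\R$-covered and uniform branching foliations of $M$. The Seifert fibration lifts in $\widetilde{M}$ to a line foliation over $\widetilde{B} \cong \HH^2$, providing a clean geometric model in which the pseudo-Anosov dynamics of $\varphi$ is just the coarse hyperbolic dynamics on $\HH^2$.

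With the $\R$-covered uniform structure in hand, the main step is to recognize the pair of diffeomorphisms giving a \emph{pseudo-Anosov pair} in the sense introduced earlier. The natural candidate is the pair formed by $f$ and a map essentially coming from the action of a generator of the center of $\pi_1(M)$ (the Seifert direction). Both preserve the branching foliations, and the lift of $f$ in the universal cover exhibits the coarse expansion/contraction coming from $\varphi$ on $\widetilde{B}$, which is exactly the coarse geometric condition needed. Applying the general pseudo-Anosov pair theorem then produces a topological Anosov flow $\phi_t$ on $M$ (a geodesic-flow-type flow adapted to the Seifert structure over the hyperbolic base), together with a continuous map $h: M \to M$ homotopic to the identity sending orbits of $\phi_t$ injectively into $C^1$ curves tangent to $E^c$, and a self orbit equivalence $\beta$ of $\phi_t$ satisfying $f \circ h = h \circ \beta$. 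After verifying these properties, the uniqueness of branching foliations lets one descend from the finite cover and iterate back to $M$ and $f$ themselves, completing the proof that $f$ is a collapsed Anosov flow.

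The main obstacle will be checking the coarse geometric hypotheses of the pseudo-Anosov pair construction in this specific setting: one has to control how leaves of the lifted branching foliations sit in $\widetilde{M}$ relative to the Seifert line foliation and the pseudo-Anosov geometry on $\widetilde{B}$, and use this to rule out pathologies (such as the center bundle pointing in an unexpected direction) that would prevent construction of the flow. The descent step from finite cover and iterate to $M$ and $f$ is delicate but follows, once the uniqueness results for branching foliations established earlier in the paper are available in this Seifert setting.
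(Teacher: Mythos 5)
Your proposal follows essentially the same route as the paper: pass to a finite cover and iterate to apply Burago--Ivanov, use horizontality of the resulting branching foliations (hence $\R$-covered, uniform, hyperbolic leaves), build pseudo-Anosov pairs from a lift of $f$ together with the central fiber element of $\pi_1(M)$, feed the full pseudo-Anosov behavior (coming from the pseudo-Anosov action on the base, via Thurston's classification) into the abstract collapsed Anosov flow theorem, and then descend to $M$ and $f$ themselves using the uniqueness of branching foliations. The only small discrepancy is attribution: horizontality of the branching foliations is taken from Hammerlindl--Potrie--Shannon rather than \cite{BFFP-4}, and the verification of full pseudo-Anosov behavior rests on the Casson--Bleiler lemmas exactly as in the hyperbolic case, but these are details of sourcing rather than of strategy.
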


We note that the class of diffeomorphisms in this result is non-empty \cite{BGHP}, and indeed some of the results, in particular the fact that center curves are quasigeodesics in their center stable
or center unstable leaves was known to be true for some open sets of examples (see\footnote{In  \cite[\S 10]{BFP} it is shown that not only small perturbations have this property but all partially hyperbolic diffeomorphisms that can be connected to the examples by a path of partially hyperbolic diffeomorphisms. It is unknown if the space of partially hyperbolic diffeomorphisms is connected in this isotopy class.} \cite[\S 5]{BGHP}) but unknown for general partially hyperbolic diffeomorphisms in these isotopy classes. 

As a consequence of these results 
we see that these partially hyperbolic diffeomorphisms must admit branching foliations (regardless of any a priori
orientability assumptions) and we will prove
uniqueness results for branching foliations in some settings, cf. \S \ref{s.uniqueness}. These results provide a complete classification of partially hyperbolic diffeomorphisms in these isotopy classes. 

One interesting point is that Theorems \ref{teoB} and \ref{teoC} admit mostly a unified proof, and we made an effort in presenting the unified point of view. 
We also show that this unified 
approach is helpful to study partially hyperbolic diffeomorphisms in other 3-dimensional manifolds. The difference between the proofs of
Theorems \ref{teoB} and \ref{teoC} has to do with how we show that certain general assumptions are met. Under these assumptions, we will get some even stronger results about classification (see \S \ref{s.hyperbolic} and \S \ref{s.seifert}). We also direct the reader to Theorem \ref{teo.curvesgeneral} for statements that include some surprising information about the strong stable and unstable foliations for certain examples (in contrast with the case of discretized Anosov flows, 
the stable and unstable foliations cannot look like horocycle foliations).  

\begin{remark}
In fact, for both Theorem \ref{teoB} and \ref{teoC} we obtain a stronger property which we called \emph{strong collapsed Anosov flow} in \cite{BFP}. See 
\S~\ref{s.CAF} for more discussion.
\end{remark}

\subsection{Tools developed in this article}

The main technical result 
we obtain in this paper is the following: under certain assumptions on the branching foliations $\cs,\cu$ preserved by a partially hyperbolic diffeomorphism $f$ of  a closed 3-manifold, we show that the leaves of the center foliation lifted to $\mt$ are quasigeodesics in the respective leaves of the 
foliations $\wcs, \wcu$. 
In \cite{BFP}  we call a partially hyperbolic diffeomorphim
which satisfies this quasigeodesic property a {\em quasigeodesic
partially hyperbolic diffeomorphism}. Under orientability conditions
this is one of the several equivalent definitions of forms
of collapsed Anosov flows, as proved in \cite{BFP}.

This eventually leads to the creation of a topological
Anosov flow which is associated with $f$. We note that this strategy is quite divergent with the strategy followed in \cite{BFFP-2,BFFP-3} which aimed to construct the topological Anosov flow by showing that a good lift in the universal cover would fix all leaves of $\wcs,\wcu$ and their intersections and then using partial hyperbolicity to conclude that the intersections should support a topological Anosov flow. In this paper we cannot assume that a lift of $f$ fixes leaves of $\wcs$ or $\wcu$, so, even if $f$ may be homotopic to the identity, we are forced to use the action of all lifts of $f$ in order to make our analysis. Also, we cannot rely on bounded deviations inside a leaf for the same reason, so we are forced to understand better the coarse geometry of the foliations and produce techniques to understand the behavior of their intersections. We point out that while very powerful, the strategy in \cite{BFFP-2,BFFP-3}, relying heavily on the existence of lifts of $f$ with special properties (fixing leaves), is unlikely to extend to other contexts. The strategy here is way more general, as shown for instance by the fact that we are able to obtain Theorem \ref{teoC}. 

The quasigeodesic property for center curves in leaves
of $\wcs, \wcu$,
and also ideas and
constructions of this article are what lead to the definition
of a collapsed Anosov flow (in its various forms),
which is done in \cite{BFP}.
In addition what is done in this paper is a general 
recipe to prove that a partially hyperbolic diffeomorphism
in any type of $3$-manifold, but admitting branching foliations
with Gromov hyperbolic leaves
is a collapsed Anosov flow (that is, it is a quasigeodesic
partially hyperbolic diffeomorphism). 
This establishes a program to study the structure
of general partially hyperbolic diffeomorphisms in dimension $3$. This is detailed in the next section (see also \cite{FP5,FP6} for recent progress in this program). 

In \cite{BFP} we explain
that all known examples of partially hyperbolic diffeomorphisms
with branching foliations having Gromov hyperbolic leaves
are collapsed Anosov flows.

\subsection{Idea of proof}

Let us discuss a bit the main difficulties we need to address and the new tools we develop to take care of them.

We focus on Theorem \ref{teoA} and forget at first about the orientability issues mentioned above which involve a different kind of problems that are discussed in \S \ref{s.funnel} and \S \ref{s.uniqueness}. 
In other words we assume the necessary orientability conditions.
As explained, from the work of \cite{BFFP-3} we can reduce to the 
double translation case: 
this means that the partially hyperbolic diffeomorphism preserves two (branching) foliations in $M$ that are uniform and $\RR$-covered. In addition
the lift $\ft$ of the homotopy of $f$ to the identity translates both such foliations. However, in principle we know nothing about how these foliations intersect, nor how they look at a big scale. The main driving goal we pursued in this project was the attempt to obtain geometric properties of the intersection of these (branching) foliations by showing that the intersected leaves are quasigeodesics in the leaves of each branching foliation when lifted to 
the universal cover. 

This strong geometric property is proved in steps.
We consider the center foliations in
(say) center stable leaves lifted to the universal cover. Each such
center stable leaf is Gromov hyperbolic and is compactified to a
closed disk with an ideal circle. 

\begin{enumerate}
\item We first show that for each ray in a center stable leaf $L$, the ray accumulates on a single
point in the ideal circle of $L$ in \S  \ref{ss.pAandsubfoliations} . We call this property \emph{landing} of rays. To show this we exploit the pseudo-Anosov behavior at infinity, introducing the notion of \emph{pseudo-Anosov pairs} ( \S~\ref{s.pApairs}). This works in quite some generality, not even partial hyperbolicity is used, only that a one-dimensional subfoliation is preserved by the map. The notion of pseudo-Anosov pair extends and subsumes some similar phenomena already appearing in \cite[\S 8]{BFFP-2} and \cite{BFFP-4}. 

\item Then we show that given a center
leaf $c$ in $L$, the ideal points of the two rays of $c$ are distinct
ideal points of $L$. This has several steps, the main of which is to establish a \emph{small visual measure property} for the invariant foliations. Here, partial hyperbolicity is used in a crucial way, but also a precise description of the obstruction is obtained (see  \S \ref{s.pApairsandPH}). This part requires some pseudo-Anosov pairs, but it is quite flexible (see \S \ref{s.further}). 

\item Finally we show that for any center stable leaf $L$ in the universal cover, then the leaf space of the
center foliation in $L$ is Hausdorff in \S \ref{s.Hausdorff}. Here is the main point where the full strength of the fact that we are working in hyperbolic 3-manifolds (or in the context of Theorem \ref{teoC}) is crucial. This motivates the notion of \emph{full pseudo-Anosov pair} and is related to specific properties of the laminations associated to the pseudo-Anosov elements that appear in these contexts. 

\item Together these properties
then imply that the centers are uniform quasigeodesics in
the center stable leaves as proved in  \S\ref{s.QG}. 
\end{enumerate}

It is worth mentioning that sections \ref{s.prelim} and \ref{s.pApairs} are quite heavy and are used to construct an abstract setting which is used to establish our results. The reader may find simpler to accept Proposition \ref{prop-dynamicspA} in a first read (and applying it to either Example \ref{example1} or \ref{example2}) and come back to those sections for specific definitions. For the specific results in hyperbolic 3-manifolds, what is needed from Proposition \ref{prop-dynamicspA} follows from \cite[\S 8]{BFFP-2}. 

We would like to mention another technical tool introduced here, which is the notion of {\em super attracting} fixed points
for actions on the universal circle in 
Subsection \ref{ss.superatracting}. This notion was first 
introduced in the setting of lifts of homeomorphisms of closed
surfaces in \cite{BFFP-4}. Here we generalize this notion to the
case of $\R$-covered, uniform foliations with Gromov hyperbolic
leaves. This notion plays a fundamental role in the definition
of pseudo-Anosov pairs and the properties that can be proved
from pseudo-Anosov pairs. We expect that it will also
be useful in other contexts. 

We mention that while showing the Hausdorff property relies very strongly on our topological assumptions (the ones in Theorem \ref{teoB} and \ref{teoC}), it is natural to expect it can be obtained by other means in other contexts (see for instance \cite{FP5}). We also mention that 
the last step, that is step (iv) in the
strategy above, has been recently extended to more generality \cite{FP6}. 

Finally we consider orientability issues: the results above use
branching foliations, which assume taking an iterate of $f$
lifted to a finite cover of $M$. We then prove invariance of these
branching foliations under deck transformations of
the finite cover. To obtain this we strongly use the quasigeodesic behavior we 
prove in the cover. The result is that the branching foliations
descend to $M$ and an iterate of $f$ satisfies all the orientability
conditions. Then using additional results of Burago and Ivanov
\cite{BI} we approximate the center stable and center unstable
foliations by foliations $\cF$ and $\cG$ which intersect
along a one dimensional oriented foliation, generating a flow.
We show that this flow is expansive, generating a topological
Anosov flow. By the result of Shannon \cite{Shannon} the flow is orbitally
equivalent to an Anosov flow finishing the proof of Theorem \ref{teoA}. 
We note that additional arguments are needed to obtain the proof
of Theorem \ref{teoB}. This can be achieved for instance
using the equivalence of the different
definitions of collapsed Anosov flows which is proved
in \cite{BFP}.

\subsection{More general results}\label{ss.generalres}

One consequence of our results is that we are able to solve the plaque expansivity conjecture (see \cite{HPS}) in hyperbolic 3-manifolds. See Corollary \ref{coro-plaqueexp}.

Also, some of the results in this work hold under weaker assumptions and these will be stated precisely in \S \ref{s.further}. For the interest of putting these results in value, we state some consequences of these results here. Note that some terms are undefined, but have been explained in the previous section. We refer the reader to \ref{s.further} for precise statements in some greater generality.

\begin{thmintro}\label{teoD}
Let $f: M \to M$ be a transitive partially hyperbolic diffeomorphism homotopic to the identity in a closed manifold $M$ with some atoroidal piece in its JSJ decomposition. Then, the  branching foliation $\cW^c$ has the small visual measure property in both $\cW^{cs}$ and $\cW^{cu}$. Moreover, either an iterate of $f$ is a discretized Anosof flow, or one of the foliations $\cW^s$ or $\cW^u$ has the small visual measure property on $\cW^{cs}$ (resp. $\cW^{cu}$).   
\end{thmintro}

We note that a similar result in the context of Seifert manifolds also holds:

\begin{thmintro}\label{teoE}
Let $f: M \to M$ be a transitive partially hyperbolic in a Seifert 3-manifold so that the action of $f$ in the base has a pseudo-Anosov component. Then, the  branching foliation $\cW^c$ has the small visual measure property in both $\cW^{cs}$ and $\cW^{cu}$. Moreover, both $\cW^s$ and $\cW^u$ have the small visual measure property on $\cW^{cs}$ and $\cW^{cu}$ respectively.   
\end{thmintro}

In particular, let us mention a suprising consequence of our techniques, stating that under the assumptions of these theorems, the strong stable and strong unstable foliations may have a behavior very different from the horocycle foliation of an Anosov flow (see Remark \ref{rem-horocycles}). As we mentioned, the missing step to have the collapsed Anosov flow property is the need to show the Hausdorff property for the center branching foliation in 
center stable and center unstable leaves
in the universal cover. This requires new ideas, but let us mention that recently, in \cite{FP5} this was achieved for unit tangent bundles of higher genus surfaces.

\subsection{Context and comments}\label{s.context} 
The problem of the topological classification of Anosov flows in 3-manifolds goes back to the seminal work of Margulis \cite{Margulis} and Plante-Thurston \cite{PlanteThurston} showing that a 3-manifold admitting an Anosov flow must have exponential growth of fundamental group. It is noteworthy that when 
these results appeared, the only known examples were 
orbitally equivalent to
geodesic flows in the unit tangent bundle of negatively curved manifolds and suspension of toral automorphisms. Since then, a myriad of new very different examples have appeared starting with the ones constructed by Franks-Williams \cite{Fr-Wi} and those by Handel-Thurston \cite{Ha-Th} and Goodman 
\cite{Go}  with somewhat different methods (we refer the reader to the introduction of \cite{BBY} for a list of known examples and constructions). This was just the beginning. In hyperbolic manifolds starting
with the fundamental work of Goodman \cite{Go}, new examples have continued to appear until very recently (see for instance \cite{FenleyAnosov,FoulonHasselblatt,BowdenMann}). Questions about which 3-manifolds support Anosov flows and how many orbitally
inequivalent ones such manifolds admit are still abundant. 
There has been considerable progress on the classification of Anosov flows in manifolds with non-trivial JSJ decomposition, we mention the recent work of Barbot and the first author in particular which gives a rather complete classification of what they call \emph{totally periodic Anosov flows} in 
graph manifolds as well as other classes, see \cite{BF1,BF2,BF3}. We refer the reader to \cite{BarbotHDR, BarthelmeAnosovsurv} for surveys about Anosov flows in dimension 3. 

The case of hyperbolic 3-manifolds is certainly the most mysterious. There are some known obstructions for hyperbolic manifolds to admit Anosov flows, and several constructions of such flows. Recently, some hyperbolic 3-manifolds have been shown to admit an arbitrary large number of
orbitally
inequivalent Anosov flows \cite{BowdenMann}.
 All these results make our results somewhat more interesting,
since it implies that we cannot 
compare our systems with some model (Anosov) systems in the manifold,
as is the
case for example in solvable manifolds (see for instance \cite{HP-survey}).
We point out in particular that the known topological obstructions to admit Anosov flows in hyperbolic 3-manifolds are very sensitive to taking finite covers (see \cite{RRS,CD}). Hence it is very important for us to obtain the results in Theorem \ref{teoA} without need to take finite covers (which introduces a big challenge, since our starting point is the existence of branching foliations from \cite{BI} which requires some orientablity assumptions). 

Let us first comment on our Theorems \ref{teoA} and \ref{teoB}. The first important thing to point out is that they both rely heavily on our previous work with Barthelm\'e and Frankel \cite{BFFP-3}: In that paper we showed that
a partially hyperbolic diffeomorphism of a hyperbolic 3-manifold (up to iterate so that it is homotopic to the
identity) is either a discretized Anosov flow or up to finite
cover admits two transverse taut (branching) foliations which were translated by the lift of the dynamics to
the universal cover. We point out that the existence of two transverse taut foliations (even with all possible orientability assumptions) in a hyperbolic manifold does not imply the existence of an Anosov flow, at least not one related to those foliations (see \cite{BBP}). Here, we analyze the
second case, and describe it completely. A posteriori this leads
to the existence of an Anosov flow in $M$.

We mention that the proof of Theorem \ref{teoB} is very similar to the proof of Theorem \ref{teoC} and we present it in a way that the only difference is in how one shows that certain conditions are met, that we do at the very end. On the other hand, Theorem \ref{teoC} is mostly self contained, since in the isotopy class under analysis, we only need to deal with that case. (The analogy would be that the discretized Anosov flow case is when $f$ is homotopic to the identity in a Seifert manifold, which is the case we treated in \cite{BFFP-3}.) 

Our results fit well in the program of classification of partially hyperbolic diffeomorphisms in dimension 3 and have motivated the definition of \emph{collapsed Anosov flows} which we believe may play an important role in this program. We refer the reader to \cite{PotICM,HP-survey} for recent surveys on the classification of partially hyperbolic diffeomorphisms in dimension 3. In \cite{BFP}, with Barthelm\'e, we have developed the notion of a collapsed Anosov flow that was suggested by this work and which provides a platform for classification.

\subsection{Organization of the paper} 
After giving some preliminaries in \S \ref{s.prelim} we introduce
the notion of super attracting fixed points in the
universal circle in Subsection \ref{ss.superatracting}.
In \S \ref{s.pApairs} 
we give a unified presentation of \cite[\S 8]{BFFP-2} and \cite[\S 11]{BFFP-3} as well as \cite{BFFP-4},
in particular using the notion of super attracting 
fixed points, and  which also works in other settings.
Part of section \S \ref{s.pApairs} is an extension of
previous work of 
\cite{BFFP-announce,BFFP-2,BFFP-3,BFFP-4} to 
a more general setting.

In \S \ref{ss.pAandsubfoliations} and \S \ref{s.pApairsandPH} we introduce new arguments that are presented in an abstract setting that has wider applicability and applications are given in \S \ref{s.further}. This serves two purposes, on the one hand it allows to obtain both results almost simultaneously; on the other hand, it also intends to express precisely what properties we use and where and allows to follow the arguments without prior knowledge on fine properties of hyperbolic 3-manifolds (the properties we will use only appear in \S~\ref{s.hyperbolic}). 

We use these results in \S \ref{s.Hausdorff} and \S\ref{s.QG} to make the key steps for our main results under more restricted assumptions that will be checked for our examples in \S \ref{s.hyperbolic} and \S \ref{s.seifert} (Theorems \ref{teoA} and \ref{teoB} are proved in \S \ref{s.hyperbolic} and Theorem \ref{teoC} is proved in \S \ref{s.seifert}). In \S \ref{s.CAF} we explain how the work done in sections \S\ref{ss.pAandsubfoliations}-\S\ref{s.QG} implies that certain partially hyperbolic diffeomorphisms are collapsed Anosov flows and in \S \ref{s.funnel} and \ref{s.uniqueness} we obtain some uniqueness results that will allow us to show our results without need to take finite covers and iterates. 

\subsection{Intersection and dependence on previous works}

This article intersects with \cite{BFFP-2,BFFP-3} and \cite{BFP},
so we expand on the interactions.
Sections \ref{s.pApairs} through \ref{s.QG} here are 
completely independent from previous works and stand on their
own (the setup in \S\ref{s.pApairs} is strongly motivated by the works \cite{BFFP-2,BFFP-3,BFFP-4}). It is in these sections that
we prove the quasigeodesic behavior
of center leaves in center stable and center unstable
leaves under certain conditions.

Section \ref{s.CAF} uses the property obtained in Section 
\ref{s.QG} to prove that certain partially hyperbolic
diffeomorphisms are collapsed Anosov flows. Here 
we directly quote a result from \cite{BFP}.
But the setup in this article is much simpler in certain aspects than
the general setup of \cite{BFP}, so in addition we provide
a short sketch and explanation of the proof in our more restricted setting.

Sections \ref{s.funnel} and \ref{s.uniqueness} are completely
independent of \cite{BFFP-2,BFFP-3,BFP} and stand on their
own. The same applies to Section \ref{s.seifert}, where
 we prove Theorem \ref{teoC}.

In Section \ref{s.hyperbolic} we prove Theorems \ref{teoA}
and \ref{teoB}. Here we rely on \cite{BFFP-3}
which describes the two possibilities for a partially
hyperbolic diffeomorphism in a closed hyperbolic manifold:
discretized Anosov flow and double translation. 
The first type is completely understood in \cite{BFFP-3}.
It is the second type we study here, and prove that it
satisfies the quasigeodesic behavior, in order to obtain
the collapsed Anosov flow property. This section's goal is to 
further understand what was known in \cite{BFFP-3},
 and how our general results can complete the picture 
when $M$ is hyperbolic.

Finally in Section \ref{s.further} we provide some 
extra consequences of our results, some of which
rely in part on \cite{BFFP-3,BFFP-2}.

In particular the whole strategy of proving that center
leaves are uniform quasigeodesics (and all its steps) is completely new and introduced in this article.

{\small \emph{Acknowledgements:} We would like to thank the referee for very useful feedback that helped us improve the presentation.} 


\section{Preliminaries and discussions on some notions}\label{s.prelim}

In this paper $M$ denotes a closed aspherical 3-manifold, and $\pi: \mt \to M$ the universal covering map. We will assume that the manifold does not have (virtually) solvable fundamental group. This allows to simplify some statements,  and the case of (virtually) solvable fundamental group
for partially hyperbolic diffeomorphisms
is already well understood (see \cite{HP-survey}). In some
sections at the end of the paper we will restrict further to $M$ being either Seifert or hyperbolic.  

Our results and statements will be independent of the chosen Riemannian metric, but we will fix one first for which the definition of partial hyperbolicity is given, and later we will change the metric so that the leaves of the (branching) foliations are negatively curved: this only changes definitions by bounded factors. 

In this section we introduce some preliminaries and fix notations which will be used later and relate with the objects introduced in the previous section. The reader familiar with \cite{BFFP-3,BFP} can safely skip this section,
except for Subsection \ref{ss.superatracting} where the notion of
super attracting fixed point in the universal circle is 
introduced. 

\subsection{Branching foliations}\label{ss.bran}
We will give a brief account on what we need about branching foliations introduced in \cite{BI} in our context. For a more detailed account we refer the reader to \cite[\S 3]{BFFP-3} or \cite{BFP}. 

Our definition will be a bit more restrictive (what we will define would be a \emph{Reebless branching foliation}) which is more than enough for our purposes and makes the definition easier to give. 

A \emph{branching ($2$-dimensional) foliation} on a closed 3-manifold $M$ is a collection of immersed surfaces $\cF$ tangent to a $2$-dimensional
continuous distribution $E$ of $TM$ such that if we consider $\Ft$ the lift of the collection to $\mt$ we have the following properties:

\begin{itemize}
\item[(i)] Every leaf $L \in \Ft$ is a properly embedded plane separating $\mt$ into two open connected components $L^{\oplus}$ and $L^{\ominus}$ depending on a fixed transverse orientation to $E$ lifted to $\mt$. Denote $L^+ = L \cup L^\oplus$ and $L^-=L \cup L^{\ominus}$. 
\item[(ii)] Every point $x \in \mt$ belongs to at least one leaf $L \in \Ft$. 
\item[(iii)] For every two leaves $L, F \in \Ft$ we have that either $F \subset L^+$ or $F\subset L^-$. This is the \emph{no topological crossings} condition.
\item[(iv)] If $x_n \to x$ and $L_n \in \Ft$ so that $x_n \in L_n$. Then every limit of $L_n$ in the compact-open topology belongs to $\Ft$. 
\end{itemize}

We will add an additional condition in the case that the distribution
$E$ is transversely oriented,
which is that every diffeomorphism preserving $E$ and its transverse orientation preserves the branching foliation in the sense that the image under $f$ of a leaf of $\cF$ is a leaf of $\cF$. 

A branching foliation is \emph{well approximated by foliations} if for every $\eps>0$ there is a true foliation $\cF_\eps$ tangent to a bundle $E_\eps$ and continuous maps $h_\eps: M \to M$ so that:
\begin{itemize}
\item The angle between $E_\eps$ and $E$ is smaller than $\eps$. 
\item The map $h_\eps$ is $\eps$-$C^0$-close to the identity (in particular, it is homotopic to the identity and therefore surjective) sending leaves of $\cF_\eps$ to leaves of $\cF$.
\item For every $L \in \cF$ there is a unique leaf $L_\eps \in \cF_\eps$ so that $h_\eps: L_\eps \to L$ is a local $C^1$-diffeomorphism so that $1-\eps < \|Dh_\eps^{-1}\|^{-1} \leq \|Dh_\eps\| <1+\eps$  (therefore, in $\mt$ it lifts to a diffeomorphism). 
\end{itemize}
Note that when a branching foliation is well approximated by foliations we can define a \emph{leaf space} $\cL_\cF$, for example  by identifying with the leaf space of some of the approximating foliations $\cL_{\cF_{\eps}} = \mt /_{\widetilde{\cF_{\eps}}}$. 
This uses the uniqueness result in the third item above, so there 
is a bijection between the leaf spaces of $\widetilde \cF$ 
and $\widetilde \cF_{\eps}$.

We will use the following result (the uniqueness statement for the approximating foliation is explained in \cite[Appendix A]{BFP}): 

\begin{teo}[\cite{BI}]\label{teo-bi}
Let $f: M \to M$ be a partially hyperbolic diffeomorphism of a closed 3-dimensional manifold so that the bundles $E^s, E^c, E^u$ are oriented and $Df$ preserves their orientation. 
Then, there exist branching foliations $\cs$ and $\cu$ tangent to $E^{cs}$ and $E^{cu}$ respectively. These branching foliations are well approximated by foliations. 
\end{teo}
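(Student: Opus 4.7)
The plan is to follow the Burago--Ivanov strategy: build approximating smooth foliations $\cF_\eps$ by integrable perturbations of $E^{cs}$, extract the branching foliation $\cs$ as a compact-open limit of leaves of $\cF_\eps$, and define the collapse $h_\eps$ by a transverse projection along short center-like arcs. The construction of $\cu$ is symmetric, running the same argument for $f^{-1}$. A crucial preliminary is to produce the approximators $\cF^{cs}_\eps$ and $\cF^{cu}_\eps$ simultaneously, since their mutual transversality is what will make Novikov-type arguments available and thereby give property (i).

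For the approximating foliations, I would exploit the classical fact that the strong stable foliation $\ws$ exists and is tangent to $E^s$. Given $\eps>0$, choose a smooth unit vector field $X_\eps$ that is $\eps$-$C^0$-close to a continuous unit section of $E^c$ (available by the orientability hypothesis). The two-plane field $E^s\oplus\langle X_\eps\rangle$ need not be integrable, but by saturating orbits of $X_\eps$ with strong stable leaves and smoothing transversally one produces a smooth integrable plane field $\tilde E_\eps$ still $O(\eps)$-close to $E^{cs}$; integrating yields the smooth foliation $\cF_\eps$. Doing this in parallel for $E^{cs}$ and $E^{cu}$ and taking $\eps$ small gives two transverse smooth foliations, each taut, and so Novikov's theorem applied in $\mt$ forces their lifted leaves to be properly embedded planes.

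To obtain $\cs$, for each $x\in\mt$ and each sequence $\eps_n\to 0$ pick a leaf $L_n(x)\in\wF_{\eps_n}$ through $x$; Arzel\`a--Ascoli produces subsequential compact-open limits which are properly embedded $C^1$ planes tangent to $E^{cs}$. The collection of all such limits, projected to $M$, is the candidate $\cs$. Properties (ii) and (iv) are immediate from the construction, property (i) is inherited from the approximators under compact-open limits (using the transverse orientation), and invariance under diffeomorphisms preserving $E^{cs}$ together with its transverse orientation follows because such a diffeomorphism permutes approximators and this passes to limits. The map $h_\eps$ sends each $L\in\wcs$ to its chosen approximator $L_\eps\in\wF_\eps$ along a short arc transverse to both, with the required bi-Lipschitz bounds coming from uniform continuity of $E^c$ and compactness of $M$.

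The main obstacle, and the heart of the argument, is the no-topological-crossings condition (iii). A genuine topological crossing of two limit leaves $L,F\in\wcs$ would force topological crossings of the approximators $L_{\eps_n}, F_{\eps_n}$ for all small $\eps_n$, contradicting that $\cF_{\eps_n}$ is a true foliation; the subtle point is to rule out only this and not the phantom configuration in which distinct limit leaves through a common point agree along a subset and diverge afterwards, which is precisely the branching phenomenon one wants to allow. Using the transverse orientation of $E^{cs}$ so that $L^+=L\cup L^\oplus$ is well defined, one argues that each such pair of limit leaves lies on a well-defined side of the other in a manner consistent with the approximators. Making the control on the center arcs uniform enough to produce, for every $L\in\wcs$, a consistent approximator $L_\eps$ satisfying the bi-Lipschitz bounds, and so that the whole limiting procedure is compatible with the branching structure, is where the bulk of the technical work will lie.
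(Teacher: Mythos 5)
This statement is quoted in the paper from Burago--Ivanov and is not proved there, so the only question is whether your sketch is a viable route to their theorem; it is not, because its first step is essentially circular. You propose to first manufacture genuine smooth foliations $\cF_\eps$ tangent to integrable plane fields $O(\eps)$-close to $E^{cs}$, and then obtain $\cs$ as a compact-open limit of their leaves. But producing a true foliation $C^0$-close in angle to $E^{cs}$ is (at least) as hard as the theorem itself, and in Burago--Ivanov the logical order is exactly the reverse: they first construct the \emph{branching} foliation directly, by saturating suitable transversals with strong stable leaves, ordering the resulting local surfaces via the transverse orientation, and taking monotone limits, with the technical core being a lemma that such $s$-saturated surfaces tangent to $E^{cs}$ cannot topologically cross; only afterwards do they derive the approximating true foliations from the branching one, by ``opening up'' the locus of branching using the co-orientation. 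Your specific mechanism for integrability --- saturating orbits of a vector field $X_\eps$ near $E^c$ by stable leaves and ``smoothing transversally'' --- fails precisely where the theorem is hard: the saturations of different $X_\eps$-orbits will in general intersect and merge (this is the branching/incoherence phenomenon the theorem is designed to accommodate, and which the paper stresses is unavoidable), so there is no integrable plane field to be extracted this way, and no reason the outcome is $C^0$-close to $E^{cs}$.

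Two further points would remain gaps even if the approximators were granted. First, invariance: the theorem's content includes that $\cs,\cu$ are invariant under $f$ (and, in the co-orientable case, under any diffeomorphism preserving the bundle and its transverse orientation). Your $\cF_\eps$ are non-canonical choices, $f$ does not permute them, and a limit of leaves of arbitrarily chosen approximators has no reason to be $f$-invariant; in Burago--Ivanov invariance is built in because the branching foliation is constructed canonically from the dynamics (e.g.\ via extremal $s$-saturated surfaces), not extracted from auxiliary foliations. Second, tautness/Reeblessness of the approximating foliations is asserted but not argued; in the actual proof the absence of Reeb components is a separate argument using partial hyperbolicity (unstable curves cannot stay in a solid torus bounded by a Reeb component), not an automatic feature of the construction. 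So the proposal as written does not close these gaps and does not follow a workable alternative route.
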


We will denote by $\cL^{cs}$ and $\cL^{cu}$ the leaf 
spaces respectively of the lifts $\wcs, \wcu$ of the branching foliations. In our setting, we will be considering a special class of foliations where leaf spaces are easier to define. See \cite[\S 3.2.2]{BFFP-3} for the general case. 

We remark also that the intersection of $\wcs$ and $\wcu$ gives rise to a one-dimensional branching foliation $\widetilde{\wc}$ which also has a well defined leaf space (see \cite[\S 2.3]{BFP}). By \emph{one dimensional branching foliation} $\cT$ which subfoliates a foliation $\cF$ we mean a collection of $C^1$-curves such that in the universal cover, for every $L \in \Ft$ the curves of $\wT$ contained in $L$ have the same properties ($i$)-($iv$) defining two dimensional branching foliations (of course, in ($i$) one needs to change properly embedded plane to properly embedded line).

\subsection{$\R$-covered foliations and hyperbolic metrics}\label{ss.Candel}

A celebrated result of Candel \cite{Candel} states that under quite 
general conditions, given a foliation in a $3$-manifold, there is a 
metric on $M$ that makes every leaf a hyperbolic surface. 
In particular, it follows from
\cite[Theorem 5.1]{FPacc} that this is the case for every minimal foliation in manifolds with exponential growth of $\pi_1(M)$ as we will consider here. 
Here is why:
If there are no holonomy invariant transverse measures, then
this result follows directly from Candel's theorem \cite{Candel}.
If there is a holonomy invariant transverse measure, then
this result follows from \cite[Theorem 5.1]{FPacc}.
Since we will be mainly concerned with minimal $\RR$-covered foliations we will consider such a metric for some approximating foliations 
$\cF^{cs},\cF^{cu}$ to $\wcs,\wcu$ and this will induce a (coarsely) negatively curved metric on each leaf of both branching foliations, see the next subsection. 

We will say that the branching foliation $\cF$ is $\R$-\emph{covered} if for every pair of leaves $L, F \in \Ft$ we have that either $L^+ \subset F^+$ or $F^+ \subset L^+$. This allows to induce an order between leaves and therefore it is equivalent to having that the approximating foliation is $\R$-covered, that is, the leaf space $\cL_{\cF}$ is homeomorphic to $\RR$. Compare with \cite[\S 11.1, \S 11.2]{BFFP-3}. 

Most of our study will take place for \emph{uniform} and $\R$-covered branching foliations. A branching foliation $\cF$ is \emph{uniform} if given two leaves $L, F \in \Ft$ the Hausdorff distance between them in $\mt$ is finite. It follows from \cite[Theorem 1.1]{FPmin} (see also \cite[\S 6]{FPmin}) that a uniform branching foliation is $\R$-covered
(here we use the hypothesis that the approximating foliations
are Reebless). We will say that a branching foliation is by \emph{hyperbolic leaves} if the metric on $M$ makes all leaves uniformly Gromov hyperbolic (see \cite[\S A.3]{BFP}). 

\subsection{Boundary at infinity and visual metric}\label{ss.boundaryvisual}

Let $L$ be a complete simply connected surface which is Gromov hyperbolic. We can define $S^1(L)$ its \emph{visual (or Gromov) boundary} as the set of geodesic rays on $L$ identified by being equivalent if they are a finite
Hausdorff distance from each other in $L$.
An oriented bi-infinite
geodesic $\ell \in L$ has therefore two endpoints $\ell^{\pm}$ corresponding to the positively and negatively oriented rays of $\ell \setminus \{x\}$ for some $x \in \ell$. This is clearly independent of the point $x \in \ell$.

One can compactify $L$ to $\hat L = L \cup S^1(L)$ making it homeomorphic to the closed disk (see \cite[\S III.H.3]{BH} or \cite{FenleyQG}). Given a geodesic $\ell$ in $L$  and a point $\xi$ in $S^1(L) \setminus \{\ell^\pm\}$ we can define an open set $O_\ell(\xi)$ containing $\xi \in S^1(L)$ as the union of the open interval of $S^1(L)$ whose endpoints are $\ell^+$ and $\ell^-$ and contains $\xi$ and the connected component of $L \setminus \ell$ containing completely a geodesic ray $r$ representing $\xi$. Note that for every $\xi \setminus \{\ell^\pm\}$ there are rays in $L \setminus \ell$ representing it, and the definition of the open set $O_\ell (\xi)$ is independent of this choice. The open sets $O_\ell(\xi)$ with $\ell$ being geodesics in $\ell$ together with the open sets in $L$ give a topology in $\hat L$ making it homeomorphic to a closed disk. 

For several reasons, we will choose a metric in $M$ whose restriction to leaves of $\cF$ is $\mathrm{CAT}(\kappa)$ for some $\kappa<0$ which can be done using the Candel metric, where we approximate by
an actual foliation, as opposed to a branching foliation (see \cite[Proposition A.5]{BFP}). This is not so important, since there are canonical ways to define visual metrics, but it is convenient to have a geometric sense of what is happening.
Scaling the metric, we can always assume that all leaves
are $\mathrm{CAT}(-1)$.
This property implies uniqueness of geodesic segments, rays, or full 
geodesics given the endpoints or ideal points.
In particular the $\mathrm{CAT}(-1)$ property implies that for any
$x$ in a leaf $ L$ there is exactly one such ray starting at $x$ for every $\xi \in S^1(L)$ so one can identify $S^1(L)$ with $T_x^1L$. This defines, for each $x \in L$ a \emph{visual metric} on $S^1(L)$ 
given by measuring intervals
 in $S^1(L)$ by the angle in $T_x^1L$ measured with the Riemannian metric on $L$. 
A very important fact for us is the following:

\begin{remark} \label{rem:holder}
The visual metric in $S^1(L)$ is well defined up to H\"{o}lder equivalence \cite[\S III.H.3]{BH}. 
\end{remark}

These metrics in the leaves vary continuously with the points in $M$.
Then $S^1(L)$ is canonically identified with $T_x^1 L$.
Also when $x$ varies, the sets  $T_x^1 L_x$ vary continuously. It follows that
the visual metrics in $S^1(L)$ vary continuously with the points.

We refer the reader to \cite{Thurston,Fen2002,CalegariPA,Calegari} for more general statements.  

\subsection{The universal circle}\label{ss.universalS1}
 In this subsection we describe what we need of the universal
circle of the foliation which allows us 
to determine the dynamics at infinity of a good pair. What is described here is done with much more detail
and richer properties in
\cite{Thurston,Fen2002,CalegariPA}. Our construction is from scratch with only the properties we will need, see also \cite[\S 2.5]{FPmin}.

 Let $\cF$ be a uniform $\RR$-covered branching foliation on $M$ by hyperbolic leaves and $\Ft$ its lift to $\mt$. For each $L \in \Ft$ we consider its visual boundary $S^1(L)$ as in \S \ref{ss.boundaryvisual}.  
 
 Let $\cA = \bigsqcup_{L \in \Ft} S^1(L)$ which can be given a topology 
from the collection of $T^1 \Ft|_{\tau}$ where $\tau$ is a transversal
compact segment to $\Ft$.
Then if $L_n \to L$ in $\cL = \mt/_{\Ft}$ then $S^1(L_n) \to S^1(L)$. With this topology $\cA$ is an open annulus since it is a circle bundle over the leaf space $\cL$ of $\Ft$.

Recall that a \emph{quasi-isometry} between two metric spaces $(X,d_X)$ and $(Y, d_Y)$ is a map $q: X \to Y$ such that there exists $C>1$ such that for every $x,x' \in X$ we have: 

$$  C^{-1} d_X(x,x') - C \leq  d_Y(q(x),q(x')) \leq C d_X(x,x') + C . $$

We do not require $q$ to be continuous, the constant $C$ is called a \emph{quasi-isometry constant} for $q$. A \emph{quasigeodesic} in $(X,d_X)$ is a quasi-isometric map from $\RR$ with its usual distance into $X$ and a quasigeodesic ray is a quasigeodesic map from $[0,\infty)$ to $X$. The Morse Lemma (see eg. \cite[Theorem III.H.1.7]{BH}) states that if $L$ is a Gromov hyperbolic simply connected surface then for every $C$ there is some $K$ such that if $r$ is a $C$-quasigeodesic (resp. $C$-quasigeodesic ray), then there exists a true geodesic (resp. geodesic ray) at Hausdorff distance less than $K$ from $r$. This also holds with the same constants for quasigeodesic segments and the geodesic joining the endpoints.

\begin{prop}\label{prop.ext} Let $f: M \to M$ be a homeomorphism preserving $\cF$, it follows that any lift $\fh$ extends naturally to a homeomorphism of $\cA$ that we still call $\fh$ and preserves the fibers (i.e. it is a homeomorphism from $S^1(L)$ to $S^1(\fh(L))$. 
\end{prop}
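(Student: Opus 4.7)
The plan is to show that each restriction $\fh|_L : L \to \fh(L)$ is a quasi-isometry with uniform constants, invoke the standard extension of quasi-isometries between $\cmno$ spaces to their visual boundaries, and check joint continuity in $\cA$. First I would observe that since $f$ preserves $\cF$ and $\pi\circ\fh=f\circ\pi$, the lift $\fh$ induces a permutation of the leaves of $\Ft$; write $\fh(L)$ for the image of $L$. Then I would argue that $\fh|_L$ is a $C$-quasi-isometry for some $C$ independent of $L$, combining two ingredients: (a) $\fh$ is bi-Lipschitz on $\mt$ with constants bounded on a fundamental domain, since $f$ is a diffeomorphism of the compact manifold $M$ and $\fh$ is equivariant under the deck group; and (b) the Candel/$\cmno$ metric on leaves of $\Ft$ is uniformly comparable with the ambient metric on tubular neighborhoods of bounded radius, i.e. $\Ft$ has uniform leafwise bounded geometry (as in \S\ref{ss.Candel}). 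Together these give that intrinsic leaf distances are controlled by ambient distances up to a quasi-isometric factor independent of $L$.

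For the extension to the boundary I would invoke the standard fact \cite[III.H.3]{BH} that a quasi-isometry between complete simply connected $\cmno$ surfaces extends uniquely to a homeomorphism between their visual boundaries: the image under $\fh|_L$ of a geodesic ray representing $\xi\in S^1(L)$ is a $C$-quasigeodesic ray in $\fh(L)$, and by the Morse lemma lies within bounded Hausdorff distance of a unique geodesic ray, whose ideal endpoint is declared to be $\fh(\xi)\in S^1(\fh(L))$. Performing the same construction for $\fh^{-1}$ yields a fiberwise inverse, so we obtain a fiber-preserving bijection $\fh:\cA\to\cA$.

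The main obstacle is joint continuity of $\fh$ on $\cA$, and this is where the uniformity of $C$ is essential. Using the identification $S^1(L)\cong T^1_xL$ afforded by $\cmno$ and the description of the topology of $\cA$ in terms of sections $T^1\Ft|_\tau$ over transversals, convergence in $\cA$ translates into: if $L_n\to L$ in $\cL$, $x_n\in L_n$ with $x_n\to x\in L$, and $\xi_n\in S^1(L_n)\to\xi\in S^1(L)$, then the geodesic rays $r_n\subset L_n$ from $x_n$ representing $\xi_n$ converge locally uniformly in $\mt$ to the ray $r\subset L$ from $x$ representing $\xi$. Continuity of $\fh$ on $\mt$ then forces $\fh(r_n)\to\fh(r)$ locally uniformly; by the uniform constant from the first step these are $C$-quasigeodesic rays, and Morse's lemma produces the associated genuine geodesic rays within a uniform Hausdorff distance. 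A compactness argument on the unit tangent bundle over a transversal to $\fh(L)$ then pins these geodesic rays down to converge locally uniformly to the geodesic ray representing $\fh(\xi)$, so $\fh(\xi_n)\to\fh(\xi)$ in the topology of $\cA$. The symmetric argument for $\fh^{-1}$ yields the homeomorphism, and naturality in $\fh$ is immediate from the construction.
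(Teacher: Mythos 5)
Your overall strategy (uniform leafwise quasi-isometry, then the Morse lemma to extend to the circles at infinity, then the same construction for $\fh^{-1}$) is the same as the paper's, but the way you establish the uniform quasi-isometry has two problems. First, the proposition is stated for a \emph{homeomorphism} $f$, while your step (a) invokes bi-Lipschitzness of $\fh$ ``since $f$ is a diffeomorphism of the compact manifold $M$''. A homeomorphism of a compact manifold need not be Lipschitz, so as written your argument only proves the statement for bi-Lipschitz maps; this matters because the proposition (and Proposition \ref{prop.extuniv}, which rests on it) is meant to apply to homeomorphisms preserving $\cF$. The paper avoids this with a purely topological argument: cover $M$ by finitely many small foliation charts; the image under $f$ of a plaque meets a uniformly bounded number of charts, and plaques have size bounded above and below, so following a leafwise path chart by chart shows that $\fh$ is a quasi-isometry from $L$ to $\fh(L)$ with constants independent of $L$. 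You would need either to add a smoothness (or Lipschitz) hypothesis, thereby weakening the statement, or to replace (a) by an argument of this kind.

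Second, the assertion that ``intrinsic leaf distances are controlled by ambient distances up to a quasi-isometric factor independent of $L$'' is false for the foliations considered here: leaves of $\Ft$ are in general not uniformly quasi-isometrically embedded in $\mt$ (for uniform $\R$-covered foliations of hyperbolic manifolds, for instance, every leaf limits onto the whole sphere at infinity, and two points of a leaf can be at bounded ambient distance while arbitrarily far apart inside the leaf). Fortunately your conclusion does not require this: if $\fh$ is $K$-Lipschitz for the ambient metric, then it multiplies lengths of leafwise paths by at most $K$, hence is $K$-Lipschitz from the path metric of $L$ to that of $\fh(L)$; combining this with the uniform comparability along leaves of the Candel metric and the induced metric (two continuously varying leafwise metrics on a compact manifold), and with the analogous bound for $\fh^{-1}$, gives the uniform quasi-isometry you want. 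Once the quasi-isometry step is repaired, your boundary extension via the Morse lemma and your continuity argument on $\cA$ (which is more detailed than what the paper records) go through.
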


Note that, by taking $f = \mathrm{id}$ this includes action by deck transformations in $\mt$.

\begin{proof}
Since $f$ is a homeomorphism of $M$ which is compact, then any given lift $\fh$ induces quasi-isometries\footnote{One can cover the manifold by finitely many sufficiently small foliations charts. A homeomorphism verifies that the image of a plaque in a foliation chart can intersect only finitely many (uniformly bounded number of) foliation charts. Since plaques in the chart have size uniformly bounded  from above and below, one deduces that $\fh$ must be a uniform quasi-isometry between leaves of the foliation.} from $L$ to $\fh(L)$ for every $L \in \Ft$ so it maps geodesic rays into quasigeodesic rays. The Morse Lemma implies that these are bounded distance away from a well defined geodesic ray up to bounded distance. This induces a continuous map from $S^1(L)$ to $S^1(\fh(L))$ and $\fh^{-1}$ induces its inverse so it is a homeomorphism.  
\end{proof}
 
It is many times useful to collate all circles in $S^1(L)$ by constructing a \emph{universal circle}, introduced by
Thurston.  There are standard constructions, which in the setting of $\RR$-covered uniform foliations gets simplified \cite[\S 5]{Thurston} (see \cite{Fen2002,CalegariPA,Calegari} for more details and more general constructions). 

To do this for $\RR$-covered uniform foliations, it is important to construct a natural way to identify leaves of $\Ft$. Intuitively, one can think as if there is a flow $\Phi_t$ in $M$ which is transverse and regulating to $\cF$: this means that if one considers two leaves $L_1, L_2 \in \Ft$ then the time it takes the flow $\Phi_t$ to take a point of $L_1$ to a leaf in $L_2$ is bounded above by a constant only depending on $L_1$ and $L_2$. This flow can be extended to $\cA$ and gives a way to identify fibers. Such a flow exists
for general transversely oriented, 
uniform $\R$-covered foliations \cite{FenleyFlow}.
To construct the identification between distinct circles
at infinity, less is needed: 

\begin{prop}\label{prop.quasiisome}
There is a family $\{\tau_{L,L'}: L \cup S^1(L) \to L' \cup S^1(L')\}$ for $L, L' \in \Ft$ with the following properties: 
\begin{enumerate}
\item the map $\tau_{L,L'}|_{L}: L \to L'$ is a quasi-isometry whose constant depends only on the Hausdorff distance between $L$ and $L'$,
\item the map $\tau_{L,L'}|_{S^1(L)} : S^1(L) \to S^1(L')$ is a homeomorphism, 
\item one has that $$ \tau_{L',L''}|_{S^1(L')} \circ  \tau_{L,L'}|_{S^1(L)} = \tau_{L, L''}|_{S^1(L)}.$$
\end{enumerate}
\end{prop}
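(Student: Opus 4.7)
The plan is to construct $\tau_{L,L'}$ as a coarse projection using the uniformity hypothesis, then extend to the ideal boundary via the Morse lemma, and finally check the cocycle condition on $S^1$ by observing that boundedly close quasi-isometries induce the same boundary map.

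First, for each pair $L, L' \in \Ft$ let $D = D(L,L')$ denote their finite Hausdorff distance in $\mt$ (finite by the uniform hypothesis). I would define $\tau_{L,L'}|_L : L \to L'$ pointwise by choosing, for every $x \in L$, some $\tau_{L,L'}(x) \in L'$ with ambient distance $d_{\mt}(x, \tau_{L,L'}(x)) \le D+1$. The map need not be continuous, but the triangle inequality in $\mt$ gives
\begin{equation*}
\bigl| d_{\mt}(\tau_{L,L'}(x), \tau_{L,L'}(x')) - d_{\mt}(x,x') \bigr| \le 2D+2 .
\end{equation*}
Since $M$ is compact and the approximating foliations have the Candel metric, the leaves of $\Ft$ are uniformly quasi-isometrically embedded in $\mt$ (the leaf metric $d_L$ is comparable to $d_{\mt}$ with constants independent of $L$). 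Combining, $\tau_{L,L'}|_L$ is a quasi-isometry from $(L,d_L)$ to $(L',d_{L'})$ with constants depending only on $D(L,L')$, giving (1).

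For (2), I would invoke the standard extension of quasi-isometries between proper Gromov hyperbolic spaces to their Gromov boundaries. Concretely, for each geodesic ray $r \subset L$ representing $\xi \in S^1(L)$, the image $\tau_{L,L'}(r) \subset L'$ is a quasigeodesic ray; by the Morse lemma (available because leaves are CAT$(-1)$) it lies at bounded Hausdorff distance from a unique geodesic ray in $L'$, defining $\tau_{L,L'}(\xi) \in S^1(L')$. The same construction applied to any coarse inverse of $\tau_{L,L'}$ (for instance $\tau_{L',L}$) produces a continuous inverse, so $\tau_{L,L'}|_{S^1(L)}$ is a homeomorphism.

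For (3), I would use the standard fact that two $(A,B)$-quasi-isometries $L \to L''$ which differ by bounded distance induce the \emph{same} map on $S^1(L)$: a geodesic ray in $L$ has both images at bounded Hausdorff distance from a common geodesic ray in $L''$ by the Morse lemma. Now for $x \in L$, writing $x \mapsto \tau_{L,L'}(x) \mapsto \tau_{L',L''}(\tau_{L,L'}(x))$ versus $x \mapsto \tau_{L,L''}(x)$, both image points lie within ambient distance $D(L,L') + D(L',L'') + D(L,L'') + 3$ of $x$, hence within bounded $d_{\mt}$-distance (and therefore bounded $d_{L''}$-distance) of each other. The two quasi-isometries $L \to L''$ are thus at bounded distance, and their boundary extensions coincide, proving the cocycle identity on $S^1(L)$.

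The only subtlety I expect to encounter is the comparison of intrinsic leaf distance with ambient distance that is used in step one; this is where the uniform hypothesis (and the Candel-type metric supplied by Section~\ref{ss.Candel}) enters in an essential way, ensuring that all quasi-isometry constants can be controlled purely by the Hausdorff distance between $L$ and $L'$. Everything else is a standard application of the Morse lemma in CAT$(-1)$ spaces.
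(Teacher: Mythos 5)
Your definition of $\tau_{L,L'}$ as a coarse ambient nearest-point assignment is the right starting point (it matches the coarse well-definedness the paper records right after the statement), but the step you use to promote it to a leafwise quasi-isometry is false. You assert that, by compactness and the Candel metric, leaves of $\Ft$ are uniformly quasi-isometrically embedded in $\mt$, i.e.\ $d_L \le C\,d_{\mt}+C$ with $C$ independent of $L$. Compactness only gives that the Candel leafwise path metric is uniformly comparable to the induced leafwise path metric; it does not compare leafwise distance with ambient distance, and in the settings this paper cares about that comparison fails badly: leaves of taut foliations of hyperbolic $3$-manifolds are exponentially distorted in $\mt$. For example, the fiber foliation of a hyperbolic surface bundle over the circle is $\R$-covered, uniform and by hyperbolic leaves, yet $d_L(x,y)$ can be of the order of $e^{d_{\mt}(x,y)}$; more generally \cite{FenleyQG} excludes uniformly quasi-isometrically embedded leaves here. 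Only the inequality $d_{\mt}\le d_L$ comes for free. Hence your triangle-inequality estimate controls the ambient distance between $\tau_{L,L'}(x)$ and $\tau_{L,L'}(x')$ but says nothing about their distance inside $L'$, so item (1) is not proved; the same false passage from ``bounded $d_{\mt}$-distance'' to ``bounded $d_{L''}$-distance'' is used again in your verification of the cocycle identity (3), so that argument collapses as well. (Your item (2) and the Morse-lemma boundary extension are fine once (1) and coarse uniqueness are in place.)

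The missing ingredient — and the place where the $\R$-covered and uniform hypotheses genuinely enter — is a statement of the following type: for every $a>0$ there is $b>0$ such that if $L,L'$ are leaves of $\Ft$ at Hausdorff distance at most $a$ and $y_1,y_2\in L'$ both lie within ambient distance $a$ of a single point of $L$, then $d_{L'}(y_1,y_2)\le b$. This is proved by a compactness/limiting argument using deck transformations and the Hausdorffness of the leaf space (no branching of limits of leaves), not by the triangle inequality; granting it, one subdivides a leafwise geodesic of $L$ into unit segments to get $d_{L'}(\tau_{L,L'}(x),\tau_{L,L'}(y))\le K\,(d_L(x,y)+1)$, obtains the lower bound by symmetry from $\tau_{L',L}$, and then your boundary and cocycle arguments go through. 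Note that the paper itself does not reprove the proposition: it cites \cite{Thurston}, \cite{CalegariPA} and \cite{Fen2002}, where the quasi-isometry property is established through exactly this kind of transverse/compactness argument (or a regulating structure), and it remarks that the branching case either works verbatim or is deduced from the approximating foliations $\cF_\eps$ — a point your write-up also leaves unaddressed.
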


This statement can be found in \cite[\S 5]{Thurston}, \cite[Corollary 5.3.16]{CalegariPA} or \cite[Proposition 3.4]{Fen2002} and the proof works exactly the same for branching foliations\footnote{Or can be deduced for them by using approximating foliations.}.
The quasi-isometries $\tau_{L,L'}: L \mapsto L'$ are
coarsely well defined, in the sense that given $L, L'$ there
is a constant $b$ which depends only on the Hausdorff distance
between $L, L'$ so that for any $x$ in $L$, then 
$d_{\mt}(x, \tau_{L,L'}(x)) < b$.
For $\R$-covered foliations this implies that if $\tau'_{L,L'}$ is
another such map then 

$$d_{L'}(\tau_{L,L'}(x),\tau'_{L,L'}(x)) \ \ < \ \ b_1,$$

\noindent
for a constant $b_1$ that depends only on $b$. So $\tau_{L,L'}$ is
coarsely defined.

We can now define $\Su$, the \emph{universal circle} of the foliation $\cF$, as $\cA/_{\sim}$ where we identify the circles $S^1(L)$ and $S^1(L')$ via the maps $\tau_{L,L'}$ from the proposition. It is important to remark that the universal circle depends on the foliation, and so, when several foliations are involved (as is the case of partially hyperbolic diffeomorphisms) we will make an effort to make clear which circle we are considering. For any $L$ in $\widetilde \cF$ define 
$$\Theta_L : \Su \mapsto S^1(L)$$
\noindent
the map that associates to a point in $\Su$ its representative in $S^1(L)$.
Notice that for any leaves $L, E$ of $\Ft$ then 
$$\Theta_E \ \ = \ \ \tau_{L,E} \circ \Theta_L$$

A useful property for us is that the following extension of Proposition \ref{prop.ext} holds: 

\begin{prop}\label{prop.extuniv}
Let $f: M \to M$ be a homeomorphism preserving an $\RR$-covered uniform branching foliation $\cF$ by hyperbolic leaves and $\fh$ a lift to $\mt$. Then, there is a well defined action $\fh_\infty$ of $\fh$ on $\Su$ given by $\fh_\infty=\Theta_L^{-1} \circ \tau_{\fh(L),L} \circ \fh \circ \Theta_L : \Su \to \Su$, where $L$ is an arbitrary leaf of $\widetilde \cF$.
In other words the map $\fh_\infty$ is independent of the choice of $L$. 
\end{prop}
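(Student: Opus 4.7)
The plan is to show the claimed independence from $L$ by verifying that the boundary homeomorphisms $\hat f_L : S^1(L) \to S^1(\hat f(L))$ furnished by Proposition~\ref{prop.ext} are compatible with the identifications $\tau_{L,L'}$ used to define $\Su$. Concretely, I will prove that for any two leaves $L, L' \in \wF$,
$$\tau_{\hat f(L), \hat f(L')} \circ \hat f_L \;=\; \hat f_{L'} \circ \tau_{L,L'} \quad \text{on } S^1(L).$$
Once this is established, applying $\Theta_{L'}^{-1}$ on the left and pre-composing with $\Theta_L$ on the right, together with the defining relation $\Theta_{L'} = \tau_{L,L'} \circ \Theta_L$ from Proposition~\ref{prop.quasiisome}(iii), immediately gives
$$\Theta_{L'}^{-1} \circ \tau_{\hat f(L'),L'} \circ \hat f \circ \Theta_{L'} \;=\; \Theta_{L}^{-1} \circ \tau_{\hat f(L),L} \circ \hat f \circ \Theta_{L},$$
which is the statement of the proposition. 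Thus the whole content is the compatibility displayed above.

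To establish that compatibility, the key observation is that both sides arise as the boundary extension of a quasi-isometry $L \to \hat f(L')$. On the one hand, $\hat f \circ \tau_{L,L'}$ is the composition of the coarse identification $\tau_{L,L'} : L \to L'$ (whose constants depend only on the Hausdorff distance $d_H(L,L')$, by Proposition~\ref{prop.quasiisome}(i)) with the quasi-isometry induced by $\hat f$ between $L'$ and $\hat f(L')$, so it is a quasi-isometry whose boundary extension is $\hat f_{L'} \circ \tau_{L,L'}|_{S^1(L)}$. On the other hand, $\tau_{\hat f(L), \hat f(L')} \circ \hat f$ is first $\hat f$ restricted to $L$ (a quasi-isometry $L \to \hat f(L)$) followed by the coarse identification between $\hat f(L)$ and $\hat f(L')$, with boundary extension $\tau_{\hat f(L),\hat f(L')} \circ \hat f_L$. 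So I must show that these two quasi-isometries $L \to \hat f(L')$ induce the same map on $S^1(\hat f(L'))$.

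The standard criterion in a Gromov hyperbolic space is that two quasi-isometries with the same source and target induce the same boundary map whenever they are at bounded $L^\infty$ distance. For a point $x \in L$, one has $d_{\mt}(x, \tau_{L,L'}(x)) \le b_1$ with $b_1 = b_1(d_H(L,L'))$, and $d_{\mt}(\hat f(x), \tau_{\hat f(L),\hat f(L')}(\hat f(x))) \le b_2$ with $b_2 = b_2(d_H(\hat f(L), \hat f(L')))$. Because $\hat f$ is a lift of a homeomorphism of a compact manifold, it is uniformly continuous at every fixed scale, so applying $\hat f$ to the first inequality gives $d_{\mt}(\hat f(x), \hat f(\tau_{L,L'}(x))) \le K$ for some $K$ depending only on $b_1$. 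Hence the two points $\hat f(\tau_{L,L'}(x))$ and $\tau_{\hat f(L), \hat f(L')}(\hat f(x))$ lie in the same leaf $\hat f(L')$ at bounded $\mt$-distance. Uniformity of $\cF$ in $M$ compact, together with the $\mathrm{CAT}(\kappa)$ Candel metric on leaves, turns this ambient bound into a bound in the leaf metric of $\hat f(L')$; I will invoke this comparison (cf.~\cite[\S 3.2.2]{BFFP-3}) as the one quantitative input. Armed with this uniform bound in $\hat f(L')$, the Morse Lemma implies that both quasi-isometries send any geodesic ray in $L$ to quasigeodesic rays at bounded Hausdorff distance, hence to rays converging to the same point of $S^1(\hat f(L'))$, which yields the required equality on boundaries.

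The main obstacle I anticipate is precisely this last comparison between ambient distance in $\mt$ and intrinsic distance in a leaf. It has to be shown that two points in a single leaf of $\Ft$ that are close in $\mt$ are close in the leaf metric (with a control depending only on the ambient distance), and this uses both the Reebless/uniform hypotheses on $\cF$ and the uniform choice of Candel metric. Everything else is formal bookkeeping with the composition law of Proposition~\ref{prop.quasiisome}(iii) and the general principle that bounded-distance quasi-isometries induce the same boundary map.
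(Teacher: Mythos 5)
Your proposal is correct and takes essentially the same approach as the paper: it reduces the statement to the compatibility relation $\tau_{\hat f(L),\hat f(E)}\circ \hat f = \hat f\circ \tau_{L,E}$ on ideal circles and verifies it by noting that the two quasi-isometries $L\to \hat f(E)$ involved stay a bounded distance apart, so (via the Morse Lemma) they induce the same boundary map. The only difference is presentational: the paper tracks a single geodesic ray and the geodesic ray shadowing its $\tau$-image, whereas you invoke the general principle that bounded-distance quasi-isometries have equal boundary extensions and you flag explicitly the ambient-to-leafwise distance comparison (uniform proper embedding of leaves), a fact the paper also relies on but leaves implicit.
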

 
 \begin{proof} 
We sketch the proof, for more details see the proof of  \cite[Proposition 3.4]{Fen2002}.
Let $p$ in $\Su$. 
Choose an arbitrary leaf $L$ of $\Ft$ to start with.
The point $p$ in $\Su$ is associated with
a point $q$ in $S^1(L)$, $q = \Theta_L(p)$. Let $r$ be a geodesic ray
in $L$ with ideal point $q$. Then since $\hat f$ is a quasi-isometry
from $L$ to $\hat f(L)$, it follows that $\hat f(r)$ is a quasigeodesic
ray and has a unique ideal point $q_0$ in $S^1(\hat f(L))$. 
Any other geodesic ray $r'$ in $L$ with ideal point $q$ in $L$,
$r'$ is asymptotic to $r$, hence $\hat f(r')$ is a finite
distance from $\hat f(r)$ and defines the same ideal point
in $\hat f(L)$. 

Finally we need to show that the map is independent of the choice of $L$, that is, that 

$$ \fh \circ \tau_{L, E}|_{S^1(L)}= \tau_{\fh(L), \fh(E)} \circ \fh|_{S^1(L)}. $$

\noindent
for any leaf $E$ of $\Ft$.

For this, let $E$ be another leaf of $\Ft$. The map $\tau_{L,E}: L \mapsto E$
is a quasi-isometry so that for any $x$ in $L$, then 
$d_{\mt}(x, \tau_{L,E}(x)) < b$ for $b$ which depends only
on the Hausdorff distance between $L, E$.
It follows that $\tau_{L,E}(r)$ is a quasigeodesic ray in $E$ which is
a bounded distance in $\mt$ from $r$. 
The quasigeodesic ray $\tau_{L,E}(r)$ is also a bounded
distance in $E$ from a geodesic ray in $E$ (this bound only depends
on the quasi-isometry constant of $\tau_{L,E}$). Hence there is
a geodesic ray $r'$ in $E$ which is a bounded distance in $\mt$
from $r$. If $q'$ is the ideal point of $r'$ in $E$, then
by definition $\tau_{L,E}(q) = q'$.
Taking the image of both $r, r'$ by $\fh$ we obtain
quasigeodesic rays $\fh(r), \fh(r')$ in $\fh(L), \fh(E)$
respectively, which are a bounded
distance from each other in $\mt$. The ideal point of $\fh(r)$ is
$\fh (q)$. The ideal point of the second is $\fh \circ \tau_{L,E}(q)$.
Since these quasigeodesic rays in $\fh(L), \fh(E)$ respectively
are a bounded distance from each other in $\mt$,
they define the same point in the universal circle, in other words

$$\tau_{\fh(L),\fh(E)}( \fh(q)) \ \ = \ \ \fh(\tau_{L,E}(q)),$$

\noindent
which is exactly what we wanted to prove.
 \end{proof}
 
\subsection{Visual metrics on the universal circle $\Su$}\label{ss.superatracting}

In the previous section we described visual metrics in 
individual leaves of $\widetilde \cF$.
It will be useful to have metrics on $\Su$ to talk about
super attracting fixed points of homeomorphisms acting on $\Su$.

Consider first a leaf $L$ of $\widetilde \cF$. 
There is a well defined bijection $\Theta_L : \Su \to S^1(L)$. The ideal circle $S^1(L)$ has
visual metrics: given $x_0$ in $L$ there is a bijection
between the unit tangent vectors to $T \widetilde \cF$ at $x$
and the points in $L$. The angle metric in $T^1_{x_0} \widetilde \cF$
induces a metric on $S^1(L)$. 
When one changes the basepoint in $L$ the visual metric in $S^1(L)$
changes by a H\"{o}lder homemorphism as noted in Remark \ref{rem:holder},
see \cite[\S III.H.3]{BH}. 

A map $g: (A,d) \mapsto (B,d')$ between metric spaces
is \emph{quasisymmetric} if 
\begin{itemize}
\item $g$ is an embedding, 
\item there is a homeomorphism $\eta: [0,\infty) \mapsto
[0,\infty)$ so that if $x,y,z$ are distinct points in $A$,
then 

$$\frac{d'(g(z),g(x))}{d'(g(y),g(x))} \ \ \leq \ \ 
\eta\left(\frac{d(z,x)}{d(y,x)} \right)$$
\end{itemize}
\noindent
See \cite[Definition 2.1]{Hai}.

When one changes from one leaf $L$ to other leaf $E$, one needs to understand the metric properties of the map $\tau_{L,E}$ restricted to $S^1(L)$. It is the identification
associated with the universal circle $\Su$. 

Recall from Proposition \ref{prop.quasiisome} that $\tau_{L,E}$ is a quasi-isometry from $L$
to $E$. Quasi-isometries between Gromov hyperbolic spaces induce quasi-symmetric homeomorphisms of the boundaries.

One can obtain this from the proofs of \cite[Propositions 5.15 and 6.6]{Gh-Ha}.
However they only explicitly talk about quasiconformal behavior,
which in dimension $1$ does not provide much information.
In \cite[Theorem 3.1]{Hai}
there is an explicit proof that a $C$-quasi-isometry between Gromov hyperbolic spaces
induces an ideal map $g$ which is quasisymmetric with constants related to $C$, the quasi-isometry constant.
He proves that it is quasi-M\"{o}bius (which we do not
define here), which implies quasisymmetric. We note that the quasi-isometry constant $C$ depends on the Hausdorff distance between leaves but we do not have much control on it. However, some metric properties make sense. We can now introduce super attracting fixed points.

First we choose a metric in $\Su$:
let $d$ be a visual metric in $\Su$ given say by
identification with $S^1(L)$ using a point in $L$ for some $L$
of $\widetilde \cF$.

\begin{defi}\label{def.superattract}
Let $f$ be a homeomorphism of $\Su$ which
fixes a point $\xi$ in $\Su$. We say that $\xi$ is a super
attracting fixed point for $f$ if 

$$\lim_{x \rightarrow \xi}  \ \frac{d(h(x),\xi)}{d(x,\xi)} \ \ = \ \ 0,$$

\noindent
where $h$ is the expression of $f$ using the
identification of $\Su$ with $S^1(L)$ for some leaf $L$ of 
$\widetilde \cF$.
\end{defi}

Compare with the definition given in \cite[Appendix A]{BFFP-4}
 which is done in a special situation. The name is inspired in complex dynamics where super attracting points are those whose derivative vanishes. Here, even if we cannot define the derivative since maps are only continuous, the quasi-symmetric structure allows `zero derivative' as in Definition \ref{def.superattract} to make sense as we will prove next.

\begin{lemma} \label{lem.superattracting}
The property of $\xi$ being a super attracting fixed point
for a homeomorphism
$f: \Su \mapsto \Su$ is independent
of the leaf $L$ in $\widetilde \cF$.
\end{lemma}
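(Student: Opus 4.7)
\medskip

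\noindent The plan is to show that changing the reference leaf from $L$ to another leaf $E$ of $\widetilde{\cF}$ replaces the visual metric on $\Su$ by a quasisymmetrically equivalent one, and that the super attracting condition is invariant under quasisymmetric conjugacy. All the needed ingredients are already available in the excerpt.

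\medskip

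\noindent First I would set up notation. Let $h_L = \Theta_L \circ f \circ \Theta_L^{-1}$ and $h_E = \Theta_E \circ f \circ \Theta_E^{-1}$ be the expressions of $f$ on $S^1(L)$ and $S^1(E)$ respectively. Set $\xi_L = \Theta_L(\xi)$ and $\xi_E = \Theta_E(\xi)$. Since $\Theta_E = \tau_{L,E} \circ \Theta_L$, writing $\tau$ for the restriction of $\tau_{L,E}$ to $S^1(L) \to S^1(E)$ we have $\tau(\xi_L) = \xi_E$ and the conjugacy relation $h_E \circ \tau = \tau \circ h_L$. By Proposition \ref{prop.quasiisome} the map $\tau_{L,L'}$ restricted to $L \to L'$ is a quasi-isometry between two $\mathrm{CAT}(-1)$ (hence Gromov hyperbolic) surfaces, and by the result of Haïssinsky cited in the text its ideal extension $\tau : (S^1(L), d_L) \to (S^1(E), d_E)$ is quasisymmetric. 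Thus there is a homeomorphism $\eta : [0,\infty) \to [0,\infty)$ with $\eta(0) = 0$ such that for any triple of distinct points $a,b,c \in S^1(L)$,
\[
\frac{d_E(\tau(a),\tau(c))}{d_E(\tau(b),\tau(c))} \ \leq \ \eta\!\left(\frac{d_L(a,c)}{d_L(b,c)}\right).
\]

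\medskip

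\noindent Now I would exploit quasisymmetry to transfer the limit. Assume the super attracting condition holds in the $L$-picture:
\[
\lim_{y \to \xi_L} \frac{d_L(h_L(y), \xi_L)}{d_L(y, \xi_L)} \ = \ 0.
\]
Given $z \in S^1(E)$ near $\xi_E$, write $z = \tau(y)$ with $y$ near $\xi_L$, and use $h_E(z) = \tau(h_L(y))$. Applying the quasisymmetric inequality to the triple $(a,b,c) = (h_L(y), y, \xi_L)$,
\[
\frac{d_E(h_E(z), \xi_E)}{d_E(z, \xi_E)} \ = \ \frac{d_E(\tau(h_L(y)), \tau(\xi_L))}{d_E(\tau(y), \tau(\xi_L))} \ \leq \ \eta\!\left(\frac{d_L(h_L(y), \xi_L)}{d_L(y, \xi_L)}\right).
\]
As $z \to \xi_E$ we have $y \to \xi_L$ (since $\tau$ is a homeomorphism), the argument of $\eta$ tends to $0$, and by continuity of $\eta$ at $0$ the right hand side tends to $\eta(0) = 0$. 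Hence the super attracting condition holds in the $E$-picture as well. Reversing the roles of $L$ and $E$ (using $\tau^{-1}$, which is also quasisymmetric) gives the converse implication, proving independence.

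\medskip

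\noindent A small subtlety I would address explicitly is that the visual metric on $S^1(L)$ itself depends on the choice of basepoint in $L$; this is handled by Remark \ref{rem:holder} together with the standard fact that the change-of-basepoint map is actually quasisymmetric (which is exactly what underlies Hölder equivalence for $\mathrm{CAT}(-1)$ metrics), so the same argument shows the definition does not depend on the basepoint chosen in $L$ either. The only real input is that \emph{quasisymmetric conjugation preserves the vanishing of $d(h(x),\xi)/d(x,\xi)$ as $x \to \xi$}, and this is immediate from the defining inequality once $\eta(0) = 0$ is used; there is no serious obstacle, the lemma is essentially a bookkeeping consequence of the quasisymmetric nature of the identification maps $\tau_{L,E}$.
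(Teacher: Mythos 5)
Your proposal is correct and follows essentially the same route as the paper's own proof: conjugate by $g=\tau_{L,E}|_{S^1(L)}$, use that this boundary map is quasisymmetric with distortion function $\eta$ satisfying $\eta(0)=0$, and apply the quasisymmetry inequality to the triple $(h(x),x,\xi)$ to transfer the vanishing of the ratio from the $L$-picture to the $E$-picture. The only addition beyond the paper's argument is your explicit remark about basepoint dependence within a single leaf, which the paper delegates to Remark~\ref{rem:holder}.
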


\begin{proof}
Let $h$ be the expression of the homeomorphism $f$ using
the identification $\Theta_L: \Su \to S^1(L)$.
If $\Su$ is identitifed with $S^1(E)$ for another leaf $E$ of 
$\widetilde \cF$, then the visual metrics $d$ in $\Su$ 
coming  from identification with $S^1(L)$ and $d'$ from
identification with $S^1(E)$ are quasisymmetric using
the map $g$ which is $\tau_{L,E}$ restricted to $S^1(L)$.
Let $\eta$ be the quasisymmetric function associated to $g$.
Then 

$$ \frac{d'(g(h(x),g(\xi))}{d'(g(x),g(\xi))} \ \ 
\leq \ \ \eta \left( \frac{d(h(x),\xi)}{d(x,\xi)}\right).$$

\noindent
Here $d'$ is the metric in $\Su$ coming from identification
with $S^1(E)$.
As $\eta$ is a homeomorphism with $\eta(0) = 0$, it follows that
$$\lim_{x \rightarrow \xi} \frac{d'(g(h(x),g(\xi))}{d'(g(x),g(\xi))}
= 0.$$
\noindent
Let $z = g(x)$. Since $g$ is a homeomorphism, then $x$ limits
to $\xi$ if and only if $z$ limits to $g(\xi)$. So we obtain

$$\lim_{z \rightarrow g(\xi)} \frac{d'(g(h(g^{-1}(z))), g(\xi))}
{d'(z,g(\xi))} \ = \ 0.$$

\noindent
But $g \circ h \circ g^{-1}$ is the expression of $f$ using
$E$ instead of $L$. 
This proves the lemma.
\end{proof}

 \subsection{Discretized and collapsed Anosov flows}\label{ss.discretisedandcollapsed}
We refer the reader to the paper \cite{BFP} which discusses in detail these concepts, as well as equivalences, examples and properties. Here we just give some quick definitions and properties that we will use to prove our results. 
Let $M$ be a closed 3-manifold. A non-singular flow $\phi_t:M \to M$ generated by a vector field $X$ is said to be \emph{Anosov} if there is a $D\phi_t$-invariant splitting $TM = E^s \oplus \RR X \oplus E^u$ and $t_0>0$ such that if $v^\sigma \in E^\sigma$ is a unit vector ($\sigma=s,u$) then:

$$ \|D\phi_{t_0} v^s \| < \frac{1}{2} < 2 < \|D\phi_{t_0} v^u\| . $$

It is easy to show that a flow on $M$ is Anosov if and only if its time $1$ map (and therefore its time $t$-map for every $t$) is partially hyperbolic. We refer the reader to \cite{FenleyAnosov, BarbotHDR, BarthelmeAnosovsurv} for generalities on the topological properties of Anosov flows.

We also have to consider the topological versions of these objects. A \emph{topological Anosov flow} $\phi_t: M \to M$ is an expansive flow tangent to a continuous vector field $X$ which preserves two transverse foliations so that orbits of one of the foliations get contracted under forward flowing while orbits of the other foliation are contracted by backward flowing. See \cite[Appendix G]{BFFP-2} for more discussions. 

It has recently been established by Shannon that transitive topologically Anosov flows are orbit equivalent to true Anosov flows \cite{Shannon}. 

More generally, a \emph{pseudo-Anosov flow} is a flow $\phi_t: M\to M$ preserving two transverse singular foliations which is locally modelled in a topological Anosov flow away from finitely many periodic orbits on which it has singularities of prong type. See \cite{Calegari} for more details. We note that every expansive
flow is pseudo-Anosov \cite{InabaMatsumoto, Paternain}.

In this paper we will be mostly interested in what is called $\R$-\emph{covered Anosov flows}: that is, topological 
Anosov flows whose 
stable foliation $\cF^{ws}$ and 
unstable foliations $\cF^{wu}$ lifted to $\mt$ are $\R$-covered. There are two important classes of $\R$-covered foliations: suspensions and \emph{skewed Anosov flows}. A fundamental early result on Anosov flows is the following: 

\begin{teo}[\cite{FenleyAnosov,Barbot}]\label{teoAnosov} The orbit space of the lift $\tild{\phi_t}$ of an arbitrary Anosov flow to $\mt$ is homeomorphic to $\R^2$. The flow is $\R$-covered if and only if one of the foliations $\cF^{ws}$ or $\cF^{wu}$ is $\R$-covered. The foliations 
$\widetilde \cF^{ws}, \widetilde \cF^{wu}$ 
induce one-dimensional foliations on $\R^2$ and $\phi_t$ is a suspension if and only if the foliations have a global product structure. 
If $\phi_t$ is $\RR$-covered and not orbitally
equivalent to a suspension, then
$\phi_t$ is \emph{skewed}. Moreover, every $\R$-covered Anosov flow is transitive. 
\end{teo}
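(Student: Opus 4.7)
The plan is to extract every conclusion of the theorem from the structure of the orbit space $\cO := \mt/\tild{\phi_t}$ together with its pair of transverse one-dimensional foliations, following the strategy of Fenley and Barbot. First I would verify that every leaf of $\widetilde{\cF^{ws}}$ and $\widetilde{\cF^{wu}}$ is a properly embedded plane in $\mt$: both foliations are Reebless in $M$ (any Reeb component would contain a closed orbit bounding a disk, impossible for an Anosov flow), so their lifts to $\mt$ consist of simply connected leaves. The projection $\pi:\mt\to \cO$ sends each weak leaf to an injectively immersed line, producing two transverse non-singular foliations $\cL^s,\cL^u$ on $\cO$. Hausdorffness of $\cO$ then reduces to separating non-identified orbits by saturated flow-boxes, which follows from expansivity together with the local product structure of the bifoliation. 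Combined with simple connectedness inherited from $\mt$ and the absence of holonomy along orbits, $\cO$ is a simply connected Hausdorff surface carrying two transverse non-singular foliations; a Kneser-type classification then identifies such a surface with $\R^2$.

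For the $\R$-covered dichotomy, the leaf spaces of $\widetilde{\cF^{ws}}$ and $\widetilde{\cF^{wu}}$ coincide respectively with the leaf spaces of $\cL^s$ and $\cL^u$ in the bifoliated plane $\cO\cong\R^2$. The main input, which I attribute to Barbot, is a symmetry argument: if say $\cL^s$ has leaf space $\R$ while $\cL^u$ branches, one can find two $\cL^u$-leaves that are non-separated in the unstable leaf space, push them into $\cL^s$-transversals, and derive a contradiction with the Hausdorffness of the stable leaf space by a continuity-and-monotonicity argument along a transverse segment. This gives one direction, and the opposite implication is symmetric. For the suspension characterization, global product structure in $\cO$ means every leaf of $\cL^s$ meets every leaf of $\cL^u$ in exactly one point; pushed back to $M$ this produces an embedded closed transversal to $\phi_t$, and Fried's theorem then yields a global cross-section and hence a suspension. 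The converse is immediate.

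For the remaining two claims, assume $\phi_t$ is $\R$-covered but not a suspension, so both leaf spaces are $\R$ yet some stable and unstable leaves are disjoint in $\cO$. I would define, for each stable leaf $\ell$, the interval $I(\ell)$ of unstable leaves meeting $\ell$, show by continuity that $I(\ell)$ is open in the unstable leaf space, and use the failure of global product together with equivariance of $\pi_1(M)$ acting on both $\R$-indexed leaf spaces to conclude that $I(\ell)$ is a bounded interval of uniform length. This uniformity delivers the skew-product description of $\cO$ as an infinite strip with a translation-equivariant shear between $\cL^s$ and $\cL^u$, which is precisely the skewness. Transitivity then follows by contradiction: a proper basic set $\Lambda\neq M$ would lift to a proper closed, $\pi_1$-invariant and $\tild{\phi_t}$-invariant subset of $\cO$; but in both the suspension and skewed cases the $\pi_1(M)$-action on the leaf spaces has dense orbits (otherwise one would produce a non-trivial invariant interval violating either the product or the shear model), so no such proper invariant closed set exists. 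The hard step is the skew model: one must upgrade local non-emptiness data for $\cL^s$-$\cL^u$ intersections into a globally uniform strip description, which requires careful bookkeeping of the $\pi_1(M)$-action on the bifoliated plane and control of how leaves behave at infinity.
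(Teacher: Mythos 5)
First, a point of comparison: the paper does not prove Theorem \ref{teoAnosov} at all; it is quoted as background with references to \cite{FenleyAnosov,Barbot}, so your sketch can only be measured against those cited proofs. At that level your overall architecture is the right one (work in the bifoliated orbit space, Hausdorff plus simply connected gives $\R^2$, the branching dichotomy, product versus skewed, then transitivity), and the first paragraph is an acceptable compression, with the caveat that ``Hausdorffness \dots follows from expansivity together with the local product structure'' hides precisely the hard step of Fenley's and Barbot's papers: ruling out non-separated orbits in the orbit space is not a routine flow-box separation argument, and it is where most of the work in the $\R^2$ statement lies.

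Two of your steps have genuine gaps. (1) Product implies suspension: the route ``global product structure produces an embedded closed transversal, and Fried's theorem then yields a global cross-section'' fails as stated. Every nonsingular flow on a closed manifold admits closed transversals (close up a nearly recurrent orbit segment), so your use of the product structure evaporates at that point, and a closed transversal is far from a global cross-section: Fried/Schwartzman-type criteria require that the homology directions of the flow lie in an open half-space, and verifying that is exactly where the product structure of the $\pi_1(M)$-action on the two $\R$-factors of the orbit space must enter; this is the substance of Barbot's argument, not a one-line reduction. (2) Transitivity: you deduce it from density (minimality) of the $\pi_1(M)$-orbits on the leaf spaces, but that minimality is itself a nontrivial theorem, in the literature obtained together with or after transitivity; the parenthetical ``otherwise one would produce a non-trivial invariant interval'' is not an argument, since a priori the action on the leaf space could preserve, say, a Cantor set compatible with the skewed model. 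The standard route goes in the opposite direction: if the flow is not transitive, the spectral decomposition gives a proper attractor whose stable saturation has a nonempty, proper, closed, saturated complement, and the boundary leaves of such a set force non-separated leaves in the lifted leaf space, contradicting the $\R$-covered hypothesis. Finally, a small phrasing issue in the skewed step: ``bounded interval of uniform length'' has no meaning before a model is chosen, since the leaf space carries no metric; what one actually proves is that the two endpoint maps from the stable leaf space to the unstable one are monotone, equivariant homeomorphisms, and the skewed model is built from them.
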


Notice that the previous theorem is shown for topological Anosov flows, so, combined with \cite{Shannon} it says that if a topological Anosov flow is $\R$-covered, then it is orbit equivalent to a true Anosov flow. Also, it follows from \cite{Brunella} that topological Anosov flows in atoroidal 3-manifolds are always transitive.

\section{Pseudo-Anosov good pairs}\label{s.pApairs}

Let $\cF$ be a Reebless branching foliation of a closed 3-manifold $M$ which is $\RR$-covered and uniform and by hyperbolic leaves.
Denote by $\Ft$ to the lift of $\cF$ to $\mt$. We choose a transverse orientation for $\Ft$ and for $L \in \Ft$ we denote by $L^+$ and $L^-$ the closed half spaces determined by $L$ in $\mt$. 

By the definition of branching foliation it follows that given $L \in \Ft$, every leaf $L' \in \Ft$ is contained in either $L^+$ or $L^-$ (and if it is contained in both, it must be $L$). We will denote by $\Su$ the universal circle of $\cF$ (cf. \S \ref{ss.universalS1}).

\subsection{Good pairs}

We will be interested in certain lifts of maps that preserve the foliation $\cF$. 

\begin{defi} \label{good}
Given $f,g: M \to M$ diffeomorphisms of $M$ preserving $\cF$, a pair $(\fh, \gh)$ where $\fh, \gh: \mt \to \mt$ are lifts of $f,g$ is called a \emph{good pair} if they commute, neither fixes a leaf of $\Ft$ and one of them acts as the identity on $\Su$. 
\end{defi}

Notice that this implies that both $\fh$ and $\gh$ act as a translation on the leaf space $\cL_{\cF} \cong \RR$.

\begin{remark} \label{rem.quotient}
If $(\fh, \gh)$ is a good pair, then we can consider the quotient $M_{\fh} = \mt/_{<\fh>}$ which is a solid torus trivially foliated by the leaves
of the induced foliation $\cF_{\fh}$. 
The leaf space\footnote{Formally, we need to take the approximating foliation to define the leaf space, but one can also define $\cL_{\cF_{\fh}}$ by using the action of $\fh$ in $\cL_{\cF}$.} $ \cL_{\cF_{\fh}} = M_{\fh}/_{\cF_{\fh}}$ is a circle where $\eta_{\fh}$, the action induced by $\fh$ in the quotient, acts as a homeomorphism.
The same can be done to produce $M_{\gh} = \mt/_{<\gh>}$. 
\end{remark}

We will mostly have in mind the following two examples on which our results will be applied and eventually specialize to these cases: 

\begin{example}\label{example1}
Let $M$ be a closed 3-manifold and $\cF$ be a minimal foliation in $M$ preserved by a diffeomorphism $f: M \to M$ homotopic to the identity. It follows from \cite[Corollary 4.7]{BFFP-3} that if we consider $\ft$ to be a good lift of $f$ (i.e. the lift obtained by lifting a homotopy to the identity, cf. \cite[Definition 2.3]{BFFP-2}) then either every leaf of $\Ft$ is fixed by $\ft$ or $\cF$ is $\RR$-covered and uniform and $\ft$ acts as a translation on the leaf space $\cL$ of $\Ft$. Since $\ft$ is a bounded distance from the identity, one can easily show that $\ft$ acts as the identity on $\Su$,
when $\cF$ is $\R$-covered.
Moreover, as $\ft$ commutes with all deck transformations (which are lifts of the identity that clearly preserves $\cF$), it is enough to find a deck transformation $\gamma$ which does not fix any leaf of $\Ft$ to obtain a good pair $(\ft, \gamma)$. Note that such deck transformations are quite abundant (see for instance \cite[\S 8]{BFFP-2} and \cite[\S 10]{BFFP-3} for the case where $M$ is a hyperbolic 3-manifold). In this paper we will 
later consider this setting when $\Ft$ 
acting  as a translation
and  $M$ is a hyperbolic 3-manifold to prove Theorem \ref{teoB}.
\end{example}

\begin{example}\label{example2}
Let $f: M \to M$ be a diffeomorphism of a Seifert manifold $M$ with hyperbolic base and preserving a horizontal (branching) foliation $\cF$. 
Horizontal means that the Seifert foliation is isotopic to one
which is transverse to $\cF$.
In particular $\cF$ is $\R$-covered and uniform.
(Since $M$ has hyperbolic base, it follows that it has
a unique Seifert fibration up to isotopy, see eg. \cite[Appendix A]{BFFP-2}.)
Suppose now that the Seifert fibration is orientable.
Then $\pi_1(M)$ has non trivial center, which is infinite
cyclic, and we can take $\gamma$ to be a generator of the center.
The center corresponds to the regular circle fibers,
$\gamma$ picks an orientation on these.

In that case $f$ 
preserves the center of $\pi_1(M)$, and  up to taking 
a square $f$ preserves
the conjugacy class of $\gamma$.
Note that $\gamma$ acts as the identity on $\Su$. In addition, if $\Ft$ is the lift of $\cF$ to $\mt$ it follows that $\gamma$ does not fix any leaf of $\Ft$ because $\cF$ is horizontal. Moreover, again taking
the square of $f$ if necessary, any lift $\ft$ of $f$ commutes with $\gamma$. If one fixes a lift $\ft$, it follows that for large enough $m>0$ the pair $(\gamma^m \ft, \gamma)$ is a good pair.  This setting will be considered when the action in the base is pseudo-Anosov to prove Theorem \ref{teoC}. 
\end{example}

\begin{notation}\label{not-hn}
Given a good pair $(\fh, \gh)$ and $m, n$ integers
 we denote the by $P$ to the diffeomorphism $P=\gh^{m}\circ\fh^n$ of $\mt$ and
by $P_{\infty}$ the induced homeomorphism of
$\Su$ (cf. Proposition \ref{prop.extuniv}). The values of $m,n$ will be clear in the context. 
\end{notation}

\subsection{Super attracting points}\label{ss.superatractingpoints} 

In this subsection we study what happens when a good pair has a super attracting fixed point in the universal circle (cf. Definition \ref{def.superattract}) and how this forces some behavior in $\mt$. 
We remark that such useful information in $\mt$ from the action
at infinity cannot in general be obtained from a merely attracting
fixed point in $\Su$.

We first need to describe some natural `neighborhoods' of points of $\Su$ inside $\mt$ adapted to a good pair $(\fh,\gh)$. 
We will assume in all this subsection that $(\fh,\gh)$ is a good pair preserving $\cF$ and that $\gh$ is the element of the pair which acts as the identity on $\Su$.  
Recall that for any $L$ in $\widetilde \cF$, we denote by $\Theta_L : \Su \mapsto S^1(L)$
the map that associates to a point in $\Su$ its representative in
$S^1(L)$. 

We will need to introduce some notations. Given an interval $I$ of $\Su$ containing $\xi$ in its interior and $L \in \Ft$, we  denote by $\ell^L_I$ the geodesic in $L$ joining the endpoints of $\Theta_L(I)$. 
Given a leaf $L \in \Ft$ we denote by $L_I^0$ to the closure of the connected component of $L \setminus \ell^L_I$ whose closure in $L \cup S^1(L)$ contains $\Theta(I)$. Given  $b>0$ we denote as $L_I^{+b} \subset L$ (resp. $L_I^{-b}$) to the union of $L_I^0$ with the $b$-neighborhood of $\ell_I^L$ (resp. the points in $L^0_I$ at distance larger than $b$ from $\ell_I^L$).

\begin{defi}\label{not-neigh}
Given $\xi \in \Su$ we say that an open set $U$ of $\mt$ is a \emph{neighborhood} of $\xi$ if it is $\hat g$ invariant and 
for every $L \in \Ft$ we have that $U \cap L$ contains $L_{I(L)}^0$ for some $I(L) \subset \Su$ open interval containing $\xi$, and $I(L)$ varying 
continuously with $L$.
In addition we say that an unbounded sequence $x_n \in \mt$ converges to $\xi \in \Su$ if for every $U$ neighborhood of $\xi$ there is $n_0$ such that if $n>n_0$ then $x_n \in U$. 
\end{defi}

There is a lot of freedom in the definition of these neighborhoods
of points $\xi$ in $\Su$. Notice in particular that we require
that the neighborhood is $\gh$ invariant (where $g_\infty$ is
the identity). This requirement is necessary for some technical
results later on to hold. Heuristically, what it allows to do is to think about neighborhoods of points at infinity in the quotient $\mt/_{<\gh>}$, if $\gh$ were a deck transformation (thus an isometry) one could define the neighborhoods by looking at the geodesic joining the points in the interval at infinity, but since this is not necessarily the case, we need to work some more to produce useful neighborhoods.

\begin{prop} \label{prop.superattracting}
Let $(\fh, \gh)$ be a good pair and $P  = \gh^m \circ \fh^n$
so that $\xi$ in $\Su$ is a super attracting fixed point
of $P_{\infty}$.
There is a neighborhood $U$ of $\xi$ in $\mt$ so that 
$P(\overline U) \subset U$ and for any $x$ in  $\overline U$ then
$P^i(x)$ converges to $\xi$ when $i \rightarrow \infty$.
\end{prop}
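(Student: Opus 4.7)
The plan is to use the super attracting hypothesis to select an interval $J\subset\Su$ containing $\xi$ with $\overline{P_\infty(J)}\subset\interior(J)$ and with the visual size of $P_\infty(J)$ much smaller than that of $J$, then take $U$ to be the union over $L\in\Ft$ of suitably widened leaf halfplanes $L^{+b}_J$. By Lemma \ref{lem.superattracting} the super attracting condition is intrinsic, so in any visual metric $d$ on $\Su$ the ratio $d(P_\infty(x),\xi)/d(x,\xi)\to 0$ as $x\to\xi$, providing such a $J$ as small as desired.

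The first step is to translate this shrinking in $\Su$ into geometric separation in leaves. Since $P|_L$ is a uniform quasi-isometry and the identifications $\tau_{L,P(L)}$ are uniformly quasi-symmetric, $P(\ell^L_J)$ is a quasi-geodesic in $P(L)$ with ideal endpoints $\Theta_{P(L)}(P_\infty(\partial J))$; by the Morse lemma it lies at Hausdorff distance at most $K_1$ from $\ell^{P(L)}_{P_\infty(J)}$, with $K_1$ uniform in $L$. The crucial point is that the intrinsic distance in $P(L)$ between $\ell^{P(L)}_{P_\infty(J)}$ and $\ell^{P(L)}_J$ is of order $\log(|J|/|P_\infty(J)|)$, which the super attracting hypothesis renders arbitrarily large by shrinking $J$---this is the essential use of \emph{super}, rather than mere attraction.

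Set $U\cap L:=L^{+b}_J$ for a uniform buffer $b$ to be tuned. Continuity of $I(L)\equiv J$ is trivial, and the $\gh$-invariance of $U$ holds because $\gh_\infty=\mathrm{id}$ fixes $\partial J$ pointwise, so $\gh(\ell^L_J)$ and $\ell^{\gh(L)}_J$ share ideal endpoints; in the typical case that $\gh$ is a deck transformation (Examples \ref{example1}--\ref{example2}) it is a Candel isometry and the two geodesics coincide, giving $\gh(L^{+b}_J)=\gh(L)^{+b}_J$ on the nose, and otherwise the resulting Morse error is absorbed into $b$. For $P(\overline U)\subset U$, a point in $L^{+b}_J$ maps into $P(L)^{+b'}_{P_\infty(J)}$ with $b'=Cb+C+K_1$, so choosing $J$ small enough that the above gap exceeds $b'$ gives $P(L^{+b}_J)\subset P(L)^0_J\subset P(L)^{+b}_J$. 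For the convergence, iterating yields $P^i(x)\in P^i(L)^{+b}_{P^i_\infty(J)}$, with $P^i_\infty(J)\to\{\xi\}$ and $P^i(L)$ escaping in $\cL$; for any neighborhood $V$ of $\xi$, continuity of $I(\cdot)$ together with $\gh$-invariance of $V$ and $\gh$-cocompactness of $\cL$ produces a uniform $I_0\ni\xi$ with $V\cap L\supset L^0_{I_0}$ for every $L$, whence $P^i(x)\in V$ for all large $i$.

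\textbf{Main obstacle.} The subtle point is reconciling the $\gh$-invariance of $U$ (which forces $b$ to be generous, since $\gh$ may translate individual points of $\mt$ far) with the strict forward invariance $P(\overline U)\subset U$ (which demands geometric contraction by $P$). The \emph{super} attracting strength of $\xi$, rather than mere attraction, provides the super-linear shrinking at infinity that creates a geometric gap in leaves dominating $b$, $K_1$, and the quasi-isometric distortion of $P$, closing the estimates.
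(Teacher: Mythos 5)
Your core mechanism is the same as the paper's (super-attraction gives a definite gap between $\ell^E_I$ and $\ell^E_{P_\infty(I)}$ dominating all coarse constants, hence strict forward invariance, then iterate), but there is a genuine gap at the step you dismiss with ``the resulting Morse error is absorbed into $b$''. Definition \ref{not-neigh} requires $U$ to be \emph{exactly} $\gh$-invariant, and in the main case of the paper (hyperbolic $M$, Example \ref{example1}) the element of the pair acting as the identity on $\Su$ is the good lift $\ft$ of $f$, not a deck transformation: leafwise it is only a uniform quasi-isometry, not a Candel isometry. Then $\gh(L^{+b}_J)$ is not $\gh(L)^{+b}_J$, and enlarging $b$ once does not help because the discrepancy compounds under iteration of $\gh$; a set of the form $\bigcup_L L^{+b}_J$ with one uniform $b$ is simply not $\gh$-invariant for any choice of $b$. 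This invariance is not cosmetic: it is what lets one propagate estimates from a fundamental interval $[L,\gh(L)]$ to all of $\mt$, and it is used downstream (compactness of $T_P/_{<\gh>}$, Lemma \ref{lem.auxiliary}, Proposition \ref{prop-nointerval} all work in $\mt/_{<\gh>}$). This is precisely why the paper's Lemma \ref{l.ghandtau} builds $U_I$ by hand: the endpoints of $I$ are chosen in contracting directions of the approximating foliation so that the boundary curves interpolate continuously between the geodesic $\ell^L_I$ and the quasigeodesic $\gh(\ell^L_I)$ across one fundamental domain, and the set is then extended by $\gh$-iteration, giving exact invariance at the price that the two-sided bound $E^{-b_0}_I\subset U_I\cap E\subset E^{+b_0}_I$ holds only for $E\in[L,\gh(L)]$. (In the Seifert case, where $\gh$ is the fiber deck transformation and hence a leafwise isometry, your direct definition does work.)

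A second, related problem is that you claim your estimates uniformly over \emph{all} leaves. The identifications $\Theta_E:\Su\to S^1(E)$ are only quasisymmetric, with constants depending on the Hausdorff distance between leaves, so for a fixed small $J$ the separation in $E$ between $\ell^E_J$ and $\ell^E_{P_\infty(J)}$ (and your constant $K_1$) is controlled only for $E$ in a compact interval of the leaf space; the remark following Lemma \ref{l.ghandtau} makes exactly this point, and your heuristic ``gap of order $\log(|J|/|P_\infty(J)|)$'' is in any case only Hölder/quasisymmetrically meaningful. The paper's bookkeeping is to verify $\gh^k\circ P(\overline{A^I_E})\subset A^J_{E'}$ only for $E\in[L,\gh(L)]$, where compactness gives uniform constants, and then to deduce both $P(\overline U)\subset U$ and $P^i(x)\to\xi$ from exact $\gh$-invariance of $U$ and of the target neighborhood $V$ together with $P\gh=\gh P$. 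Your final step producing ``a uniform $I_0$ with $V\cap L\supset L^0_{I_0}$ for every $L$'' fails for the same reason: it holds on a fundamental domain, and spreading it by $\gh$ degrades the constants, since $\gh$ is not an isometry on leaves. So the skeleton is right, but without an exactly $\gh$-invariant $U$ and with the uniformity restricted to a fundamental domain and then propagated by invariance, the argument as written does not close in the hyperbolic case.
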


We will first construct a family of neighborhoods of $\xi$ depending on open intervals $I$  so
that $\xi \in I \subset \Su$,  and a given leaf $L \in \Ft$ it satisfies
the conditions on the next lemma:

\begin{lema}\label{l.ghandtau}
Fix a leaf $L \in \Ft$, then, for every open interval $I_0 \subset \Su$ and $\eps>0$
there is $I \subset I_0$ open interval whose closure is contained in $I_0$ and $I_0$ is contained in its $\eps$-neighborhood in $\Su$ satisfying
the following:  we can define an open set $U_I$ which is a neighborhood of any $\sigma \in I$ (cf. Definition \ref{not-neigh}) and such that $L \cap U_I=L_I^0$. Moreover, there exists $b_0>0$ such that for every $E \in [L, \gh(L)]$ we have that $E_I^{-b_0} \subset U_I \cap E \subset E_I^{+b_0}$ 
\end{lema}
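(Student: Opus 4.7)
The plan is to construct $U_I$ one leaf at a time, first on the fundamental slab of leaves between $L$ and $\gh(L)$, and then extend it to all of $\mt$ by $\gh$-equivariance. Since $\gh$ acts as the identity on $\Su$, the maps $\Theta_E$ associated to leaves $E$ lying in one $\gh$-fundamental domain of $\cL$ provide a coherent way to transport the two endpoints of $I$ from $\Su$ to points in $S^1(E)$, and hence to geodesics $\ell_I^E \subset E$. The candidate set is $U_I \cap E := E_I^0$ (the component of $E \setminus \ell_I^E$ that limits to $\Theta_E(I)$), and the task is to show that with a suitable perturbation $I$ of $I_0$ this gives an open, $\gh$-invariant subset of $\mt$ with the stated bounds.

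For $E$ in the slab $[L,\gh(L)]$, the quasi-isometry $\tau_{L,E}$ from Proposition~\ref{prop.quasiisome} has its quasi-isometry constant controlled by the Hausdorff distance between $L$ and $E$ in $\mt$. Because the foliation is uniform and the slab $[L,\gh(L)]$ is compact in $\cL$, this Hausdorff distance is uniformly bounded, giving a uniform quasi-isometry constant $C$ for all $E$ in the slab. The image $\tau_{L,E}(\ell_I^L)$ is therefore a uniform $C$-quasigeodesic in $E$ whose two ideal endpoints, via $\Theta_E = \tau_{L,E}\circ\Theta_L$, are exactly the ideal endpoints of $\ell_I^E$. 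The Morse lemma then yields a constant $b_0 = b_0(C)$ so that $\tau_{L,E}(\ell_I^L)$ lies at Hausdorff distance at most $b_0$ from $\ell_I^E$ in $E$, which directly gives the inclusions $E_I^{-b_0}\subset U_I\cap E\subset E_I^{+b_0}$.

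To see that the union $\bigcup_{E\in[L,\gh(L)]}E_I^0$ is open in $\mt$, I would argue that the geodesic $\ell_I^E$ varies continuously with $E$: its ideal endpoints in $S^1(E)$ are $\Theta_E(\xi_\pm)$ where $\xi_\pm\in\Su$ are the fixed endpoints of $I$, and $E\mapsto\Theta_E(\xi_\pm)$ is continuous in the topology on $\cA$ described in \S\ref{ss.universalS1}. Since leaves are $\mathrm{CAT}(-1)$, continuity of the ideal endpoints implies continuity of the geodesic, so the open half-leaves $E_I^0$ sweep out an open set transversally. The extension to all of $\mt$ is by declaring $U_I\cap\gh^k(E):=\gh^k(U_I\cap E)$. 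The gluing at $\gh(L)$ is consistent precisely because $\gh_\infty=\mathrm{id}$: the leaf $\gh(L)$ inherits the same pair of ideal endpoints $\Theta_{\gh(L)}(\xi_\pm)$ whether one arrives via the slab $[L,\gh(L)]$ or via $[\gh(L),\gh^2(L)]$, so the geodesic $\ell_I^{\gh(L)}$ and the set $\gh(L)_I^0$ are unambiguously defined. The continuity of $I(E)=I$ in $E$ is then trivial, confirming that $U_I$ is a neighborhood in the sense of Definition~\ref{not-neigh} of every $\sigma\in I$.

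The role of the perturbation from $I_0$ to a nearby $I$ is to avoid a thin set of intervals whose endpoints in $\Su$ project under the $\Theta_E$ to points where the geodesics in neighboring leaves could coincide or degenerate in a way that spoils either continuity in $E$ or the uniform Morse bound across the slab, ensuring generic transversality. The step I expect to demand the most care is the verification of openness at the boundary leaves $\gh^k(L)$, where one must check that the two definitions of $U_I\cap\gh^k(L)$ (from the $[\gh^{k-1}(L),\gh^k(L)]$ slab and from the $[\gh^k(L),\gh^{k+1}(L)]$ slab) agree as subsets of $\gh^k(L)$; this reduces, via $\gh$-equivariance and $\gh_\infty=\mathrm{id}$, to the identity $\gh(L_I^0)=\gh(L)_I^0$, but the details require using that the ideal endpoints are preserved exactly under $\gh$ to conclude that the two candidate half-leaves agree.
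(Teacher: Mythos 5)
Your construction breaks down at exactly the point you set aside as a detail. Prescribing $U_I \cap E = E_I^0$ on the whole closed slab $[L,\gh(L)]$ and then extending by $\gh$-equivariance is inconsistent at the slab boundary: on the leaf $\gh(L)$ the two prescriptions give $\gh(L)_I^0$, bounded by the geodesic $\ell_I^{\gh(L)}$, and $\gh(L_I^0)$, bounded by $\gh(\ell_I^L)$, and these sets are not equal in general. The hypothesis $\gh_\infty=\mathrm{id}$ only guarantees that $\gh(\ell_I^L)$ is a quasigeodesic in $\gh(L)$ with the same ideal endpoints $\Theta_{\gh(L)}(\partial I)$; since $\gh$ is merely a quasi-isometry between leaves, the Morse lemma gives a bounded Hausdorff distance from $\ell_I^{\gh(L)}$, not equality, and equal ideal endpoints never force two curves (or the half-leaves they bound) to coincide. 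So the identity $\gh(L_I^0)=\gh(L)_I^0$, to which you reduce the gluing, is false; consequently either $\gh$-invariance (which Definition \ref{not-neigh} requires) or openness at the leaves $\gh^k(L)$ fails for your set. This incompatibility is the whole difficulty of the lemma, and it explains the shape of the statement: equality $L\cap U_I=L_I^0$ can be imposed at only one leaf per $\gh$-orbit, while on the other slab leaves one can only ask for the sandwich $E_I^{-b_0}\subset U_I\cap E\subset E_I^{+b_0}$ (which, with your definition, would be vacuous rather than the content of the lemma).

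The paper's proof resolves this by interpolation, and that is where the perturbation $I_0\to I$ does real work, not the ``generic transversality'' you invoke, which plays no role in your argument. Passing to an approximating true foliation, one moves the endpoints of $I_0$ slightly so that they become directions (contracting directions in Thurston's sense, or directions where nearby leaves stay uniformly close when there is an invariant transverse measure) along which all leaves of $[L,\gh(L)]$ near $\gh(L)$ remain $\delta_0$-close to $\gh(L)$ all the way out to infinity; this yields a product structure of $\Ft$ along the band around the geodesic $\beta$ in $\gh(L)$ with ideal points $\Theta_{\gh(L)}(\partial I)$. One then takes the boundary of $U_I$ to be the geodesic $\ell_I^G$ only for $G\in[L,E]$, where $E$ is a leaf close to $\gh(L)$, and for $G\in[E,\gh(L)]$ one chooses, via the product structure, a continuous family of uniform quasigeodesics with ideal points $\Theta_G(\partial I)$ terminating exactly at $\gh(\ell_I^L)$ on $\gh(L)$; only after this correction does the $\gh$-equivariant extension give a well-defined, open, $\gh$-invariant set, with $b_0$ furnished by the Morse lemma. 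Note also that for leaves outside the slab the boundary curves are $\gh^n$-images whose quasigeodesic constants degrade with $n$, so verifying the neighborhood condition of Definition \ref{not-neigh} there requires the separate, non-uniform argument at the end of the paper's proof rather than being ``trivial.''
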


\begin{proof} 
Since the result is in the universal cover, we can assume by taking
a double cover that $\cF$ is transversely orientable.
Then Theorem \ref{teo-bi} implies that $\cF$
is approximated by an actual foliation $\cF_{\eps}$.
The universal circles of $\cF$ and $\cF_{\eps}$ are
canonically homeomorphic, under an equivariant homeomorphism.
Given a leaf $E$ of $\widetilde{\cF}$ there is an associated
leaf $E'$ of $\widetilde{\cF}_{\eps}$.

The foliation $\cF_{\eps}$ has leaves with curvature
arbitrarily close to $-1$. A contracting direction in a leaf
of $E$ of $\cFe$ is an ideal point $y$ of $E$ so that a geodesic
ray $r$
from a basepoint $x_0$ in $E$ to the ideal point $y$ satisfies
that all nearby leaves in one side of $E$ contracts towards
$E$ along $r$.
Thurston proved (see \cite[\S 3]{Fen2002}) that either there is
a holonomy invariant transverse measure 
or for every leaf 
$E$ of $\cFe$ the set of contracting
points from $E$ is dense in $S^1(E)$.
In the first case for every $\delta > 0$ there is 
also a dense set of directions in every leaf $E$ so that
nearby leaves stay 
always less than $\delta$ from $E$  in these directions.
The contracting points or points which are $\delta$ close
to nearby leaves project down to similar points of the 
branching foliation $\cF$. These contracting directions will allow us to `interpolate' between curves in closeby leaves to produce the desired neighborhoods.

\vskip .05in
There is $k_1 > 0$ so that the image  under $\hat g$
of any geodesic $\ell$ in a leaf  $E$ of $\Ft$
is a $k_1$ quasigeodesic
in $\gh(E)$.
Let $b_0$ be a global constant so that if $\ell$ is
a $k_1$ quasigeodesic in a leaf $E$  of $\widetilde{\cF}$, then $\ell$
is at most $b_0/2$ distant from the geodesic in $E$ with
same ideal points as $\ell$.
Fix $\delta > 0$ which is much smaller than the local
product size of the foliation $\cF$.
Now let $\delta_0  > 0, \ \delta_0 << \delta$
so that if two leaves $E_1, E_2$ of
$\Ft$ 
are within $\delta_0$ of each other along a geodesic
$\beta$ of $E_1$, then $E_1, E_2$ are within $\delta$ of
each other in a neighborhood of size $b_0$ of $\beta$.
This is why we use the approximating foliation, rather
than the branching foliation.

So fix the leaf $L$ as in the statement of the
lemma.
Given $\gh(L)$ we find 
ideal points  $z_1, z_2$
in $S^1(\gh(L))$ arbitrarily close to the endpoints
$\Theta_{\gh(L)}(\partial I_0)$ so that 
rays in $\gh(L)$ in the direction of $z_i$ are
$\delta_0$ close to all nearby leaves $E$ of $\Ft$ in
$[L,\gh(L)]$.
Let $y_i = (\Theta_{\gh(L)})^{-1}(z_i)$.

Consider $\ell$ the geodesic in $L$ with ideal points
$\Theta_L(y_i)$. 
Consider in $\gh(L)$ the geodesic $\beta$ with same ideal
points as $\gh(\ell)$.
Then $\beta, \gh(\ell)$ are at most $b_0/2$ distant from 
each other in $\gh(L)$.
Choose $E$ in $[L,\gh(L)]$ which is at most 
$\delta_0$ from $\gh(L)$ along $\beta$. One can do this
for some rays of $\beta$ in either direction
by the choice of $I$. Then by
choosing $E$ closer to $\gh(L)$ if necessary, one can choose
this for the whole geodesic $\beta$.
Let $B$ be the $b_0/2$ neighborhood of $\beta$ in $\gh(L)$.
Then $B$ is $\delta$ near $E'$ for any $E'$ in $[E,\gh(L))$.

In $E$ let $\ell_E$ be the geodesic with ideal points
$\Theta_E(y_i)$. The foliation $\Ft$ is a product
in the $\delta$ neighborhood of $B$, hence one can continuously
chooose curves $\ell_{G}$ for $G$ in $[E,\gh(L)]$ so that:

\begin{enumerate}
\item $\ell_G$ is a quasigeodesic in $G$,
\item $\ell_G$ has ideal points $\Theta_G(y_i)$, 
\item $\ell_G$ is within $b_0$ of the geodesic $\beta_G$ 
in $G$ with ideal points $\Theta_G(y_i)$.
\item $\ell_{E}$ is the geodesic with ideal points
$\Theta_E(y_i)$.
\item $\ell_{\gh(L)} = \gh(\ell_L)$.
\end{enumerate}

Now for $G$ in $[L,E]$ let $\ell_G$ be the geodesic with
ideal points $\Theta_G(y_i)$.

This defines the neighborhood $U_I$ for $G$ in $[L,\gh(L)]$.
Then iterate by $\gh$ to construct all of $U_I$.
By construction $U_I$ satisfies the last property of
the lemma. In addition $L \cap U_I = L^0_I$.

Finally we check the first property of the lemma.
For any $G$ leaf of $\Ft$, there is a unique
$n$ so that $E = \gh^{-n}(G)$ is in $[L,\gh(L))$. 
Then $U_I \cap G$ is $\gh^n(E \cap U_I)$. 
The set $E \cap U_I$ is bounded by a uniform quasigeodesic in $E$,
with endpoints $\Theta_E(\partial I)$. 
Hence $\gh^n(E \cap U_I) = G \cap U_I$ are also bounded
by uniform quasigeodesics with ideal points $\Theta_G(\partial I)$,
because $\gh^n$ is a quasi-isometry between leaves.
Hence for any $\sigma$ in $I$, there is $J$ open subinterval 
of $I$ containing $\sigma$ so that 
the set $U_I \cap G$ contains $G^0_J$
for all $G$ in $[L,\hat g(L)]$. Now fix $\sigma$ in $I$, then the
interval $J$ above depends on $G$,  and one
can chooose $J(G)$ varying continuously with $G$, by decreasing
it if necessary.

This finishes the proof of Lemma \ref{l.ghandtau}.
\end{proof}

\begin{remark} Note that if $E \notin [L, \gh(L)]$ one cannot ensure the containment and inclusion with the same constant $b_0$:
this is because one applies iterates of the quasi-isometry
$\gh$, whose quasi-isometry constants get worse with iteration.

This means that given $\sigma \in I$ it is not a priori true that 
there is a fixed interval $J$ with $\sigma \in \mathring{J}$ and $J \subset I$
with $G^0_J \subset U_I \cap G$ for all $G$ in $\Ft$.

\end{remark}

\begin{proof}[Proof of Proposition \ref{prop.superattracting}] 
Given $I \subset \Su$, $U_I$ as constructed in the previous lemma, 
and $E \in \Ft$,
we denote $A_E^I=E \cap U_I$. 

We claim that if $I$ as above is a sufficiently small interval  around
$\xi$, then  there are smaller intervals $J$ around $\xi$ 
 such that 
$$\gh^k \circ P(\overline{A_E^I}) \ \ \subset \ \ A_{E'}^J $$

\noindent 
for all $E$ in $[L,\gh(L)]$. 
We explain what $E'$ and $k$ are in this formula.
They are uniquely defined so that 
$E' \in [L, \gh(L))$ is the image of $E$ by $\gh^k \circ P$, and $k \in \ZZ$ is defined uniquely so that $\gh^k \circ P (E) \in [L, \gh(L))$. This will complete the proof of the first statement of the Proposition because
the formula above shows that $P(\overline{A_E^I}) \subset U_J$
for all $E$ in $[L,\hat g(L)]$.
The fact that $P(\overline{U}) \subset U$ 
then follows from the facts below:

1) $[L,\gh(L)]$ is a fundamental for the action of $\gh$ on $\mt$,

2) $P$ commutes with $\gh$,

3) Both $U_I$ and $U_J$ are $\gh$ invariant.

To get the property above, first note that the value of $k$ is uniformly bounded in $[L,\gh(L)]$ and so one gets that the quasi-isometric constants of the map $\gh^k \circ P: E \to E'$ are uniformly bounded independently on $E \in [L, \gh(L)]$ (where the $k$ depends on the particular
leaf $E$). 
It follows that there exists $b_1>0$ such that for every $J \subset \Su$ if we denote by $Z= P_\infty(J)$ we have that
$$\gh^k \circ P(E_J^{+b_0}) \ \ \subset \ \ (E')_{Z}^{+b_1}$$

\noindent
for every $E$ in $[L,\gh(L)]$, where $b_0$ was defined in
the previous Lemma..

Now we use the property of $\xi$ being super-attracting for the map
$P_{\infty}$. Since $\gh$ acts as the identity on $\Su$, then
$\xi$ is super attracting for $(\gh^k \circ P)_\infty$.

For each fixed $k$  one has that $(\gh^k \circ P)_\infty = P_\infty$.
Therefore 
one gets that for small enough intervals $I$ around $\xi$ the image $Z=P_\infty(I)$ verifies that the distance between the geodesics $\ell_I^E$ and $\ell_Z^E$ are much larger than $2b_0+b_1$ for every $E \in [L, \gh(L)]$. 
Here again we use that $[L,\gh(L)]$ is a compact interval in the leaf space
of $\widetilde \cF$.
Hence we can choose $J$ interval in $\Su$ around $\xi$ and
$U_J$ as defined in the previous lemma, so that $\ell^L_J$ separates $\ell^L_I$ from
$\ell^L_Z$, and 

$$(E)^{+b_1}_Z \ \ \subset \ \ (E)^{+0}_J \ \ \subset \ \
U_I \cap E$$

\noindent  for
all $E$ in $[L,\gh(L)]$. Let $U = U_I$.
This proves 
that $P(\overline U) \subset U$.

In addition one can choose the starting $I$ small enough,
so that in the proof above the distance from any point in
$\ell^I_E$ to any point in $\ell^J_E$ is bigger than
a constant $b_2 >> 2 b_0 + b_1$ for all $E$ in $[L,\hat g(L)]$.

This holds
for smaller $I$ as well.
In particular it holds for $J$. Hence the distance in $E$
(for any $E$ in $[L,\hat g(L)]$)
from any point 
in $\ell^I_E$ to any point in $P^2(U_I) \cap E$ is
at least $2 b_2$, and similarly for any positive $P^i$ iterate
it is at least $i b_2$.
This implies that for any neighborhood
$V$ of $\xi$ there is $i > 0$ so 
$$P^i(U_I \cap [L,\gh(L)])  \ \ \subset \ \  V.$$

\noindent
The $\gh$ invariance of the sets $U_I$ and $V$ then
implies that $P^i(U_I) \subset V$.
Hence for any $x$ in $\overline{U}$ then $P^i(x)$
converges to $\xi$ when $i \mapsto \infty$. This completes
the proof of Proposition \ref{prop.superattracting}.
\end{proof}

\begin{defi}\label{defi-basin} 
Given a good pair $(\fh, \gh)$ so that $\xi \in \Su$ is super attracting (resp. super repelling) for $P=\gh^m \circ \fh^n$.
We define the \emph{basin of attraction} (resp. \emph{basin of repulsion}) of $\xi$ to be the set of points $x$ in $\mt$ such that $P^{k}(x) \to \xi$ as $k \to +\infty$ (resp. $k \to -\infty$) understood as in Definition \ref{not-neigh}. 
\end{defi}

Proposition \ref{prop.superattracting} says that a super attracting point (which is defined only by the action on $\Su$) has a non-trivial basin of attracting which is a neighborhood of the super attracting
point.

\begin{remark}\label{rem.forwardbackwarditerate}
Let  $(\fh,\gh)$ be a good pair and assume that $\gh$ act as the identity on $\Su$. It follows that if a point $\xi \in \Su$ is super-attracting  for a lift $P=\gh^m  \circ \fh^n$, then $n\neq 0$ and then $\xi$ will be super-attracting for every lift $\gh^{k} \circ \fh^{\ell n}$ if $\ell >0$ and super-repelling if $\ell  <0$.
\end{remark}

\subsection{Pseudo-Anosov pairs}\label{ss.pApairs}

We can now define a technical object that will be central in our proofs:

\begin{defi}\label{def-pApair}
A good pair $(\fh,\gh)$ is a \emph{pseudo-Anosov pair} (or \emph{pA-pair}) if there is $n,m \in \ZZ$ such that if $P= \gh^m \circ \fh^n$ then the homeomorphism $P_\infty$ in the universal circle $\Su$ (cf. Notation \ref{not-hn}) has exactly $2p$ fixed points,
all of which  are alternatingly super-attracting and super-repelling. 
Here $p$ is an integer $\geq 2$.
If $p=2$ the pair will be called a \emph{regular pA-pair} and if $p\geq 3$ it will be called a \emph{prong pA-pair}. We denote by $I(P) = 1-p$ the \emph{index} of the pseudo-Anosov pair.  
\end{defi}

\begin{figure}[ht]
	\begin{center}
		\begin{overpic}[scale=0.83]{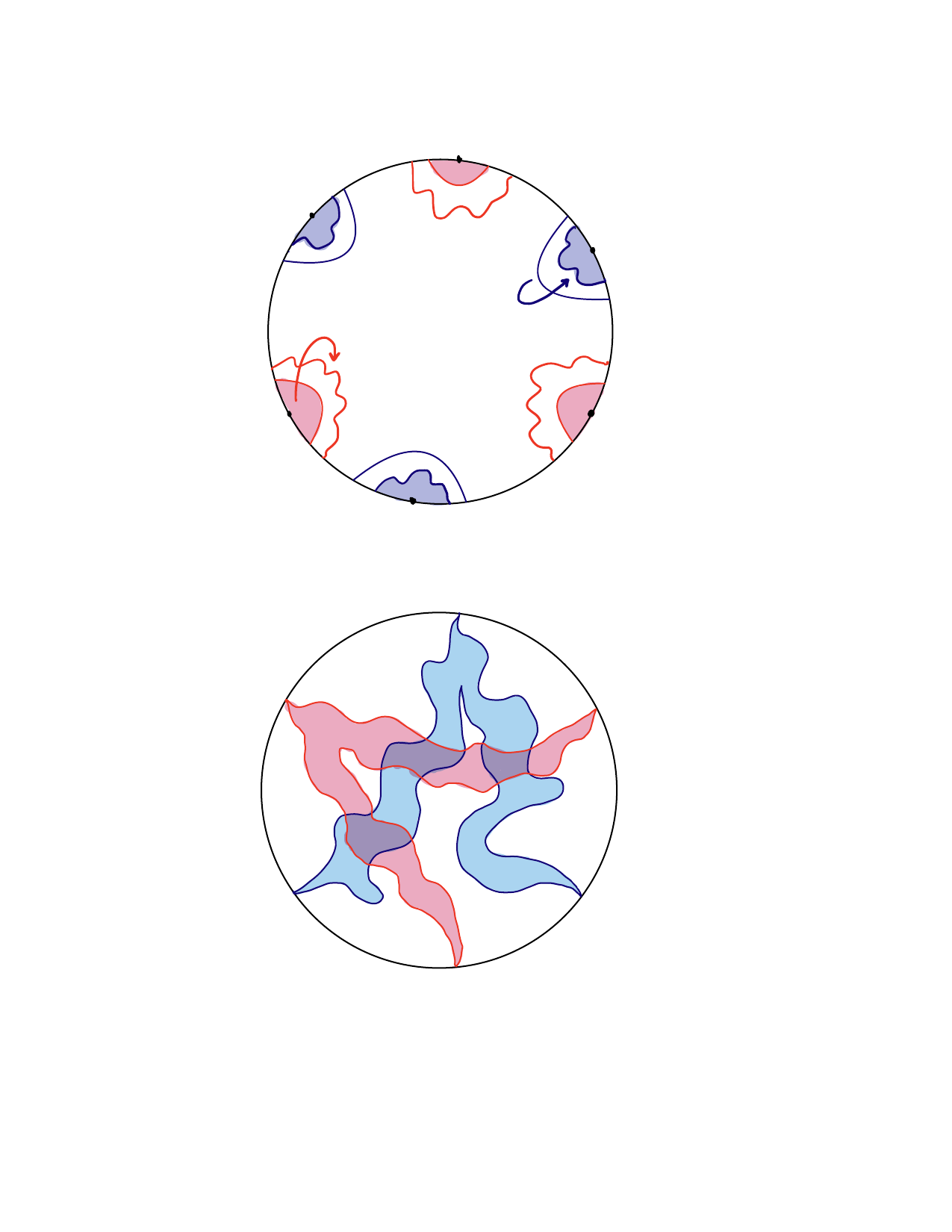}
		\end{overpic}
		\begin{picture}(0,0)

\put(-185,108){$P$}
\put(-53,116){$P$}

\end{picture}

	\end{center}
	\vspace{-0.5cm}
	\caption{{\small A pseudo-Anosov pair with $p = 3$.}}\label{f.papair}
\end{figure}

We next state a result which extends 
\cite[Proposition 8.1]{BFFP-3}  and \cite[\S 3]{BFFP-4}:

\begin{prop}\label{prop-core}
Let $(\fh,\gh)$ be a pseudo-Anosov pair and $P=\gh^m \circ \fh^n$ be a lift with $m,n \neq 0$, satisfying the conditions
of Definition \ref{def-pApair}.

Then there exists a closed set $T_{P} \subset \mt$ which is invariant under $\fh$ and $\gh$. The set $T_P$ 
intersects every leaf  $L$ of $\Ft$ in  a compact set $T_P \cap L$ which consists of the set of points 
which are not in the basin of attraction of any attracting
point of $P_{\infty}$ or the basin of repulsion of
any repelling point of $P_{\infty}$.

Moreover, if there is a leaf $L \in \Ft$ such that $P(L) = L$, then the total Lefschetz index of the compact invariant set $T_{P} \cap L$ is $I(P)$ the index of $P$. 
\end{prop}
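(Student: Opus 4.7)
The plan is to take
\[
T_P \ := \ \mt \setminus \bigcup_{\xi} \mathrm{Basin}(\xi),
\]
where the union is over the $2p$ fixed points of $P_\infty$ on $\Su$, with $\mathrm{Basin}(\xi)$ meaning the attracting basin if $\xi$ is super-attracting and the repelling basin if $\xi$ is super-repelling (Definition \ref{defi-basin}). Each basin is open in $\mt$ by Proposition \ref{prop.superattracting}, so $T_P$ is closed, and $P$-invariance is immediate. For $\gh$-invariance I would use that $\gh$ commutes with $P$ and acts as the identity on $\Su$, so it fixes every super-attracting and super-repelling point; since the notion of convergence to $\xi$ in Definition \ref{not-neigh} uses $\gh$-invariant neighborhoods, $\gh$ preserves every single basin. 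For $\fh$-invariance, $\fh$ commutes with $P$ and $\fh_\infty$ permutes the fixed points of $P_\infty$ preserving the super-attracting/repelling type, so $\fh$ permutes the collection of basins and hence preserves their union.

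To show $T_P \cap L$ is compact, I would prove the basins cover a neighborhood of $S^1(L)$ in $\hat L := L \cup S^1(L)$. Using Lemma \ref{l.ghandtau}, the $\gh$-invariance of $U_a$, and the fact that $\gh_\infty = \mathrm{id}$ fixes intervals in $\Su$ setwise, one obtains a uniform interval $I_a \subset \Su$ around each super-attracting $\xi_a$ such that $U_a \cap E \supset E_{I_a}^0$ for \emph{every} leaf $E$ of $\Ft$, and similarly $V_r \cap E \supset E_{J_r}^0$ for super-repelling $\xi_r$. Given $\sigma \in \Su$ in the $P_\infty$-attracting basin of some $\xi_a$ on $\Su$, choose $k$ large so that $P_\infty^k(\sigma)$ is interior to $I_a$. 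Since $P^k|_L : L \to P^k(L)$ is a continuous quasi-isometry of $\mathrm{CAT}(-1)$ leaves, it extends to a continuous map of compactifications whose boundary action is conjugate to $P_\infty^k$ via $\Theta_L$ and $\Theta_{P^k(L)}$. Continuity at $\Theta_L(\sigma)$ then gives a neighborhood $W$ of $\sigma$ in $\hat L$ whose image under this extension lies in the closed half-disk $\overline{P^k(L)_{I_a}^0} \cup \Theta_{P^k(L)}(I_a) \subset \overline{U_a}$, so $W \cap L \subset P^{-k}(U_a) \subset \mathrm{Basin}(\xi_a)$. A symmetric argument using negative iterates handles $\sigma$ in a repelling basin, and fixed points of $P_\infty$ are handled directly by Proposition \ref{prop.superattracting}. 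Thus $T_P \cap L$ stays bounded away from $S^1(L)$ in $\hat L$, hence it is compact.

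For the Lefschetz computation, assume $P(L) = L$. Then $P|_L$ extends to a self-homeomorphism $\hat P$ of $\hat L \cong D^2$ whose boundary action under $\Theta_L$ is $P_\infty$; any interior fixed point of $\hat P$ is a $P$-fixed point in $L$, cannot lie in any basin, and so belongs to $T_P \cap L$, the maximal compact invariant subset of $L$ for $\hat P$. My plan is to compute the Lefschetz index of $T_P \cap L$ by doubling: let $\tilde L$ be the double of $\hat L$ along $S^1(L)$, so $\tilde L \cong S^2$, and extend $\hat P$ to an orientation-preserving self-homeomorphism $\tilde P$ of $\tilde L$ by reflection across the equator. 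Each super-attracting (resp.\ super-repelling) boundary fixed point of $\hat P$ becomes a genuine super-attracting (resp.\ super-repelling) interior fixed point of $\tilde P$ in $S^2$ with local fixed-point index $1$, while each interior fixed point of $\hat P$ in $T_P \cap L$ doubles to two fixed points of $\tilde P$ of the same local index. Since $L(\tilde P) = \chi(S^2) = 2$, additivity of the fixed-point index yields
\[
2 \cdot \bigl(\text{Lefschetz index of } T_P \cap L\bigr) \ + \ 2p \cdot 1 \ = \ 2,
\]
so the Lefschetz index of $T_P \cap L$ equals $1 - p = I(P)$. The main technical obstacle I anticipate is justifying the doubling step rigorously --- showing that $\tilde P$ is a well-defined orientation-preserving homeomorphism of $S^2$ and that super-attraction on one side of the equator really produces an index-$1$ interior super-attractor for $\tilde P$ --- both of which follow from the symmetry of the reflection construction together with the visual-metric characterization of super-attraction (Definition \ref{def.superattract}).
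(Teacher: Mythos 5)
Your construction of $T_P$ (complement of the union of the attracting/repelling basins) and your overall strategy coincide with the paper's, which sets $T_P=T_P^+\cap T_P^-$ and derives the proposition from Proposition \ref{prop-dynamicspA}; the doubling trick for the Lefschetz index is a legitimate alternative to the usual planar index bookkeeping (as in \cite[Proposition 8.1]{BFFP-3}). However, there are two genuine gaps. The first is your claim of a \emph{uniform} interval $I_a$ with $U_a\cap E\supset E^0_{I_a}$ for \emph{every} leaf $E$ of $\Ft$: this is exactly what the remark following Lemma \ref{l.ghandtau} warns cannot be ensured, because the sets $U_I$ are built on the compact interval $[L,\gh(L)]$ of the leaf space and then propagated by iterating $\gh$, whose quasi-isometry constants degrade with iteration. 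Your argument invokes this containment at the leaves $P^k(L)$ with $k$ unbounded, so it is not a harmless abuse. It can be repaired using only what Lemma \ref{l.ghandtau} actually provides: for each leaf $G$ the boundary of $U_I\cap G$ is a curve with ideal points $\Theta_G(\partial I)$, hence $G\setminus U_I$ accumulates at infinity only on $\Theta_G(S^1\setminus I)$. Since $P^k_\infty(\sigma)$ lies in the interior of $I$, continuity of the extension of $P^k|_L$ at the \emph{single} leaf $P^k(L)$ already produces the neighborhood $W$ you want; this is how the paper argues items (iii) and (iv) of Proposition \ref{prop-dynamicspA} (alternatively, one builds a smaller $\gh$-invariant set $U_J\subset U_I$ over $[L,\gh(L)]$ and pulls back by $P^{-k}$, as in the paper's proof of item (iv)).

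The second gap is that you never prove $T_P\cap L\neq\emptyset$ for an arbitrary leaf $L$, which is part of the statement (``intersects every leaf'') and requires a genuine argument: the paper's item (ii) takes points $x_n\in (T_P^+\setminus V)\cap P^{-n}(L)$, pushes them forward to $L$, and extracts a limit in $T_P\cap L$, using the compactness already established. Your Lefschetz computation yields nonemptiness only for leaves fixed by $P$, not for general leaves. Concerning the Lefschetz step itself, the doubling argument is sound in outline, but two points you leave implicit should be made explicit: orientation preservation of $P|_L$ (needed for $L(\tilde P)=\chi(S^2)=2$) is automatic here because an orientation-reversing circle homeomorphism has exactly two fixed points while $P_\infty$ has $2p\geq 4$ of them; and the index-$1$ contribution of each equatorial fixed point uses that on a $P$-fixed leaf a super attracting (resp.\ repelling) point of $P_\infty$ is genuinely attracting (resp.\ repelling) in $\hat L=L\cup S^1(L)$, which follows from Proposition \ref{prop.superattracting} together with the dynamics of $P_\infty$ on the arcs between consecutive fixed points.
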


The set $T_P$ is called the \emph{core} of the pair. It is the complement
in $\mt$ of what one detects
by looking at the action at infinity. 

We first define the basins of attraction.
Let $(\fh, \gh)$ be a pseudo-Anosov pair with
$\gh$ acting as the identity on $\Su$, and let $P = \gh^m \circ \fh^k$
as in Definition \ref{def-pApair}.
Let  $\{a_1, \ldots, a_p\}$ and $\{r_1, \ldots, r_p\}$ 
be the super attracting and super 
repelling points of $P_\infty$ on $\Su$. 
We define $T_P^+$ (resp. $T_P^-$) as the set of points
which is not in the 
basin of attraction of any of
the points $a_1, \ldots a_p \in \Su$ (resp.  not in the basis
of repulsion of any of the points $r_1, \ldots, r_p \in \Su$).
Let 
$$T_P \ \ = \ \ T^+_P \cap T^-_P.$$

Proposition \ref{prop-core} follows from applying the following consequence of Proposition \ref{prop.superattracting} that we state precisely for future use and prove below. 

\begin{prop}\label{prop-dynamicspA}
Let $(\fh, \gh)$ be a pseudo-Anosov pair with
$\gh$ acting as the identity on $\Su$, and let $P = \gh^m \circ \fh^k$
as in Definition \ref{def-pApair}. Then:

\begin{enumerate}
\item The set $L \setminus T_P^+$ (resp. $L \setminus T_P^-$) is non
empty and open.
\item  For each $L$ in $\widetilde \cF$,
$L \cap T_P \neq \emptyset$.
\item For any $L$ in $\Ft$, then $L \cap T_P$ is 
compact. In addition $T_P/_{<\hat g>}, \ T_P/_{<\hat f>}$  are compact.
\item  For every $\xi \in \Su \setminus \{r_1, \ldots r_p\}$ (resp. $\xi \in \Su \setminus \{a_1, \ldots, a_p\}$) if we denote $a_i$ (resp. $a_i$) to be the point such that $P^{n}(\xi)\to a_i$ (resp. $P^{-n}(\xi)\to r_i$) then, there exists a neighborhood $U_\xi$ of $\xi$ in $\mt$ contained in the basin of attraction of $a_i$ (resp. basin of repulsion of $r_i$).  
\end{enumerate}
\end{prop}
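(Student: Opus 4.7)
The plan is to prove the four parts in the order (4), (1), (3), (2), since (4) delivers the local dynamics near the ideal fixed points that is used throughout.

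For (4), let $\xi \in \Su \setminus \{r_1, \ldots, r_p\}$. By the alternating arrangement of the super-attracting and super-repelling fixed points on $\Su$, the forward $P_\infty$-orbit of $\xi$ converges to some $a_i$. I apply Proposition \ref{prop.superattracting} to produce a $\gh$-invariant neighborhood $U$ of $a_i$ with $P(\overline U) \subset U$ and $P^n(x) \to a_i$ for every $x \in \overline U$. Since $\fh$ and $\gh$ commute, so do $P$ and $\gh$, and hence each $P^{-N}(U)$ is $\gh$-invariant. Combining the convergence $P^N_\infty(\xi) \to a_i$ with the continuity of the interval $I(L)$ defining $U$ in Lemma \ref{l.ghandtau}, for sufficiently large $N$ the open set $P^{-N}(U)$ is a neighborhood of $\xi$ in the sense of Definition \ref{not-neigh}. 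Every point in it enters $U$ under $P^N$ and hence has forward $P$-orbit converging to $a_i$. The super-repelling case is symmetric, replacing $P$ with $P^{-1}$.

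Part (1) follows from (4) together with Definition \ref{not-neigh}: the neighborhood $U$ of each $a_i$ intersects every leaf $L$, so the basin of $a_i$ in $\mt$ meets $L$ non-trivially and $L \setminus T_P^+$ is non-empty; it is open as a union of preimages of the open set $U$, and the super-repelling case is identical. Part (3) likewise reduces to (4): every ideal point of $L$ has a neighborhood in some basin, so $T_P \cap L$ does not accumulate on $S^1(L)$, hence is compact in $L$. For the quotient compactness, I fix a fundamental domain $[L_0, \gh(L_0)]$ for $\gh$ on $\cL$. The uniform constant $b_0$ from Lemma \ref{l.ghandtau} controls the neighborhoods of the $a_i$ and $r_j$ uniformly over this compact interval of leaves, so $T_P$ intersected with the slab between $L_0$ and $\gh(L_0)$ is compact in $\mt$ and projects to $T_P/\langle \gh\rangle$; the argument for $\langle \fh\rangle$ is analogous.

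Part (2) is the main difficulty and the place where I expect the real work. Non-emptiness of $T_P^+ \cap L$ (and of $T_P^- \cap L$) is quick: the complement in $L$ is a disjoint union of $p \geq 2$ non-empty open basins, and $L$ is connected. The harder claim is that these two closed sets must intersect inside $L$. The closure of $T_P^+ \cap L$ in the compactification $\overline L$ meets $S^1(L)$ exactly at the super-repelling points (by (4)), and that of $T_P^- \cap L$ exactly at the super-attracting points; since the $2p$ fixed points alternate on $S^1(L)$, each component of $L \setminus T_P^+$ has exactly one $a_i$ at its ideal boundary and each component of $L \setminus T_P^-$ exactly one $r_j$. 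A planar separation argument on the closed disk $\overline L$ should then force the two separating sets to cross inside $L$. The main obstacle is making this last step uniform in $L$, since in general no iterate $P^n$ preserves a given leaf $L$ (so a direct Lefschetz index computation as in the invariant-leaf case cannot be invoked), and one must instead exploit the alternating ideal data together with the $\gh$-equivariance of the whole picture.
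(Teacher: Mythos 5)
Your treatments of (1), (3) and (4) are essentially the paper's own: you pull back the super-attracting neighborhoods of Proposition \ref{prop.superattracting} by a large iterate of $P$ (the paper does this by building an intermediate set $U_J$ with $E^{2b_0+b_1}_J\subset U$ for $E\in[L,\gh(L)]$ and then applying $P^{-i}$, which is the quantitative content behind your assertion that $P^{-N}(U)$ is a neighborhood of $\xi$ in the sense of Definition \ref{not-neigh}), and compactness in a leaf and in the quotients follows exactly as you say. The genuine gap is item (2). What you offer there is a sketch: ``a planar separation argument on the closed disk $\overline L$ should then force the two separating sets to cross,'' and you stop at the step that is the whole point. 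As written this is not a proof. Moreover the obstacle you identify --- uniformity in $L$ and the absence of a $P$-invariant leaf, with a nod to the Lefschetz computation --- is a red herring: (2) is a statement about each individual leaf, no invariance of $L$ is needed (the Lefschetz index only enters Proposition \ref{prop-core} when $P(L)=L$), and the missing step is purely plane-topological. To make your route rigorous you would need nontrivial input: the sets $\overline{T_P^+\cap L}$, $\overline{T_P^-\cap L}$ need not be connected, so one has to invoke something like the classical fact that a compact separator in the plane contains a component that separates, together with a quadrilateral crossing lemma for disjoint continua attached to interleaved boundary points, and also justify your unproved claim that each complementary component of $T_P^+\cap L$ sees exactly one $a_i$ at infinity (only the inclusion of the ideal trace in the fixed-point set follows from (4)). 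None of this is supplied.

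The paper avoids all of this with a soft dynamical argument that you should compare with. Fix a union $V$ of repelling neighborhoods with $P^{-1}(\overline V)\subset V$. For each $n$, the set $(T_P^+\setminus V)\cap P^{-n}(L)$ is nonempty: otherwise the connected set $P^{-n}(L)\setminus V$ would be covered by the disjoint open basins of attraction of at least two distinct $a_i$, each of which it meets. Pick $x_n$ there and set $y_n=P^n(x_n)\in L$; then $y_n\in T_P^+$ by invariance and $y_n\notin V$ (else $x_n\in P^{-n}(V)\subset V$), so by the compactness already established in (iii) the $y_n$ lie in a fixed compact subset of $L$ and subconverge to some $y\in T_P^+\cap L$. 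If $y\notin T_P^-$, some $P^{-n_0}(y)\in V$, hence a whole neighborhood of $y$ is carried into $V$ by $P^{-n}$ for $n\ge n_0$, contradicting $P^{-n_i}(y_{n_i})=x_{n_i}\notin V$. Thus $y\in T_P\cap L$. This uses only items (iii), (iv) and the invariance of $V$ and $T_P^\pm$, with no separation theory; you should either adopt it or supply the plane-topology lemmas your sketch silently relies on.
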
 

\begin{proof}
Item (i) follows directly from Proposition \ref{prop.superattracting}
because it  proves that the basins of attraction and repulsion
of each point in $\{ a_1, ..., a_p, r_1, ..., r_p \}$
are open, non empty  sets in $\mt$.

\vskip .05in
Next we prove item (iv). 
Fix $L$ in $\widetilde \cF$.
Let $\xi$ not one of the $r_i$. 
Then $\xi$ is in the basin of attraction of some attracting
point under $P_\infty$, assume without loss of 
generality it is $a_1$. 
Let $U = U_I$ be a neighborhood of $a_1$ constructed as in
Proposition Lemma \ref{prop.superattracting}.
Let $i > 0$ so that $P^{n}_\infty(\xi)$ is in the
interior of $I$.
Then as in the proof of Proposition \ref{prop.superattracting},
there is $J$ open interval containing $P^i(\xi)$ so that 
$E^{2b_0+b_1}_J \subset U$ for all $E$ in $[L,\hat g(L)]$.
We can construct a set $U_J$ in $[L,\hat g(L)]$
as in Proposition \ref{prop.superattracting}
so that $U_J \cap E \subset U$ for all $E$ in $[L,\hat g(L)]$.
Then iterate by $\hat g$ to produce $U_J$. It is a neighborhood
of $P^i(\xi)$ which is contained in the basis of attraction
of $a_1$. Then $P^{-i}(U_J)$ is the desired neighborhood
of $\xi$ contained in the basis of attraction of $a_1$.
This proves (iv).

\vskip .05in
To obtain item (iii) we do the following.
Let $\xi$ in $\Su$ not one of $\{ r_i \}$.
There is $a_1$ an attracting point of $P_{\infty}$ so that
$\xi$ is in the basis of attraction of $a_1$ under $P_{\infty}$.
Fix a neighborhood $U_I$ of $a_1$ contained in the basis of
attraction of $a_1$ under $P$ as provided in item (iv). 
There is $i$ so that $P^i_{\infty}(\xi) \in I$.
Fix $L$ in $\Ft$,
let $\tau = \Theta_L(\xi)$. 
The above shows that $P^i(\tau)$ is an interior point
of $\Theta_{P^i(L)}(I)$. In particular there is a
neighborhood $V$ of $\tau$ in $L \cup S^1(L)$ so
that $P^i(V \cap L) \subset (U_I \cap P^i(L))$.
This is because of the definition of the neighborhoods
$U_I$.
Similarly there is $j$ so that $P^j(\xi)$ is 
in $J$ where $U_J$ is contained in 
a repelling neighborhood of a repelling
point $r_1$ of $P_{\infty}$.
Both of these facts together imply that $L \cap T_P$
is compact for any $L$ in $\Ft$.
In the argument above one can take the neighborhood
in $\bigcup_{E \in Z} (E \cup S^1(E))$, where $Z$ is any
compact interval in the leaf space with $L$ in the interior.
This shows that $T_P/_{<\hat g>}, T_P/_{<\hat f>}$ are compact.

\vskip .05in
Finally we prove item (ii).
Fix $L$ in $\widetilde \cF$.
Fix a union $V$ of  neighborhoods of the 
points $r_1, ..., r_p$, so that $P^{-1}(\overline V) \subset V$.
For any $n > 0$, the set
$$A_n \ = \ (T_P^+ - V) \cap P^{-n}(L)$$

\noindent  is non empty.
Otherwise the basins of attraction of different $a_i, a_j$
intersect, which is impossible.
Choose $x_n$ in $A_n$ and let $y_n = P^n(x_n)$ which is in
$L$. In addition $y_n$ is not in $V$ and $y_n$ is in $T_P^+$.
Therefore $y_n$ is in a compact set of $L$. Take a
subsequence $y_{n_i}$ converging to $y$ in $L$.
If $y$ is not in $T_P^-$ then there is $n_0 > 0$ so
that $P^{-n_0}(y)$ is in $V$, so a neighborhood
$W$ of $y$ so that $P^{-n}(W) \subset V$ for any $n > n_0$.
Assume all $y_{n_i}$ are in $W$.
But this contradicts that $P^{-n_i}(y_{n_i}) = x_{n_i}$ are
never in $V$.

This contradiction shows that $y$ is in $T_P^-$.
Since $x_n$ is in $T_P^+$ then $y_n$ is also in $T_P^+$
and $y$ is in $T_P^+$. It follows that $y$ is in $T_P$
so $T_P \cap L \not = \emptyset$.
This finishes the proof of (ii).

This finishes the proof of the proposition.
\end{proof}

\begin{figure}[ht]
	\begin{center}
		\begin{overpic}[scale=0.83]{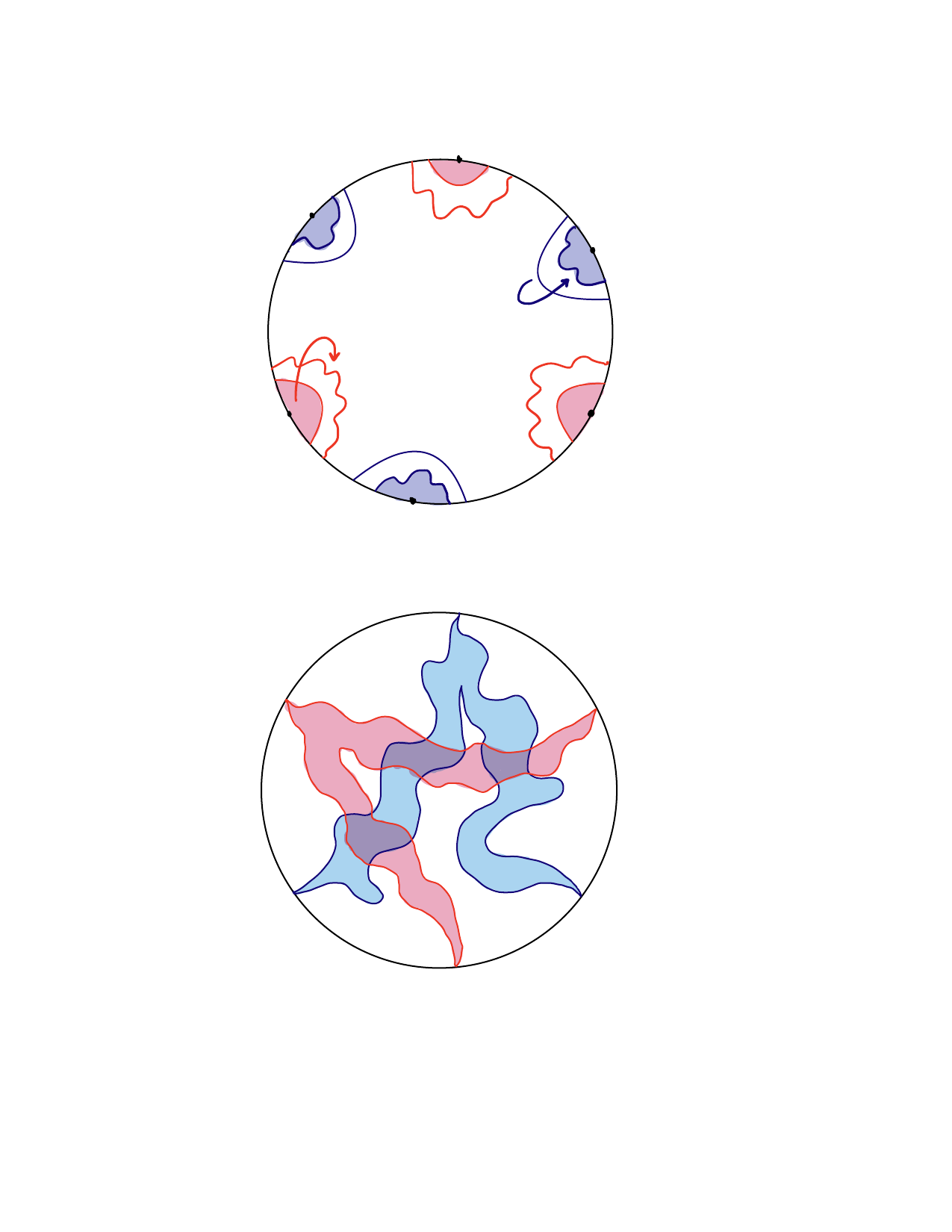}
		\end{overpic}
	\end{center}
	\begin{picture}(0,0)

\put(-16,205){\blue{$T_P^-$}}
\put(-57,180){\red{$T_P^+$}}
\put(-40,100){$T_P$}
\end{picture}

	\vspace{-0.5cm}
	\caption{{\small The core of the pA pair.}}\label{f.core}
\end{figure}

Since for pA pairs (and appropriate choices
of integers $n, m$ with $P = \hat g^m \circ \hat f^n$)
all fixed of $P_{\infty}$ in $\Su$ are super attracting and super repelling, we will dispose the use of the word super when it is clear that we are considering a pA pair and call the points attracting and repelling. 

\begin{addendum}\label{add-pA}
In the setting of Proposition \ref{prop-dynamicspA} we have the following:
for every family of attracting neighborhoods $U_{a_i}$ of the attracting points $a_i$ and $L$ in $\Ft$, 
there is some $R>0$ such that outside a ball of radius $R$ in $E$ we have that $T_P^- \cap E$ is contained in those neighborhoods, for
any $E$ in $[L, \hat g(L)]$. The symmetric statement holds for the repelling points and $T_P^+$. In particular, $T_P \cap L$ is contained in a ball of radius $R$ inside $L$. 
\end{addendum}

The second statement is obvious because the quotient $T_P/_{<\hat g>}$ is compact. 
To obtain the first statement: for each individual $E$ in $[L,\gt(L)]$ this is true for some
$R(L)$ by item (iv) of the previous proposition. Then since $[L,\gt(L)]$ 
is a compact interval of leaves, the result follows.

An argument very similar to the proof of item (ii) of the previous
proposition yields the following result which will be useful
in the future.
The map $\hat g$ acts freely and properly discontinuously in $\mt$,
hence $\mt /_{< \hat g>}$ is a manifold $N$ (cf. Remark \ref{rem.quotient}). The foliation $\Ft$
induces a foliation $\cF_N$ in $N$, whose leaves are homeomorphic
to planes and the leaf space of $\cF_N$ is the circle.
Let $\pi_N: \mt \rightarrow N$ be the projection map.
We say that a sequence $x_n$ in $\mt$ {\em converges to} $T_P$
if $\pi_N(x_n)$ converges to $T_P/_{<\hat g>}$, in the sense that for
any neighborhood $Z$ of $T_P/_{<\hat g>}$ in $N$ then $\pi_N(x_n)$ 
is eventually in $Z$.

\begin{lemma} \label{lem.auxiliary}
Under the hypothesis of Proposition \ref{prop.superattracting}
let $y$ in $T_P^+$. Then $P^n(y)$ converges to $T_P$ as $n \rightarrow 
\infty$.
\end{lemma}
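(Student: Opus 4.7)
The plan is to adapt the compactness-plus-trapping argument used in item (ii) of Proposition \ref{prop-dynamicspA}, but run it along the \emph{specific} forward orbit of the fixed point $y$. Assume for contradiction that $\pi_N(P^n(y))$ does not converge to $T_P/_{<\hat g>}$ in $N$. Then there is an open neighborhood $Z$ of $T_P/_{<\hat g>}$ and a subsequence $n_k \to \infty$ with $\pi_N(P^{n_k}(y)) \notin Z$. Using that a fundamental domain for $\hat g$ in the leaf space is compact, extract (after passing to a subsequence) integers $m_k$ so that $\hat g^{m_k}(P^{n_k}(y))$ converges in $\mt$ to a point $z$ with $\pi_N(z) \notin Z$; in particular $z \notin T_P$. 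The set $T_P^+$ is closed (complement of open basins of attraction) and invariant under both $P$ and $\hat g$ (the latter because $\hat g$ acts as the identity on $\Su$, hence preserves every basin). Since $y \in T_P^+$, each $\hat g^{m_k}(P^{n_k}(y))$ lies in $T_P^+$, and therefore $z \in T_P^+$. Combined with $z \notin T_P$, we conclude $z$ lies in the basin of repulsion of some $r_j$.

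The heart of the argument is to now show that these data force $y$ itself to lie in every trapping neighborhood of $r_j$. Fix any trapping neighborhood $V$ of $r_j$ (a $\hat g$-invariant set with $P^{-1}(\overline V) \subset V$, produced by applying Proposition \ref{prop.superattracting} to $P^{-1}$ via Lemma \ref{l.ghandtau}). Because $z$ is in the basin of repulsion of $r_j$, there is some $n_0 \geq 0$ with $P^{-n_0}(z) \in V$, and by continuity a neighborhood $W$ of $z$ satisfies $P^{-n_0}(W) \subset V$. For $k$ large enough $\hat g^{m_k}(P^{n_k}(y)) \in W$, so
\[
\hat g^{m_k}\bigl(P^{n_k-n_0}(y)\bigr) \;=\; P^{-n_0}\!\bigl(\hat g^{m_k}(P^{n_k}(y))\bigr) \;\in\; V.
\]
Using $\hat g$-invariance of $V$, this gives $P^{n_k-n_0}(y) \in V$. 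The trapping property then iterates backwards: $P^{n_k-n_0-l}(y) \in V$ for every $0 \leq l \leq n_k-n_0$; choosing $l=n_k-n_0$ (permitted since $n_k\to \infty$) yields $y \in V$.

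The contradiction is now geometric. The trapping neighborhoods $V=U_I$ can be chosen arbitrarily small, corresponding to intervals $I \subset \Su$ shrinking to $\{r_j\}$. By the description in Lemma \ref{l.ghandtau}, in each leaf $E$ of a fundamental domain, $U_I \cap E$ is contained in $E^{+b_0}_I$, a uniformly bounded thickening of the geodesic $\ell^E_I$; as $I$ shrinks, $\ell^E_I$ escapes toward the ideal point $\Theta_E(r_j)$. Propagating via $\hat g$ to the leaf containing $y$, any fixed point of $\mt$ must fail to lie in $U_I$ once $I$ is sufficiently small. This contradicts $y \in V$ for every trapping $V$ and proves the lemma. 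The main obstacle in the plan is the middle step: one must verify that the $\hat g$-invariance of $V$, together with commutation of $\hat g$ and $P$, actually transports the inclusion from the limit point $z$ back to $y$ itself, and that arbitrarily small trapping neighborhoods are available; both of these rely essentially on the structure built in Proposition \ref{prop.superattracting} and Lemma \ref{l.ghandtau}.
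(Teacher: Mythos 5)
Your overall scheme (argue by contradiction, normalize by powers of $\hat g$, pass to a limit point $z\in T_P^+\setminus T_P$, hence in a basin of repulsion) is the same as the paper's, and your endgame is a legitimate variant: instead of contradicting, as the paper does, the fact that the forward orbit eventually avoids a fixed trapping union $V$, you pull the inclusion $P^{n_k-n_0}(y)\in V$ back to $y$ itself via the backward trapping property and then rule this out by shrinking $V$. That part of the argument (commutation of $P$ with $\hat g$, $\hat g$-invariance of $V$, downward induction using $P^{-1}(\overline V)\subset V$, and the geometric fact from Lemma \ref{l.ghandtau} that small trapping neighborhoods of $r_j$ miss any given point) is sound.

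The genuine gap is at the extraction step. Compactness of a fundamental domain of $\hat g$ in the \emph{leaf space} does not give a convergent subsequence of $\hat g^{m_k}(P^{n_k}(y))$ in $\mt$: the normalized points lie in leaves ranging over a compact interval, but they can still escape to infinity \emph{inside} those leaves (for instance drifting toward the repelling ideal points while remaining in $T_P^+$). Note that only $T_P/_{<\hat g>}$ is compact (Proposition \ref{prop-dynamicspA}(iii)); $T_P^+/_{<\hat g>}$ is not, so membership in $T_P^+$ alone does not confine the orbit to a compact part of $N$. This is exactly where the paper works: it first shows there is $n_0$ with $P^n(y)\notin V$ for $n>n_0$ (for a fixed trapping union $V$ of repelling neighborhoods), and then invokes Addendum \ref{add-pA}, which says $T_P^+\setminus V$ meets every leaf of $[L,\hat g(L)]$ in a set of uniformly bounded diameter, to conclude that $\pi_N(P^n(y))$ stays in a compact subset of $N$; only then can a convergent subsequence off $Z$ be taken. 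Your proposal supplies no substitute for this, and without it the limit point $z$ on which your whole "heart of the argument" rests may not exist. The gap is repairable in the spirit of your own trick: if along the subsequence the orbit leaves every compact set of $N$, the Addendum forces $P^{n_k}(y)$ eventually into any prescribed trapping union $V$, and your backward-trapping pull-back again yields $y\in V$ for every small $V$, the same contradiction; but as written, the crucial precompactness is asserted on insufficient grounds.
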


\begin{proof}{}
We use the setup in the proof of item (ii) of the previous proposition.
In particular let $V$ be a union of neighborhoods of the repelling
points of $P_{\infty}$ so that $P^{-1}(\overline V) \subset V$.

Let $P_N$ be 
the induced map  by $P$ in $N$. 
Let $z = \pi_N(y)$.
Assume that $P^n(y)$ does not converge to $T_P$. 
Then there is a neighborhood $Z$ of $T_P/\hat g$ and
$n_i \rightarrow \infty$,  with $P_N^{n_i}(z)$ always not
in $Z$. 
There is $n_0 > 0$ so that if $n > n_0$ then $P^n(y)$ is
not in $V$, hence $P_N^n(z)$ is not in $V/_{<\hat g>}$.
By the addendum, $P_N^n(z)$ is in a compact set in $N$
for $n > n_0$. Hence up to another subsequence we may
assume that $P_N^{n_i}(z)$ converges to $z_0$. 
Notice that $z_0$ is not in $T_P/_{<\hat g>}$.

Lift $z_0$ to $x_0$ in $\mt$. Then $x_0$ is not in $T_P$, 
but since $y$ is in $T_P^+$, it follows that $x_0$ is in $T_P^+$,
so it follows that $x_0$ is not in $T_P^-$.

Hence as in the proof of item (ii) of the previous proposition
there is a  neighborhood $W$ 
of $x_0$ and $j_0$ integer so that if $j \leq  j_0$ then $P^{j}(w)$ is in
$V$  for any $w$ in $W$.
For any $i$ big $P_N^{n_i}(z)$ is in $W/_{<\hat g>}$, hence 
$P_N^{n_i + j_0}(z)$ is in $V/_{<\hat g>}$. This contradicts
the fact that $P_N^n(z)$ is not in $V/_{<\hat g>}$ for $n > n_0$.
This contradiction finishes the proof.
\end{proof}

\subsection{Abundance of pseudo-Anosov pairs}\label{ss.abundancepApairs}

In this section we specialize to the cases described in examples \ref{example1} and \ref{example2}: That is, we say that $(f,\cF)$ verifies the \emph{commuting property} if $f: M \to M$ is a diffeomorphism preserving an $\R$-covered uniform foliation $\cF$ by hyperbolic leaves and if one of the following conditions holds:

\begin{enumerate}
\item There is a lift $\fh$ to $\mt$ which commutes with all deck transformations\footnote{In particular, if $f$ is homotopic to the identity.} and does not fix any leaf of $\Ft$.
\item There is a deck transformation $\gamma$ which commutes with all deck transformations \footnote{In particular, $M$ is Seifert with hyperbolic base (because the leaves of $\cF$ are hyperbolic) and $\gamma$ corresponds to the center of $\pi_1(M)$ generated by the element corresponding to the fibers. This in particular implies that $\cF$ is horizontal, and so $\Su$ identifies with the boundary of the universal cover of the base.} and does not fix any leaf of $\Ft$.
\end{enumerate}

The assumption that $(f,\cF)$ has the commuting property 
implies on the one hand that it admits good pairs of the form $(\fh, \gamma)$ with $\fh$ a lift of $f$ and $\gamma\in \pi_1(M)$ a deck transformation (i.e. a lift of $\mathrm{id}:M \to M$) and on the other that one can construct new good pairs out of others. 

\begin{defi} \label{defi.admis}
A good pair $(\fh,\gamma)$ for a $(f,\cF)$ with the commuting property will be said to be \emph{admissible} if either $\fh$ commutes with all deck transformations, or $\gamma$ is in the center of $\pi_1(M)$.  
\end{defi}

In the first case, the good pair property is verified since $\fh$ acts as the identity on $\Su$, and in the latter, it is $\gamma$ that acts as the identity on $\Su$. 
In both cases this happens because if a map commutes with
all deck transformations then this map is 
a bounded distance from the identity in $\mt$
(see also \cite{BFFP-2}).

\begin{defi}\label{defi.conjugate}
Let $(f,\cF)$ have the commuting property and let $(\fh, \gamma)$ be an admissible good pair. We say that $(\fh', \gamma')$ is \emph{conjugate} to $(\fh,\gamma)$ by $\eta \in \pi_1(M)$ if we have that\footnote{Note that by the commuting property we have that either $\eta^{-1} \circ \fh \circ \eta = \fh$ or $\eta^{-1} \circ \gamma
\circ \eta = \gamma$.}

$$(\fh',\gamma')=(\eta^{-1} \circ \fh \circ \eta, \eta^{-1} \circ \gamma
\circ \eta).$$
\end{defi}

Note that if $(\fh,\gamma)$ is a pA pair (resp. regular pA pair) then every good pair conjugate to $(\fh,\gamma)$ also is a pA pair (resp. regular pA pair). 

The following result shows that there are plenty of pA-pairs with good properties. In our specific settings, we could obtain this directly, but here we give a unified proof.

\begin{prop}\label{prop-manypApairs}
Let $(f,\cF)$ with the commuting property, $(\fh, \gamma)$ be an admissible pA-pair and let $J \subset \Su$ be an open interval. Then, there exists an admissible pA-pair $(\fh',\gamma')$ conjugate to $(\fh,\gamma)$ such that it has all its fixed points in the interior of $J$.
\end{prop}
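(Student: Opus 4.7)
The plan is to reduce the proposition to a dynamical question on $\Su$ and then solve it by producing a deck transformation with suitable contracting dynamics there. By Definition~\ref{defi.admis} and the commuting property, exactly one of $\eta^{-1}\circ\fh\circ\eta=\fh$ or $\eta^{-1}\circ\gamma\circ\eta=\gamma$ holds for every $\eta\in\pi_1(M)$, and a short algebraic rearrangement gives $P'=(\gamma')^m\circ(\fh')^n=\eta^{-1}\circ P\circ\eta$. Hence $P'_\infty=\eta_\infty^{-1}P_\infty\eta_\infty$ and its fixed points are exactly $\eta_\infty^{-1}(x_1),\ldots,\eta_\infty^{-1}(x_{2p})$, where $\{x_i\}$ is the fixed-point set of $P_\infty$. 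Lemma~\ref{lem.superattracting} ensures the super-attracting/repelling character is preserved under the quasisymmetric conjugation by $\eta_\infty$, so the conjugate pair is again an admissible pA-pair of the same index, and the task reduces to finding a deck transformation $\eta\in\pi_1(M)$ with $\eta_\infty^{-1}(x_i)\in J$ for every $i=1,\ldots,2p$.

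The key step is to exhibit a deck transformation $\delta\in\pi_1(M)$ whose induced homeomorphism $\delta_\infty$ of $\Su$ has a super-attracting fixed point $a\in J$ and such that the whole finite set $\{x_1,\ldots,x_{2p}\}$ lies in the basin of attraction of $a$. Once such $\delta$ is available, Proposition~\ref{prop.superattracting} forces $\delta_\infty^k(x_i)\to a$ uniformly in $i$, so for $k$ large we have $\delta_\infty^k(x_i)\in J$ for all $i$, and setting $\eta=\delta^{-k}$ finishes the argument. I would construct $\delta$ case by case in the commuting property. In the first case the lift $\fh$ acts as the identity on $\Su$, so $P_\infty=\gamma_\infty^m$ and $\gamma^m$ is itself a deck transformation with super-attracting dynamics; a conjugate $\mu^{-1}\gamma^m\mu$ with $\mu\in\pi_1(M)$ chosen via the minimality of the $\pi_1(M)$-action on $\Su$ so as to move an attracting point into $J$ provides the candidate. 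In the second case, the Seifert structure with hyperbolic base identifies $\Su$ with the ideal circle of the base and the image of $\pi_1(M)$ in $\mathrm{Homeo}(\Su)$ is a cocompact Fuchsian group, so any hyperbolic element of that group acts as a north--south map whose attracting fixed point can be placed anywhere in $\Su$.

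The main obstacle is the \emph{multi-basin issue}: if $\delta_\infty$ has several attracting fixed points (as it will whenever $\delta$ is a conjugate of $\gamma^m$ with $p\geq 2$), the basins of attraction partition $\Su$ minus the finite repelling set and the $x_i$ may fall into different basins, preventing them from all converging to the single attracting point $a\in J$. In the Fuchsian case this obstacle is absent because hyperbolic elements have a unique attracting fixed point, but in the general setting--in particular for hyperbolic 3-manifolds--one must either build a genuine north--south deck transformation on $\Su$ (exploiting the ambient loxodromic action of $\pi_1(M)$ on $\partial_\infty\widetilde M$ and its relation to $\Su$) or else argue by a simultaneous-conjugation procedure ensuring that all $p$ attracting points of $\mu^{-1}\gamma^m\mu$ are placed inside $J$ at once, together with a genericity perturbation forcing the repelling points to avoid the finite configuration $\{x_i\}$.
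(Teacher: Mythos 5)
Your reduction is fine and matches the paper's: conjugating the pair by a deck transformation $\eta$ preserves admissibility, and the super attracting/repelling character of the fixed points survives because $\eta$ acts quasisymmetrically (H\"older) on $\Su$, so the whole problem is indeed to find $\eta\in\pi_1(M)$ moving the finite fixed-point set of $P_\infty$ into $J$. But at exactly that point your argument has a genuine gap: you never produce such an $\eta$ in the general setting. Your proposed mechanism (a deck transformation $\delta$ with one super attracting point $a\in J$ whose basin on $\Su$ contains all the $x_i$) is not established, and you yourself flag the obstruction: in the cases of interest (e.g.\ hyperbolic $3$-manifolds, where the relevant deck transformations come from periodic orbits of a regulating pseudo-Anosov flow) the elements acting nontrivially on $\Su$ typically have $2p\geq 4$ alternating fixed points, so there is no north--south dynamics, the $x_i$ can sit in different basins or even on repelling points of $\delta_\infty$, and Proposition \ref{prop.superattracting} gives you nothing about points outside the basin of $a$. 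The two escape routes you sketch (building a genuine north--south deck transformation via the loxodromic action on $\partial_\infty\widetilde M$, or a ``simultaneous-conjugation procedure with a genericity perturbation'') are not carried out and it is not clear either can be; in particular the relation between $\Su$ and $\partial_\infty\widetilde M$ is not one you can invoke without substantial work.

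The paper closes this gap with a single imported tool: \cite[Lemma 5.4]{FPmin}, which states that for any two disjoint open sets $U,V\subset\Su$ there is a deck transformation whose action on $\Su$ maps the complement of $U$ into the interior of $V$. One then picks $U$ a small interval disjoint from the fixed points of $P_\infty$ and $V=J$; the resulting $\eta$ sends all fixed points of $P_\infty$ into $J$ at once, with no case analysis and no basin bookkeeping, and the conjugated pair is the desired admissible pA-pair. So the missing idea in your proposal is precisely this strong contraction/minimality property of the $\pi_1(M)$-action on the universal circle; without it (or an equivalent statement) the key step of your plan does not go through outside the Fuchsian case.
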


\begin{proof}
We will apply \cite[Lemma 5.4]{FPmin} stating that for every pair of disjoint open sets $U$ and $V$ in $\Su$ there is a deck transformation $\eta$ such that the action of $\eta$ on $\Su$ maps the complement of $U$ in the interior of $V$. Denote by $P= \gamma^m \circ \fh^n$ with $n,m$ not both
equal to $0$ and denote $P_\infty$ the action on $\Su$. 

Now, pick a open interval $U$ disjoint from all fixed points of $P_\infty$ and a deck transformation $\eta$ which maps the complement of $U$ inside $J$. 

Now, if one considers the map $\eta \circ P_\infty \circ \eta^{-1}$ it follows that it has all its fixed points inside $J$. Since the map is conjugated by a deck transformation, it follows that the points are super attracting/repelling. 
This is because deck translations acts in a H\"{o}lder way
on $\Su$.

Since $(f,\cF)$ has the commuting property, then either $(\fh, \eta \circ \gamma \circ \eta^{-1}$) or $(\eta \circ \fh \circ \eta^{-1}, \gamma)$ make an admissible pair for $(f, \cF)$. 
\end{proof}

\section{Pseudo-Anosov pairs and sub-foliations}\label{ss.pAandsubfoliations}

We will assume that there is a one-dimensional branching foliation $\cT$ subfoliating $\cF$ which is $f$-invariant (recall the definition at the end of \S \ref{ss.bran}). 
Denote by $\widetilde \cT$ the lift of $\cT$ to the universal cover. Recall from the previous section that whenever $(\fh,\gamma)$ is a pseudo-Anosov pair we will take $P$ to be some $P=\gamma^m \circ \fh^n$ which has a finite number of fixed points alternatingly super attracting and super repelling in $\Su$. If we do not choose explicitely the values of $n,m$ it will mean that any choice with this property will work. 

\subsection{Landing points}\label{ss.landing}

Given a leaf $c \in \widetilde \cT$ we say that $\hat c$ is a \emph{ray} of $c$ if it is a connected component of $c \setminus \{x\}$ for some $x \in c$. Since the leaves of
$\widetilde \cT$ are properly embedded in leaves of
$\widetilde \cF$ 
then every ray of $c \subset L \in \Ft$ accumulates only
in some connected subset of $S^1(L)$.  

Using the dynamics of pseudo-Anosov pairs one deduces the following simple proposition that we will use several times in the paper.  
In this result we 
we use the foliation
$\cF_N$ in $N = \mt /_{<\hat g>}$.

\begin{prop}\label{prop-nointerval}
Let $(\fh,\gh)$ be a pseudo-Anosov pair and let $\hat c$ be a 
ray in a leaf of $\widetilde \cT$
which accumulates in an interval $J \subset S^1(L)$. Then, $J$ contains
the $\Theta_L$ images of at most two fixed points of $P_\infty$ in $\Su$. 
\end{prop}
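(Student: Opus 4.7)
The approach is a proof by contradiction: assume $\tilde J := \Theta_L^{-1}(J) \subset \Su$ contains at least three fixed points of $P_\infty$. By the alternation property in Definition \ref{def-pApair}, among any three consecutive such fixed points in $\tilde J$, one type appears twice and the other once. Replacing $P$ by $P^{-1}$ if necessary, we may choose three consecutive fixed points $\xi_1 < \xi_2 < \xi_3$ lying in $\tilde J$ so that $\xi_2$ is super-attracting and $\xi_1, \xi_3$ are super-repelling for $P_\infty$.

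The first step combines Lemma \ref{l.ghandtau} with Proposition \ref{prop.superattracting} to produce a $\gh$-invariant open neighborhood $U \subset \mt$ of $\xi_2$ with $P(\overline U) \subset U$, whose defining interval $I \subset \Su$ is chosen small enough around $\xi_2$ to be disjoint from both $\xi_1$ and $\xi_3$. Consequently, $\overline U$ stays away from suitable open neighborhoods of $\Theta_L(\xi_1)$ and $\Theta_L(\xi_3)$ in $L \cup S^1(L)$. From the accumulation hypothesis, infinitely many points of $\hat c$ lie in $U \cap L$ going to infinity (from accumulation at $\Theta_L(\xi_2)$), and infinitely many lie in $L \setminus \overline U$ going to infinity (from accumulation at $\Theta_L(\xi_1), \Theta_L(\xi_3)$). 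Interleaving these two sequences forces $\hat c$ to cross $\partial U \cap L$ infinitely often, giving a discrete sequence of crossing points escaping to infinity in $L$.

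The final step, which is the main obstacle, is to turn this infinite-crossing behavior into a genuine contradiction via the $P$-invariance of $\widetilde \cT$. The plan is to iterate under $P^n$ and analyze $P^n(\hat c)$: points of $\hat c \cap U$ have images trapped in $U$ and converging to $\xi_2$ by Proposition \ref{prop.superattracting}, while points of $\hat c$ near $\Theta_L(\xi_1)$ or $\Theta_L(\xi_3)$, lying in the respective repelling basins by Proposition \ref{prop-dynamicspA}(iv), are driven by $P^n$ into the attracting basins of the fixed points of $P_\infty$ flanking $\xi_1, \xi_3$ on the outside. Using the compactness of $T_P \cap E$ for each leaf $E$ of $\widetilde \cF$ (Proposition \ref{prop-dynamicspA}(iii)), Lemma \ref{lem.auxiliary}, and Addendum \ref{add-pA} to control where iterated arcs can sit, the goal is to extract from $\{P^n(\hat c)\}$ a limit configuration of arcs lying in a single $\widetilde \cF$-leaf whose endpoints are forced into incompatible sectors, producing two $\widetilde \cT$-leaves in the same $\widetilde \cF$-leaf that must cross each other transversely. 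This violates the no-topological-crossings condition in the definition of the one-dimensional branching foliation $\widetilde \cT$. The hardest part is making this limit extraction precise and ruling out the possibility that the branching structure of $\widetilde \cT$ absorbs the would-be crossings; this is where the interplay between the forward-invariance of $U$ near $\xi_2$ and the outward-pushing behavior near the repelling $\xi_1, \xi_3$ has to be exploited carefully.
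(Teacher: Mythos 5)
Your setup (three consecutive fixed points in $J$, WLOG after possibly replacing $P$ by $P^{-1}$, super-attracting/repelling neighborhoods, interleaved crossings of $\partial U$) matches the paper's opening moves, but the proof has a genuine gap exactly where you flag it: the final step is stated as a ``goal'' rather than carried out, and the mechanism you propose for the contradiction is not the right one. Two specific problems. First, the claim that points of $\hat c$ near $\Theta_L(\xi_1)$ or $\Theta_L(\xi_3)$ are ``driven by $P^n$ into the attracting basins of the fixed points flanking $\xi_1,\xi_3$ on the outside'' is unjustified: a point in a repelling neighborhood of $\xi_1$ may perfectly well forward-converge to the middle attractor $\xi_2$, or lie in $T_P^+$ and converge to no attractor at all; Proposition \ref{prop-dynamicspA}(iv) only controls points of $\Su$ distinct from the repellers, not points of $\mt$ near a repeller. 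Second, aiming at a transverse crossing of two leaves of $\wT$ is unlikely to close: limits of subarcs of a single leaf of a \emph{branching} foliation can merge rather than cross, which is precisely the ``absorption'' you worry about, and nothing in your outline rules it out.

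The paper's actual contradiction is of a different nature and is the idea you are missing. With the configuration attracting--repelling--attracting (the mirror of yours under $P\mapsto P^{-1}$), each arc $\ell_i$ of $\hat c$ joining the two attracting neighborhoods has endpoints in two \emph{distinct} open basins of attraction, so by connectedness $\ell_i$ must meet $T_P^+$. Lemma \ref{lem.auxiliary} then says forward iterates of such points converge to $T_P$ in the quotient $N=\mt/_{<\gh>}$, where $T_P/_{<\gh>}$ is compact (Proposition \ref{prop-dynamicspA}(iii)). One covers a compact neighborhood $Z$ of $T_P/_{<\gh>}$ by finitely many small product boxes of the induced foliations, chosen (using that leaves of $\cF_N$ are planes fibering over the circle) so that a single leaf of $\cT_N$ meets each box in at most one component. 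Taking one large iterate $P^k$ that works for sufficiently many of the pairwise disjoint subarcs $\ell_i$, all their images are subarcs of the \emph{same} leaf $P^k(\hat c)$ and all project into $Z$; the pigeonhole principle forces one box to be met in more than one component of that single leaf, a contradiction. This quotient-plus-pigeonhole argument replaces your crossing argument and is where the no-crossing/local-product structure of $\wT$ is actually used; without it (or an equivalent substitute) your proof does not go through.
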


\begin{proof}
We assume that $\hat g$ acts as the identity on $\Su$.
If the interval $J$ intersects three such points we can assume without loss of generality that two of them (we call them $a_1, a_2$, points
in $\Su$) are attracting while one (called $r$) is repelling and between the two attracting ones there are no other fixed point of $P_{\infty}$.

Fix neighborhoods $U_{a_1}$ and $U_{a_2}$ of $a_1$ and $a_2$ in the respective basins of attraction given by Proposition \ref{prop.superattracting}. For those neighborhoods there is a sequence $\ell_1, \ldots, \ell_k, \ldots$ of arcs of $\hat c$ joining the neighborhoods $U_{a_1}$ and $U_{a_2}$. 
Fix a repelling neighborhood $V_r$ of the form $U_I$ of $r$
so that $I$ has endpoints $y_1, y_2$. 
We assume that $y_i$ is in the interval $(a_i,r)$ of $\Su$.
Then $P^i_\infty(y_i)$ converges to $a_i$ as $i \rightarrow \infty$.
Let $b_i = \Theta_L(y_i)$. 
For $i$ big $\ell_i$ has a subsegment $e_i$ in $V_r \cap L$ 
connecting a point very near $b_1$ in $L \cup S^1(L)$ 
to a point very near $b_2$ in $L \cup S^1(L)$.
These points are in the basins of attraction of $a_1, a_2$
respectively.
We claim that $e_i$ intersects $T^+_P$. If not then 
$e_i$ is contained in the union of basis of attraction 
of attracting points, but the endpoints are contained 
in distinct basis of attraction. Since the basis of
attraction are open sets this contradicts the connectedness
of $e_i$. 

We consider the manifold $N = \mt /_{<\hat g>}$ as in Lemma \ref{lem.auxiliary} (see also Remark \ref{rem.quotient}).
Let $\cT_N$ be the foliation induced by $\widetilde \cT$
in $N$.
As in Lemma \ref{lem.auxiliary} consider a fixed neighborhood
$Z$ of $T_P/_{<\hat g>}$ in $N$, but now with compact closure.
Cover the closure of 
$Z$ by finitely many foliated boxes of $\cF_N$ and $\cT_N$
all with compact support. Since the leaves of $\cF_N$ are planes,
and $\cF_N$ is a fibration over the circle the
following happens: we can choose the foliated boxes
small enough so that
a leaf of 
$\cT_N$ can only intersect each of these foliated boxes in a single
component.

Since $\ell_i$ intersects $T^+_P$,
Lemma \ref{lem.auxiliary} implies that there is some $k_i$ such that if $k>k_i$ the map $P^k$ will then map the arc $\ell_i$ to a curve intersecting 
$\pi_N^{-1}(Z)$. 

We can apply this several times 
to all the arcs, we find a sufficiently large number
of subarcs of a large iterate of $\hat c$ that when projected to $N$ 
they all intersect $Z$.
If there are sufficiently many, then more than two have to
intersect a product box of $\cT_N$ in $Z$.

This contradicts the fact that each curve of $\widetilde \cT_N$
can only 
intersect a local product box as above in a unique connected component. 

This produces a contradiction and proves the proposition. 
\end{proof}

\begin{figure}[ht]
	\begin{center}
		\begin{overpic}[scale=0.83]{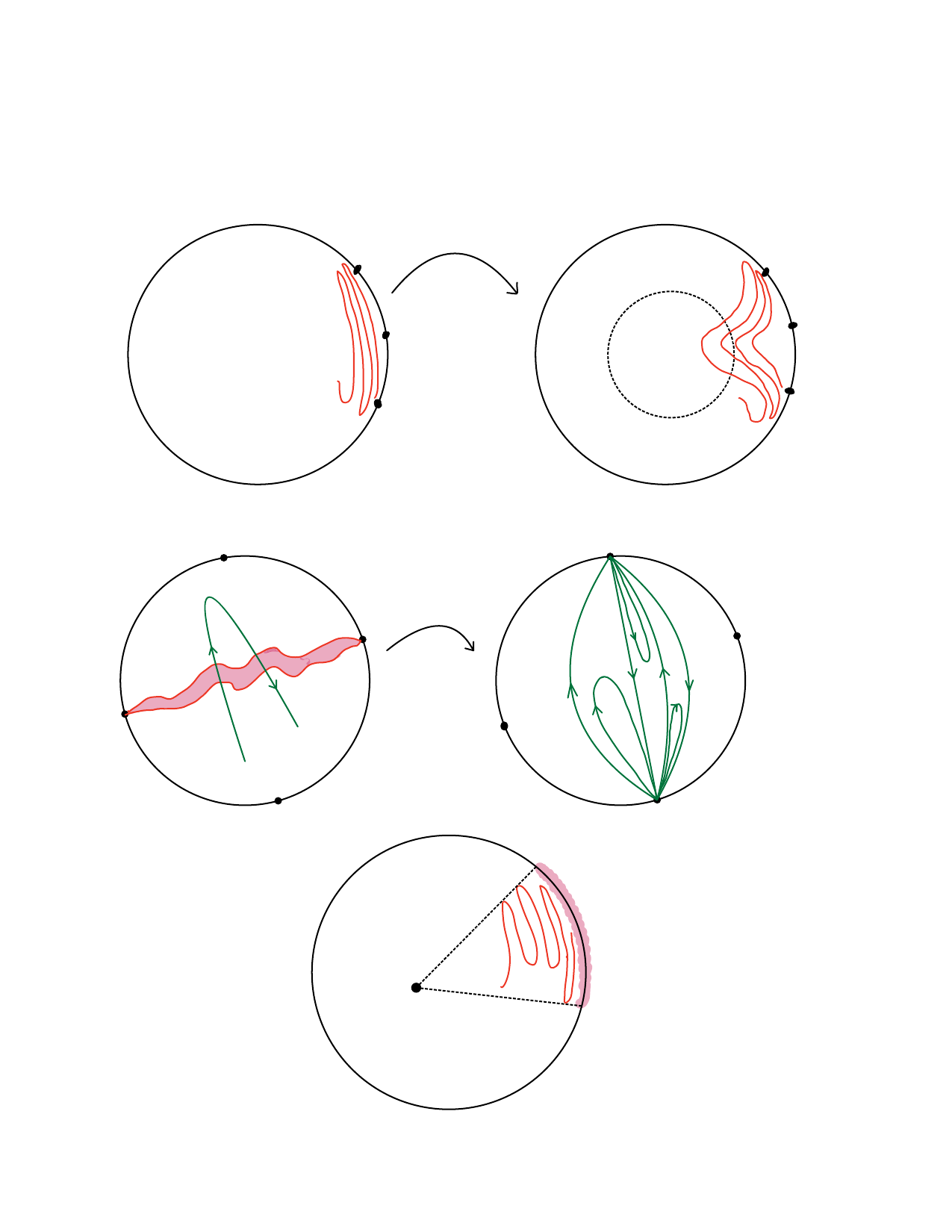}
		\end{overpic}
	\end{center}
	\vspace{-0.5cm}
	\caption{Proof of landing. The iterates $P^n$ push
the leaves away from the middle point and into a compact
part (when projected to $N$).}\label{f.land}
\end{figure}

{\bf {Throughout 
the remainder of this section we will consider $(f,\cF)$ with the commuting property (cf. \S \ref{ss.abundancepApairs}). }}

We will assume that there is a one-dimensional branching foliation $\cT$ subfoliating $\cF$ which is $f$-invariant (recall the definition at the end of \S \ref{ss.bran}). 
We say that a ray $\hat c$ of a curve $c \in \wT$ \emph{lands in a point} if there exists $\hat c_\infty \in S^1(L)$ so that the closure of $\hat c$ in $L \cup S^1(L)$ is $\hat c \cup \hat c_\infty$. 
In other words the ray $\hat c$ accumulates only on
$\hat c_\infty$.

Using the previous proposition, we deduce the following: 

\begin{prop}\label{prop-landing}
If $f: M \to M$ preserves an $\RR$-covered and uniform foliation by hyperbolic leaves $\cF$ which is subfoliated by a one-dimensional foliation $\cT$ and admits a pseudo-Anosov pair $(\hat f, \gamma)$ then every ray of $\wT$ lands in a point. 
\end{prop}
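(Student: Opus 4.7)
The plan is to combine the abundance of pseudo-Anosov pairs (Proposition \ref{prop-manypApairs}) with the constraint on accumulation sets of rays (Proposition \ref{prop-nointerval}). Recall that Proposition \ref{prop-nointerval} limits, for a single pA-pair, how many fixed points of $P_\infty$ can live inside an accumulation interval, while Proposition \ref{prop-manypApairs} lets us move all fixed points of (a conjugate) $P_\infty$ into any prescribed open interval of $\Su$. Together these are in direct tension, and the tension will produce the landing statement.

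I would argue by contradiction. Let $\hat c$ be a ray of a leaf of $\widetilde \cT$ contained in a leaf $L$ of $\widetilde \cF$, and let $A \subset S^1(L)$ denote its accumulation set. Since the paper already records that such an $A$ is a connected subset of $S^1(L)$ (using that the leaves of $\widetilde \cT$ are properly embedded in the corresponding leaves of $\widetilde \cF$), $A$ is either a single point or a non-degenerate closed arc. Suppose we are in the second case. Then $A$ contains a non-degenerate open subarc $K \subset S^1(L)$, and pulling back via $\Theta_L$, which is a homeomorphism from $\Su$ to $S^1(L)$, we obtain a non-degenerate open interval $J \subset \Su$ with $\Theta_L(J) \subset A$.

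Now I invoke Proposition \ref{prop-manypApairs} applied to $J$: it supplies an admissible pA-pair $(\hat f', \gamma')$ conjugate to $(\hat f, \gamma)$ such that the associated $P'_\infty : \Su \to \Su$ has all of its $2p \geq 4$ fixed points inside $J$. The conjugated pair is still a pA-pair, so Proposition \ref{prop-nointerval} applies to it and to the same ray $\hat c$ (the ray is what it is, only the dynamics at infinity changes); it tells us that $\Theta_L(J) \subset A$ can contain the $\Theta_L$-images of at most two fixed points of $P'_\infty$. This contradicts the fact that $\Theta_L(J)$ contains the images of all $2p \geq 4$ of them. Hence $A$ must be a single point, which is exactly the landing conclusion.

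The main obstacle, insofar as there is one, is purely conceptual: noticing that one can swap the pA-pair mid-argument without changing the accumulation set of $\hat c$, and so apply Proposition \ref{prop-nointerval} to a pA-pair specifically tuned to the interval where accumulation occurs. The connectedness of $A$, which is what makes \emph{an interval} the relevant object and enables the use of both Proposition \ref{prop-nointerval} and Proposition \ref{prop-manypApairs}, is already provided by the discussion preceding the statement; no extra topological input is required. Everything else is a clean concatenation of the two previous results.
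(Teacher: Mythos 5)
Your argument is correct and is essentially the paper's own proof: the paper likewise observes that a non-landing ray accumulates on a nondegenerate interval, uses Proposition \ref{prop-manypApairs} to conjugate the pA-pair so that all of its (at least four) fixed points lie in that interval, and then contradicts Proposition \ref{prop-nointerval}. Your write-up just makes explicit the connectedness of the accumulation set and the identification via $\Theta_L$, which the paper leaves implicit.
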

\begin{proof}
It is enough to show that given a pseudo-Anosov pair $(\hat f, \gamma)$ and an interval $J \subset \Su$ there exists another pseudo-Anosov pair so that $J$
contains three or more fixed points of it, in order to apply Proposition \ref{prop-nointerval}. But the fact we need follows from Proposition \ref{prop-manypApairs}. 
\end{proof}

\begin{notation}\label{not-orient} 
In $\mt$ we can orient leaves of $\wT$, and we will fix an orientation. 
Given a leaf $\ell \in \wT$ in a leaf $L \in \Ft$ both of whose rays land in a point we denote by $\ell^+$ and $\ell^-$ in $S^1(L)$ the landing points of the positive and negative ray (with respect to the orientation and a given point $x \in \ell$ which is not relevant for the definition of $\ell^\pm$). 
\end{notation}

\subsection{Pseudo-Anosov pairs with periodic leaves}\label{ss.pAperiodic}

We let $(\fh, \gamma)$ be a pA-pair and we will assume that: 

\begin{enumerate}
\item There is a leaf $L \in \Ft$ which is fixed by $P= \gamma^m \circ \fh^n$ for some $m \neq 0$ and $n>0$. 
\item The action of $P_\infty$ in $\Su$ has $2p$ fixed points which are alternatively super attracting and super repelling (with $p\geq 2$). 
\end{enumerate}

Let $c \in \wT \cap L$ be a leaf which is fixed by $P$ and $x \in c$. Write $c= c_1 \cup \{x\} \cup c_2$ where $c_1$ and $c_2$ are the
two connected rays of $c$ defined by $x$.  
Suppose that $c_1$ has ideal point $c^+$ in $S^1(L)$
and $c_2$ has ideal point $c^-$.
We say that $c_1$ is \emph{coarsely expanding} (resp. \emph{coarsely contracting}) if there is a compact interval $\cI$ of $c$ such that for every compact interval $\cJ$ of $c_1 \cup \{x\}$ there is $k>0$ such that $P^{-k}(\cJ) \subset \cI$ (resp. $P^k(\cJ) \subset \cI$).  These rays already played a prominent role in the arguments of \cite{BFFP-3}. The next result should be compared with the results in \cite[\S 11.2]{BFFP-3}. 

We can show: 

\begin{prop}\label{prop.landingpointsperiodic}
Given a center curve $c \in \wT \cap L$ which is fixed by $P$, then
$\Theta_L^{-1}(c^+), \ \Theta_L^{-1}(c^-)$ are
fixed by $P_\infty$ in $\Su$. Moreover, if $\Theta_L^{-1}(c^+)$
is an attracting (resp. repelling) point of $P_\infty$ in $\Su$ then the ray $c_1$ is coarsely expanding (resp. contracting). 
\end{prop}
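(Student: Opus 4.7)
I split the argument into the fixed-endpoints claim and the coarse expansion claim (the coarse contraction case follows by replacing $P$ with $P^{-1}$, which interchanges super attracting and super repelling points of $P_\infty$).

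For the endpoints, since $P(L)=L$ and $M$ is compact, $P|_L$ is a uniform quasi-isometry of the Gromov hyperbolic leaf $L$, so as in Proposition \ref{prop.ext} it extends continuously to a homeomorphism of $S^1(L)$. Because $P(L)=L$ the transport map $\tau_{P(L),L}$ is coarsely trivial, so this boundary extension is conjugate via $\Theta_L$ to $P_\infty$. Since $P(c)=c$, the extension permutes $\{c^+,c^-\}$. Using the standing orientation of $E^c$ preserved by $Df$ and deck transformations in the Burago--Ivanov setup, $P=\gamma^m\circ \hat f^n$ preserves the orientation of $c$, and hence fixes $c^+$ and $c^-$ individually, so $\Theta_L^{-1}(c^\pm)$ are fixed by $P_\infty$.

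For the coarse expansion claim, assume $a:=\Theta_L^{-1}(c^+)$ is super attracting. Let $U$ be an attracting neighborhood of $a$ furnished by Proposition \ref{prop.superattracting}, so $P(\overline U)\subset U$ and $P^n(y)\to a$ for every $y\in\overline U$. By Proposition \ref{prop-landing}, the ray $c_1$ lands at $c^+=\Theta_L(a)$, so all but a compact portion of $c_1$ lies in $U\cap L$. Parameterize $c$ by $\mathbb{R}$ so that $\phi:=P|_c$ is an orientation-preserving homeomorphism of $\mathbb{R}$ with $c_1=(s_x,+\infty)$; the super attracting property forces $\phi^n(s)\to+\infty$ for every $s\geq s_0$ (some threshold), hence $\phi$ has no fixed points in $[s_0,+\infty)$ and $\phi(s)>s$ on that half line.

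The key step is to produce a fixed point $s_*\in\mathbb{R}$ of $\phi$. Once $s_*$ is chosen rightmost, $\phi$ restricts to a self-homeomorphism of $(s_*,+\infty)$ with $\phi>\mathrm{id}$, so $\phi^{-k}(s)\searrow s_*$ monotonically for each $s>s_*$. Taking $\mathcal{I}\subset c$ to be a compact interval containing $s_*$ in its interior, any compact $\mathcal{J}\subset c_1\cup\{x\}$ satisfies $\phi^{-k}(\mathcal{J})\subset\mathcal{I}$ for all $k$ sufficiently large, which is exactly the coarse expansion property. To produce such an $s_*$ I split on the type of the other endpoint $b:=\Theta_L^{-1}(c^-)$, which is also fixed by $P_\infty$ by Step~1: if $b$ is super attracting, the symmetric argument gives $\phi(s)<s$ for $s$ very negative and the intermediate value theorem yields $s_*$. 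The delicate case, and the main obstacle of the proof, is ruling out that $\phi$ acts as a pure translation on $c$ when $b$ is super repelling, since then no fixed $\mathcal{I}$ can absorb arbitrarily large compact $\mathcal{J}$. For this I would combine Proposition \ref{prop-core}, which forces the compact core $T_P\cap L$ to have nonzero Lefschetz index $1-p$, with Addendum \ref{add-pA}, which pins down where $T_P\cap L$ can sit near infinity in $L$, and argue via a Lefschetz index computation on the two half-planes of $L$ cut off by $c$ that a purely translating $\phi$ on a $P$-invariant center curve is incompatible with the alternating arrangement of the $2p$ super attracting/repelling fixed points of $P_\infty$ on $\Su$.
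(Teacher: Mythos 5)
Your first step is fine, and so is your treatment of the case in which the other ideal point is also attracting; in fact these are the only situations the paper ever uses. Every later invocation of this proposition concerns either a ray emanating from a known fixed point of $P$ or a curve both of whose ideal points are attracting (resp.\ repelling), and there the statement really is immediate from Proposition \ref{prop.superattracting}: since $P(\overline{U})\subset U$ gives $P^k(U)\subset U$ for all $k>0$, a point $y\notin U$ can have no backward iterate in $U$, and the displacement estimate in the proof of Proposition \ref{prop.superattracting} bounds, uniformly on compact sets, how long a backward orbit can remain in $U$; applying this at both ends of $c$ (or at the attracting end together with the fixed basepoint of the ray) traps $P^{-k}(\cJ)$ in a fixed compact subinterval $\cI$ of $c$ for all large $k$, which is exactly coarse expansion. (A minor point: your claim that super attraction forces $\phi^{n}(s)\to+\infty$ tacitly assumes $c^+\neq c^-$; convergence to $a$ in $\mt$ does not by itself select the end of $c$ along which the forward orbit escapes.)

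The genuine gap is in the case you yourself single out as the main obstacle: $c$ joining a repelling ideal point $b$ to the attracting point $a$ with $P|_c$ fixed point free. The Lefschetz computation you propose does not rule this out. Double the closed half-disk bounded by $c$ and one ideal arc across its boundary: in the translation scenario $a$ becomes a topological sink (it attracts along the ideal arc, along $c$, and from inside its attracting neighborhood), $b$ becomes a source, and each of the $2t_i$ fixed ideal points interior to that arc is a sink or a source; all of these have index $+1$, so the Lefschetz formula forces the index of the interior fixed point set in that half-plane to be $-t_i$, and since $t_1+t_2=p-1$ the two half-planes together account exactly for the total index $1-p$ of $T_P\cap L$ demanded by Proposition \ref{prop-core}. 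Addendum \ref{add-pA} adds nothing, as it only localizes $T_P\cap L$ in a leafwise ball. So the ``pure translation from $b$ to $a$'' configuration is perfectly consistent with every index and core constraint you invoke, and it is precisely the configuration in which coarse expansion fails (a fixed point free translation admits no compact $\cI$ absorbing arbitrarily long intervals $\cJ$ of the ray). To close this case you would need a genuinely different input — for instance producing a fixed point of $P$ on $c$ by some other dynamical argument, or restricting the statement to the configurations in which it is actually applied — rather than the index count; the paper's own one-line proof (reduce to Proposition \ref{prop.superattracting}) is only ``direct'' in those restricted configurations.
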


\begin{proof}
This is direct from Proposition \ref{prop.superattracting}.
See also \cite[\S 11.2]{BFFP-3}. 
\end{proof}

We now  give a definition that we will use several times since we will be able to establish this strong property in the partially hyperbolic setting: 

\begin{defi}\label{defi.periodicproperty}
A pair $(f, \cF)$ has the \emph{periodic commuting property} if it has the commuting property (cf. \S \ref{ss.abundancepApairs})  and for every admissible pA-pair $(\fh, \gamma)$ for $(f,\cF)$ there exists $k>0$, $m \in \ZZ \setminus \{0\}$ and a leaf $L \in \Ft$ which is fixed by $P=\gamma^m \circ \fh^k$. 
\end{defi}

\begin{notation}\label{not-lift}
Whenever $(f, \cF)$ has the periodic commuting property and $(\fh,\gamma)$ is an admissible pA-pair, the lift $P$ will denote a lift $P=\gamma^m \circ \fh^k$ with $m \in \ZZ \setminus \{0\}$ and $k>0$ so that $P$ fixes some leaf $L \in \Ft$ and such that $P_\infty$ acting on $\Su$ has fixed points, all of
which are either super attracting or super repelling.
\end{notation}

\begin{prop}\label{prop.periodiccommuting} 
If $(f,\cF)$ has the periodic commuting property and $(\fh, \gamma)$ is any admissible pA-pair, then for every $L \in \Ft$ one has that $P^{m}(L)$ converges as $m \to \pm \infty$ to a leaf which is fixed by $P$. 
\end{prop}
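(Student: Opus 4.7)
The plan is to reduce the problem to a one-dimensional dynamical question on the leaf space $\cL_\cF \cong \RR$. Since $(\fh,\gamma)$ is a good pair, neither map fixes a leaf of $\Ft$, so both act on $\cL_\cF$ as fixed-point-free orientation-preserving homeomorphisms of $\RR$, that is, topological translations. The admissibility hypothesis of Definition~\ref{defi.admis} ensures that $\fh$ and $\gamma$ commute as maps of $\mt$ (directly when $\fh$ commutes with every deck transformation, and after passing to a suitable power of $f$ in the case where $\gamma$ is central, so that $f_\ast(\gamma)=\gamma$). Consequently $P=\gamma^m\circ\fh^k$ commutes with $\gamma$ as maps of $\cL_\cF$.

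Next I would exploit the periodic commuting property via Notation~\ref{not-lift}: it furnishes a leaf $L_0\in\Ft$ with $P(L_0)=L_0$. Because $P$ commutes with $\gamma$, the entire $\gamma$-orbit $\{\gamma^n L_0:n\in\ZZ\}$ lies in the fixed set $\mathrm{Fix}(P)\subset\cL_\cF$. Since $\gamma$ acts as a translation on $\cL_\cF \cong \RR$, this orbit is unbounded above and below, so $\mathrm{Fix}(P)$ is a closed, non-empty subset of $\RR$ which is cofinal in both directions.

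The conclusion then follows from elementary one-dimensional dynamics. For any $L\in\cL_\cF$, if $L\in\mathrm{Fix}(P)$ the convergence is trivial; otherwise $L$ belongs to a maximal open bounded interval $(L_1,L_2)$ of $\cL_\cF\setminus\mathrm{Fix}(P)$, whose endpoints exist and lie in $\mathrm{Fix}(P)$ thanks to cofinality. On this interval $P$ is an orientation-preserving homeomorphism without fixed points, so either $P(x)>x$ throughout or $P(x)<x$ throughout; in either case the $P$-orbit of $L$ is strictly monotone in $(L_1,L_2)$ and converges to one endpoint as $m\to+\infty$ and to the other as $m\to-\infty$. Both endpoints are leaves fixed by $P$, which is what is required.

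The main obstacle, such as it is, lies in the verification that $P$ commutes with $\gamma$; this rests on the admissibility hypothesis and, in the Seifert setting of Example~\ref{example2}, on passing to the appropriate iterate so that $f_\ast$ fixes the central generator. Beyond this, the argument uses none of the finer structure provided by pseudo-Anosov pairs---the action on $\Su$ or the core $T_P$---all of that work having been absorbed into the periodic commuting property itself.
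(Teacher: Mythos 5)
Your proof is correct and follows essentially the same route as the paper: the paper also takes the $P$-fixed leaf $E$ guaranteed by the periodic commuting property, uses commutation with $\gamma$ to get that every $\gamma^i(E)$ is fixed, traps an arbitrary leaf $L$ between consecutive fixed leaves, and concludes by the (implicit) monotone one-dimensional dynamics on the leaf space that you spell out explicitly. The only minor remark is that the commutation of $\fh$ and $\gamma$ is already part of the definition of a good pair, so the ``main obstacle'' you flag is handled by hypothesis rather than by admissibility.
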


\begin{proof}
The hypothesis implies that $P$ fixes a leaf $E$ of $\widetilde \cF$.
But then it also fixes $\gamma^i(E)$ for any $i \in \ZZ$.
For any $L$ leaf of $\widetilde \cF$ it is contained
in $[\gamma^i E, \gamma^{i+1} E]$ for some $i$ in $Z$ which implies
the result.
\end{proof}

The following property will be used several times:

\begin{figure}[ht] 
	\begin{center}
		\begin{overpic}[scale=0.88]{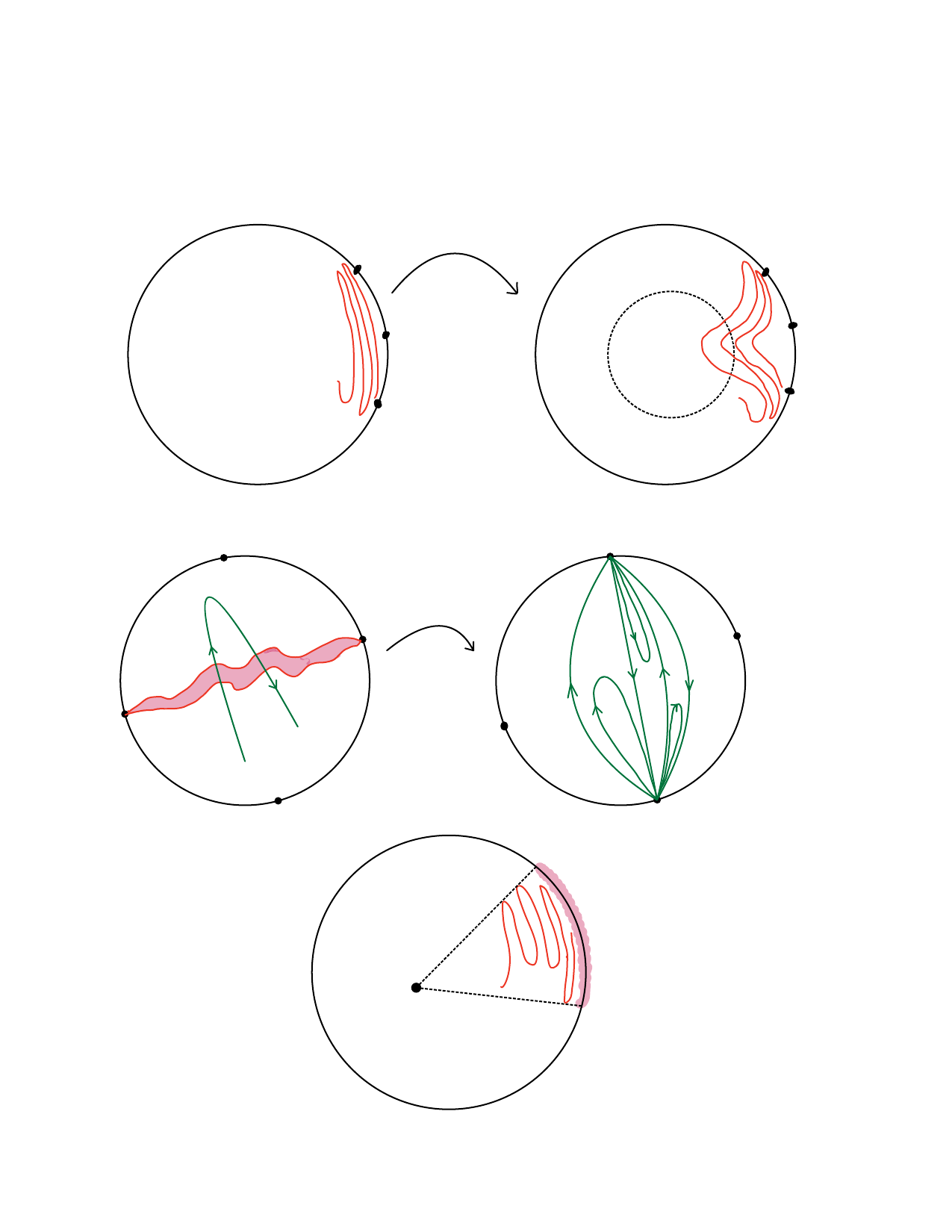}
		\end{overpic}
	\end{center}
	\vspace{-0.5cm}
	\caption{A configuration.}\label{f.config}
\end{figure}

\begin{prop}\label{p-getconfig} 
Let $(f, \cF)$ have the periodic commuting property preserving a one dimensional branching foliation $\cT$ that subfoliates $\cF$. Let $(\fh,\gamma)$ be an admissible regular pA-pair with attracting points $a_1, a_2$ for a lift $P=\gamma^m \circ \fh^n$ (cf. Notation \ref{not-lift}). Assume that there is a leaf $\ell$ of $\wT \cap L$ which has a segment $\cI \subset \ell$ with the property that both extreme points of $\cI$ belong to the basin of attraction of $a_1$ and so that $\cI$ intersects the basin of attraction of $a_2$. Then, there is  a leaf $E \in \Ft$ fixed by $P$ which has at least two disjoint leaves $\ell_1$ and $\ell_2$ from $\wT$ joining 
$\Theta_E(a_1)$ and $\Theta_E(a_2)$ and $\ell_1, \ell_2$
fixed by $P$. In particular, $\ell_1$ and $\ell_2$ are both coarsely expanding for $P$. 
\end{prop}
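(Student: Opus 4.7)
The plan is to iterate the arc $\cI$ by large positive powers of $P$ and extract two distinct $P$-invariant leaves of $\wT$ in a limit leaf. First, since $(f,\cF)$ has the periodic commuting property and $P=\gamma^m\circ \fh^n$ is of the form guaranteed by Notation \ref{not-lift}, Proposition \ref{prop.periodiccommuting} yields that the sequence $P^k(L)$ converges in the leaf space $\cL$ (say as $k\to +\infty$) to a leaf $E\in \Ft$ fixed by $P$. This will be our candidate limit leaf.

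Second, I split the arc as $\cI=\cI_1\cup \cI_2$ by choosing a point $z\in \cI$ in the basin of attraction of $a_2$, with $x,y$ the endpoints of $\cI$ (in the basin of $a_1$), so that $\cI_1$ runs from $x$ to $z$ and $\cI_2$ from $z$ to $y$. Proposition \ref{prop.superattracting} applied to $a_1$ and $a_2$ gives that $P^k(x), P^k(y)$ converge to $a_1$ and $P^k(z)$ converges to $a_2$ in the sense of Definition \ref{not-neigh}. Consequently, each $P^k(\cI_i)\subset P^k(L)$ is an arc with one endpoint escaping toward $\Theta_{P^k(L)}(a_1)$ and the other toward $\Theta_{P^k(L)}(a_2)$.

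Third, extract a subsequential (pointed Hausdorff) limit of $P^{k_j}(\cI_i)$ inside $E$ for $i=1,2$. The closedness property (iv) of branching foliations in \S\ref{ss.bran} forces the limit to lie in leaves of $\wT\cap E$; since the endpoints accumulate in $S^1(E)$ only at $\Theta_E(a_1)$ and $\Theta_E(a_2)$, Proposition \ref{prop-landing} (landing of rays) ensures the limit is a bi-infinite curve $\ell_i\subset \wT\cap E$ with these two exact ideal points. Distinctness of $\ell_1$ and $\ell_2$ comes from the observation that $\cI_1\cap\cI_2=\{z\}$ and $P^k(z)$ leaves every compact subset of $E$; thus for every compact $K\subset E$, eventually $P^{k_j}(\cI_1)\cap K$ and $P^{k_j}(\cI_2)\cap K$ are disjoint, so $\ell_1$ and $\ell_2$ are disjoint in the interior of $E$, even though they share the common ideal point $\Theta_E(a_2)$.

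To obtain $P(\ell_i)=\ell_i$, note that $P(\ell_i)$ is a limit of the shifted sequence $P^{k_j+1}(\cI_i)$, which belongs to the same accumulation set in $E$ and still has landing points $\Theta_E(a_1), \Theta_E(a_2)$ and is disjoint from $\ell_{3-i}$; by the distinctness argument above applied to $\ell_i$ versus $P(\ell_i)$ (both come from iterates of $\cI_i$, so their compact pieces cannot separate as $k\to \infty$), it must equal $\ell_i$. Finally, since both ideal points of each $\ell_i$ are attracting fixed points of $P_\infty$, Proposition \ref{prop.landingpointsperiodic} implies that both rays of $\ell_1$ and $\ell_2$ are coarsely expanding. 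The main obstacle I anticipate is the simultaneous control of $P$-invariance and distinctness of the limit leaves: one must ensure that the Hausdorff limit really produces two single leaves of $\wT$ (not a union of branching leaves sharing the same ideal pair) and that the shift-by-one argument identifying $P(\ell_i)$ with $\ell_i$ does not inadvertently collapse the two limits — both issues rest on the escape of $P^k(z)$ to infinity, which separates the two limits in the interior of $E$.
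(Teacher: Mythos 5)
Your setup (taking $E=\lim_k P^k(L)$ via Proposition \ref{prop.periodiccommuting}, splitting $\cI$ at a point $z$ in the basin of $a_2$, and iterating forward) is the same as the paper's, but the two steps that carry the real content of the proposition do not work as written. First, disjointness: from the escape of $P^{k}(z)$ you only get that, inside any fixed compact $K\subset \mt$, the arcs $P^{k_j}(\cI_1)\cap K$ and $P^{k_j}(\cI_2)\cap K$ are eventually disjoint; this does not imply that their limits are disjoint, since Hausdorff limits of disjoint families can coincide (two disjoint arcs can converge to the same curve). The danger you flag at the end --- that the two limits collapse to one leaf --- is exactly what this argument fails to rule out. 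The paper excludes it by a different mechanism: the fixed orientation of $\wT$. Any limit leaf extracted from $P^{k}(\cI_1)$ joining $\Theta_E(a_1)$ to $\Theta_E(a_2)$ is oriented from $a_1$ to $a_2$, while one extracted from $P^{k}(\cI_2)$ is oriented from $a_2$ to $a_1$; since leaves of the branching foliation that meet must have coherent orientations, oppositely oriented leaves cannot be equal or even intersect, which is what forces disjointness.

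Second, $P$-invariance: the limit of the shifted sequence $P^{k_j+1}(\cI_i)$ need not agree with the limit along $k_j$, so ``$P(\ell_i)$ comes from iterates of $\cI_i$, hence equals $\ell_i$'' is not a proof; the set of subsequential limit leaves is only invariant as a family, and $P$ could a priori move individual leaves within it. The paper resolves this by observing that the family of limit leaves with ideal points $\Theta_E(a_1),\Theta_E(a_2)$ is closed, linearly ordered, avoids the repelling neighborhoods, and is $P$-invariant, so its two outermost elements are fixed by $P$ (and the outermost on each side carries one of the two orientations, feeding into the disjointness argument above). A further, more minor, overreach: you assert that the limit of $P^{k_j}(\cI_i)$ is a single bi-infinite leaf with ideal points exactly $\Theta_E(a_1)$ and $\Theta_E(a_2)$. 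Controlling the endpoints of the arcs does not control where interior points accumulate, and in fact the limit family typically also contains leaves with both ideal points equal (this is precisely the content of Addendum \ref{ad.config}). What one can and must prove --- as the paper does --- is that the landing points of all limit leaves avoid the repelling intervals because those are expanded by $P_\infty$, whence at least one limit leaf joins $a_1$ to $a_2$; the conclusion on coarse expansion via Proposition \ref{prop.landingpointsperiodic} is then fine.
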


\begin{proof}
Let $E$ be the limit of $P^k(L)$ as $k \rightarrow \infty$
given by Proposition \ref{prop.periodiccommuting} which is fixed by $P$. 

Call $x_1$ and $x_2$ the endpoints of $\cI$ so that $\cI$ is oriented from $x_1$ to $x_2$. Take $y \in \cI$ which belongs to the basin of attraction of $a_2$. It follows that $\cI = \cI_1 \cup \cI_2$ where $\cI_1$ is the segment oriented from $x_1$ to $y$ and $\cI_2$ the segment oriented from $y$ to $x_2$. 

Let $r_1, r_2$ be the other fixed points of $P_\infty$, which
are both repelling, and $a_1, r_1, a_2, r_2$ circularly
ordered in $\Su$.
Let $V_1, V_2$ be neighborhoods of type $U_I$ of 
$r_1, r_2$ respectively so that $P^{-1}(\bar V_i) 
\subset V_i$, given by Proposition \ref{prop.superattracting}.
Let $I_i$ be the interval of $\Su$ defined by
$V_i$ and containing $r_i$.

For $k > 0$ big enough $P^k(\cI_i)$ cannot intersect $V_1$ or $V_2$,
therefore 
the sequence $(P^k(\cI_1))$ cannot escape compact sets in $\mt$ 
as $k \rightarrow \infty$.
This is because $P^k(L)$ converges to $E$ and $P^k(\cI_i)$
intersects the basis of attraction of both $a_1$ and $a_2$.
Hence the sequence $(P^k(\cI_1))$  converges to some family of leaves 
in $\wT$ in $E$. The leaves in the limit have must land. The set of
landing points of all such limit leaves is invariant
under $P$ (since $E$ is invariant under $P$).
In addition the set of landing points of these limit leaves cannot
intersect $\Theta_E(I_1)$ or $\Theta_E(I_2)$ because 
$I_1, I_2$ are expanding intervals
under the action of $P_\infty$.
Therefore the only possible limit points of the landing leaves
must be $\Theta_E(a_1)$ and $\Theta_E(a_2)$.
See Figure \ref{f.config} for a depiction of this situation.

Since $P^k(\cI_1)$ has endpoints in neighborhoods of $a_1$ and
$a_2$ there must be a limit leaf 
which has $a_1$ and $a_2$ as landing points (as opposed to both
landing points being $a_1$ or $a_2$).
In addition this  leaf is oriented going from $a_1$ to $a_2$. 
Similarly in the limit of $(P^k(\cI_2))$ there must be at least one leaf of $\wT$ oriented from $a_2$ to $a_1$. The family of such limit leaves  is closed, ordered, and avoids neighborhoods of the repellers of $P$; so we can consider the two outermost of them and these must be fixed by $P$. Moreover, since they are oriented in a different direction, these leaves cannot be close and therefore are disjoint.
\end{proof}

Indeed we get a further property: 

\begin{addendum}\label{ad.config} 
In the setting of Proposition \ref{p-getconfig} we further obtain that in $E$ there is at least one  leaf $\ell_3$ of
$\widetilde \cT$ in $E$ between $\ell_1$ and $\ell_2$ so that both endpoints coincide with either $a_1$ or $a_2$. 
\end{addendum} 

\begin{proof}
Consider the leaves $\ell_1, \ell_2$ in $E$ obtained
in Proposition \ref{p-getconfig}.
These leaves form the boundary of an infinite band
$B$ in $E$ which accumulates only in $a_1$ and $a_2$. Therefore
any leaf of $\widetilde \cT$ contained in $B$ can only
accumulate in these two points. Assume by way of contradiction
that no such leaf has both endpoints $a_1$ or both $a_2$.
Then every leaf has one ideal point in $a_1$ and one in $a_2$.
As a consequence
the set of leaves of $\wT \cap E$ between $\ell_1$ and $\ell_2$ 
has an order making it order isomorphic to an interval.
Moreover, each such leaf has an orientation either from $a_1$ to $a_2$ or from $a_2$ to $a_1$. Since the orientations of $\ell_1$ and $\ell_2$ differ, this is a contradiction.
Therefore it cannot happen that every leaf in between 
$\ell_1$ and $\ell_2$ has different endpoints. 
\end{proof}

\subsection{Shadows and visual measure} 
Let $(f,\cF)$ with the periodic commuting property (Definition \ref{defi.periodicproperty}) preserving a one dimensional branching foliation $\cT$ that subfoliates $\cF$.

In some cases it is possible to control the visual measure of a center arc from a point in a leaf $L' \in \Ft$ if one can exclude certain configurations. For a point $x \in L' \in \Ft$ and a subset $X \subset L'$ we call the \emph{shadow} of $X$ from $x$ to the set of points in $S^1(L') \cong T^1_xL'$ corresponding to geodesic rays from $x$ intersecting $X$.

\begin{figure}[ht]
	\begin{center}
		\begin{overpic}[scale=0.843]{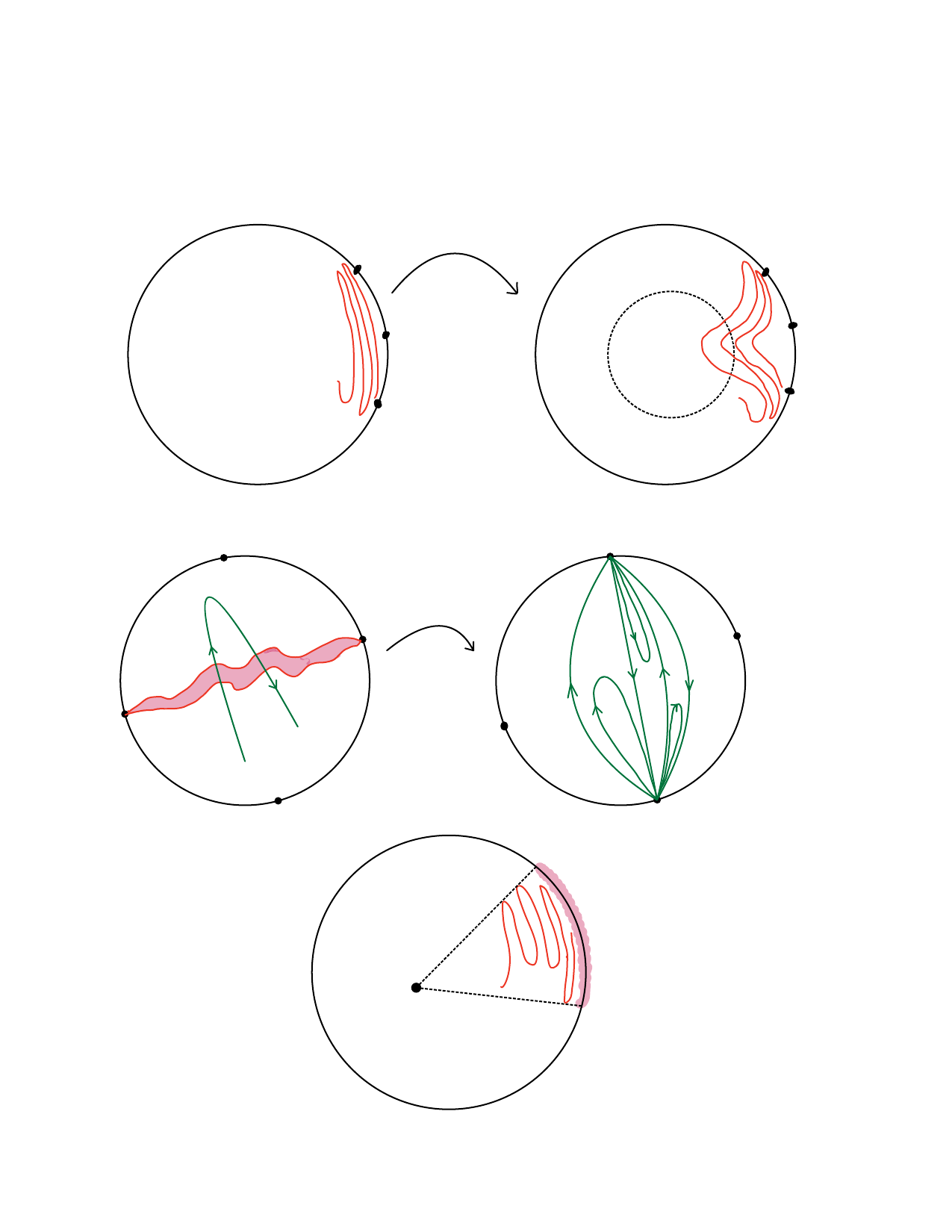}
		\end{overpic}
	\end{center}
	\vspace{-0.5cm}
	\caption{{\small The shadow.}}\label{f.shadow}
\end{figure}

\begin{defi}\label{defi.smallvisual}
We say that $\cT$ has \emph{small visual measure} in $\cF$ if for every $\eps>0$ there is $R>0$ such that if  $x \in L \in \Ft$ and $I$ is a segment of $\wT \cap L$ at distance larger than $R$ from $x$, then the shadow of $I$ from $x$ has visual measure smaller than $\eps$ in $S^1(L) \cong T^1_xL$ (cf. \S \ref{ss.boundaryvisual}). 
\end{defi}

The distance condition means that $d_L(x,y) > R$ for any $y$ in $I$.

\begin{prop}\label{prop.visualabstract}
Assume that there is a regular admissible pA-pair for $(f,\cF)$ and that $\cT$ does not have small visual measure in $\cF$. Then, there is a regular pA-pair $(\fh,\gamma)$ and a leaf $L \in \Ft$ fixed by $P$ (cf. Notation \ref{not-lift}) which has at least two disjoint leaves of $\wT$ each fixed by
$P$ and whose landing points are $\Theta_L$ images of distinct
attracting fixed points of $P_\infty$ in $\Su$.
In particular, these leaves of $\wT$ are both coarsely expanding for $P$.  
\end{prop}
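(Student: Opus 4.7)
The plan is to apply Proposition \ref{p-getconfig} with $\cI=I_R$, where $(x_R,L_R,I_R)$ is extracted from the failure of small visual measure. Negating Definition \ref{defi.smallvisual} gives $\eps_0>0$ such that for every $R>0$ there exist $x_R\in L_R\in\Ft$ and a segment $I_R\subset\wT\cap L_R$ with $d(x_R,I_R)>R$ and with shadow $J'_R\subset S^1(L_R)$ from $x_R$ of visual measure $\geq\eps_0$. We choose $R$ large at the end.

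Using Proposition \ref{prop-manypApairs} (together with a careful inspection of its proof: the deck transformation can be chosen so that the complement of the target interval sits in a prescribed cyclic gap between two consecutive original fixed points), we conjugate the given regular admissible pA-pair to obtain $(\fh,\gamma)$ such that, firstly, all four $P_\infty$-fixed points $a_1,r_1,a_2,r_2$ (in this cyclic order on $\Su$) satisfy $\Theta_{L_R}(\cdot)\in J''_R$, where $J''_R$ is the ``middle third'' of $J'_R$ in the visual metric of $S^1(L_R)$ pulled back by $\Theta_{L_R}$; and secondly, the complement $K_R:=S^1(L_R)\setminus J'_R$ lies inside the cyclic arc from $\Theta_{L_R}(r_2)$ to $\Theta_{L_R}(a_1)$ that avoids $r_1$ and $a_2$. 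This arc is contained in the $P_\infty$-basin of attraction of $a_1$ on $S^1(L_R)$, so the directions from $x_R$ to both endpoints of $I_R$ lie in this basin. Moreover, since $\Theta_{L_R}(a_2)\in J'_R$, the segment $I_R$ meets the ray from $x_R$ in direction $\Theta_{L_R}(a_2)$, which is in the basin of $a_2$. The ``middle-third'' condition keeps all these directions at $S^1(L_R)$-visual distance at least $\eps_0/3$ from every $P_\infty$-fixed point.

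The final task is to upgrade these direction-level statements to actual basin membership in $\mt$. Using Lemma \ref{l.ghandtau} and Proposition \ref{prop.superattracting}, we construct attracting $\mt$-neighborhoods $U_I$ of $a_1$ and $a_2$ whose defining open intervals $I\subset\Su$ have endpoints at visual distance $\geq\eps_0/3$ from the relevant repellers. In the $\cmno$ leaf $L_R$, the geodesic $\ell^{L_R}_I$ bounding the half-plane $(L_R)^0_I\subset U_I$ then sits at distance from $x_R$ bounded above by some $D_0=D_0(\eps_0)$ independent of $R$, by the standard hyperbolic estimate for the distance to a geodesic whose endpoints subtend a given visual angle. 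Choosing $R>D_0$, each endpoint of $I_R$ lies past $\ell^{L_R}_I$ and hence in the basin of $a_1$, and the point of $I_R$ in direction $\Theta_{L_R}(a_2)$ lies in the basin of $a_2$. Thus $\cI:=I_R$ satisfies the hypothesis of Proposition \ref{p-getconfig}, whose application yields the required two disjoint $P$-fixed leaves of $\wT$ with landing points $\Theta_E(a_1)$ and $\Theta_E(a_2)$ in a $P$-fixed leaf $E\in\Ft$. The main technical point is the uniformity of $D_0$: the conjugated pair depends on $R$, but the ``middle-third'' placement keeps the attracting interval $I$ of visual size at least roughly $\eps_0/3$, and $\cmno$ geometry then controls the distance from $x_R$ to $\ell^{L_R}_I$ uniformly in $R$.
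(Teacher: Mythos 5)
Your overall skeleton is the same as the paper's: conjugate the given regular pA-pair so that its fixed points sit inside the shadow produced by the failure of small visual measure, verify the hypothesis of Proposition \ref{p-getconfig}, and invoke that proposition. The gap is in the step where you upgrade direction-level information at $x_R$ to actual basin membership in $\mt$ with a bound $D_0=D_0(\eps_0)$ independent of $R$. Proposition \ref{prop.superattracting} produces a forward-invariant attracting neighborhood $U_I$ only for a \emph{sufficiently small} interval $I$ around the attracting point, with no lower bound on how small; and by the remark following Lemma \ref{l.ghandtau}, the trace $U_I\cap E$ is comparable to the half-plane $E_I^0$ with uniform constants only on one fundamental domain of the base leaf (you may base the construction at $L_R$, but that does not help with the size of $I$). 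So either you take $I$ small enough for Proposition \ref{prop.superattracting}, and then there is no lower bound on the visual angle that $\Theta_{L_R}(I)$ subtends at $x_R$, hence no bound on $d_{L_R}(x_R,\ell^{L_R}_I)$ from the hyperbolic estimate; or you take $I$ of definite visual size (endpoints at distance $\eps_0/3$ from the repellers), and then the claim that $U_I$ lies in the basin of attraction of $a_1$ is not given by any quoted result — Proposition \ref{prop-dynamicspA} (iv) only provides, for each individual $\xi$, \emph{some} neighborhood $U_\xi$ inside a basin, with uncontrolled depth. Placing the fixed points of the conjugated pair in the middle third of the shadow is a statement about the circle action only; it does not control where the core $T_{P'}$ of the conjugated pair, or the scale of its invariant neighborhoods, sits in $\mt$ relative to $x_R$. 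Since the conjugating element depends on $R$, the threshold beyond which points in good directions lie in the basins also depends on $R$ — exactly the circularity your uniform $D_0$ was supposed to break, and which is not broken by the cited lemmas.

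The paper avoids this by a compactness step you skipped: it first applies deck transformations (isometries on leaves) so that $x_n\to x_0$ and $L_n\to L$, extracts a limit interval $J_\infty\subset S^1(L)$ of visual measure larger than $\eps$, and only then chooses a \emph{single} conjugated pA-pair, independent of $n$, with all fixed points inside $J_\infty$. With the pair, and hence the neighborhoods $U_{a_2}$, $V_{r_i}$, $U_{\xi_i}$, fixed once and for all, the arcs $I_n$ (which escape to infinity from the convergent basepoints and whose shadows converge to $J_\infty$) eventually enter those fixed neighborhoods; note also that the paper does not need the endpoints of $I_n$ themselves to lie in a basin — it passes to subarcs $S_n$ with endpoints in $U_{\xi_1},U_{\xi_2}$, which lie in the basin of $a_1$, and which cross $U_{a_2}$. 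If you insert this deck-translation and limit step (after which your middle-third bookkeeping and the uniform $D_0$ become unnecessary), your argument reduces to the paper's.
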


\begin{proof}
By assumption there is $\epsilon > 0$, and there are
points $x_n$ in leaves $L_n \in \Ft$ such that there are segments $I_n$ of leaves of $\wT \cap L_n$ at distance bigger than $n$ from $x_n$ and whose shadow in $S^{1}(L_n)$ has visual measure larger than $\eps$. 
Deck transformations act as isometries on leaves of 
$\widetilde \cF$. Hence  up to applying deck transformations and
a subsequence we can assume that $x_n$ converges to a point $x_0$.
We can assume that $L_n$ converges to $L$ (notice $\cF$ is a 
branching foliation so a priori all $L_n$ could contain $x_0$).

Let $J_n$ be the shadow of $I_n$ on $S^1(L_n)$.
Up to another subsequence 
we can also assume that the intervals $J_n$ in $S^1(L_n)$ converge to an interval $J_\infty$ of visual measure larger than $\eps$ in $S^1(L)$.
We can assume without loss of generality that $J_\infty \neq S^1(L)$
by taking shorter segments $I_n$. 

Using Proposition \ref{prop-manypApairs} we can consider $(\fh,\gamma)$ an admissible regular pA-pair such that it has all of its fixed points in the interior of $J = \Theta^{-1}(J_{\infty})$.
Call the attracting points $a_1, a_2$ and the repelling ones $r_1, r_2$. Since $J \neq \Su$ we can order these points in $\Su$
up to renumbering so that $a_2$ is inside the segment $J' \subset J$ whose endpoints are $r_1$ and $r_2$. Consider neighborhoods $U_{a_i}$ and $V_{r_i}$ of each in $\mt$ as in Proposition \ref{prop.superattracting}. 

For large enough $n$ we have that the arcs $I_n$ contain subarcs $S_n \subset I_n$ joining $V_{r_1}$ with $V_{r_2}$ and intersecting $U_{a_2}$. We can also assume that $S_n$ are such that for some points $\xi_1, \xi_2$ in each connected component of $J \setminus J'$ the segment $S_n$ intersects the neighborhoods $U_{\xi_i}$ as in Proposition \ref{prop-dynamicspA} ($iv$). 
In particular these points in $U_{\xi_i}$ are in the basis
of attraction of $a_1$ (the other attracting point).
Denote as $S_n^1$ and $S_n^2$ two segments of $S_n$ joining respectively $U_{a_2}$ with $U_{\xi_1}$ and $U_{\xi_2}$. 

Now the result follows from Proposition \ref{p-getconfig}. 
\end{proof}

\begin{remark}
Note that the proposition admits a symmetric statement
since it can be applied to $(\fh^{-1}, \gamma)$ which is a regular pA-pair for $f^{-1}$ which also preserves $\cF$ and $\cT$. So, under those assumptions there also exist a fixed leaf of $\Ft$ with two distinct leaves of $\cT$ being fixed and coarsely contracting. Disjointness of the curves is important since we do not assume that $\cT$ is a true foliation. This will allow us to rule out such behavior for centers in the partially hyperbolic setting. 
\end{remark}

\section{Pseudo-Anosov pairs and partially hyperbolic foliations}\label{s.pApairsandPH}
In this section $f: M \to M$ will be a partially hyperbolic diffeomorphism preserving two transverse branching foliations $\cs$ and $\cu$. We denote by $\cW^{s}$ and $\cW^{u}$ the strong stable and strong unstable foliations respectively, and by $\cW^{c}$ the center (branching) foliation. 
We will assume that at least one of $\cs$ or $\cu$ is 
$\R$-covered and uniform and that some lift $\hat f$ acts
as a translation on this leaf space.
Many results will be stated for $\cs$ but obviously work
equally well for $\cu$.

\subsection{Periodic leaves for pseudo-Anosov pairs}\label{ss.pApairsperiodic}

Here we restate a result from \cite{BFFP-3, BFFP-4} in the context of pseudo Anosov pairs. 

\begin{prop}\label{prop.periodic}
Assume that $(\fh,\gamma)$ is a pA pair for the foliation $\cs$. Then, there exists $n>0$, $m \in \ZZ_{\neq 0}$ and a leaf $L \in \wcs$ such that $\gamma^m \circ \fh^n(L) = L$. 
\end{prop}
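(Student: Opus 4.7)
My plan is to analyze the action of the commuting abelian group $\langle \fh, \gamma\rangle$ on the leaf space $\cL^{cs} \cong \RR$ and exploit the pA attracting/repelling structure on $\Su$. Since $\wcs$ is $\R$-covered and $(\fh,\gamma)$ is a good pair, both $\fh$ and $\gamma$ act as fixed-point-free commuting orientation-preserving homeomorphisms of $\cL^{cs}$. First I would dispose of the easy case: if some $\gamma^m\circ\fh^n$ with $(m,n)\neq(0,0)$ already fixes a leaf of $\wcs$, then since neither $\fh$ nor $\gamma$ alone fixes any leaf one has $m,n\neq 0$ automatically, and replacing $(m,n)$ by $(-m,-n)$ if necessary arranges $n>0$, concluding the proof.

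Suppose instead that no such nontrivial composition has a fixed leaf. Then the $\langle\fh,\gamma\rangle$-action on $\cL^{cs}\cong\RR$ is free, and by Hölder's theorem for free abelian actions on $\RR$ by orientation-preserving homeomorphisms it is jointly conjugate to a subgroup of the translation group of $\RR$. This yields a translation number homomorphism $\tau:\langle\fh,\gamma\rangle\to\RR$ with $\tau(\fh),\tau(\gamma)$ nonzero and $\tau(\fh)/\tau(\gamma)$ irrational (a rational ratio would produce a nontrivial element with translation number zero, which in a translation-conjugate action fixes every leaf). Consequently $\langle\fh,\gamma\rangle$-orbits in $\cL^{cs}$ are dense, and in the quotient $N=\mt/\langle\gamma\rangle$ the induced map $\fh_N$ acts on the leaf space $\cL^{cs}/\langle\gamma_*\rangle\cong S^1$ as an irrational rotation. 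Using Definition \ref{def-pApair} and Propositions \ref{prop-core}--\ref{prop-dynamicspA}, the core $T_P$ of $P=\gamma^{m_0}\circ\fh^{n_0}$ is closed, $\langle\fh,\gamma\rangle$-invariant, meets every $\wcs$-leaf in a compact set, and descends to a compact invariant set $T_P/\langle\gamma\rangle\subset N$ that surjects onto $S^1$ under the leaf-space projection.

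The hard part is producing a contradiction in this irrational case, where one must leverage the partially hyperbolic structure (not merely the good pair structure) together with the pA attracting dynamics. The plan is the following: the attracting neighborhoods $U$ of the super-attracting fixed points of $P_\infty$ built in Lemma \ref{l.ghandtau} are $\gamma$-invariant and satisfy $P(\overline U)\subset U$, which imposes a coherent family of half-planes in each leaf with controlled size as the leaf varies. Density of $\langle\fh,\gamma\rangle$-orbits on $\cL^{cs}$ would then force the attracting and repelling half-planes on arbitrarily close leaves to interlock in ways incompatible with the compactness of $T_P/\langle\gamma\rangle$ and the fixed pA index of $P_\infty$. Following the strategy of \cite[\S 11]{BFFP-3} and \cite{BFFP-4}, one combines this with the exponential contraction along the strong stable foliation $\cW^s\subset\wcs$ to produce a sequence $P^k(L_0)$ that accumulates in $\cL^{cs}$, which forces rational commensurability of $\tau(\fh)$ and $\tau(\gamma)$ and contradicts the standing irrationality assumption; the earlier rational case then yields the desired fixed leaf.
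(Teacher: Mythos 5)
Your setup is sound and in fact mirrors the structural ingredients of the paper's proof: you correctly note that both $\fh$ and $\gamma$ act freely on $\cL^{cs}\cong\RR$, that the case where some nontrivial $\gamma^m\circ\fh^n$ already fixes a leaf is immediate (with $m,n\neq 0$ forced and the sign of $n$ adjustable), and that the core $T_P$ is nonempty, $\langle\fh,\gamma\rangle$-invariant, and has compact image in $N=\mt/\langle\gamma\rangle$. This is exactly the reduction the paper makes. However, there is a genuine gap at the decisive step: ruling out the case where no nontrivial composition fixes a leaf. The paper does this by observing that $\fh$ descends to a partially hyperbolic map of $\mt/\langle\gamma\rangle$ preserving the compact set $T_P/\langle\gamma\rangle$, and then invoking, with its hypotheses now verified, the fixed-leaf result proved in \cite[Proposition 10.3]{BFFP-3} (itself resting on \cite[Proposition 9.1]{BFFP-2}) or \cite[Proposition 4.1]{BFFP-4}. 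You instead announce a plan: that the $\gamma$-invariant attracting neighborhoods, density of leaf-space orbits, and compactness of $T_P/\langle\gamma\rangle$ will ``interlock'' incompatibly, and that exponential contraction along $\cW^s$ will ``produce a sequence $P^k(L_0)$ that accumulates in $\cL^{cs}$.'' No mechanism is given for either assertion, and the second is circular as stated: under your standing assumption the action is (semi)conjugate to a translation group with $P$ having nonzero translation number, so no $P$-orbit in $\cL^{cs}$ accumulates; exhibiting such accumulation is precisely the content of the hard argument, which must extract it from the partially hyperbolic dynamics on the compact core in the solid torus, and that extraction is entirely absent from your write-up.

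Two smaller points. First, Hölder's theorem for a free action on $\RR$ gives injectivity of the translation-number homomorphism and a monotone semiconjugacy to translations, not in general a conjugacy; your conclusions (no nontrivial element with zero translation number, irrational rotation number for the induced circle map on $\cL^{cs}/\langle\gamma\rangle$) survive this weakening, but ``acts as an irrational rotation'' should read ``has irrational rotation number.'' Second, if you intend to close the gap by citation rather than by a new argument, the correct move is the paper's: state precisely the proposition of \cite{BFFP-3} or \cite{BFFP-4} being used and check that the data $(T_P/\langle\gamma\rangle$ compact, invariant, the projected map partially hyperbolic on $\mt/\langle\gamma\rangle)$ satisfies its hypotheses, rather than appealing to the ``strategy'' of those papers.
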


\begin{proof}
Under these conditions we proved in Proposition \ref{prop-dynamicspA}
(ii) that the set $T_P$ is non empty. 
The quotient of $T_P$ in $\mt/\gamma$ $-$ that is, $T_P/\gamma$
is compact.
Since $\gamma$ is a deck transformation,
the map $\hat f$ projects to a map, which we
denote by $f_0$,  in $\mt/\gamma$ which is 
partially hyperbolic and preserves the compact set $T_P/\gamma$.

Let $z \in T_P/\gamma$ and let $y \in T_P/\gamma$ be an accumulation
point of $(f^n_0(z))$. Take $i, j$ big enough, with $j$ much 
bigger than $i$, such that $f^i_0(z)$ and $f^j_0(z)$ are both
very close to $y$.

Consider a small closed unstable segment $\tau$
containing $f^i_0(z)$ in
its interior. Since $f_0$ increases unstable
lengths uniformly, then if $j$ is big enough,
every leaf of $\wcs/\gamma$  intersecting
$\tau$ intersects the interior of $f^{j-i}_0(\tau)$.
This set of leaves of $\wcs/\gamma$ is an interval. This
produces a fixed $\wcs/\gamma$ leaf under
$f^{j-i}_0$. Lifting to $\mt$ proves the Proposition.
\end{proof}

This is the same proof as in \cite[Proposition 10.3]{BFFP-3} (which itself uses 
\cite[Proposition 9.1]{BFFP-2}) or \cite[Proposition 4.1]{BFFP-4}.

\begin{remark}\label{rem.incoherence}
Notice that once one has this, one immediately deduces that  $\cs$ cannot be a true foliation (cf. \cite[Theorem B]{BFFP-2} and \cite[\S 5]{BFFP-4}). 
This is related with the fact that partially hyperbolic diffeomorphisms having pA pairs with respect to the $\cs$ or $\cu$ foliation cannot be \emph{dynamically coherent} and will force that the map $h$ in the definition of \emph{collapsed Anosov} flow is not a homeomorphism. 
\end{remark}

As a consequence of Proposition \ref{prop.periodic} we deduce immediately that:

\begin{cor}\label{cor.periodic}
If $(f,\cs)$ has the commuting property and has an admissible pA-pair, then it has the periodic commuting property.  
\end{cor}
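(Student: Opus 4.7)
The plan is to observe that this corollary is an essentially immediate consequence of Proposition \ref{prop.periodic}, so the bulk of the work has already been done. Since the commuting property is part of the hypothesis and is exactly the first requirement in Definition \ref{defi.periodicproperty}, I only need to verify the second requirement: that for every admissible pA-pair $(\fh', \gamma')$ for $(f,\cs)$, there exist $k > 0$, $m \in \ZZ \setminus \{0\}$, and a leaf $L \in \wcs$ such that $(\gamma')^m \circ (\fh')^k(L) = L$.

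The strategy is to check that any admissible pA-pair is a pA-pair in the sense of Definition \ref{def-pApair} (this is automatic by Definition \ref{defi.admis}), and then invoke Proposition \ref{prop.periodic} directly. More precisely, given an arbitrary admissible pA-pair $(\fh', \gamma')$ for $(f,\cs)$, Proposition \ref{prop.periodic} produces $n > 0$, $m \in \ZZ_{\neq 0}$, and a leaf $L \in \wcs$ such that $(\gamma')^m \circ (\fh')^n(L) = L$, which is precisely the conclusion needed. Setting $k = n$ finishes the verification.

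The only role of the hypothesis that at least one admissible pA-pair exists is to ensure that the quantification in the periodic commuting property is not vacuous. There is no genuine obstacle to overcome at this point, as all of the substantive content lies in Proposition \ref{prop.periodic}, which in turn relies on the nonemptiness of $T_P$ (Proposition \ref{prop-dynamicspA}(ii)), the compactness of $T_P/\gamma$ in $\mt/\gamma$, and the invariance of this compact set under the partially hyperbolic dynamics induced by $\fh$, allowing the argument from \cite[Proposition 10.3]{BFFP-3} or \cite[Proposition 4.1]{BFFP-4} to be applied. Thus the corollary is really just a bookkeeping statement recording the interplay between the general framework of pA-pairs and the partially hyperbolic setting.
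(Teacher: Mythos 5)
Your proof is correct and matches the paper's reasoning: the paper states the corollary as an immediate consequence of Proposition \ref{prop.periodic}, exactly as you do, since every admissible pA-pair is in particular a pA-pair to which that proposition applies, yielding the fixed leaf required by Definition \ref{defi.periodicproperty}.
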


\begin{remark}\label{remark-phinleaf}
Note that if $(\fh, \gamma)$ is a pA pair and $P=\gamma^m \circ \fh^n$ with $n>0$, the map $P$ is a lift of a positive iterate of $f$  therefore is partially hyperbolic and the invariant bundles are exactly the lifts of those of $f$ in $M$ to $\mt$ (the stable switches with the unstable if we take $n<0$). 
\end{remark}

\begin{remark}\label{rem.novikov}
Note that both $\wc$ and $\ws$ are one dimensional (branching) subfoliations of $\cs$. By construction, it holds that $\wc$ is also a subfoliation of $\cu$ (which is also a branching foliation) and therefore we know that in $\mt$ we have that a curve of $\wws$ cannot intersect the same leaf of $\wwc$ twice. 
\end{remark}

\subsection{Visual measure and distance of curves to geodesics} 

Here we show the following result which has validity beyond the context we are working in this paper as it does not require a full set of pA pairs
(defined later). 

\begin{teo}\label{teo.curvesgeneral}
Let $f: M \to M$ be a partially hyperbolic diffeomorphism preserving branching foliations $\cs$ and $\cu$ so that $(f,\cs)$ has the commuting property (see subsection \ref{ss.abundancepApairs}). Assume moreover that there is an admissible regular pA pair for  $(f,\cs)$.  Then it follows that both $\wc$ and $\ws$ have small visual measure in $\cs$ (cf. Definition \ref{defi.smallvisual}). Moreover, there is $R>0$ such that given a center leaf $\ell \in \wwc$ (resp. a stable leaf $\ell \in \wws$) in $L \in \wcs$ if we denote by $\hat \ell$ a segment a or ray of $\ell$ whose landing is either $\ell^-$ or 
$\ell^+ \in L \cup S^1(L)$ then the geodesic segment or geodesic ray $\hat r$ of $L$ 
joining either the endpoints of $\hat \ell$ or the starting 
point of $\hat \ell$ with its landing point 
is contained in the $R$-neighborhood in $L$ of $\hat \ell$. 
\end{teo}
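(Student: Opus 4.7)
The plan is to prove by contradiction that both $\ws$ and $\wc$ have small visual measure in $\cs$, and then to deduce the chord-neighborhood assertion from a clean hyperbolic-geometry argument. Invoke Corollary~\ref{cor.periodic} so that $(f,\cs)$ enjoys the periodic commuting property, and hence all results of Sections~\ref{ss.pAperiodic}--\ref{ss.pAandsubfoliations} apply. Suppose for contradiction that either $\ws$ or $\wc$ fails to have small visual measure in $\cs$. Then Proposition~\ref{prop.visualabstract} produces a regular admissible pA-pair $(\fh,\gamma)$ and a lift $P=\gamma^m\circ\fh^n$ with $n>0$ (Notation~\ref{not-lift}) fixing some $L\in\wcs$, together with two disjoint leaves $\ell_1,\ell_2\subset L$ in $\wws$ or $\wwc$ respectively that are $P$-fixed, coarsely expanding, and whose landing points in $S^1(L)$ are the $\Theta_L$-images of distinct attracting fixed points $a_1,a_2$ of $P_\infty$.

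For the strong stable case the contradiction is immediate from partial hyperbolicity. Since $\gamma^m$ acts isometrically on $\mt$ and $\fh^n$ is the lift of a positive iterate $f^n$ of the partially hyperbolic diffeomorphism $f$, the map $P$ uniformly contracts the strong stable bundle $E^s$. Hence $P^{-1}$ uniformly expands strong stable arcs, and for any nontrivial compact arc $\cJ$ on a ray of a strong stable leaf the length of $P^{-k}(\cJ)$ grows exponentially with $k$, so $P^{-k}(\cJ)$ cannot lie in any fixed compact arc $\cI$. This contradicts the coarsely expanding property of $\ell_1$, so $\ws$ has small visual measure in $\cs$.

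The center case is the main obstacle, as partial hyperbolicity gives no uniform dynamical control along centers. By Addendum~\ref{ad.config} there is a center leaf in $\wwc\cap L$ lying between $\ell_1,\ell_2$ whose two rays both land at the same attractor, say $a_1$. Taking a Hausdorff limit of a subsequence of its $P$-orbit (all of whose elements lie in the compact band between $\ell_1,\ell_2$ and have both ideal endpoints at the $P_\infty$-fixed point $a_1$) and invoking the closure axiom (iv) of branching foliations, one obtains a $P$-fixed leaf with the same property, which I again call $\ell_3$. Replacing $P$ by $P^2$ if necessary to ensure that it preserves the orientation of $\ell_3$, Proposition~\ref{prop.landingpointsperiodic} together with coarse expansion of both rays of $\ell_3$ forces the dynamics of $P^2$ on $\ell_3\cong\R$ to be conjugate to a dilation with both ends attracting, so $P^2$ admits a unique repelling fixed point $y\in\ell_3$. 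The set $\ell_3\cup\{a_1\}$ is a Jordan curve in the compactified disc $L\cup S^1(L)$ bounding a disc $D$ with $\overline{D}\cap S^1(L)=\{a_1\}$. The strong stable leaf $s(y)$ is $P^2$-invariant and uniformly contracted to $y$, so Proposition~\ref{prop.landingpointsperiodic} applied to $\ws$ (as a one-dimensional branching subfoliation of $\cs$) shows that both rays of $s(y)$ are coarsely contracting and land at repelling fixed points of $P^2_\infty$, hence at $r_1$ or $r_2$, which lie in $S^1(L)\setminus\{a_1\}$. On the other hand $s(y)$ crosses $\ell_3$ transversally at $y$, so one of its rays enters $D$; by Remark~\ref{rem.novikov} this ray cannot cross $\ell_3$ again, hence it is trapped in $\overline{D}$ and its landing point must lie in $\overline{D}\cap S^1(L)=\{a_1\}$, contradicting the previous sentence. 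Therefore $\wc$ also has small visual measure in $\cs$.

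Finally, for the chord-neighborhood statement, fix any $\epsilon<\pi$ and let $R$ be the constant of Definition~\ref{defi.smallvisual} associated with $\epsilon$ for both $\wc$ and $\ws$. Given $\hat\ell$ and its chord $\hat r$ as in the statement, suppose for contradiction that some $p\in\hat r$ satisfies $d_L(p,\hat\ell)>R$. The shadow of $\hat\ell$ from $p$ in $S^1(L)\cong T^1_pL$ is a connected subset, being the image of the connected set $\hat\ell$ under the continuous map sending $q$ to the direction at $p$ of the geodesic from $p$ to $q$. Since $p$ lies on the geodesic joining the two endpoints (or the starting point and ideal landing point) of $\hat\ell$, the limiting directions from $p$ to those two endpoints are antipodal in $T^1_pL$, so the closure of the shadow contains two antipodal points and therefore has visual measure at least $\pi$. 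But small visual measure applied at $p$ forces the shadow to have measure less than $\epsilon<\pi$, a contradiction. Hence every point of $\hat r$ lies within distance $R$ of $\hat\ell$, concluding the proof.
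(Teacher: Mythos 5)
Your reduction via Corollary~\ref{cor.periodic} and Proposition~\ref{prop.visualabstract}, your treatment of the strong stable case (a coarsely expanding $P$-fixed stable ray is impossible since $P$ is a lift of a positive iterate of $f$ and contracts $E^s$), and your final shadow/antipodal-directions argument deducing the chord statement from small visual measure are all correct and essentially the paper's route (the last step is the paper's Lemma~\ref{lem.geodesic} in contrapositive form, and handles rays directly rather than by exhausting with segments). The problem is the center case. You pass from the leaf given by Addendum~\ref{ad.config} (both rays landing at the same attractor $a_1$) to a \emph{$P$-fixed} such leaf by ``taking a Hausdorff limit of its $P$-orbit in the compact band between $\ell_1,\ell_2$.'' That band is not compact: its closure in $L\cup S^1(L)$ meets $S^1(L)$ exactly in the two attracting points $a_1,a_2$, and forward iteration by $P$ pushes points toward those attractors. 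If the Addendum leaf happens to lie in the basin of attraction of $a_1$ (nothing rules this out at this stage), its forward iterates can escape every compact subset of $L$ and leave no limit leaf at all; and even when subsequential limits exist, a limit along a subsequence $P^{k_j}$ is not automatically $P$-invariant, nor must its two ideal points both equal $a_1$ (ideal points are not continuous under such limits, and limit leaves could join $a_1$ to $a_2$). This is a genuine gap, not a technicality: ruling out a center leaf with both ideal points at an attracting fixed point \emph{without} assuming it is $P$-fixed is exactly the content of the paper's later Proposition~\ref{prop.centerssamepoint}, whose proof observes in one line (your stable-through-a-fixed-point argument) that a fixed such leaf is impossible and then spends several pages on the non-fixed case.

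The fix is short and avoids the detour entirely (this is the paper's Lemma~\ref{lem.imposibleconf}): since $\ell_1$ is $P$-fixed and coarsely expanding, $P$ has a fixed point $x\in\ell_1$; the stable leaf $s(x)$ is $P$-invariant and cannot meet $\ell_2$ (an intersection point would be a second $P$-fixed point on a stable leaf, impossible since $P$ contracts stables), so the ray of $s(x)$ on the $\ell_2$-side stays in the band between $\ell_1$ and $\ell_2$, whose ideal boundary is $\{\Theta_L(a_1),\Theta_L(a_2)\}$; by Proposition~\ref{prop-landing} this ray lands, necessarily at an attracting point, which by Proposition~\ref{prop.landingpointsperiodic} would force it to be coarsely expanding, contradicting that it is a stable ray. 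Substituting this for your limit construction makes your proof complete and then coincides with the paper's.
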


\begin{remark}\label{rem-horocycles}
It is important to mention what this Theorem does not say. In particular, it does not ensure that the ray $\hat \ell$ is contained in a bounded neigborhood of the geodesic ray (in particular, it does not say that $\hat \ell$ is a quasigeodesic). Later, we will use this result to show that under some more assumptions, all center curves are quasigeodesics. This cannot hold for stable curves as there may be some stable curves which have both endpoints being the same (see eg. \cite{BGHP}). However, the fact that the strong stables have small visual measure is something quite remarkable as they can be made to have tangent vectors arbitrarily close to horocycles (see \cite{BGHP}).  
\end{remark}

Note first that the fact that curves from $\wwc$ and $\wws$ land in leaves of $\wcs$ is direct from Proposition \ref{prop-landing}. To show that the visual measure of the arcs, rays or shadows is small we will use the following result about center curves that will also be useful later: 

\begin{lema}\label{lem.imposibleconf} 
Let $f$ be a partially hyperbolic diffeomorphism preserving branching foliations $\cs$ and $\cu$ so that $(f, \cs)$ has the commuting property, and there is an admissible regular pA pair $(\fh,\gamma)$.  Let $P$ as in Notation \ref{not-lift} with $P(L) = L$ for some $L \in \wcs$. Then, there cannot be two disjoint center curves $c_1$ and $c_2$ of $\wwc$ in $L$ which are fixed by $P$ and join the $\Theta_L$ images of
distinct attracting fixed points of $P_{\infty}$ in $\Su$.
\end{lema}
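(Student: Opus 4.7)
The plan is to argue by contradiction using the strong stable foliation $\wws$ inside $L$. Suppose $c_1, c_2$ are two disjoint $P$-fixed center curves in $L$, both joining $\Theta_L(a_1)$ to $\Theta_L(a_2)$, where $a_1, r_1, a_2, r_2$ are the fixed points of $P_\infty$ on $\Su$ in circular order (attracting, repelling, attracting, repelling). Since $c_1, c_2$ share both ideal endpoints, inside the closed disk $L \cup S^1(L)$ they bound a closed topological bigon $\overline B$ whose intersection with $S^1(L)$ is exactly the two ``pinches'' $\{\Theta_L(a_1), \Theta_L(a_2)\}$.

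\textbf{Step 1: producing a $P$-invariant stable leaf.} Since both endpoints of $c_1$ correspond to attracting points of $P_\infty$, Proposition~\ref{prop.landingpointsperiodic} implies that both rays of $c_1$ are coarsely expanding under $P$, so that $P|_{c_1}$ is an orientation-preserving homeomorphism of $\RR$ with both ends pushed outward; in particular it has a fixed point $p_1 \in c_1$. Let $s_1 = \ws(p_1) \subset L$ be the strong stable leaf through $p_1$. Since $p_1$ is $P$-fixed and $P$ permutes stable leaves, $s_1$ is $P$-invariant. Writing $P = \gamma^m \circ \fh^k$ with $k>0$ and noting that $\gamma^m$ acts by isometries on leaves of $\wcs$, the map $P|_{s_1}$ is an exponential contraction in arc length toward $p_1$. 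Let $s_1^B$ denote the ray of $s_1$ emanating from $p_1$ into the open bigon $B$.

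\textbf{Step 2: $s_1^B$ is trapped in $\overline B$.} Suppose for contradiction that there exists $q \in s_1^B \cap c_2$. As both $s_1$ and $c_2$ are $P$-invariant, the set $s_1 \cap c_2$ is $P$-invariant, hence $P^n(q) \in s_1 \cap c_2$ for all $n \ge 0$. The contraction of $P|_{s_1}$ toward $p_1$ forces $P^n(q) \to p_1$ in $L$; since $c_2$ is closed in $L$, this gives $p_1 \in c_2$, contradicting $c_1 \cap c_2 = \emptyset$. Therefore $s_1^B \subset \overline B$, and Proposition~\ref{prop-landing} applied to the one-dimensional branching sub-foliation $\wws$ of $\wcs$ shows that $s_1^B$ lands at a single point of $S^1(L)$. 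Being contained in $\overline B$, this landing point lies in $\overline B \cap S^1(L) = \{\Theta_L(a_1), \Theta_L(a_2)\}$.

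\textbf{Step 3: contradiction via the landing of $s_1$.} The leaf $s_1$ is $P$-fixed and both of its rays are coarsely contracting. The argument of Proposition~\ref{prop.landingpointsperiodic} applies equally to the sub-foliation $\wws$ (it rests only on the dynamics of $P_\infty$ via Proposition~\ref{prop.superattracting}) and shows that the landing point of a coarsely contracting fixed ray is a \emph{repelling} fixed point of $P_\infty$. Hence $s_1^B$ must land at $r_1$ or $r_2$, which is disjoint from $\{\Theta_L(a_1), \Theta_L(a_2)\}$, contradicting Step 2.

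\textbf{Main obstacle.} The delicate step is Step 2: one must rule out $s_1^B$ exiting $B$ across $c_2$ even though $c_1, c_2$ are not uniformly separated in $L$ (they share both ideal endpoints, so they approach each other arbitrarily closely near infinity). The argument bypasses any metric separation by using the $P$-invariance of $s_1 \cap c_2$ together with the strict exponential contraction of $P$ along $s_1$, which forces the forbidden limit point $p_1$ onto the closed curve $c_2$.
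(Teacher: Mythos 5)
Your outline is essentially the paper's own argument: both rays of $c_1$ land at attracting points, hence are coarsely expanding by Proposition \ref{prop.landingpointsperiodic}, producing a $P$-fixed point $p_1\in c_1$; its stable leaf $s_1$ is $P$-invariant; the ray of $s_1$ entering the region between $c_1$ and $c_2$ is trapped there, so by Proposition \ref{prop-landing} it lands at $\Theta_L(a_1)$ or $\Theta_L(a_2)$, which is incompatible with the fact that a stable ray through a $P$-fixed point is coarsely contracting (and not coarsely expanding, since backward iterates escape along the ray). Your way of excluding $s_1^B\cap c_2\neq\emptyset$ (iterate the would-be intersection forward, use the stable contraction to force $p_1\in c_2$, contradicting disjointness) is a harmless variant of the paper's version, which instead notes that such an intersection would be a single point, hence a second $P$-fixed point on a stable leaf.

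There is, however, one incomplete inference in Step 2: from $s_1^B\cap c_2=\emptyset$ you conclude ``therefore $s_1^B\subset\overline B$,'' but the ray starts on $c_1\subset\partial B$ and could a priori leave $\overline B$ by crossing $c_1$ at a second point. This possibility cannot be ignored: if the ray did cross $c_1$ again it would enter the component of $L\setminus c_1$ not containing $c_2$, whose ideal boundary contains a repelling fixed point of $P$ acting on $L\cup S^1(L)$, and a landing point there would be perfectly consistent with coarse contraction, so no contradiction would follow. Your dynamical trick does not rule this out (a second point of $s_1\cap c_1$ is simply attracted to $p_1\in c_1$, which is not a contradiction). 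What is needed is the fact that a leaf of $\wws$ cannot intersect the same leaf of $\wwc$ twice in $\mt$ — this is exactly Remark \ref{rem.novikov} of the paper (it uses that $\wc$ also subfoliates the branching foliation $\cu$, via a Novikov-type argument). With that one citation the trapping claim, and hence your whole proof, is complete; the paper's proof uses the same fact implicitly at the same spot.
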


\begin{proof}
Such center curves should be coarsely expanding by $P$ by Proposition \ref{prop.landingpointsperiodic}. This forces $P$ to have at least one fixed point $x$ in $c_1$. We look at $s(x)$ the stable manifold of $x$. It cannot intersect $c_2$ since both $s(x)$ and $c_2$ are invariant by $P$
and so is their intersection, which is a single point $y$. See Figure \ref{f.impos}.
Since $c_2, c_1$ are disjoint $y$ would be a fixed point of $P$ in $s(x)$ different from $x$ $-$ impossible, since $s(x)$ is a stable leaf
and $P$ is contracting in stables. Then, the ray of $s(x)$ in the connected component of $L \setminus c_1$ containing $c_2$ must land in an attracting point of $P$ in $\Su$. This again is impossible since $s(x)$ is coarsely contracting (cf. Remark \ref{remark-phinleaf}),  and this contradicts Proposition \ref{prop.landingpointsperiodic}. 
\end{proof}

\begin{figure}[ht]
	\begin{center}
		\begin{overpic}[scale=0.873]{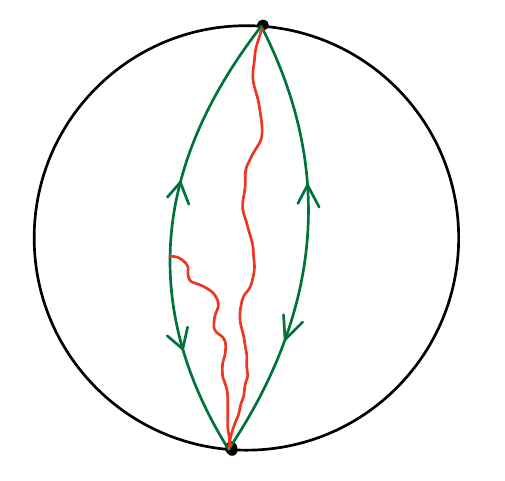}
		\end{overpic}
	\end{center}
	\begin{picture}(0,0)
	\put(-49,127){\color{ForestGreen}{$c_1$}}
	\put(-42,111){$x$}
	\put(-10,110){\red{$s$}}
	\put(30, 100){\color{ForestGreen}{$c_2$}}
	\end{picture}
	\vspace{-0.5cm}
	\caption{{\small Proof of Lemma \ref{lem.imposibleconf}.}}\label{f.impos}
\end{figure}

We complete the proof of Theorem \ref{teo.curvesgeneral}
by showing that if geodesics joining points of centers or stables do not remain boundedly close to the respective curves in leaves, then one can construct arcs with shadows with large visual measure: 

\begin{lema}\label{lem.geodesic}
Let $\cF$ be an $\RR$-covered uniform foliation with hyperbolic leaves of a closed 3-manifold $M$ and let $\cT$ be a one dimensional branching subfoliation of $\cF$. Assume that for every $n>0$ there is a segment $\ell_n$ of a leaf of $\cT$  such that the geodesic segment joining the endpoints of $\ell_n$ is not contained in the ball of radius $n$ of the segment $\ell_n$. Then, $\cT$ does not have small visual measure in $\cF$. 
\end{lema}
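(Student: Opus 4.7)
The plan is to directly contradict the defining condition of small visual measure by producing, for each $n$, a point $z_n \in \mt$ at leafwise distance larger than $n$ from $\ell_n$ and from which the shadow of $\ell_n$ already has visual measure at least $\pi$.

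First I would lift to $\mt$, so that $\ell_n$ is a segment of $\wT \cap L_n$ for some $L_n \in \Ft$, with distinct endpoints $p_n, q_n$. Since $L_n$ is $\cmno$, the geodesic $\gamma_n \subset L_n$ joining $p_n$ to $q_n$ is unique, and by hypothesis there is $z_n \in \gamma_n$ with $d_{L_n}(z_n, \ell_n) > n$. In particular $z_n \notin \ell_n$, so the ``direction from $z_n$'' map
$$ \phi_n \colon \ell_n \longrightarrow T^1_{z_n} L_n, \qquad \phi_n(y) = \dot{\sigma}_y(0), $$
where $\sigma_y$ is the unique unit-speed geodesic from $z_n$ to $y$, is well defined and continuous.

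The key observation is then purely topological. Because $z_n$ lies on the geodesic $\gamma_n$ strictly between $p_n$ and $q_n$, the vectors $\phi_n(p_n)$ and $\phi_n(q_n)$ are antipodal in the angular circle $T^1_{z_n}L_n$. The image $\phi_n(\ell_n)$ is therefore a connected subset of $S^1$ containing two antipodal points, so it is either all of $S^1$ or an arc of angular length at least $\pi$. Identifying $T^1_{z_n}L_n$ with $S^1(L_n)$ via the $\cmno$ structure recalled in \S \ref{ss.boundaryvisual}, this means the shadow of $\ell_n$ from $z_n$ has visual measure at least $\pi$.

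To finish, fix any $\eps < \pi$. For every $R > 0$, choose $n > R$; then $\ell_n$ is a segment of $\wT \cap L_n$ at leafwise distance $> R$ from $z_n$, yet its shadow from $z_n$ has visual measure $\ge \pi > \eps$, which is exactly the negation of Definition \ref{defi.smallvisual}. The only substantive step is the antipodal/connectedness argument in the second paragraph; everything else is a routine lift-to-the-universal-cover reformulation, so I do not expect any serious obstacle.
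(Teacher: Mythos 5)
Your proposal is correct and follows essentially the same argument as the paper: the shadow of $\ell_n$ from the far point $z_n$ on the geodesic is connected and contains the two antipodal directions toward the endpoints (since $z_n$ lies on the geodesic joining them), hence has visual measure at least half the circle while $\ell_n$ lies outside the ball of radius $n$ about $z_n$, contradicting small visual measure. The only difference is that you spell out the antipodality/connectedness step via the direction map, which the paper states more tersely.
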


\begin{proof}
Just consider the segments $\ell_n \subset L_n$ and the corresponding geodesic segment $r_n \subset L_n$ joining the endpoints. By assumption, we know that there is a point $x_n \in r_n$ at distance larger than $n$ from $\ell_n$, or equivalently, that $B_{L_n}(x_n, n) \cap \ell_n = \emptyset$. 

Since the shadow of $\ell_n$ from $x_n$ is connected and $\ell_n$ intersects both sides of $r_n$ we know that the shadow of $\ell_n$ through $x_n$ has at least half of the visual measure from the point $x_n$ while it is completely outside the ball of radius $n$ around $x_n$. This implies that $\cT$ cannot have small visual measure in $\cF$. 
\end{proof}

Now we can complete the proof of Theorem \ref{teo.curvesgeneral}. 

\begin{proof}[Proof of Theorem  \ref{teo.curvesgeneral}]
The statement about visual measure in the case of $\ws$ follows
by appliyng Proposition \ref{prop.visualabstract} using
$\cT$ as the stable foliation..
The statement follows because
strong stable leaves cannot be coarsely expanding
under $P$, if $P = \gamma^m \circ \fh^n$ with $n > 0$.

To show that this is also the case for $\wc$ we again
apply Proposition \ref{prop.visualabstract} using
$\cT$ as $\wc$.  If centers did not have small visual measure in $\cs$, it follows that there is a regular pA-pair $(\fh,\gamma)$ associated to $(f,\cs)$ and we can find two disjoint leaves $c_1, c_2 \in \wwc$ contained in a leaf $L$ which are fixed by $P$ as well as $c_1, c_2$. Now, Lemma \ref{lem.imposibleconf} gives a contradiction. 

 The statement about rays or segments of the leaves in the foliations follows from Lemma \ref{lem.geodesic}. The statement about segments is strictly contained in that Lemma. To get the result for rays it is enough to approximate the ray by longer and longer segments which all have the same property. 
\end{proof}

\subsection{Impossible configurations}\label{ss.pApairsconfig}
We show that some configurations of the foliations
in leaves of $\wcs$ (or $\wcu$)
are impossible and this will be used to show that the leaf space of $\wwc$ is Hausdorff inside leaves of $\wcs$.  The next proposition will combine well with  Lemma \ref{lem.imposibleconf} (which together with Proposition \ref{p-getconfig} gives other impossible configurations).  We note that the next result works for a single pA pair with certain properties and does not need to have the full set of pA pairs that will be used in next section. In fact, we will need to deal with a case slightly more general than a pA pair which is when there are only two fixed points in $\Su$ one super attracting and one super repelling. 

\begin{prop}\label{prop.centerssamepoint} 
Let $f: M \to M$ be a partially hyperbolic diffeomorphism preserving a branching foliation $\cs$ which is $\RR$-covered and uniform with hyperbolic leaves. 
Suppose that $(f, \cs)$ admits an  admissible regular
pseudo-Anosov pair.
Let $(\fh,\gamma)$ be a good pair
pair and $P=\gamma^m \circ \fh^n$ with $n>0$ so that $P_\infty$ has fixed points in $\Su$ and such that all fixed points are either super attracting or super repelling. Let $L \in \wcs$ be a leaf fixed by $P$. If there is a center curve $c$ in $\wwc \cap L$ with endpoints $c^+$ and $c^-$ in $S^1(L)$ such that $c^+=c^-$ then one must have that 
$\Theta_L^{-1}(c^+)$ cannot be an attracting fixed 
point of $P_{\infty}$.  
\end{prop}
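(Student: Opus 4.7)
The plan is to argue by contradiction, transferring the ``loop'' at $\xi$ formed by $c$ to the strong stable foliation inside $L$ and then contradicting Proposition \ref{prop.landingpointsperiodic} at a $P$-fixed center leaf. Suppose for contradiction that $a := \Theta_L^{-1}(c^+) = \Theta_L^{-1}(c^-)$ is a super attracting fixed point of $P_\infty$, and write $\xi := \Theta_L(a)$. By Proposition \ref{prop-landing} both rays of $c$ land at $\xi$, so $c \cup \{\xi\}$ is a Jordan curve in the closed disc $L \cup S^1(L)$ bounding an open region $D \subset L$ whose closure meets $S^1(L)$ only at $\xi$ (the ``lune'' at $\xi$).

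First I produce a strong stable ray landing at $\xi$. Pick any $y_0 \in c$ and consider the strong stable leaf $s(y_0) \subset L$ inside the $\cs$-leaf $L$. By Remark \ref{rem.novikov} combined with transversality of $\ws$ and $\wwc$, we have $s(y_0) \cap c = \{y_0\}$, so exactly one of the two rays of $s(y_0) \setminus \{y_0\}$, call it $r_D$, enters $D$. Applying Proposition \ref{prop-landing} to the one-dimensional subfoliation $\ws$ of $\cs$ (which is permitted since $(f,\cs)$ admits an admissible regular pA pair), the ray $r_D$ lands at a single ideal point lying in $\overline{D} \cap S^1(L) = \{\xi\}$; hence $r_D$ lands at $\xi$.

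The main step is to realize this configuration on a $P$-fixed center leaf. Let $\mathcal{C} \subset \wwc \cap L$ denote the family of center leaves in $L$ with both ideal endpoints at $\xi$, naturally partially ordered by inclusion of lunes. It is non-empty, $P$-invariant, and $P$ preserves the order, so $(P^n(c))_n$ is monotone in $\mathcal{C}$. Using property (iv) of branching foliations together with the non-emptiness and compactness of $T_P \cap L$ (which hold in our setting by the argument of Proposition \ref{prop-dynamicspA}(ii),(iii), as only super attracting and super repelling fixed points are needed), I extract a non-degenerate limit $c^{*} \in \mathcal{C}$ which is fixed by $P$. Given such $c^{*}$, Proposition \ref{prop.landingpointsperiodic} yields a fixed point $x^{*} \in c^{*}$ of $P$. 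The stable leaf $s(x^{*})$ is then fixed as a set by $P$, and repeating the reasoning of the previous paragraph shows that the ray of $s(x^{*})$ entering the lune of $c^{*}$ lands at $\xi$. Finally, Proposition \ref{prop.landingpointsperiodic} applied to the fixed stable leaf $s(x^{*})$ (viewed as an element of the subfoliation $\ws$) tells us its ideal endpoints are fixed by $P_\infty$; because $\ws$ is coarsely contracted by $P$ (since $n > 0$), those endpoints must be super repelling, contradicting that $\xi$ is super attracting.

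The crux is the reduction to the $P$-fixed leaf $c^{*}$: a priori the monotone iterates $P^n(c)$ in $\mathcal{C}$ could concentrate into arbitrarily small neighborhoods of $\xi$ in $L$, failing to produce a genuine limit in $\wwc \cap L$. Ruling this out uses that $T_P \cap L$ is a non-empty compact set disjoint from any basin at infinity, which forces the iterates $P^n(c)$ to stay ``thick'' in a definite region of $L$; the fixed limit $c^{*}$ then follows from the compactness encoded by property (iv) of branching foliations, and its $P$-invariance from passage to the limit in $P \circ P^n(c) = P^{n+1}(c)$. This is where the main technical care is needed.
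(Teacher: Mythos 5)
Your proof stands or falls on the reduction step, and that step is where the gap lies. Under the contradiction hypothesis, the iterates $c_k=P^k(c)$ do all have both ideal points at $\xi$, but they are not monotone in your family $\mathcal{C}$ in general: since center leaves in $L$ do not topologically cross, the lunes $D(c_k)$ are either (up to replacing $P$ by a positive power) nested or pairwise disjoint, and both cases must be handled. In the pairwise disjoint case there is no nesting and no limit leaf at all: the paper's Claim \ref{af.4} shows the regions $D(c_k)$ accumulate only at $\xi$, i.e.\ the iterates escape every compact subset of $L$. Your proposed remedy via $T_P$ does not prevent this, because nothing forces $c$ (or any $c_k$) to meet $T_P^+$ $-$ both ends of $c$ are attracted to $a$, and the whole curve can lie in the basin of attraction of $a$; the mere non-emptiness and compactness of $T_P\cap L$ says nothing about $c$. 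The paper rules out this case not by extracting a limit but by a contracting-neighborhood argument at $a$. In the nested case limit leaves do exist (the boundary $\partial\cD$ of $\cD=\overline{\bigcup_k D(c_k)}$ is non-empty, Claim \ref{af.2}), but they need not lie in $\mathcal{C}$: the union of the lunes can be much larger than a lune, and Claim \ref{af.5} proves that any boundary leaf of $\cD$ with an ideal point fixed by $P$ has two \emph{distinct} ideal points, one attracting and one repelling. So the object you want to extract $-$ a $P$-fixed center with both ends at $\xi$ $-$ is exactly what the limiting procedure cannot produce; consistently, the paper's opening observation (which coincides with your final step) is that no $P$-fixed or $P$-periodic center leaf with both ends at an attracting point can exist.

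Because the extraction fails, the contradiction has to come from elsewhere, and this is the bulk of the paper's proof. In the nested case the paper analyzes the non-separated family $\cC=\partial\cD$ (Claims \ref{af.2}--\ref{af.5}), builds from the stable leaves crossing the $c_j$ a linearly ordered, $P$-invariant family $\cS$ with disjoint $P$-invariant subfamilies $\cS_I,\cS_J$, and obtains a $P$-invariant stable leaf with an ideal point at $a$, which contradicts $a$ being attracting since stables are coarsely contracted. Your closing argument (the stable ray through a $P$-fixed point entering the lune must land at $\xi$, forcing coarse expansion of a stable) is correct and is the same observation the paper uses at the outset, but without a valid production of $c^{*}$ the proof does not go through; the "thickness'' assertion is precisely the unproved (and in general false) point.
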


\begin{proof} 
We stress that we do not assume that $(\hat f, \gamma)$ is
a pA-pair. In particular $P_{\infty}$ may have only two fixed points
in $\Su$.

Since $P(L) = L$ we can reduce the proof to $\hat L = L \cup S^1(L)$.
The map $P$ induces a homeomorphism of $\hat L$. A point $\xi$ in $\Su$
is a fixed point, attracting or repelling point of $P_{\infty}$ 
if and only if
$\Theta_L(\xi)$ is a fixed point, attracting or repelling point
of $P$ in $\hat L$.
We will prove the result by contradiction assuming that $c^+=c^- = a$
is an attracting fixed point of $P$ in $\hat L$.
We denote by $D(c)$ to the connected component of $L \setminus c$ whose closure in $\hat L$ intersects $S^1(L)$ only in 
$a$.

Note that such a center cannot be fixed by $P$. If it were the case, then it would be coarsely expanding by Proposition \ref{prop.superattracting} and therefore there would be a fixed point $x \in c$ by $P$. Let $s(x)$ be
the stable leaf through $x$. The ray of $s(x)$ intersecting $D(c)$ must land in $a=c^+=c^-$ since a strong stable cannot intersect a center curve twice (cf. Remark \ref{rem.novikov}) and therefore the ray is completely contained in $D(c)$ and lands in $c^+$. That forces that stable curve to be coarsely expanding by Proposition \ref{prop.superattracting} which is impossible. Compare with Lemma \ref{lem.imposibleconf}.  
In fact the same argument shows that this center cannot be
periodic under $P$ as well.

Up to taking the square of $P$ we assume that $P$ 
preserves orientation when acting on $L$, and hence  also 
on $S^1(L)$.

Consider now the iterates $c_k:=P^{k}(c)$ with $k\in \ZZ$. Denote by $D(c_k) = P^{k}(D(c))$ which is the connected component of $L \setminus c_k$ whose closure in $\hat L$ intersects $S^1(L)$ only in $a=c^+=c^-$.

Consider $\cD =\overline{\bigcup_{k} D(c_k)}$. Note that $\cD$ is 
a $P$ invariant, closed set.
Let $\cC$ be the set of center leaves which make
up the boundary of $\cD$.

In order to prove the proposition we establish  
some general claims. The first one is the place where we use that $a$ is attracting
for $P$. If it were repelling there would be no
a priori contradiction\footnote{Indeed, this behavior can happen for the strong stable/unstable foliations of some partially hyperbolic diffeomorphisms such as the ones constructed in \cite{BGHP}.}.

\begin{af}\label{af.1}
There cannot be a fixed point of $P$ in $\cD$. 
\end{af}
\begin{proof}
Let $x \in \cD$ be fixed by $P$. If $x \in D(c_k)$ for some $k$ it follows that one of the rays of $s(x)$ has to land in $c^+$ which is a contradiction. Otherwise, $x$ is accumulated by the curves $c_k$, therefore, for large enough $k$ we have that one ray of $s(x)$ intersects $c_k$ and therefore enters in $D(c_k)$ and must land in $c^+$, a contradiction. This completes the proof. 
\end{proof}

To continue the proof of
Proposition \ref{prop.centerssamepoint}
we distinguish two options (see Figure~\ref{f.options}):

\begin{figure}[ht]
	\begin{center}
		\begin{overpic}[scale=0.73]{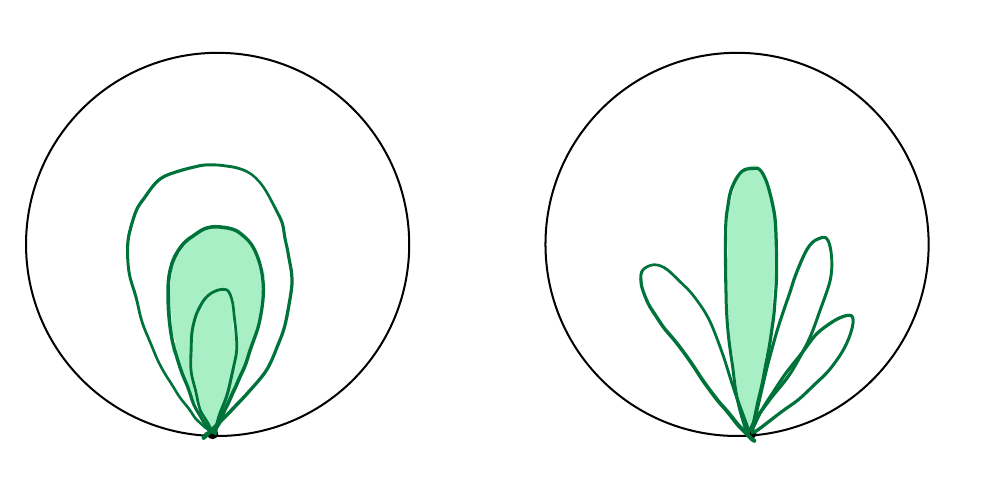}
		\end{overpic}
	\end{center}
	\begin{picture}(0,0)
	\put(80,135){$D(c_0)$}
	\put(-105,115){$D(c_0)$}
	\put(115,115){$D(c_1)$}
	\put(-125,135){$D(c_1)$}
	\end{picture}
	\vspace{-0.5cm}
	\caption{{\small Option (1) to the left and option (2) to the right.}}\label{f.options}
\end{figure}

\begin{itemize}
\item[(1)] The sets $\{ D(c_k)) \}$ are not pairwise
disjoint.
\item[(2)] The sets $\{ D(c_k)) \}$ are pairwise disjoint.
\end{itemize} 

In option (1) there is $i > 0$ so that $D(c_i)$ intersects
$D(c)$, hence either $D(c_i) \subset D(c)$ or $D(c) \subset D(c_i)$.
Hence up to taking a further positive iterate  of $P$
we assume that $D(c) \subset D(c_1)$ or $D(c_1) \subset D(c)$. 
Since it is a positive iterate, the point $a$ is still
super attracting for $P$.

\vskip .1in
\noindent
{\bf {Option (1)}}

We assume first that we are in option (1). 
This situation is by far the harder to deal with.
The overall strategy in this case is the following:
we find a stable leaf $s$ intersecting $\cD$ which
is fixed by $P$. This leaf  $s$ has one ideal point
in $a$ and this will contradict that $a$ is attracting
for $P$. 
To find such $s$ we essentially consider the set of all stables
intersecting $\cC$ plus the stable leaves ``in between".
We show that this set has a linear order, is invariant by
$P$ and $P$ fixes a leaf in this  set. The last step
is the hardest and depends on understanding the structure
of the boundary of $\cD$, how it interacts with the fixed
points of $P$ in $S^1(L)$. Notice that a priori the center
foliation in $L$ can be very complicated, so there are many
theoretical configurations. 
\footnote{The eventual goal, done in Section \ref{s.Hausdorff}
 is to prove
that the center foliation in $L$ is actually fairly simple,
that is, its leaf space is Hausdorff and homeomorphic
to  $\R$.}

Therefore, the first goal is to
obtain some useful properties about the boundary of $\cD$. 
There are similar properties in option (2), but not the same,
and option (2) is much easier to deal with.

\begin{af}\label{af.2}
The boundary $\partial \cD$ of $\cD$ in $L$ is a non empty set saturated by center curves. Every point in $\partial \cD$ belongs to a center curve which is a limit of subintervals of the curves $c_k$. Moreover, the collection $\cC$ of center leaves are pairwise non separated in the
center leaf space in $L$.
\end{af}

\begin{proof}
Assume first that $\cD=L$, then we have that there are compact arcs converging uniformly to some interval in $S^1(L)$ in the topology of $\hat L = L \cup S^1(L)$, these arcs would have large visual measure and escape to infinity contradicting Theorem \ref{teo.curvesgeneral}. Therefore $\partial \cD \neq \emptyset$. 

Since $\cD$ is saturated by center curves, then so is $\partial \cD$. Moreover, if $x \in \partial \cD$ belongs to a certain center leaf  $e \subset
\partial \cD$
then we have that every compact subinterval $I$ of $e$ must be accumulated by the sets $D(c_j)$ with $j \to +\infty$ or $j \to -\infty.$ This implies that there are arcs $I_j$ of $c_{k_j}$ converging uniformly to $I$. 

Finally let $e_1, e_2$  be two  distinct center leaves in $\cC$.
Non separated means that in the center leaf space they do
not have disjoint neighborhoods.
As above the leaves $e_1, e_2$ are contained
in the limit of $c_j$ with $j \to \infty$
or $j \to -\infty$.  Therefore $e_1, e_2$ are not separated
from each other.
\end{proof}

Since $P$ preserves the orientation in $L$ the following happens:,
if $e$ is in $\cC$ then either $P(e) = e$ or all iterates $P^n(e)$
are pairwise disjoint.
We can also show: 

\begin{af}\label{af.6}
Let $e \in \cC$ a center curve in $\partial \cD$ such that $P(e) \neq e$. Then $\{ P^n(e) \}$ cannot accumulate in a point in $L$ when $n \to \infty$
or $n \to -\infty$. 
\end{af}
\begin{proof}
Consider $G_e$ to be the connected component of $L \setminus e$ which is disjoint from $P(e)$. Since the curves in $\cC$ are 
pairwise non-separated, and $P$  preserves
orientation in $L$, we know that $P^n(G_e)$ are all disjoint. Assuming that $\{ P^n(e) \}$ accumulates in
some point $x \in L$ with $n \to \pm \infty$,
we can fix a local product structure neighborhood around $x$ for the center foliation and we can see $\{ P^n(G_e) \}$ accumulating in this point. Since these sets are all disjoint, then this means that the leaves
$\{ P^n(e) \}$ accumulate on a local product structure box in more than one connected component, which is impossible. 
\end{proof}

\begin{af}\label{af.5}
Let $e$ be a leaf in $\cC$. If $e$ 
has an ideal point $\xi$ which is a fixed point of $P$ then the other
ideal point $\nu$ of $e$ is different from $\xi$ and one of them
is attracting and one repelling.
In addition the ideal points of $e$ cannot be in
distinct complementary components of the set of fixed points
of $P$ in $S^1(L)$.
\end{af}

\begin{proof}
Suppose $e$ is a leaf in $\cC$ which has an
ideal point $\xi$ fixed by $P_\infty$. Let $\nu$ be the
the other ideal
point of $e$. 
 Suppose first that $\nu$ is distinct from $\xi$. Then since $P(e)$ is
non separated from $e$ and $c_k$ converges to both $e$ and $P(e)$ 
it follows that $P(e) = e$. 
If both endpoints of $e$ are either attracting or repelling for
$P$ then there is a fixed point of $P$ in $e$, hence a fixed
point of $P$ in $\cD$, disallowed in Claim \ref{af.1}.
So one of the ideal points of $e$ is
attracting and the other one is repelling.

Suppose now that $\nu =\xi$. 
Let $G_e$ be the component of $L - e$ which accumulates
only in $\xi$ in $S^1(L)$. Since $\xi$ must be either super attracting or super repelling, then using
Claim \ref{af.1}, we deduce that $P(e) \neq e$ and that all the iterates
$P^i(G_e)$ are all distinct. Moreover, by Claim \ref{af.6} we know that $P^i(e)$ cannot accumulate on a point $x \in L$ which implies that the sets $P^i(G_e)$ converge as $i \to \pm \infty$ to $\xi$. However, since $\xi$ is super attracting or super repelling it follows that $P^i(e)$ cannot converge to $\xi$ as $i \to -\infty$ or $i \to +\infty$. 
This proves the first assertion of the claim.

Finally suppose that $e$ has ideal points in two distinct
complementary components of fixed points of $P$ in $S^1(L)$.
Up to an iterate these complementary components are fixed by
$P$. Then since $e$ is a boundary leaf of $\cD$ this implies
that $P(e) = e$. In particular the ideal points of $e$ are
fixed by $P$ and are not in complementary of the set of fixed
points of $P$. 
This finishes the proof of the claim.
\end{proof}

We now define a set $\cS$
of stable leaves which will produce a $P$ invariant
stable leaf intersecting $\cD$.
The construction of $\cS$ is geometric and not dynamical.
For simplicity assume that $c_j$ converges to $\cC$ when
$j \to \infty$. The case when $c_j$ converges to $\cC$
when $j \to -\infty$ is entirely analogous and we address that later.

Let $\cS$ be the set of stable leaves $s$ in $L$ so that
there is $j_0 \in \Z$ so that $s$ intersects $c_j$ for any
$j \geq j_0$.
Each such stable leaf $s$  intersects some $c_j$. Since
$c_j$ has both ideal points equal to $a$, it follow that $s$
has a ray limiting on $a=c^+=c^-$. 
Each stable leaf $s$  intersecting $\cC$ intersects $c_j$ for
all $j \geq j_0$ (the $j_0$ depends on $s$), so $s$ is in $\cS$. 
In addition if $s_0, s_1$ are in $\cS$ then they intersect
$c_j$ for all $j \geq j_0$ for some $j_0$ 
(take a $j_0$ that works for both).
For any stable $s$ intersecting
$c_{j_0}$ between $s_0 \cap c_{j_0}$ and $s_1 \cap c_{j_0}$ 
then $s$ intersects
$c_j$ for any $j \geq  j_0$, so $s$ is also in $\cS$.
This is because $c_{j_0}, c_j$ and $s_0, s_1$ form a 
``quadrilateral" in $L$ and $s$ intersects $c_{j_0}$, hence
intersects $c_j$ also.
It follows that $s$ is in $\cS$.
Therefore
the set $\cS$ is linearly ordered. Since the subset of $\cS$  between
$s_0$ and $s_1$ is order isomorphic to an interval then
$\cS$ is order isomorphic to the reals. With the quotient topology
it is homeomorphic to the reals.
Since  we took a square of $P$ if necessary, then
the map $P$ preserves this order.

If on the other hand $c_j$ converges to $\cC$ when $j \to -\infty$,
then  in the definition of $\cS$ we require a $j_0$ so that
$s$ intersects all $c_j$ for $j \leq j_0$.

\vskip .1in

Let $I, J$ be the connected components of $S^1(L)$ minus 
the set of fixed
points of $P$ in $S^1(L)$, so that $I, J$ have
one endpoint equal to $a$.
We now define a subset $\cS_I$ of $\cS$ associated with $I$. 
The definition will depend on whether there is a leaf
of $\cC$ with an ideal point in $I$ or not.
Consider first the case that there is a leaf $e$ in $\cC$ with
an ideal point in $I$. By Claim \ref{af.5}
the other ideal point of $e$ is also in $I$.
Let $A_i$ be the set of stable leaves in $L$ intersecting 
$P^i(e)$. Let $\cS_I$ be the smallest connected set of $\cS$
containing all $A_i$.
In this case for any $s$ in $\cS_I$ then $s$ has an ideal point
in $I$.
In fact for any $s$ in $\cS$ it has an ideal point in $a$.
If the other ideal point $x$  of $s$ is in $I$ then $x$ is
in the closed segment contained in $I$ with endpoints
$P^i(z)$ and $P^{i+1}(z)$ where $z$ is an ideal point
of a stable intersecting $e$ and $i$ is some integer. Hence
$s$ separates two elements in $\cS$ and intersects $c_j$ for
all $j \geq j_0$ (for some $j_0$) hence $s$ is in $\cS$.
It follows that in this case $\cS_I$ is exactly the set of
stables intersecting $\cD$ which have one ideal point in $I$.
In particular the definition of $\cS_I$ is independent of
the particular leaf $e$ that we start with.

The other possibility is that 
there is no leaf of $\cC$ with an ideal point in 
$I$. We deal  with this case now.
Let $x$ in $I$. We claim that there is a neighborhood
$V$ of $x$ in $L \cup S^1(L)$ so that $V \cap L$ is disjoint
from $\cD$. Suppose not. If for  some such $V$ we have that
$V \cap L
\subset \cD$, then we get a sequence of arcs in $c_k$ (with 
$c_k$ limiting to $\cC$ as $k \to \infty$)
 so that they escape compact sets in $L$
and limit to $V \cap S^1(L)$. These arcs do not have small
visual measure, violating Theorem \ref{teo.curvesgeneral}.
So this cannot happen.
Choose ${ V_i }$ with $ i \in {\bf N}$ a basis neighborhood of
$x$ in $L \cup S^1(L)$, with $V_i \cap S^1(L)$
always contained in $I$. By assumption,
for each $i$ there is a point 
$y_i$ in $V_i \cap c_{k_i}$ for a suitable choice
of $k_i$.
If the $k_i$ can be chosen
constant equal to $k$ then $c_k$ has one ideal point
in $V_k \cap S^1(L)$. But this is impossible by hypothesis
as $V_k \cap S^1(L) \subset I$. So up to subsequence we can assume
that $k_i$ are pairwise distinct. Since the $c_{k_i}$ have both ideal
points outside of $I$ then either they escape compact sets,
contradicting Theorem \ref{teo.curvesgeneral}, or up to
subsequence they keep intersecting a fixed compact set.
This is impossible since the elements in $\cC$ are pairwise non
separated from each other, cf. Claim \ref{af.2}.
This shows that there is such $V$ as above, so
that $(V \cap L) \cap \cD  = \emptyset$. 
In this case let $e$ be the unique leaf of $\cC$ which separates
$V \cap L$ from the interior of
$\cD$. By $P$ invariance of $\cD$ and the fact that
no ideal point of $e$ is in $I$, it follows that $P(e) = e$.
In addition one ideal point of $e$ is $a$. This is because
$e$ separates $V \cap L$ from the interior of $\cD$ and the
interior of $\cD$ has points limiting to $a$.
Since $e$ is fixed by $P$ the other ideal point of $e$ is
a repelling fixed point of $P$ by Claim \ref{af.5},
and  hence it is not $a$. 
Let now $I_{ex}$ be the open  interval of $S^1(L)$ determined
by the ideal points of $e$ and which contains $I$.
We remark that $I_{ex}$ is disjoint from $J$.
In this case let $\cS_I$ be the set of stable leaves
intersecting $e$. 

Notice that $\cS_I$ is again a connected subset of $\cS$.
In either case we remark that if $s$ is a leaf in $\cS_I$
then no ideal point of $s$ is in $J$.

Notice that in either case $\cS_I$ is $P$ invariant.
In the same way we define a set $\cS_J$.

\begin{af}\label{af.7}
The sets $\cS_I$, $\cS_J$ are disjoint.
\end{af}

\begin{proof}
Suppose that there is a leaf $e$ in $\cC$ with an ideal
point in either $I$ or $J$. For simplicity assume an ideal
point in $I$. Then by construction for every leaf $s$ in $\cS_I$
it has an ideal point in $I$. Since no leaf in $\cS_J$ has
an ideal point in $I$, the claim is proved in this 
case.

The remaining case is that we have the intervals $I_{ex}$
and $J_{ex}$, which are defined by leaves $e, e_1$ in $\cC$.
In this case $\cS_I$ is the set of stable leaves intersecting
$e$ and $\cS_J$ is the set of stable leaves intersecting
$e_1$. Since $e, e_1$ are distinct but non  separated from each
other, no stable leaf can intersect both of them. Hence
again $\cS_I \cap \cS_J = \emptyset$.
This proves the claim.
\end{proof}

Since $\cS_I, \cS_J$ are disjoint, let $s$ be the stable
leaf in $\cS$ corresponding to the endpoint of $\cS_I$ separating
it from $\cS_J$ in $\cS$. Then since both $\cS_I, \cS_J$ are fixed
by $P$, so is $s$. 
Since $s$ is in $\cS$ then it intersects $c_j$ for some
$j$ and hence has an ideal point $a$.
This contradicts the
fact that $a$ is an attracting fixed point of $P$.

This finally finishes the proof of Proposition \ref{prop.centerssamepoint} assuming option (1).

\vskip .1in
\noindent
{\bf {Option (2)}}

We now assume option (2).

\begin{af}\label{af.4}
In option (2) we have that as $k \to \pm \infty$ then $D(c_k)$ can only accumulate in $a=c^+=c^-$. 
\end{af} 

\begin{proof}
In option (2) the sets $D(c_k)$ are pairwise disjoint.
An argument entirely analogous to that 
of Claim \ref{af.6} shows that $D(c_k)$ cannot accumulate
anywhere in $L$ as $k \to \infty$ or $k \to  -\infty$. 

If the collection $D(c_k)$ accumulates in another point of $S^1(L)$ besides $a$, then since it does not accumulate in $L$ it will have
subsegments which limit uniformly on non empty intervals 
of $S^1(L)$. In particular these segments escape compact sets in $L$.
This violates that the center foliation has small visual 
measure, Theorem \ref{teo.curvesgeneral}. This finishes the proof of the claim.
\end{proof}

Now we can complete the proof in option (2). As $k \rightarrow \pm \infty$,
the $D(c_k)$ cannot accumulate in $L$ or in any other
point of $S^1(L)$ besides $a$. Choose a neighborhood $U$
of $a$ which is contracting under $P$ as in 
Proposition \ref{prop.superattracting}. Choose $U$ sufficiently
small so that $D_0$ is not contained in $U$.
Then for $k$ big negative $D(c_k)$ is contained in $U$, and
applying $P^{-k}$ sends $D(c_k)$ inside of $U$,
but also to $D(c_0)$ not contained in $U$, contradiction. This completes the proof  of Proposition \ref{prop.centerssamepoint}. 
\end{proof}

\section{Hausdorff center leaf space}\label{s.Hausdorff}
In this section we show that under some assumptions the center leaf space of $\wwc$ has to be Hausdorff in leaves of $\wcs$. This is an important step in the proof of our main theorems and will use all the results on pseudo-Anosov
pairs we have been developing so far. 
We use the abbreviation pA pairs for pseudo-Anosov pairs.

To be able to exclude non-Hausdorff leaf space we will need enough pA pairs to be able to force certain configurations (see Remark \ref{rem.configurations} below). This will be defined precisely in \S \ref{ss.fullsetofpA}. 

After we rule out a certain configuration in \S \ref{ss.uniquelandingpt}, we will show in \S \ref{ss.proofHsdff} the following: 

\begin{teo}\label{teo.hsdff} 
Let $f: M \to M$ be a partially hyperbolic diffeomorphism preserving branching foliations $\cs$ and $\cu$. Assume that $(f, \cs)$  has full pseudo-Anosov behavior (cf. Definition \ref{defi.fullpA}). 
Assume also that $\cu$ is $\R$ covered.
Then, inside each leaf of $\wcs$, the foliation $\wwc$ by center curves has leaf space which is Hausdorff. 
\end{teo}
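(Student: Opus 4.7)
The plan is to argue by contradiction: suppose there are distinct non-separated center leaves $c_1, c_2 \in \wwc \cap L$ for some $L \in \wcs$, and derive a configuration forbidden by the results of Sections \ref{s.pApairs}--\ref{s.pApairsandPH}. By definition of non-separation there is a sequence $e_n \in \wwc \cap L$ whose limit set contains both $c_1$ and $c_2$. By Proposition \ref{prop-landing} each of $c_1, c_2, e_n$ lands at well-defined ideal points in $S^1(L)$, and after fixing orientations and passing to a subsequence I would assume the ideal points $e_n^\pm$ converge in $S^1(L)$. The small visual measure bound of Theorem \ref{teo.curvesgeneral} confines these limits to $\{c_1^\pm, c_2^\pm\}$, so the four ideal points of $c_1, c_2$ are very constrained; in particular either two of them coincide or the pair $(c_1, c_2)$ bounds a non-trivial region in $L$ accumulating only at a small subset of $S^1(L)$.

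The next step is to exploit full pA behaviour to turn this coincidence into rigid dynamical information. By Corollary \ref{cor.periodic} we have the periodic commuting property, and by Proposition \ref{prop-manypApairs} combined with the full pA hypothesis we may choose an admissible regular pA pair $(\fh, \gamma)$ whose super-attracting and super-repelling fixed points of $P_\infty$ in $\Su$ can be placed in any prescribed open interval; in particular I would cluster them near the $\Theta_L^{-1}$-preimages of the limit ideal points of $c_1, c_2$. By Proposition \ref{prop.periodiccommuting} the associated map $P = \gamma^m \circ \fh^n$ fixes some leaf $E \in \wcs$ and $P^k(L) \to E$ as $k \to \infty$. Applying $P^k$ to the triple $(c_1, c_2, e_n)$ and using Proposition \ref{prop.superattracting} and Addendum \ref{add-pA} (basins of attraction are open neighborhoods of the super-attracting points that swallow the complement of any repelling neighborhood), one obtains limit center leaves $\bar c_1, \bar c_2$ in $E$ whose ideal points are super-attracting fixed points of $P_\infty$, and which are still non-separated because non-separation is preserved by the homeomorphism $P$ and by the continuity of the center branching foliation.

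Once $\bar c_1, \bar c_2$ are produced, the case analysis closes: if some $\bar c_i$ has $\bar c_i^+ = \bar c_i^-$ equal to a super-attracting fixed point of $P_\infty$, this contradicts Proposition \ref{prop.centerssamepoint}; otherwise $\bar c_1$ and $\bar c_2$ are distinct (hence disjoint, being leaves of a one-dimensional branching foliation) and fixed by $P$, with landing points at distinct super-attracting fixed points of $P_\infty$, contradicting Lemma \ref{lem.imposibleconf}. The hard part will be the intermediate extraction argument: one must ensure that the limits $\bar c_1, \bar c_2$ are genuinely distinct (do not merge in the limit), that they really are fixed by $P$ (rather than merely periodic and moved far apart), and that the non-separation survives. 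Here I expect to use the $\RR$-covered hypothesis on $\cu$ crucially: since the $c_i$ lie in leaves of $\wcu$ and the leaf space $\cL^{cu}$ is linearly ordered, the unstable leaves through $c_1, c_2, e_n$ admit a robust linear order that persists under $P$, which prevents the limiting centers from collapsing and lets us conclude that the non-separation descends to the fixed leaf $E$.
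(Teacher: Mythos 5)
Your overall contradiction scheme (push the non-separated configuration with a pA pair onto a $P$-fixed leaf and then invoke Proposition \ref{prop.centerssamepoint} or Lemma \ref{lem.imposibleconf}) is in the right spirit, but there are two genuine gaps. First, the mechanism you propose for choosing the pA pair does not work: Proposition \ref{prop-manypApairs} only lets you place \emph{all} fixed points of $P_\infty$ inside a prescribed interval, and clustering them near the limit ideal points of $c_1,c_2$ gives no control of where $c^+$ sits relative to the four fixed points, hence no guarantee that the attracting (or repelling) geodesic \emph{separates} $c^+$ from the ideal points of the approximating curves $e_n$. Without that separation the hypothesis of Proposition \ref{p-getconfig} (endpoints of an arc in one basin while the arc crosses another basin) is never verified. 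This is exactly why the hypothesis is ``full pseudo-Anosov behavior'' and not merely the existence of pA pairs: the proof needs the full-pair property (Definition \ref{defi.fullpair}) through Proposition \ref{prop-fullpair}, which produces a conjugate whose invariant geodesics cross a prescribed geodesic ray arbitrarily far out with angle bounded below, and therefore genuinely separate the ideal point of the ray from the rest of the configuration. Your sketch never uses this, and Remark \ref{rem.configurations} records precisely this subtlety.

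Second, your endgame only treats limit curves landing at \emph{super-attracting} points, but Proposition \ref{prop-fullpair} only gives the dichotomy ``different basins of attraction \emph{or} different basins of repulsion''; in the second case one must iterate backwards and the limiting fixed center curves land at super-repelling points of $P_\infty$. There neither Proposition \ref{prop.centerssamepoint} (which only forbids the attracting case, since its proof uses strong stable leaves inside $\wcs$ and so requires $n>0$) nor Lemma \ref{lem.imposibleconf} applies, and the analogous ``repelling'' statements are false in general (the paper notes this behavior actually occurs for strong foliations). Handling this case is the content of Proposition \ref{prop.uniquelandcenter} (every center leaf has distinct ideal points), which you skip entirely; its proof is where the hypothesis that $\cu$ is $\R$-covered is really used, via a different argument: boundedly close points on the two rays of the center, a bounded-length geodesic arc in the $\wcu$ leaf bounding a disk $D$ with a center segment, and expansion of an unstable ray inside $D$ under $P$. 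Your proposed use of $\R$-coveredness of $\cu$ (a ``robust linear order on $\cL^{cu}$ preventing collapse of the limits'') does not substitute for this; in the paper the distinctness and disjointness of the limiting curves comes instead from the opposite orientations produced in Proposition \ref{p-getconfig} and Addendum \ref{ad.config}, with the final contradiction given by the equal-endpoint curve against Proposition \ref{prop.uniquelandcenter}/\ref{prop.centerssamepoint}.
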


Recall that a one dimensional
 (branching) foliation $\wT$ in a complete simply connected surface $L$ has \emph{Hausdorff leaf space} if for every pair of curves of $\wT$ in $L$ it follows that the positive (closed) half space in $L$ determined by one of the curves is contained in the positive (closed) half space determined by the other. 

\begin{remark}
Definition \ref{defi.fullpA} is quite restrictive and asks for the existence of several pseudo-Anosov pairs for $f$. We suspect that the only examples which verify these assumptions are the ones we treat in Theorem \ref{teoB} and Theorem \ref{teoC}. We note however that we do not ask that the actions of the good pairs on the universal circle of $\cs$ and $\cu$ coincide (this is immediate in the context of Theorem \ref{teoC}, but not a priori obvious for Theorem \ref{teoB}). 
\end{remark}

\begin{remark}\label{rem.configurations}
Until now, all arguments used a given pA pair and then found sequences of curves that approached the universal circle in certain configurations that would ensure that some of their points belong to the basins of attracting/repulsion of the fixed points in $\Su$ of the pA pairs.
In this section the strategy will be different. We will fix a curve and find sequences of pA pairs whose configurations will force that the curve has some points in basins of attraction of different fixed points of the pA pairs (as the configuration required in Proposition \ref{p-getconfig}). Two delicate issues with this approach will appear:
\begin{itemize}
\item The curves we will consider already have limit points and approach the boundary very fast (cf. Theorem \ref{teo.curvesgeneral}). Therefore we need that the configuration of attracting/repelling points of the pA pairs are very special; 
\item Also, the core $T_P$ of a pA pair depends somewhat on the particular pA pair we choose, and that is why it will be important to consider pA pairs which are related to the same object in $M$ (i.e. different lifts of the same `orbit') so that we get some uniform estimates. 
\end{itemize}
For these reasons, we will need to restrict to a class of diffeomorphisms that we will later show contains the classes we are studying in this paper
to show Theorems \ref{teoB} and \ref{teoC}. 
\end{remark}

\subsection{Diffeomorphisms with a full set of pseudo-Anosov pairs}\label{ss.fullsetofpA} 

In some arguments we will need not only one pseudo-Anosov pair, but also that its conjugates (cf. Definition \ref{defi.conjugate}) fill the universal circle in a particular way. 

\begin{remark}
In what follows one should have in mind the difference between a pseudo-Anosov diffeomorphism of a surface and a reducible diffeomorphism of 
a surface with a pseudo-Anosov piece. One can also think about regulating flows for uniform foliations in atoroidal manifold versus manifolds with atoroidal pieces but non-trivial JSJ decomposition. (Recall Examples \ref{example1} and \ref{example2}.) When there is a unique pseudo-Anosov piece, the laminations are minimal, so every (regular) periodic orbit verifies that its stable/unstable manifold is dense in the stable/unstable lamination of the pseudo-Anosov map which forms a ``full lamination'' (see \cite{Calegari} and references therein). 
\end{remark}

We consider a pair $(f,\cF)$ with the periodic commuting property (cf. Definition \ref{defi.periodicproperty}) and let $(\fh, \gamma)$ be an admissible regular pA-pair with attracting points $a_1,a_2$ and repelling points $r_1,r_2$ in $\Su$ (here the action is with respect to a lift $P=\gamma^m \circ \fh^k$ as in Notation \ref{not-lift}). 

\begin{defi}\label{defi.fullpair} 
The regular pA-pair $(\hat f, \gamma)$ is a \emph{full pair} (for 
$P = \hat f^k \circ \gamma^m$), if there are $\alpha_0 > 0$, $d_0 > 0$
satisfying the following: for every geodesic ray $\eta$ 
in a leaf $L$ of $\Ft$, with starting point $x_0$
there is:
\begin{itemize}
\item a compact non degenerate interval $\cI$ in the leaf space of $\widetilde \cF$, 
\item and for every $n>0$, a deck transformation $\beta_n \in \pi_1(M)$ such that $\beta(L) \in \cI$ so that:
\end{itemize} 
if we denote by $g_n^a$ the geodesic in $L$  joining 
$\Theta_L(\beta_n a_1)$ with $\Theta_L(\beta_n a_2)$ and 
$g_n^r$ the geodesic  in $L$ joining $\Theta_L(\beta_n r_1)$ with $\Theta_L(\beta_n r_2)$ then:
\begin{itemize}
\item either $g_n^a$ or $g_n^r$ intersects $\eta$ in  a point $x_n$ making  angle larger than $\alpha_0$,
\item $d_L(x_0,x_n) > n$ and $d_L(x_n, g^a_n \cap g^r_n) < d_0$.
\end{itemize}
\end{defi}

We will use this property to obtain the following important result.

\begin{prop}\label{prop-fullpair}
Let $(f,\cF)$ have the periodic commuting property and admitting a full pair $(\fh,\gamma)$. Then, given a geodesic ray $\eta$ in a leaf  $L \in \Ft$ from a point $x_0 \in L$ with ideal point
$\Theta_L(\xi)$ ($\xi  \in \Su$), the following happens: There exists a conjugate pair $(\fh', \gamma')$ of $(\fh, \gamma)$ $-$ with $P'$ the corresponding conjugate of $P$
(cf. Definition \ref{defi.conjugate}) such that either $x_0$ and $\xi$ belong to different basins of attraction of the fixed points of $P'_\infty$ in $\Su$ or they belong to different basins of repulsion of the fixed points of $P'_\infty$. 
\end{prop}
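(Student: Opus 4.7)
The plan is to invoke the full pair assumption along $\eta$ to obtain a family of conjugates, and then use the super-attracting neighborhoods produced by Proposition~\ref{prop.superattracting} to certify membership of $x_0$ and $\xi$ in distinct basins.

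Apply Definition~\ref{defi.fullpair} to the ray $\eta$. For each $n\geq 1$ this produces a deck transformation $\beta_n$ (with $\beta_n(L)$ in a fixed compact interval $\cI$ of leaves) and geodesics $g_n^a,g_n^r\subset L$ joining the $\Theta_L$-images of $\beta_n a_1,\beta_n a_2$ and of $\beta_n r_1,\beta_n r_2$ respectively, one of which crosses $\eta$ transversely at $x_n$ with angle $\geq\alpha_0$, $d_L(x_0,x_n)>n$, and $d_L(x_n,g_n^a\cap g_n^r)<d_0$. Passing to a subsequence I assume the same alternative holds throughout, and treat the case where $g_n^r$ meets $\eta$; the case of $g_n^a$ is symmetric and yields the conclusion for repelling basins. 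Set $\hat f':=\beta_n\hat f\beta_n^{-1}$, $\gamma':=\beta_n\gamma\beta_n^{-1}$, $P':=\beta_n P\beta_n^{-1}$. Admissibility and the pA structure are conjugation invariant (by the commuting property, the element of the pair acting as identity on $\Su$ is unchanged), so $(\hat f',\gamma')$ is again an admissible regular pA-pair with attractors $A_n^i:=\beta_n a_i$ and repellers $R_n^i:=\beta_n r_i$.

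Relabel so that $\Theta_L(A_n^1)$ lies on the same side of $g_n^r$ as the ideal endpoint $\Theta_L(\xi)$ of $\eta$ past the crossing. The two arcs of $\Su\setminus\{R_n^1,R_n^2\}$ are precisely the basins of attraction of $A_n^1$ and $A_n^2$ under $P'_\infty$ on $\Su$, and by construction $\xi$ lies in the one containing $A_n^1$ (discarding finitely many $n$ so that $\xi\notin\{R_n^1,R_n^2\}$). It remains to show $x_0$ lies in the basin of $A_n^2$ in $\mt$ for some $n$ large. Since $\eta$ meets $g_n^r$ transversely at angle $\geq\alpha_0$ and $d_L(x_0,x_n)>n$, the CAT$(-1)$ triangle comparison forces $d_L(x_0,g_n^r)\to\infty$ as $n\to\infty$, with $x_0$ on the $A_n^2$ side. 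Apply Lemma~\ref{l.ghandtau} to $(\hat f',\gamma')$ with initial interval $I_0$ the arc of $\Su$ from $R_n^1$ to $R_n^2$ through $A_n^2$; this yields a nearby interval $I$ and an open set $U_I$ with $U_I\cap L=L_I^0$, whose bounding geodesic $\ell_I^L$ can be made arbitrarily close to $g_n^r$. For $n$ large, $L_I^0$ contains $x_0$, and Proposition~\ref{prop.superattracting} identifies $U_I$ as an attracting neighborhood of $A_n^2$ under $P'$, so $x_0$ lies in its basin; the pair $(\hat f',\gamma')$ is the required one.

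The main obstacle is the final step: to match the attracting neighborhood $U_I$ produced by Lemma~\ref{l.ghandtau}, whose endpoints are forced to lie near but generically not at $\{R_n^1,R_n^2\}$, with the concrete requirement that $U_I$ contain $x_0$. The resolution is quantitative: the transversality angle $\alpha_0$ together with $d_L(x_0,x_n)>n$ from the full pair definition forces $d_L(x_0,g_n^r)\to\infty$, leaving ample room to perturb the endpoints of $I_0$ as Lemma~\ref{l.ghandtau} requires without ejecting $x_0$ from $L_I^0$.
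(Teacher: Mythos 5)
Your reduction of the problem to showing that $x_0$ lies in the basin of attraction of $A_n^2$ is the right target, but the step you use to establish it does not work, and it is precisely where the content of the proposition lies. Proposition \ref{prop.superattracting} produces a neighborhood contained in the basin of attraction only for a sufficiently small interval $I$ around the super attracting point: its proof needs $P_\infty(I)$ to be compactly contained in $I$ with the geodesics $\ell^E_I$ and $\ell^E_{P_\infty(I)}$ uniformly far apart, and each iterate to advance a definite distance. For your $I_0$, the arc from $R_n^1$ to $R_n^2$ through $A_n^2$, this fails near the endpoints, and the desired conclusion is in fact false for such an interval: arbitrarily far out, $L^0_I$ contains points of the repelling wedges at $R_n^1,R_n^2$ (equivalently of $T^{+}_{P'}$, cf.\ Addendum \ref{add-pA}), and these are not attracted to $A_n^2$. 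Consequently, knowing that $d_L(x_0,g_n^r)\to\infty$ with $x_0$ on the $A_n^2$ side does not place $x_0$ in the basin of $A_n^2$: for the conjugated dynamics, $x_0$ could a priori lie in a repelling wedge at $R_n^1$ or $R_n^2$, or in $T^{+}_{P'}$, since those sets contain points arbitrarily far from $g_n^r$ on either side. Your closing paragraph acknowledges the obstacle, but the proposed fix (perturbing the endpoints of $I_0$ so that $x_0$ stays in $L^0_I$) does not address it, because membership in $L^0_I$ for such a large $I$ carries no basin information.

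What is missing is a uniform control saying that, after conjugation, the direction in which $x_0$ sits (equivalently the backward ideal point $\nu$ of the geodesic extension of $\eta$) stays in a fixed compact interval bounded away from the fixed points at the ends of the crossing geodesic. The paper's proof obtains exactly this by pulling back by the deck transformations: since $\beta_n(L)$ lies in a compact interval of the leaf space, the leaves $\beta_n^{-1}(L)$ also stay in a compact interval (bounded slithering); since $d_L(x_n, g_n^a\cap g_n^r)<d_0$ (a hypothesis of Definition \ref{defi.fullpair} that your argument never uses) and $T_P/_{<\gamma>}$ is compact, the points $\beta_n^{-1}(x_n)$ stay in a bounded region; and the angle bound $\alpha_0$ then rules out, by a compactness/contradiction argument with limits of the geodesics $\beta_n^{-1}(g_n)$ and $\beta_n^{-1}(\eta)$, that $\beta_n^{-1}(\xi)$ or $\beta_n^{-1}(\nu)$ accumulate on those endpoint fixed points. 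Only after this do $\beta_n^{-1}(\xi)$ and $\beta_n^{-1}(x_0)$ land, for $n$ large, in fixed neighborhoods $U_I$, $U_J$ built from small intervals around the two remaining fixed points, which is what legitimately certifies that $\xi$ and $x_0$ lie in distinct basins for the conjugated $P'$. Without an argument of this kind your proof has a genuine gap at its central step.
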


\begin{proof}
Consider the geodesic ray $\eta$ in $L$ with starting $x_0 \in L$ and 
ideal point $\Theta_L(\xi)$. Let $a_1, a_2$ be the attracting points of $P_\infty$. Since $(\fh,\gamma)$ is a full pair, without loss of generality we can assume that there is a sequence $\gamma_n \in \pi_1(M)$ such that the geodesic $g_n$ in $L$ with ideal points $\Theta_L(\gamma_n a_1)$ 
and $\Theta_L(\gamma_n a_2)$ makes angle larger than $\alpha_0$ 
with $\eta$ and intersects $\eta$ in a point 
$x_n$ at distance larger than $n$ from $x_0$. 
In addition if $h_n$ is the geodesic in $L$ with ideal points
$\Theta_L(\gamma_n r_1)$ and $\Theta_L(\gamma_n r_2)$ then
$d_L(g_n \cap h_n,x_n) < d_0$.
Finally $\gamma_n(L)$ is in a fixed compact interval in the
leaf space of $\Ft$ for every $n$.

Let $P'_n = \gamma_n \circ P \circ \gamma_n^{-1}$.

Extend $\eta$ in $L$ beyond $x_0$ to a full geodesic
still denoted by $\eta$ and with other ideal point $\Theta_L(\nu)$.
Now we map back by $\gamma^{-1}_n$. The fact that $\gamma_n (L)$
is in a compact interval of the leaf space means that
the slithering distance of $\gamma_n$ is bounded \cite{Thurston}
and so is the slithering distance of $\gamma_n^{-1}$. In
other words $\gamma^{-1}_n(L)$ is in a compact interval,
which we denote by  $\cJ$.

Suppose that up to subsequence that one of
$\gamma^{-1}_n(\xi)$ or 
$\gamma^{-1}_n(\nu)$ converges to $a_1$ or $a_2$. Without loss of
generality assume that $\gamma_n^{-1}(\xi)$ converges to
$a_1$ or $a_2$.
Up to another subsequence assume that $\gamma^{-1}_n(L)$ converges
to $L_0$. 
Notice that $g_n \cap h_n$ is a globally bounded distance
from $T_{P'_n} \cap L$ $-$ because of the 
following: 
\begin{enumerate} 
\item $T_P/_{<\gamma>}$ is compact, and 
\item Since deck transformations are isometries of $\mt$ they
induce metrics in the quotients, and also 
$$T_P/_{<\gamma>}, \ \ \ T_{P'_n}/_{<\gamma_n \circ \gamma \circ 
\gamma_n^{-1}>}$$
\noindent
are isometric.
\end{enumerate} 

Since $d_L(g_n \cap h_n, x_n)$ is bounded by $d_0$, it follows
that 

$$d_{\gamma_n^{-1}(L)}(\gamma_n^{-1}(x_n), T_P \cap \gamma^{-1}_n(L))$$

\noindent
is bounded above. This 
is because $\gamma^{-1}_n(g_n), \gamma^{-1}_n(h_n)$ are geodesics
in $\gamma_n^{-1}(L)$ with ideal points in $S^1(\gamma^{-1}_n(L))$
which are $\Theta_{\gamma^{-1}_n(L)}(a_1),
\Theta_{\gamma^{-1}_n(L)}(a_2),
\Theta_{\gamma^{-1}_n(L)}(r_1)$ and 
$\Theta_{\gamma^{-1}_n(L)}(r_2)$, respectively.
Hence the intersection $\gamma^{-1}_n(g_n) \cap \gamma^{-1}_n(h_n)$ is
a bounded distance from $T_p$.

So we can assume $\gamma^{-1}_L(x_n)$ also converges
to $y_0$. 
The sequence of geodesics $\gamma^{-1}_n(g_n)$ in $\gamma_n^{-1}(L)$
have ideal points $\Theta_{\gamma^{-1}_n(L)}(a_1),
\Theta_{\gamma^{-1}_n(L)}(a_2)$,
so this sequence of geodesics
converges to the geodesic $g$ in $L_0$ with ideal
points $\Theta_{L_0}(a_1),
\Theta_{L_0}(a_2)$.
In addition the sequence $\gamma^{-1}_n(\eta)$ converges
to a geodesic in $L_0$ through $y_0$ and making an angle
with $g$ of at least $\alpha_0$ as specified
in the beginning of this proof. This is impossible 
since $\gamma^{-1}_n(\xi)$ converges to either $a_1$ or $a_2$.
Therefore none of $\gamma_n^{-1}(\xi), \gamma_n^{-1}(\nu)$
converges to either $a_1$ or $a_2$.

The previous paragraphs show that
there are fixed interval neighborhoods $I, J$
of the repellers $r_1, r_2$ not containing either $a_1, a_2$ in
their closures, so that $\gamma^{-1}_n(\xi),
\gamma^{-1}_n(\nu)$ are always in $I \cup J$ for
$n$ sufficiently big.
Let $U_I, U_J$ be neighbhorhoods of $r_1, r_2$ respectively
as constructed in Proposition \ref{prop.superattracting}.
Up to subsequence and without loss of generality
assume that $\gamma_n^{-1}(\xi)$ is in $U_I$ for $n$ big.
Since $d_L(x_0,x_n) > n$, and $\gamma_n^{-1}(g_n \cap h_n)$ is
in a fixed compact set in $\mt$,  then eventually $\gamma^{-1}_n(x_0)$
is in $U_J$.

This shows that $\xi, x_0$ belong to the basins of
repulsion of $\gamma^n(r_1), \gamma^n(r_2)$ respectively
under $P'$.

This proves the proposition.
\end{proof}

Now we are ready to give the definition we will use to get Theorem \ref{teo.hsdff}. 

\begin{defi}\label{defi.fullpA}
We say that $(f, \cF)$ has \emph{full pseudo-Anosov behavior} if $(f,\cF)$ has the periodic commuting property (cf. Definition \ref{defi.periodicproperty}) and:
\begin{enumerate}
\item every admissible good pair (cf Definition
\ref{defi.admis}) of $(f,\cF)$, up to iterate, has only super attracting and super repelling fixed points. and,
\item it contains a regular pA-pair which is a full pair (cf. Definition \ref{defi.fullpair}). 
\end{enumerate}
\end{defi}

\subsection{Distinct landing points}\label{ss.uniquelandingpt}
We first need the following auxiliary result. This is the place where we will use the full pseudo-Anosov behavior on one of the foliations.

\begin{prop}\label{prop.uniquelandcenter}
Let $f: M \to M$ be a partially hyperbolic diffeomorphism such that it preserves branching foliations $\cs$ and $\cu$ and such that $(f,\cs)$ 
has full pseudo-Anosov behavior. 
Assume also that $\cu$ is $\R$-covered.
Then, for every $c \in \wwc \cap L$ with $L \in \wcs$ we have that the endpoints $c^+$ and $c^-$ of $c$ in $S^1(L)$ are different.  
\end{prop}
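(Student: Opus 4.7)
My plan is to argue by contradiction. Suppose that $c^+ = c^- = \xi \in S^1(L)$, and let $\tilde{\xi} \in \Su$ denote the unique point with $\Theta_L(\tilde{\xi}) = \xi$. Fix $x_0 \in c$ and let $\eta$ be the geodesic ray in $L$ starting at $x_0$ with ideal point $\xi$. Since $(f,\cs)$ has full pseudo-Anosov behavior, applying Proposition \ref{prop-fullpair} to the full regular pA-pair and to the ray $\eta$ yields a conjugate pA-pair $(\fh',\gamma')$, with associated lift $P'$, such that either
(a) $x_0$ and $\tilde{\xi}$ lie in distinct basins of attraction of two super-attracting fixed points $a_1, a_2$ of $P'_\infty$, or
(b) $x_0$ and $\tilde{\xi}$ lie in distinct basins of repulsion of two super-repelling fixed points $r_1, r_2$ of $P'_\infty$.

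In alternative (a), say $x_0$ lies in the basin of attraction of $a_1$ and $\tilde{\xi}$ in the basin of attraction of $a_2$. Proposition \ref{prop-dynamicspA}(iv) supplies a neighborhood $U$ of $\tilde{\xi}$ in $\mt$ contained in the basin of attraction of $a_2$. Since both rays of $c$ land at $\xi$, each ray eventually enters $U$, so I can choose points $y_1, y_2$ on the two rays of $c$ lying in the basin of attraction of $a_2$. The subsegment $\cI \subset c$ bounded by $y_1, y_2$ contains $x_0$ and hence meets the basin of attraction of $a_1$. This is precisely the configuration of Proposition \ref{p-getconfig} (with the roles of $a_1$ and $a_2$ interchanged), which produces a leaf $E \in \wcs$ fixed by $P'$ containing two disjoint leaves $\ell_1, \ell_2$ of $\wwc$ joining $\Theta_E(a_1)$ and $\Theta_E(a_2)$, both fixed by $P'$ and coarsely expanding. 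Lemma \ref{lem.imposibleconf} applied to $P'$ forbids this configuration, producing the contradiction.

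In alternative (b) I apply the symmetric argument to $((\fh')^{-1},\gamma')$, which is an admissible regular pA-pair for $f^{-1}$ whose lift $(P')^{-1}$ has the roles of attracting and repelling points of $P'_\infty$ interchanged. The commuting property and the existence of an admissible regular pA-pair transfer at once to $(f^{-1},\cs)$, with the roles of $\ws$ and $\wu$ interchanged. The same chain of Propositions \ref{prop-dynamicspA}(iv), \ref{p-getconfig} and Lemma \ref{lem.imposibleconf}, now formulated for $f^{-1}$ and $(P')^{-1}$ and using strong unstable leaves of $f$ where the original argument used strong stables, produces two disjoint $(P')^{-1}$-fixed (equivalently $P'$-fixed) centers joining $\Theta_E(r_1)$ and $\Theta_E(r_2)$ in a fixed center-stable leaf, again a contradiction.

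The main obstacle is arranging the conjugate pA-pair so that the basins of its fixed points slice through $L$ in a way that separates $x_0$ from the ideal point $\tilde{\xi}$ of $\eta$; this is precisely the content of the full-pair hypothesis feeding Proposition \ref{prop-fullpair}. Once the conjugate pair is chosen, the argument is clean: a self-limiting center leaf is forced to straddle two basins, giving rise via Proposition \ref{p-getconfig} to two disjoint $P'$-fixed centers joining attractors of $P'_\infty$ (or, in case (b), attractors of $(P')^{-1}_\infty$), which contradicts the stable (or unstable) contraction encoded in Lemma \ref{lem.imposibleconf}.
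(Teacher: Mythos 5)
Your alternative (a) is fine and essentially matches the attracting half of the paper's argument: Proposition \ref{prop-fullpair} plus Proposition \ref{p-getconfig} produce two disjoint $P'$-fixed centers joining the two attracting points, and Lemma \ref{lem.imposibleconf} kills that configuration. The gap is in alternative (b). There is no ``symmetric'' version of Lemma \ref{lem.imposibleconf} inside a center-stable leaf using strong unstables: a leaf of $\wcs$ is tangent to $E^s\oplus E^c$ and is subfoliated only by centers and strong stables, so strong unstable leaves simply do not lie in the leaf $E$ where your two $P'$-fixed centers joining $\Theta_E(r_1)$ and $\Theta_E(r_2)$ live. If you instead try to run the proof of Lemma \ref{lem.imposibleconf} with $(P')^{-1}$ and the stable foliation inside $E$, it breaks: the stable ray through the fixed point lands at a repelling point of $P'_\infty$, which by Proposition \ref{prop.landingpointsperiodic} only says it is coarsely contracting for $P'$ --- exactly what a stable ray does, so no contradiction. (Applying the lemma ``for $f^{-1}$'' would put you inside leaves of $\wcu$ with respect to the universal circle of $\cu$, a different configuration from the one you derived, and in any case the statement only assumes full pseudo-Anosov behavior for $(f,\cs)$.) Indeed, the repelling configuration is not a priori absurd: Proposition \ref{prop.centerssamepoint} deliberately excludes only the attracting case, and the paper notes that the analogous coarsely contracting picture genuinely occurs for strong foliations in the examples of \cite{BGHP}.

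This repelling case is precisely where the paper does real extra work, and it is why the hypothesis ``$\cu$ is $\R$-covered'' appears in the statement --- a hypothesis your proposal never uses, which is the telltale sign of the gap. The paper first upgrades the configuration (via Proposition \ref{p-getconfig} together with Addendum \ref{ad.config} and a limiting argument) to a $P$-fixed center $c'$ in a $P$-fixed leaf with both ideal points equal; Proposition \ref{prop.centerssamepoint} forces that common ideal point to be super repelling; then the contradiction is obtained by leaving the center-stable leaf entirely: one takes the $P$-fixed leaf $F\in\wcu$ containing $c'$, uses Theorem \ref{teo.curvesgeneral} to find points $p_n,q_n$ far out on the two rays of $c'$ at bounded distance in $\mt$, uses the $\R$-covered property of $\cu$ (uniform proper embeddedness of $F$) to join them by uniformly bounded arcs $\ell_n$ in $F$, and then the super-repelling behavior pushes $P(\ell_n)$ off $\ell_n$, producing a disk $D\subset F$ with $P(D)\subsetneq D$ that must contain an expanding unstable ray through the fixed point --- a contradiction. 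Your proof needs this (or some substitute using the unstable direction transverse to $\cs$) to handle alternative (b); as written, that case is unproved.
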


The proof of this statement will require 
to first iterate in $\cs$ until we get a center curve both of whose endpoints land in a single fixed point of a pA pair for $(f,\cs)$. If the fixed point is super attracting one can apply Proposition \ref{prop.centerssamepoint} to get a contradiction. If the fixed point is repelling, we need to 
use Theorem \ref{teo.curvesgeneral} 
and an analysis of the center unstable foliation $\cu$ to 
derive a contradiction.

\begin{lema}\label{l.lema1}
Let $f$ be as in Proposition \ref{prop.uniquelandcenter} and assume there is a center curve $c \in \wwc \cap L$ for $L \in \wcs$ so that $c^-=c^+$. 
Then, there exists a regular pA-pair $(\fh, \gamma)$ such that $P$ as in Notation \ref{not-lift} fixes a leaf $L' \in \wcs$ which has a center curve $c'$ so that both endpoints coincide and so that $c'$ is fixed by $P$.
\end{lema}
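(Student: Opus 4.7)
Let $\xi\in\Su$ satisfy $\Theta_L(\xi)=c^+=c^-$, fix a point $x_0\in c$, and let $\eta$ be a geodesic ray in $L$ from $x_0$ with ideal point $\Theta_L(\xi)$. The plan is to use the full pseudo-Anosov behavior of $(f,\cs)$ to produce an admissible regular pA pair under which $x_0$ and $\xi$ are dynamically separated on the universal circle, then to upgrade this separation into two disjoint fixed center curves on a fixed leaf via Proposition~\ref{p-getconfig} and Addendum~\ref{ad.config}, and finally to extract a $P$-fixed center curve with coinciding endpoints by an order argument.

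First apply Proposition~\ref{prop-fullpair} to the ray $\eta$: this furnishes a conjugate admissible regular pA pair $(\fh',\gamma')$ with associated lift $P'=(\gamma')^m\circ(\fh')^n$ as in Notation~\ref{not-lift} such that $x_0$ and $\xi$ lie either in basins of distinct attractors of $P'_\infty$ or in basins of distinct repellers of $P'_\infty$. Consider first the attracting case, with $x_0$ in the basin of $a_1'$ and $\xi$ in that of $a_2'$. Since both rays of $c$ land at $\Theta_L(\xi)$, and the basin of $a_2'$ intersected with $L$ has an ideal closure containing a neighborhood of $\Theta_L(a_2')$ by Proposition~\ref{prop.superattracting}, each ray eventually enters that basin. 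Pick points $y_1,y_2$ on the two rays inside the basin of $a_2'$, and let $\cI\subset c$ be the compact sub-arc from $y_1$ through $x_0$ to $y_2$. Then $\cI$ has both endpoints in the basin of $a_2'$ and meets the basin of $a_1'$ at $x_0$, so Proposition~\ref{p-getconfig} (with the roles of $a_1,a_2$ interchanged) yields a leaf $E\in\wcs$ fixed by $P'$ containing two disjoint $P'$-fixed center curves $\ell_1,\ell_2$ joining $\Theta_E(a_1')$ to $\Theta_E(a_2')$. The repelling case is symmetric: the pair $((\fh')^{-1},(\gamma')^{-1})$ is an admissible regular pA pair for $f^{-1}$ which preserves the same branching foliations and whose associated lift exchanges attractors and repellers of $P'_\infty$, so the same argument applied to $f^{-1}$ produces a $P'$-fixed leaf $E$ and two disjoint $P'$-fixed center curves $\ell_1,\ell_2$ joining the two repellers of $P'_\infty$.

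In either case, Addendum~\ref{ad.config} applied to the band of $E$ between $\ell_1$ and $\ell_2$ furnishes at least one center leaf in the band whose two ideal points coincide at a single fixed point of $P'_\infty$. Denote by $S$ the set of all such leaves. It is non-empty, $P'$-invariant (as $P'$ preserves $\ell_1,\ell_2,E$ and the boundary fixed points of the band), and inherits the restriction of the linear order on center leaves of $E$ in the band given by the proof of Addendum~\ref{ad.config}; replacing $P'$ by $P'^2$ if necessary we may assume $P'$ preserves this order. Moreover $S$ is closed under monotone limits: any monotone sequence in $S$ is bounded by $\ell_1,\ell_2$, so converges via the branching-foliation closure property~(iv) to a center leaf in the band, whose ideal points are forced to remain at the single common fixed point because the band accumulates only at its two boundary ideal points in $S^1(E)$. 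Starting from any $\ell\in S$, the orbit $\{(P')^n(\ell)\}_{n\geq 0}$ is either eventually constant or strictly monotone, and in either case converges to a leaf $c'\in S$ which is $P'$-fixed by continuity. Taking $L':=E$ and $(\fh,\gamma):=(\fh',\gamma')$ yields the desired configuration.

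The principal obstacle is the repelling case, which requires the symmetric analogue of Proposition~\ref{p-getconfig} for the inverse pair $((\fh')^{-1},(\gamma')^{-1})$: this transfers because the hypotheses of that proposition rely only on the coarse dynamics of $P$ on $\Su$ and the existence of $P$-fixed leaves of $\wcs$, and both features survive the replacement of $f$ by $f^{-1}$ (the periodic commuting property and existence of admissible regular pA pairs are inversion-symmetric). A secondary subtlety is making precise the monotone-convergence step when $\wwc$ has non-Hausdorff leaf space in the band; the argument is that the two-point ideal structure of the band together with the closure property of branching foliations forces any accumulation of a monotone sequence in $S$ to itself be a leaf in $S$.
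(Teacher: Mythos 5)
Your first three steps are exactly the paper's argument: apply Proposition \ref{prop-fullpair} to the geodesic ray toward $\Theta_L(\xi)$, feed the resulting separation of $x_0$ and $\xi$ into Proposition \ref{p-getconfig} (for $P'$ or its inverse) to obtain a $P'$-fixed leaf with two disjoint $P'$-fixed center curves joining the two attracting (resp.\ repelling) points, and invoke Addendum \ref{ad.config}; your justification that the inverse pair is again an admissible regular pA pair is the same symmetry the paper uses. The gap is in your final step, where you extract the $P'$-fixed curve $c'$. The set $S$ of center leaves in the band with coinciding ideal points is not linearly ordered: the order you borrow from the proof of Addendum \ref{ad.config} was constructed only under the (contradictory) hypothesis that every leaf of the band joins $a_1$ to $a_2$, and once leaves with coinciding endpoints exist, two such leaves bound regions that may be disjoint rather than nested (and leaves whose common ideal point is $a_1$ are not comparable with those whose common point is $a_2$). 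Hence your dichotomy ``eventually constant or strictly monotone'' for the orbit $\{(P')^n(\ell)\}$ is unjustified; the correct dichotomy is nested versus pairwise disjoint regions, exactly Options (1) and (2) in the proof of Proposition \ref{prop.centerssamepoint}, and in the pairwise disjoint case there is no monotone limit: the iterates may a priori escape every compact set, accumulating only at the common ideal point, and then the closure property of branching foliations produces no limit leaf at all.

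Excluding this wandering behavior is the real content of the step and needs a dynamical argument: disjoint regions cannot accumulate in the leaf (a local product structure argument as in Claim \ref{af.6}) and cannot converge to the common fixed ideal point because that point is super attracting or super repelling (using the neighborhoods of Proposition \ref{prop.superattracting} with $P(\overline U)\subset U$, resp.\ $P^{-1}(\overline V)\subset V$). This matters precisely in the case that actually occurs: by Lemma \ref{lem.imposibleconf} the attracting configuration you treat in detail cannot arise in a center stable leaf, so that case is vacuous and all the weight falls on the repelling case, which you dismiss as ``symmetric''. Moreover, even in the genuinely nested case you must argue that the monotone limit is a center leaf whose two ideal points still coincide, rather than the curves degenerating to the boundary leaves $\ell_1,\ell_2$, which have distinct ideal points; the observation that the band accumulates at only two ideal points does not give this. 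The paper avoids the order-theoretic shortcut: it notes that the iterates of the curve furnished by Addendum \ref{ad.config} are trapped between the two $P$-fixed curves and remain in a compact region, and then extracts a $P$-fixed limit leaf in the spirit of the proof of Proposition \ref{p-getconfig} (taking extremal limit leaves). Your proof needs either that compactness argument or an explicit exclusion of the wandering case to be complete.
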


\begin{proof}
Pick a point $x_0 \in c$ and consider the geodesic ray $\eta$ from $x_0$ to $\Theta_L(\xi) = c^-=c^+$. Since $(f, \cs)$ has full pseudo-Anosov behavior it has an admissible full pair $(\fh,\gamma)$ which is a regular pA pair. 

We will use Proposition \ref{prop-fullpair}.
By Proposition \ref{prop-fullpair} 
 we deduce that, up to conjugating $(\fh,\gamma)$, there is a $P$
so that $x_0$, $\xi$ are in different basis of attraction of either
$P_{\infty}$ or $P_{\infty}^{-1}$.

We will apply Proposition \ref{p-getconfig} with $\cT = \wc$
the center foliation.

We consider the case where the point $x_0 \in c$ is in the basin of repulsion of a repeller of $P$ and $\xi$ in the basin of repulsion of the other
repeller of $P$. Applying Proposition \ref{p-getconfig} to $P^{-1}$ we obtain that iterating by $P^{-n}$ the leaf $L$ converges to a leaf $L'$ fixed by $P$ and $P$ fixing disjoint center curves $c_1$ and $c_2$ which join the repelling points. (The other case is symmetric and obtains curves that join the attracting points of $P$ by iterating forward.) 

By Addendum \ref{ad.config} one can also see that there is a center curve $c_3$ in $L'$ between $c_1$ and $c_2$ and so that both
endpoints of $c_3$ in $L'$ are equal.
Since the curve is between $c_1$ and $c_2$ it follows that its forward iterates remain in a compact region, and so the curves converge to at least
one curve $c'$ whose endpoints coincide with the endpoints of $c_3$ and
which is fixed by $P$.

The symmetric case is dealt with using $P^{-1}$ instead of $P$.
\end{proof}

\begin{proof}[Proof of Proposition \ref{prop.uniquelandcenter}]
Let $c'$ given by the previous lemma.
By Proposition \ref{prop.centerssamepoint} we get that 
the endpoints of $c'$ must correspond to a super repelling point of $P_{\infty}$. 

Since $c'$ is fixed by $P$, there is a  leaf $F \in \wcu$ containing $c'$ and 
fixed by $P$.  Since the action of $P$ on $c'$ is coarsely 
contracting, there is a fixed point $x \in c$ by $P$. 
Let $c_1,c_2$ be the rays of $c'  \setminus \{x\}$.

We first consider the situation in $L$ and the foliation $\cs$.
Both $c_1, c_2$  limit to the same point $z = \Theta_L(\xi)$ in $S^1(L)$ with
$\xi$ super repelling for $P$. Theorem \ref{teo.curvesgeneral} implies that the geodesic ray in $L$ starting in $x$ and with ideal
point $z$ is contained in uniform neighborhoods in $L$ of $c_1$ and $c_2$. In particular, there are sequences $p_n \in c_1$ and $q^n \in c_2$ converging to $z$ in $L \cup S^1(L)$ so that $d_L(p_n,q_n)$ is bounded. 
It follows that $d_{\mt}(p_n, q_n)$ is bounded.

We now look at the center unstable foliation $\cu$, and use
that it is $\RR$-covered.
Since $\cu$ is $\RR$-covered, this implies that in $F$ the points $p_n$ and $q_n$ are a bounded distance apart. 
This is because $F$ is uniformly properly embedded in $\mt$.
Let $\ell_n$ be the geodesic segment in $F$ from $p_n$ to $q_n$.
By trimming $\ell_n$ or 
replacing $p_n, q_n$ if necessary, we assume that $\ell_n$
intersects $c'$ only in $p_n, q_n$, still keeping the length
of $\ell_n$ globally bounded. 

Since $\xi$ is super repelling for $P_{\infty}$ (acting on
the universal circle of $\cs$) it follows that $d_{\mt}(p_n,P(p_n))$
converges to infinity and likewise for $q_n$. The length of
$P(\ell_n)$ is uniformly bounded. In particular for $n$ big
enough $P(\ell_n)$ is disjoint from $\ell_n$. Fix one such $n$.
Let $D$ be disk in $F$ bounded by $\ell_n$ and the segment
in $c'$ from $p_n$ to $q_n$. By the above $P(D)$ is strictly contained
in $D$. There is a ray of the unstable leaf
of $x$ entering $D$. This ray intersects $\partial D$.
This ray is expanded by $P$.
This contradicts that $P(D)$ is a subset of $D$.

This finishes the proof of Proposition \ref{prop.uniquelandcenter}.
\end{proof}

\subsection{Proof of Theorem \ref{teo.hsdff}}\label{ss.proofHsdff}

The proof of Theorem \ref{teo.hsdff} is very similar to previous arguments. 

\begin{proof}
Assume by contradiction that there is a leaf $L \in \wcs$ on which 
the leaf space of $\wwc$ is non Hausdorff. Consider two center leaves $c,c' \in L$ which are non-separated in the sense that there is a sequence $c_n$ of center leaves such that $c_n$ converges both to $c$ and $c'$ (it may converge to other center leaves too). Up to changing orientation of the 
center foliation, we can assume that there are arcs $I_n$ of $c_n$ which approximate\footnote{By this we mean that the Haudsorff limit of the arcs $I_n$ in $\hat L$ contains both $c^+$ and $(c')^-$.} the points $c^+$ and $(c')^-$ in $\hat L = L \cup S^1(L)$ which may or may not coincide. 

By Proposition \ref{prop.uniquelandcenter}, we have
$c^- \neq c^+$ and also $c^+ \neq (c')^+$. Hence as done in the proof
of Proposition 
\ref{prop-fullpair} we can choose
an admissible regular pA pair $(\fh,\gamma)$ which verifies that in $L$ either the geodesic joining the attracting points or the repelling points separates $c^+$ from the ideal points of 
the center curves $c_n$. As before, we assume that it is the geodesic joining the attracting ideal points that makes the separation (as the other case is symmetric). 

We can assume by further conjugating $(\fh,\gamma)$ that it verifies that both ideal points of the curves $c_n$ belong to the same basin of repulsion of the repeller $r_1$ of $P_\infty$. Since $c^+$ belongs to the basin of repulsion of the other repeller $r_2$ it follows that for $n$ large enough, the segment $I_n$ intersects the basin of repulsion of $r_2$. We can then apply Proposition \ref{p-getconfig} to $P^{k}, k < 0$ to find a fixed center stable leaf $L' \in \Ft$ such that it contains two center curves which are disjoint and join the repelling points of $P_\infty$. Moreover, between these center curves there is a fixed center curve both of whose points coincide by Addendum \ref{ad.config}. This contradicts Proposition \ref{prop.uniquelandcenter} and concludes the proof of Theorem \ref{teo.hsdff}. 
\end{proof}

\section{Quasigeodesic behavior}\label{s.QG}
In this section we want to show that under our assumptions the centers behave as uniform quasigeodesics in leaves of center stable and center unstable branching foliations. 

Recall that for $k>1$, an embedded rectifiable 
curve $\ell \subset L$ in a complete Riemannian manifold is called a $k$-\emph{quasigeodesic} if one has that for all $x, y\in \ell$

$$ d_{\ell}(x,y) \leq k d_L(x,y)  + k. $$

\noindent where $d_L$ denotes the Riemannian distance in $L$ and $d_\ell$
is the length along $\ell$.
Note that $d_\ell (x,y) \geq d_L(x,y)$ always. 

\begin{defi} Let $\cF$ be a (branching) foliation in a closed 3-manifold $M$. We say that a  one dimensional branching subfoliation $\cT$  of $\cF$ 
by rectifiable curves is by \emph{uniform quasigeodesics} if there exists $k$ such that every curve $\ell$ of $\wT$ in $L \in \Ft$ with the induced path metric is a $k$-quasigeodesic. 
\end{defi}

\begin{remark}
The fact that a subfoliation is by uniform quasigeodesics is independent of the metric in $M$ since $M$ is compact. Only the constant may change. In our setting we typically work with (branching) foliations whose induced metric is negatively curved, where quasigeodesics have very meaningful geometric properties thanks to the classical Morse lemma (see \cite[\S III.H.1]{BH}). 
\end{remark}

Unless otherwise stated we always assume that the one dimensional
(branching) subfoliations are by rectifiable curves.

Now we can state the main result of this section: 

\begin{teo}\label{teo.QG}
Let $f: M \to M$ be a partially hyperbolic diffeomorphism preserving branching foliations $\cs$ and $\cu$ such that both $(f,\cs)$ and $(f,\cu)$ have full pseudo Anosov behavior (Definition \ref{defi.fullpA}). 
Suppose that there is $(\hat f, \gamma)$ a regular full pair 
for $(f, \cs)$ which is also a good pair for $(f, \cu)$.
Then $\wc$ is by uniform quasigeodesics in both $\cs$ and $\cu$. 
\end{teo}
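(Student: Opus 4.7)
The plan is to bootstrap the structural results already established (small visual measure, distinct landing points, Hausdorff center leaf space) together with the full pseudo-Anosov pair structure into a quasigeodesic bound. By the symmetric hypothesis that $(f,\cs)$ and $(f,\cu)$ both have full pseudo-Anosov behavior, it suffices to show that centers are uniform quasigeodesics inside leaves of $\wcs$; the statement inside leaves of $\wcu$ then follows by swapping foliations. Throughout I fix $c \in \wwc \cap L$ with $L \in \wcs$, and by Proposition \ref{prop.uniquelandcenter} the ideal points $c^+, c^- \in S^1(L)$ are distinct, so the geodesic $g_c$ in $L$ joining them is well-defined.

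One direction of the comparison between $c$ and $g_c$ is immediate from Theorem \ref{teo.curvesgeneral}: there is a uniform $R>0$ with $g_c$ contained in the $R$-neighborhood of $c$. The heart of the argument is to establish the converse, a uniform $R'>0$ so that $c$ lies in the $R'$-neighborhood of $g_c$. Once this two-sided Hausdorff bound is in place, a standard $\mathrm{CAT}(-1)$ argument will upgrade it to a uniform quasigeodesic inequality with constants depending only on $R$, $R'$ and the curvature bound on leaves, using that $c$ is properly embedded in $L$ and that two distinct centers inside $L$ cannot cross (by the no-topological-crossings property of the branching foliation, sharpened to genuine disjointness of distinct centers in $L$ via Theorem \ref{teo.hsdff}).

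My approach to the converse bound is to first handle periodic centers and then approximate. If $c \subset L$ is fixed by some $P = \gamma^m \circ \hat f^n$ coming from a regular pA pair, then by Proposition \ref{prop.landingpointsperiodic} $c$ is coarsely expanded (or contracted) by $P$ and admits a compact fundamental domain $\cI \subset c$ whose $P$-iterates tile $c$. Comparing the iteration of $\cI$ along $c$ with the iteration of $P$ along $g_c$ (which shares the same endpoints and is therefore $P$-invariant), one extracts a quasigeodesic constant controlled by the expansion rate of $P_\infty$ at its super-attracting points on $\Su$. Because Proposition \ref{prop-manypApairs} lets one realize all relevant pA pairs as deck-conjugates of a single fixed full pair $(\hat f, \gamma)$, and because deck transformations act as isometries on $\mt$, this constant is uniform across the whole family of periodic centers. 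For a general center $c$, I would use the full pA structure together with Proposition \ref{prop-fullpair} to produce conjugate pairs whose super-attracting points approach $c^\pm$ in $\Su$; by Proposition \ref{prop.periodiccommuting} each such conjugate pair fixes a leaf of $\wcs$ and a periodic center inside it, and these periodic centers converge to $c$ on compact subsets, with the convergence controlled by the Hausdorffness of the center leaf space (Theorem \ref{teo.hsdff}) and the injectivity of the endpoint map obtained by combining Theorem \ref{teo.hsdff} with Lemma \ref{lem.imposibleconf} and Proposition \ref{prop.centerssamepoint}. The quasigeodesic inequality is preserved under this controlled limit, yielding the bound for $c$.

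The hard part will be this approximation step: ensuring that the leaves of $\wcs$ supporting the periodic centers $c_k$ actually converge to $L$, that the $c_k$ converge to $c$ in a sense strong enough to preserve arclength-versus-distance comparisons, and that the quasigeodesic constants for the $c_k$ do not degenerate in the limit. The flexibility of the full pseudo-Anosov structure to position fixed points arbitrarily on $\Su$, together with the uniformity coming from conjugation by isometric deck transformations, is what should make this feasible, and is precisely why the hypothesis of full pseudo-Anosov behavior on both $\cs$ and $\cu$ (together with the compatibility of the pair $(\hat f, \gamma)$ with both foliations) is used here.
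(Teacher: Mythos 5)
Your framing matches the paper's skeleton only at the outer level: one direction of the comparison does come from Theorem \ref{teo.curvesgeneral}, and the real content is a converse bound plus an upgrade to quasigeodesity (the paper's Lemma \ref{lem.track} and Proposition \ref{prop.QG}). But the mechanism you propose for the converse bound has genuine gaps. First, the periodic-center step does not work as sketched: on a leaf $L$ fixed by $P=\gamma^m\circ\fh^n$, the map $P$ is only a quasi-isometry of $L$, and the quasi-isometry constants of $P^k$ degrade with $k$; so knowing that a compact fundamental domain $\cI\subset c$ lies at bounded distance from the $P$-invariant geodesic $g_c$ gives no uniform bound on $d_L(P^k(\cI),g_c)$. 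Moreover, ``the expansion rate of $P_\infty$ at its super-attracting points'' is not a usable quantity: the ideal circle only carries a quasi-symmetric (H\"older) structure, and super-attraction is a limit statement, not a rate controlling distances inside $L$. Uniformity over all periodic centers is also unjustified, since the exponents $(m,n)$ provided by Proposition \ref{prop.periodic} vary with the center, so the relevant maps are not all deck-conjugates of one fixed $P$. Second, the approximation step you flag as ``the hard part'' is exactly the missing idea: nothing you cite produces periodic centers converging to an arbitrary center $c$ together with convergence of their leaves to $L$ and of their endpoints. Proposition \ref{prop-fullpair} only places a point and an ideal point in different basins of a conjugate pair; it does not yield a $P'$-fixed center near $c$, and density of periodic centers is essentially equivalent to the collapsed Anosov flow structure one is trying to build, so it cannot be assumed here.

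The paper avoids all of this: it proves the two-sided tracking bound softly, by a compactness argument on finite segments (Lemma \ref{lem.track}) — translate by deck transformations, pass to limits, and use small visual measure together with distinct landing points (via the continuity Lemma \ref{lem.cont}, which rests on Hausdorffness of the center leaf space from Theorem \ref{teo.hsdff}) to reach a contradiction; no periodic center or explicit dynamics of a single $P$ enters at this stage, the pA pairs having already been spent on establishing the ``efficient behavior'' properties. A second, related gap is your claim that a two-sided Hausdorff bound upgrades to uniform quasigeodesity ``by a standard $\mathrm{CAT}(-1)$ argument'' with constants depending only on $R$, $R'$ and curvature: this is false as stated, since a properly embedded curve can backtrack with arbitrarily large length inside a bounded neighborhood of a geodesic, and disjointness of distinct centers does not constrain the length of a single center. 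The paper's Proposition \ref{prop.QG} supplies the missing ingredient — a uniform estimate that segments of center leaves of length at least $a_1$ have endpoints at distance at least $a_0>3R$ — and this again requires a compactness argument using the foliation structure (local product structure and the Hausdorff leaf space), not just leafwise negative curvature.
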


Note that uniform quasigeodesics in leaves whose metric vary continuously can be followed in nearby leaves, so we deduce that: 

\begin{cor}\label{cor.continuousendpoints}
The endpoint maps $c \mapsto c^{\pm}$ from the leaf space $\cL^c$ of the center foliation $\wwc$ to $\Su$ is continuous and $\pi_1(M)$-equivariant. It is also $\fh$-equivariant for $\fh$ a lift of $f$ to $\mt$ (see Proposition \ref{prop.extuniv}). 
\end{cor}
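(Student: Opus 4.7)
The plan is to deduce both statements directly from Theorem \ref{teo.QG} together with the Morse lemma in $\mathrm{CAT}(-1)$ leaves and the naturality of the universal circle.

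First I would fix the setup. By Theorem \ref{teo.QG}, every center curve $c\subset L\in\wcs$ is a uniform $k$-quasigeodesic in $L$; since $L$ is $\mathrm{CAT}(-1)$, the Morse lemma (cf.\ Section \ref{ss.boundaryvisual}) gives a uniform constant $K$ so that each ray of $c$ lies within Hausdorff distance $K$ in $L$ of a unique geodesic ray, whose ideal endpoint in $S^1(L)$ is precisely $c^\pm$. Composing with $\Theta_L^{-1}$ we obtain the map $c\mapsto c^\pm\in\Su$. The same argument applies in leaves of $\cu$.

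For continuity, I would take a sequence $c_n\to c$ in $\cL^c$ and pick basepoints $x_n\in c_n$ converging to $x\in c$ in $\mt$. Let $L_n,L$ be leaves of $\wcs$ containing $c_n,c$ with $L_n\to L$ in $\cL^{cs}$; the metrics and the visual compactifications vary continuously in the sense of Section \ref{ss.boundaryvisual}. The positive ray of $c_n$ from $x_n$ is a $k$-quasigeodesic ray, so by Morse it is within $K$ of the geodesic ray $\eta_n\subset L_n$ from $x_n$ with ideal point $c_n^+$; the rays $\eta_n$ converge on compact sets to the geodesic ray $\eta\subset L$ from $x$ with ideal point $c^+$, because centers of $c_n$ converge on compact sets to $c$ and the quasigeodesic constant is uniform. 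Hence $c_n^+\to c^+$ in the topology of $\cA$ from Section \ref{ss.universalS1}. Since the universal circle is obtained from $\cA$ through the continuous identifications $\tau_{L_n,L}$, it follows that $\Theta_L^{-1}(c_n^+)\to\Theta_L^{-1}(c^+)$ in $\Su$; the argument for $c^-$ is identical, and the same argument works inside $\cu$.

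For equivariance, let $\gamma\in\pi_1(M)$. The deck transformation sends $c$ to a center curve $\gamma\cdot c$ contained in $\gamma\cdot L$, and acts as a quasi-isometry from $L$ to $\gamma\cdot L$, so it sends the geodesic ray in $L$ with ideal point $c^+$ to a quasigeodesic ray in $\gamma\cdot L$ whose unique ideal point is $(\gamma\cdot c)^+$. By construction of the extension $\gamma_\infty$ of $\gamma$ to $\Su$ given by Propositions \ref{prop.ext} and \ref{prop.extuniv}, this ideal point equals $\gamma_\infty(c^+)$. The identical argument applied to any lift $\fh$ of $f$, which also preserves $\wwc$ and acts as a quasi-isometry between leaves of $\wcs$, yields $\fh$-equivariance.

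The only subtle point is handling the branching nature of $\wwc$: a single center curve can lie in several leaves of $\wcs$, so one must verify that $c^\pm$ is independent of the ambient leaf chosen. This follows because $\Theta_L$ commutes with the identifications $\tau_{L,L'}$ used to define $\Su$, so the ideal point of $c$ computed in any leaf containing it corresponds to the same point of $\Su$; alternatively one may pass to an approximating foliation, where each center curve lies in a unique leaf, and then use the well-definedness of $\cL^c$ from Section \ref{ss.bran}.
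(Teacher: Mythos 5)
Your proof is correct and follows essentially the same route as the paper, which deduces the corollary immediately from Theorem \ref{teo.QG} together with the fact that uniform quasigeodesics in leaves whose metrics vary continuously can be followed in nearby leaves, with equivariance coming from the naturality of the boundary maps in Propositions \ref{prop.ext} and \ref{prop.extuniv}. Your write-up simply fills in the standard Morse-lemma details (and the well-definedness remark for branching) that the paper leaves implicit.
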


We explain what we mean by $c^{\pm}$. Suppose that $c$ is contained
in a leaf $L$ of $\wcs$. Let $b$ be the ideal  point of $c$ in
the positive center direction. Then  $c^+ = \Theta_L^{-1}(b)$.
Similarly for $c^-$.

\subsection{Tracking geodesics} 
Consider $\cT$ a one-dimensional sub-branching foliation of a branching foliation $\cF$ of $M$. We assume that $\cF$ is $\RR$-covered and uniform and by hyperbolic leaves so that we can apply all what was developed in \S \ref{ss.Candel}-\S \ref{ss.universalS1}. When lifting to the universal cover we get a (branching) foliation $\Ft$ of $\mt$ which is subfoliated by a (branching) foliation $\wT$ and we choose an orientation for both. 

We will say that $\cT$ has \emph{efficient behavior} in $\cF$ if the following conditions hold:

\begin{enumerate}
\item the leaf space of $\wT$ in  each leaf $L \in \Ft$ is Hausdorff, 
\item each curve $\ell \in \wT$ in a leaf $L \in \Ft$ has well defined limit points $\ell^-$ and $\ell^+$ in $S^1(L)$ which are different
\item there is $R>0$ such that for each $\ell \in \wT$ and $x \in \ell \subset L \in \Ft$  if we denote by $r^\pm_x$ the geodesic ray in $L$ joining $x$ with $\ell^\pm$ then we have that $r^{\pm}_x$ is contained in the $R$-neighborhood in $L$ of the ray of $\ell$ from $x$ to $\ell^{\pm}$. 
\item $\cT$ has small visual measure (cf. Definition
\ref{defi.smallvisual}).
\end{enumerate}

\begin{remark}\label{rem.efficient}
In the previous sections we have established that if $f: M \to M$ is a partially hyperbolic diffeomorphism in the hypothesis of Theorem \ref{teo.QG} then the center (branching) foliation $\wc$ has efficient behavior in both $\cs$ and $\cu$: Point ($i$) is done in \S \ref{s.Hausdorff}, point ($ii$) in  \S \ref{ss.landing} and Proposition \ref{prop.uniquelandcenter} and 
points ($iii$), ($iv$) in Theorem \ref{teo.curvesgeneral}. 
\end{remark}

The following will be an auxiliary result to show the quasigeodesic behavior.

\begin{lema}\label{lem.cont}
Let $\cT$ having efficient behavior in $\cF$ and let $c_n \in \wT$ be a sequence of leaves. Assume that $c_n \subset L_n \in \Ft$ so that $L_n \to L$, $c_n \to c \in L$ and there are points $x_n \in c_n$ so that $x_n \to x \in c$. Assume there is a point $y_n$ in the ray of $c_n \setminus \{x_n\}$ with positive orientation so that $y_n \to \xi \in \Su$
(as in Definition \ref{not-neigh}). Then, the endpoint $c^+$ of the positively oriented ray of $c\setminus \{x\}$ is $\xi$ is $\Theta_L(\xi)$. 
\end{lema}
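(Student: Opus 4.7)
The plan is to exhibit a geodesic ray $r_\infty$ in $L$ starting at $x$ whose ideal endpoint in $S^1(L)$ coincides with both $c^+$ and $\Theta_L(\xi)$, thereby forcing the identification $c^+=\Theta_L(\xi)$.

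For the construction, I would proceed as follows. By property (iii) of efficient behavior applied to each $c_n$, the geodesic ray $r_n$ in $L_n$ from $x_n$ to $\Theta_{L_n}(c_n^+)$ lies within distance $R$ in $L_n$ of the positive ray of $c_n$ from $x_n$; pick $z_n\in r_n$ with $d_{L_n}(z_n,y_n)\le R$. Since $d_{\mt}\le d_{L_n}$, it is straightforward to check that $z_n\to\xi$ in the sense of Definition \ref{not-neigh}: any neighborhood of $\xi$ as in that definition can be thickened in $\mt$ by a fixed bounded amount to produce another neighborhood of the same form, because its intersection with each leaf is bounded by a geodesic whose $R$-neighborhood is contained in a slightly larger half-plane of the same type. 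Next, because $L_n\to L$, $x_n\to x\in L$ and the leaves are uniformly $\mathrm{CAT}(-1)$, the rays $r_n$ sub-converge in the compact-open topology to a geodesic ray $r_\infty$ in $L$ based at $x$; let $b\in S^1(L)$ be its ideal endpoint.

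To see $b=c^+$, property (iii) applied to $c$ itself yields that the geodesic ray from $x$ to $\Theta_L(c^+)$ stays within $R$ of the positive ray of $c$. Since $c_n\to c$ in the compact-open topology and $r_n$ stays within $R$ of the positive ray of $c_n$, passing to the limit shows that $r_\infty$ stays within $R$ of the positive ray of $c$, and in $\mathrm{CAT}(-1)$ there is a unique geodesic ray from $x$ within bounded distance of a given quasigeodesic ray, so $b=c^+$. To see $b=\Theta_L(\xi)$, I would use that $z_n$ leaves every compact set of $\mt$ while remaining on the geodesic ray $r_n$ with known ideal endpoint. Combined with the fact that the intervals $I(E)$ from Definition \ref{not-neigh} vary continuously with $E$ and can be chosen arbitrarily small around $\xi$, this forces the ideal endpoints of $r_n$ (viewed in $\Su$ via $\Theta_{L_n}^{-1}$) to converge to $\xi$, and therefore $b=\Theta_L(\xi)$.

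The main delicate point will be this last step: translating the convergence $z_n\to\xi$ in $\mt$, defined via the leafwise half-plane neighborhoods of Definition \ref{not-neigh}, into convergence in the universal circle $\Su$ of the ideal endpoints of the rays $r_n$. The key observation making this work is that a geodesic ray in a leaf whose ideal endpoint lies outside a prescribed interval of $S^1(L_n)$ cannot penetrate arbitrarily deep into the complementary half-plane-type region, so only geodesic rays whose ideal endpoints lie close to $\xi$ in $\Su$ can carry points that are simultaneously close to $\xi$ in $\mt$ and far from $x_n$ inside $L_n$.
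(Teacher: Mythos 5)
Your argument has a genuine gap, and it appears at the very first step. Property (iii) of efficient behavior only asserts that the geodesic ray $r_n$ in $L_n$ from $x_n$ to $\Theta_{L_n}(c_n^+)$ is contained in the $R$-neighborhood of the positive ray of $c_n$; it does \emph{not} assert that the ray of $c_n$ is contained in any bounded neighborhood of $r_n$. Hence you cannot ``pick $z_n\in r_n$ with $d_{L_n}(z_n,y_n)\le R$'': that requires the reverse containment, which is essentially the quasigeodesic property that this lemma is a step towards proving (and the remark following Theorem \ref{teo.curvesgeneral} explicitly warns that rays of the subfoliation are not known to stay near the corresponding geodesic rays; for strong stable leaves this can genuinely fail). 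Without such a $z_n$, the hypothesis $y_n\to\xi$ cannot be transported to points of the geodesic rays $r_n$, and your identification of the ideal point $b$ with $\Theta_L(\xi)$ collapses.

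The other limiting step is also unjustified: from $r_n$ lying in the $R$-neighborhood of the positive ray of $c_n$ you cannot conclude that $r_\infty$ lies in the $R$-neighborhood of the positive ray of $c$. The points $q_n\in c_n$ witnessing the containment may accumulate on a leaf $\ell$ of $\wT$ in $L$ \emph{different} from $c$; convergence $c_n\to c$ in the leaf space does not prevent the sequence $c_n$ from accumulating on other leaves as well, and ruling out exactly this configuration is the real content of the lemma. This is where the two hypotheses your proposal never uses, namely small visual measure (property (iv)) and Hausdorffness of the leaf space of $\wT$ inside $L$ (property (i)), are essential. The paper's proof is by contradiction: assuming $c^+\ne\Theta_L(\xi)$, it chooses points $z_n\in c_n$ between $x_n$ and $y_n$ converging to $c^+$, observes that the segments of $c_n$ from $z_n$ to $y_n$ have visual measure bounded away from zero, so by (iv) they cannot escape compact sets and must accumulate on a leaf $\ell\subset L$; one checks $\ell\ne c$ (otherwise the lengths along $c_n$ from $x_n$ to $z_n$ would stay bounded), and then Hausdorffness of the leaf space in $L$ produces a transversal from $x$ to a point of $\ell$, which pushed to the nearby leaves $L_n$ yields a transversal joining two points of the same leaf $c_n$, a contradiction. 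Any correct proof has to engage with this accumulation phenomenon rather than bypass it.
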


\begin{proof}
Suppose this is not true, let $c_n, x_n, y_n$ failing
this condition. Up to subsequence assume that 
$y_n$ converges to $\xi$, with $c_+ \not = \Theta_L(\xi)$.
Let $\nu \in \Su$ with $\Theta_L(\nu) = c^+$.

Since $c^+ = \Theta_L(\nu)$ and $c_n$ converges to $c$ in
the center leaf space, then $c_n$ also has points $z_n$
between $x_n$ and $y_n$ so that $z_n$ converges to $\Theta_L(\nu)$.
Consider the segments $J_n$ in $c_n$ from $z_n$ to $y_n$.
These segments do not have visual measure converging to zero
as $n \rightarrow \infty$, because $\Theta_L(\nu) \not = 
\Theta_L(\xi)$. Since $\cT$ has small visual measure it
follows that these segments cannot escape compact sets $\mt$
and converge to a collection of center leaves in $L$.
Let  $\ell$ be such a center leaf.
In particular there are $w_n$ in $J_n$ converging to $w$ in $\ell$.
If $c = \ell$, then the the local product
structure of foliations (in the center leaf space)
shows that the length of segments in $c_n$ from $x_n$
to $w_n$ is bounded, so the length in $c_n$ from $x_n$
to $z_n$ would also be bounded contradiction.

We conclude that $\ell, c$ are distinct center leaves in $L$.
By Theorem \ref{teo.hsdff} the center leaf space restricted
to $L$ is Hausdorff. Hence there is a transversal to the
center foliation in $L$ from $x$ to $w$. This transversal
produces in nearby leaves $L_n$, transversals to the 
center foliation in $L_n$ from $x_n$ to $w_n$. This
is a contradiction, because $x_n$ and $w_n$ are
in the same center leaf in $L_n$.
This finishes the proof.
\end{proof} 

Clearly the same statement holds for the negatively oriented ray. 

\begin{remark}
We say that \emph{the endpoints of curves in $\cT$ vary continuously} if given a sequence of leaves $\ell_n \in \wT$, $\ell_n$  in $L_n$ leaves
of $\wcs$, with endpoints $\ell^+_n$ and $\ell^-_n$ in 
$S^1(L_n)$ and so that $\ell_n$ converges to $\ell$,
in the leaf space of $\wT$ 
and $\ell \subset  L$,
where $L$ is the limit of $L_n$, 
then the sequences 
$$\Theta_{L_n}^{-1}(\ell_n^+), \ \ \ \Theta_{L_n}^{-1}(\ell_n^-)
\ \ \ {\rm in} \ \ \ \Su$$ 

\noindent
converge to $\Theta_L^{-1}(\ell^+)$ and
 $\Theta_L^{-1}(\ell^-)$ respectively. It is worth making the remark that if the sequence of leaves $\ell_n$ converges to $\ell$ in the leaf space of $\wT$ it means that given a compact part of $\ell$ it will be well approached\footnote{If we consider a sequence of points $x_n \to x$ one may choose leaves of $\wT$ through $x_n$ which are far from a given curve passing through $x$ due to potential non unique integrability of the (branching) foliation. Convergence in the leaf space is needed to make this work.} by the curves $\ell_n$. The proof of the previous Lemma can be adapted to show that the endpoints of the curves in $\cT$ vary continuously. Since we will only use the statement above, and continuity also follows a posteriori from the fact that $\cT$ is uniformly quasigeodesic (cf. Corollary \ref{cor.continuousendpoints}), we do not prove this here. 
\end{remark}

Now we can use the previous lemma to show the following property which implies one of the main consequences of being uniform quasigeodesic. This will allow us to show the quasigeodesic property in the next subsection.  

\begin{lema}\label{lem.track}
If $\cT$ has efficient behavior in $\cF$ then there is $R>0$ such that for every center leaf $\ell \in \wT$ contained in $L$ leaf of $\wcs$, 
then the geodesic $g$ in $L$ with ideal points $\ell^+$ and $\ell^-$
is at Hausdorff distance less than $R$ in $L$ from $\ell$.
The same results holds for segments or rays in leaves of $\wT$.
\end{lema}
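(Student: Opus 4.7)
The plan is to argue by contradiction and compactness, exploiting the distinctness of ideal points (property (ii) of efficient behavior) and CAT($-1$) geometry of the leaves. I will first establish the uniform inclusion $\ell \subset N_R(g)$, then deduce $g \subset N_{R'}(\ell)$ from exponential convergence of asymptotic geodesic rays together with property (iii), and finally obtain the segment and ray statements by comparing $[a,b]$ with the geodesic segment joining the projections of $a, b$ onto the bi-infinite geodesic.

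For the main inclusion, assume by contradiction that there exist $\ell_n \in \wT$ in $L_n \in \Ft$, points $y_n \in \ell_n$ and geodesics $g_n \subset L_n$ with ideal points $\ell_n^\pm$ such that $d_{L_n}(y_n, g_n) \to \infty$. Using cocompactness and that deck transformations act isometrically on leaves of $\Ft$, I reduce to $y_n \to y_0 \in L_0$ with $L_n \to L_0$ in $\Ft$. Consider the geodesic rays $r^\pm_{y_n}$ in $L_n$ from $y_n$ to $\ell_n^\pm$: both asymptote to $g_n$ on the side of $g_n$ containing $y_n$, and a direct CAT($-1$) comparison with $\mathbb{H}^2$ shows that the angle they make at $y_n$ tends to $0$ as $d_{L_n}(y_n, g_n) \to \infty$. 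Passing to a subsequence, the starting unit tangent vectors of $r^+_{y_n}$ and $r^-_{y_n}$ share a common limit in $T_{y_0} L_0$, so both sequences $r^\pm_{y_n}$ converge locally uniformly to the same geodesic ray $r_\infty \subset L_0$ with a unique ideal point $\xi \in S^1(L_0)$.

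I then pick points $z_n^\pm \in r^\pm_{y_n}$ at distances $s_n \to \infty$ from $y_n$, chosen so that $z_n^\pm \to \Theta_{L_0}^{-1}(\xi) \in \Su$ in the sense of Definition \ref{not-neigh} (using the shape of the neighborhoods of points of $\Su$ and that the ideal points $\Theta_{L_n}^{-1}(\ell_n^\pm)$ converge to $\Theta_{L_0}^{-1}(\xi)$ in $\Su$). By property (iii) of efficient behavior, the $z_n^\pm$ lie within $R$ of points $w_n^\pm$ on the forward and backward rays of $\ell_n$ from $y_n$, so $w_n^\pm$ also converge to $\Theta_{L_0}^{-1}(\xi)$ in $\mt$. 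Up to a further subsequence, $\ell_n$ converges (via condition (iv) of the definition of branching foliation) to a center leaf $\ell \subset L_0$ through $y_0$, which has distinct ideal points $\ell^\pm$ by property (ii). Applying Lemma \ref{lem.cont} with $c_n = \ell_n$, $x_n = y_n$, and the sequences $w_n^+$ and $w_n^-$ yields $\ell^+ = \Theta_{L_0}(\xi) = \ell^-$, contradicting property (ii).

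Once $\ell \subset N_R(g)$ is known uniformly, fix any $y \in \ell$ and let $y' \in g$ be its closest-point projection, so $d_L(y, y') \leq R$. The two halves of $g$ starting at $y'$ share ideal points with the rays $r^\pm_y$, hence by exponential convergence of asymptotic rays in CAT($-1$) leaves they lie in a uniform neighborhood of $r^\pm_y$ whose size depends only on $R$; combining with (iii) gives $g \subset N_{R + R_1}(\ell)$. For a compact segment $I \subset \ell$ with endpoints $a, b$, the points $a, b$ lie within $R$ of their projections $a', b' \in g$, and a standard CAT($-1$) comparison bounds the Hausdorff distance between $[a, b]$ and $[a', b'] \subset g$ in terms of $R$ alone; the reverse inclusion $[a, b] \subset N_{R''}(I)$ is already contained in Theorem \ref{teo.curvesgeneral}. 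The ray case is analogous. The hard part will be the construction of the limit picture in Step 1, where one must argue carefully that both sequences $r^\pm_{y_n}$ collapse onto a single geodesic ray in $L_0$ and that the positive and negative ideal points of the limit leaf $\ell$ are both identified with the same point of $\Su$, so that Lemma \ref{lem.cont} can be invoked to force the contradiction with property (ii).
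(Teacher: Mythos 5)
Your proof of the main statement -- that a full leaf $\ell$ of $\wT$ and the geodesic $g$ in $L$ with ideal points $\ell^{\pm}$ are at uniformly bounded Hausdorff distance -- is essentially the paper's argument: contradiction, translating the far points $y_n$ into a compact region, and an application of Lemma \ref{lem.cont} to both rays of the limit leaf to force $\ell^+=\ell^-$, contradicting property (ii). You produce the common point $\xi\in\Su$ via the small angle at $y_n$ between the rays $r^{\pm}_{y_n}$ together with property (iii) and a choice of scales $s_n$; the paper instead works with a segment $I_n$ of $\ell_n$ and its geodesic chord, using that a chord very far from $x_n$ has small visual measure, so that the \emph{endpoints of $I_n$} (which already lie on $\ell_n$) converge to a single point of $\Su$ -- which avoids invoking (iii) and the diagonal choice of $s_n$, but both routes are acceptable at the same level of rigor.

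The genuine gap is in the segment (and ray) case. You try to deduce it from the full-leaf statement, but knowing $\ell\subset N_R(g)$ and conversely does not bound the Hausdorff distance between a finite subsegment $I\subset\ell$ with endpoints $a,b$ and its chord: at this point quasigeodesity of $\ell$ is not yet known (it is the goal of the section), so a priori $I$ could backtrack or overshoot along $g$ far beyond the projections $a',b'$ of its endpoints while staying in the $R$-neighborhood of $g$; then points of $I$ are far from $[a',b']$, hence from $[a,b]$, and your $\mathrm{CAT}(-1)$ comparison of $[a,b]$ with $[a',b']$ says nothing about $I$. Moreover, the inclusion you import from Theorem \ref{teo.curvesgeneral} is not available here: Lemma \ref{lem.track} is stated for an abstract pair $(\cT,\cF)$ with efficient behavior, whereas Theorem \ref{teo.curvesgeneral} requires the partially hyperbolic setting with a regular pA pair. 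The fix is the paper's order of proof: run the same contradiction argument directly on segments (take $g_n$ to be the chord of $I_n$; its endpoints lie on $\ell_n$, one on each ray from the far point $x_n$, and converge to a single $\xi\in\Su$, so Lemma \ref{lem.cont} applies to both rays), which gives the segment statement with a uniform constant; the full-leaf and ray statements then follow by exhausting $\ell$ by segments whose chords converge to $g$, and the reverse inclusion comes from the $1$-Lipschitz nearest-point projection of the connected set $I$ onto its chord.
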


\begin{proof}
Notice that condition (ii) of a efficient lamination says that $\ell^-
\not = \ell^+$, so the geodesic $g$ is defined.

We first prove this for finite segments: there is a uniform constant $R>0$ such that for every $L \in \Ft$, the Hausdorff distance in $L$ between a geodesic segment in $L$ joining the endpoints of an arc $I \subset \ell \in \wT \cap L$ and $I$ is less than $R$. Assume that this is not the case. Then, we can find a sequence $I_n$ of segments of leaves $\ell_n \in \wT$, 
$\ell_n \subset L_n \in \widetilde \cF$,
  so that there is a point $x_n \in I_n$ at distance larger than $n$ 
in $L_n$ from the geodesic segment $g_n$ in $L_n$ joining the endpoints of $I_n$. 

Up to composing with deck transformations we can assume that the points $x_n$ belong to a fixed compact fundamental domain of $M$ in $\mt$ and therefore, up to subsequence, that $x_n \to x \in \mt$, that $\ell_n \to \ell$ through $x$, and that $L_n \to L$ containing $\ell$. 
Up to another subsequence assume that one of the endpoints
of $g_n$ converges to a point $\xi$ in $\Su$.

Since the distance in $L_n$
from $x_n$ to $g_n$ converges to infinity, and $g_n$ are
geodesic segments in $L_n$ it follows that visual measure of $g_n$
in $L_n$ from $x_n$ converges to $0$. In other words both
endpoints of $g_n$ in $L_n$ converge to the same point $\xi$ of $\Su$.
Applying Lemma \ref{lem.cont} it follows that both
endpoints of $\ell$ are $\Theta_L(\xi)$.
This contradicts condition (ii) of an efficient lamination.
This proves the result for segments.

To get the result for full center leaves, take $x \in \ell$ and consider a sequence $I_n$ of intervals of $\ell$ from $z_n$ to a point $y_n$ so that $y_n \to \ell^+$ and $z \to \ell^-$.  Since $y_n \to \ell^+$, $z_n \to \ell^-$  in $\hat L = L \cup S^1(L)$ it follows that the geodesic segments from $z_n$ to $y_n$ converge uniformly on compact sets
to the geodesic with ideal points $\ell^+$ and $\ell^-$. Therefore the result holds maybe by taking $R$ slightly larger.   
A similar proof holds for rays.
\end{proof}

\subsection{The quasigeodesic behavior} 
Here we show the following result which is standard. A similar result in a slightly different setting can be found in \cite{FMosh}. 

\begin{prop}\label{prop.QG}
Let $\cT$ be a one dimensional (branching) foliation of $M$ which subfoliates $\cF$. Assume that there exists $R>0$ such that for every $L \in \Ft$ and every finite segment $I$ in a leaf $\ell \in \wT \cap L$ there is a geodesic segment in $L$ with same endpoints as $I$ which is at 
Hausdorff distance in $L$ less than $R-1$ from $\ell$. Then $\cT$ is by uniform quasigeodesics in $\cF$. 
\end{prop}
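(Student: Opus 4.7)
My plan is to bound, for any finite segment $I$ of a leaf $\ell\in\wT\cap L$ with endpoints $x,y$, the arc-length $T=|I|$ linearly in the endpoint distance $D=d_L(x,y)$. The strategy combines a local quasigeodesic bound coming from the compactness of $M$ with a global covering argument that exploits the Hausdorff hypothesis on every subsegment.

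For the local ingredient, I would first establish the following bounded-geometry lemma, using the uniform continuity on the compact manifold $M$ of the continuous line field tangent to $\cT$: there exist constants $\rho_0>0$ and $A_0>0$ such that any connected arc of a leaf of $\wT$ that is contained in a ball of radius $\rho_0$ in some leaf of $\Ft$ has length at most $A_0$. Indeed, by choosing $\rho_0$ small enough that this line field has angular variation less than $\pi/4$ on balls of radius $2\rho_0$, any $C^1$-curve tangent to it makes forward progress in a fixed reference direction at rate at least $\cos(\pi/4)$, so its arc-length is controlled by twice the ball's radius divided by $\cos(\pi/4)$.

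For the global step, let $g$ denote the geodesic in $L$ from $x$ to $y$ and parametrize $I$ by arc length as $I:[0,T]\to L$. The Hausdorff hypothesis places $I$ in the $(R-1)$-tube around $g$, and the $\mathrm{CAT}(-1)$-geometry of $L$ (using that leaves of $\Ft$ can be chosen uniformly $\mathrm{CAT}(\kappa)$ with $\kappa<0$ by Candel's theorem, as recalled in \S\ref{ss.Candel}) makes the nearest-point projection $q\colon s\mapsto\pi_g(I(s))$ coarsely $1$-Lipschitz. The core claim is that there exists a uniform constant $A'>0$ such that whenever $|q(s_1)-q(s_2)|\leq\alpha$ for some sufficiently small $\alpha>0$, one has $s_2-s_1\leq A'$: indeed, such a pair forces $I(s_1)$ and $I(s_2)$ to be within $2(R-1)+\alpha$ of each other in $L$, and applying the hypothesis to the subsegment $I|_{[s_1,s_2]}$ confines it to an $(R-1)$-neighborhood of a short geodesic; covering this region by a bounded number of balls of radius $\rho_0$ and invoking the local bound yields the desired uniform bound on $s_2-s_1$. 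Partitioning $g$ into intervals of length $\alpha$ and sampling $I$ at integer times gives an inequality of the form $T\leq (A'+1)(D/\alpha+1)$, hence a uniform constant $K$ with $T\leq KD+K$.

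The main technical obstacle is the bound on $s_2-s_1$ in the third paragraph, which requires controlling not just the length of a single connected leaf-arc in a bounded region (which is what the local lemma gives) but also the number of times the connected subarc $I|_{[s_1,s_2]}$ can re-enter such a region. The key point, which I would verify by covering the tube by $\rho_0$-balls and using the $\mathrm{CAT}(\kappa)$-geometry of $L$ together with the bounded transverse geometry of the branching foliation $\cT$ (available via the Burago--Ivanov approximating foliations and the fact that the hypothesis applies to \emph{all} subsegments of $\ell$, not only to $I$), is that the nearest-point projection $q$ is coarsely monotone along $I$, so that any deep backtrack of $q$ would place a point of $I$ outside the $(R-1)$-neighborhood of the geodesic joining the corresponding endpoints, contradicting the hypothesis.
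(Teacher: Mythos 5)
Your overall reduction agrees with the paper's: everything comes down to showing that a subsegment of a leaf of $\wT$ whose endpoints are a uniformly bounded distance apart in $L$ has uniformly bounded length, after which a projection-and-counting argument like your final assembly (or the paper's last paragraph) finishes the proof. The genuine gap is in how you propose to prove that core claim. Your local lemma only bounds the length of a connected arc that stays inside a \emph{single} ball of radius $\rho_0$, and the coarse monotonicity of the nearest-point projection $q$ --- which does follow from applying the hypothesis to suitable subsegments, but only up to an additive error of order $R$ --- merely confines $I|_{[s_1,s_2]}$ to a region of diameter comparable to $R$. Since $R$ is enormously larger than $\rho_0$, coarse monotonicity says nothing about oscillations at scale $\rho_0$: a priori the arc could re-enter one of your $\rho_0$-balls arbitrarily many times while its projection wanders inside the allowed window of size $O(R)$, so the covering count does not bound $s_2-s_1$. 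The obstacle you yourself flag (bounding the number of returns of the connected subarc to a small ball) is thus not resolved by the mechanism you offer; neither the $\mathrm{CAT}(\kappa)$ geometry of $L$ nor the hypothesis applied to subsegments supplies such a bound by itself.

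This is exactly the point where the paper uses the foliation structure rather than metric geometry. It proves the core claim by compactness: if segments of length $>n$ had endpoints at bounded distance, one pulls them back by deck transformations and passes to limits in the leaf space of $\wT$; either the limit leaves through the two limit endpoints coincide, forcing the lengths to be bounded (contradiction), or they are distinct and there is a transversal from one limit point to the other, which yields transversals joining $x_n$ to $y_n$, impossible since these lie on the same leaf inside the planar leaf $L_n$. Equivalently, you could repair your covering argument by invoking the fact, used in the proof of Proposition \ref{prop-nointerval}, that in a leaf of $\Ft$ a leaf of $\wT$ meets a sufficiently small foliated box in a single component, so your subarc meets each ball of the cover at most once; but some such input about one-dimensional (branching) foliations of planes is indispensable, and coarse monotonicity cannot substitute for it. (A minor additional point: your local lemma assumes a continuous line field tangent to $\cT$, which holds in the intended applications but is not literally among the hypotheses of the proposition, whereas the paper's compactness argument avoids it.)
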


\begin{proof}
As seen by the proof of the last Lemma the condition implies
that full leaves of $\wT$ have distinct ideal points 
and are $R$ distant from the corresponding geodesics
in their respective leaves. This also immediately implies
that the leaf space of $\wT$ in any leaf of $\widetilde \cF$ 
is Hausdorff.

Let $a_0 > 3R$. We first claim that there is a global length
$a_1$ so that if a segment $I$ in a leaf $\ell$ of $\wT$
contained in $L$ leaf of $\widetilde \cF$ has length
more than $a_1$ then the distance in $L$ between the endpoints
of $I$ is more than $a_0$. 
Otherwise find segments $I_n$ of length $> n$ with endpoints
$x_n, y_n$ less than $a_0$ in their respective leaves. Up
to deck transformations and subsequences assume that $x_n \rightarrow
x_0, y_n \rightarrow y_0$ both in $L$, which is the limit
of leaves $L_n$ containing $I_n$.
The leaves $\ell_n$ through $x_n, y_n$ converge to a leaf
$b_0$ through $x_0$ and a leaf $b_1$ through $y_0$. This is in
the leaf space of $\wT$. If $b_0 = b_1$ then the lengths of
$\ell_n$ between $x_n$ and $y_n$ are bounded contradiction.
Hence $b_0$, $b_1$ are distinct leaves. There is a trasversal
from $x_0$ to $y_0$ and this leads to transversals in respective
leaves of $\widetilde \cF$ from $x_n$ to $y_n$, contradiction.
This proves the claim.

Given $\ell$ leaf of $\wT$ in leaf $L$ of $\widetilde \cF$ consider the geodesic $g$
in $L$ with same ideal points as $\ell$ and the orthogonal
projection from $\ell$ to $g$. Since $\ell$ is in a neighborhood
of size $R$ in $L$ from $g$, then the claim above shows that 
every time we follow along $\ell$ a length $\geq a_1$ the projection
to $g$ moves forward at least $R$. 
This proves the uniform quasigeodesic behavior of leaves of $\wT$.
\end{proof}

\begin{proof}[Proof of Theorem \ref{teo.QG}]
As observed in Remark \ref{rem.efficient} we know that under our assumptions the center foliation $\cT$ has efficient behavior in $\cs, \cu$. 
Properties ($ii$) and ($iii$) of efficient behavior
plus Lemma \ref{lem.track} imply that $\wc$ is in the hypothesis of Proposition \ref{prop.QG} with respect to both $\cs$ and $\cu$. The result then follows. 
\end{proof}

\section{The collapsed Anosov flow property}\label{s.CAF} 

In view of the previous section we can deduce: 

\begin{teo}\label{teo.CAFabstract} 
Let $f: M \to M$ be a partially hyperbolic diffeomorphism preserving branching foliations $\cs$ and $\cu$ which are uniform, $\RR$-covered, 
and such that both $(f,\cs)$ and $(f,\cu)$ have full pseudo-Anosov behavior. 
Suppose that $\cs, \cu$ are transversely orientable.
Then, $f$ is a collapsed Anosov flow. 
\end{teo}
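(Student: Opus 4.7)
The plan is to exploit the quasigeodesic behavior of centers (Theorem \ref{teo.QG}) together with the Burago--Ivanov approximating foliations to build an actual (topological) Anosov flow whose orbits track center leaves, and then combine with results of \cite{Shannon} to obtain the collapsed Anosov flow structure. Concretely, I would proceed as follows.

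First, I would use Theorem \ref{teo-bi} to obtain, for $\eps > 0$ small, foliations $\cF = \cF^{cs}_\eps$ and $\cG = \cF^{cu}_\eps$ approximating $\cs$ and $\cu$, together with the collapsing maps $\hs$ and $\hu$ that are $C^0$-close to the identity and semi-conjugate the approximating foliations to the branching ones leafwise. Since $E^{cs}$ and $E^{cu}$ are transverse and the approximating bundles are close, $\cF \transv \cG$ along a one-dimensional oriented foliation $\cO_\eps$, whose leaves integrate to a continuous non-singular flow $\phi_t$ on $M$. The orbits of $\phi_t$ are tangent to a continuous line field close to $E^c$. Using $\hs$ restricted to each leaf of $\cF$, the orbits of $\phi_t$ project to $C^1$ curves in leaves of $\cs$ tangent to $E^c$; the analogous statement holds via $\hu$ in leaves of $\cu$. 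This gives the candidate map $h$ (built by combining the collapsing maps appropriately, as in \cite[Appendix A]{BFP}).

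Next, I would show that $\phi_t$ is expansive, hence a topological pseudo-Anosov flow by \cite{InabaMatsumoto, Paternain}. Expansivity is where the quasigeodesic behavior of $\wc$ (Theorem \ref{teo.QG}) is crucial: if two orbits of $\tilde\phi_t$ stayed $\delta$-close for all time, then the corresponding center curves in $\wcs$ (and $\wcu$) obtained after applying $\hs$ (resp.\ $\hu$) would have identical pairs of ideal points in $S^1(L)$ (by Corollary \ref{cor.continuousendpoints} and the uniform quasigeodesic property that forces distinct center curves through distinct points to diverge in both directions). Since $\cs, \cu$ have Hausdorff center leaf space (Theorem \ref{teo.hsdff}) and centers land at distinct points (Proposition \ref{prop.uniquelandcenter}), two centers with identical ideal data must coincide; pushing this back through $\hs, \hu$ forces the two orbits to lie on the same $\phi_t$-orbit. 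Ruling out prong singularities uses the fact that full pseudo-Anosov behavior produces regular (2-prong) pA pairs, so the index computations from Proposition \ref{prop-core} exclude higher-prong behavior for generic periodic center leaves. Thus $\phi_t$ is a topological Anosov flow.

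Third, I would use the output of \cite{Shannon}: since $M$ is atoroidal or the flow $\phi_t$ is $\R$-covered (which follows from $\cs, \cu$ being $\R$-covered and uniform, via Theorem \ref{teoAnosov}), $\phi_t$ is transitive and hence orbitally equivalent to a genuine Anosov flow, which we still call $\phi_t$. Now to produce the self-orbit equivalence $\beta$ and verify $f \circ h = h \circ \beta$: the diffeomorphism $f$ preserves $\cs$ and $\cu$ and preserves $E^c$, so it sends orbits of $\phi_t$ (which are tangent to a bundle close to $E^c$ and invariant under the combined structure) to curves tangent to the same line field. This does not immediately give $f(\phi_t\text{-orbit}) = \phi_t\text{-orbit}$, but using that $h$ maps orbits onto center curves in a well-defined way and the uniqueness of the center curve through a point in each leaf (a consequence of the Hausdorff and quasigeodesic properties, which make centers essentially unique given their ideal points), one constructs $\beta$ as the unique homeomorphism satisfying $f \circ h = h \circ \beta$, and checks it is a self-orbit equivalence by transporting the orientation through $h$.

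The main obstacle I expect is the expansivity step, and more specifically ruling out that two distinct orbits of $\tilde\phi_t$ could stay globally bounded in $\mt$: translating global boundedness in $\mt$ into a statement about center curves in a single center stable leaf requires combining the $\R$-covered uniform structure of $\cs$ with the quasigeodesic behavior in a leaf-dependent way, and the branching nature of $\cs$ means one has to be careful that ``the center through a point in a leaf'' is only well-defined modulo the branching ambiguity. Properly bookkeeping this ambiguity (and checking that $h$ is well-defined, continuous, homotopic to the identity, and injective on orbits) is the technical heart of the argument; everything else is an assembly of the previously established structural results.
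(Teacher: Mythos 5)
Your overall architecture (approximate by true foliations, intersect to get a flow, prove expansivity, invoke \cite{InabaMatsumoto,Paternain} and \cite{Shannon}, then build $h$ and $\beta$) matches the paper's, but the decisive step is missing. In your expansivity argument you assert that ``two centers with identical ideal data must coincide,'' deducing this from the Hausdorff center leaf space (Theorem \ref{teo.hsdff}), distinct landing points (Proposition \ref{prop.uniquelandcenter}), and the uniform quasigeodesic property (Theorem \ref{teo.QG}). None of these exclude a band of distinct center leaves in a single leaf $L$ of $\wcs$ sharing the \emph{same pair} of ideal points in $S^1(L)$: such a configuration has Hausdorff leaf space, each leaf is a uniform quasigeodesic, and each has distinct endpoints. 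Ruling it out is exactly the content of the quasigeodesic fan property (Definition \ref{def.qGfan}), and this is what the paper identifies as the key point of the proof. The paper first proves minimality of $\cs$ and $\cu$ (Proposition \ref{prop.minimality}, via cyclic stabilizers of leaves and a volume-versus-length argument), then uses minimality twice: once, with an argument in the spirit of \cite[\S 5]{CalegariPromoting}, to show that every leaf carries a \emph{weak} fan (all centers share a common ideal point, possibly with several centers joining the same pair of points), and a second time to show that the set of centers sharing the same pair of ideal points yields an open invariant set that cannot be everything, upgrading weak fan to fan (cf.\ Propositions 6.9, 6.15 and 6.20 of \cite{BFP}). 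Your proposal never establishes the common funnel point nor the injectivity of the endpoint map, so both the expansivity step and the construction of $h$ and of the self-orbit equivalence $\beta$ (which in the paper, and in \cite{BFP}, rest on the fan structure and the funnel point machinery of \S \ref{s.funnel}) remain unsupported.

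A secondary issue: your exclusion of prong singularities via ``index computations from Proposition \ref{prop-core} for generic periodic center leaves'' is not how the argument goes and is too vague to carry weight. In the paper the flow obtained by intersecting the two transverse approximating foliations is non-singular by construction, and an expansive flow preserving a pair of transverse foliations is topologically Anosov by \cite{InabaMatsumoto,Paternain}; no index argument is needed once the fan structure (hence expansivity) is in place.
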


\begin{proof}
The orientation hypothesis are equivalent to $E^u, E^s$ being
orientable respectively.
Theorem \ref{teo.QG} shows that centers are quasigeodesics
in the respective leaves of $\wcs, \wcu$.
This is what is called a {\em quasigeodesic partially
hyperbolic diffeomorphism}, see \cite[Definition 2.15]{BFP}.
Under the orientation hypothesis of $E^s, E^u$,
\cite[Theorem D]{BFP} implies that $f$ is {\em leaf space collapsed
Anosov flow}, which we do not define here. Again
using the orientation hypothesis, \cite[Theorem B]{BFP} 
then 
implies that $f$ is a strong collapsed Anosov flow.
The Definition of a strong collapsed Anosov flow
\cite[Definition 2.9]{BFP} immediately implies that 
$f$ is a collapsed Anosov flow as we have defined here.
Items (i) and (iv) of \cite[Definition 2.9]{BFP} are
the exact conditions defining a collapsed Anosov flow.
\end{proof}

The situations we analyze in this article are simpler
than the general situation analyzed in \cite{BFP}. In particular
the branching foliations $\cs, \cu$ here are $\R$-covered.
We give here a detailed sketch of a proof of 
Theorem \ref{teo.CAFabstract}
in our simpler setting. The reason for this sketch is twofold:
on the one hand some arguments can be simplified and we will point to some of these simplifications that make the paper more self contained. On the other hand we also will use some of the notions to get uniqueness of branching foliations, in particular we will use the following notion:

\begin{defi}\label{def.qGfan} 
A one dimensional branching foliation $\cT$ in an $\R$-covered uniform foliation $\cF$ by hyperbolic leaves is said to be a \emph{quasigeodesic fan foliation} if the following happens: For every $L \in \Ft$ there is a point $p=p(L) \in S^1(L)$ called the \emph{funnel point}, such that there is a bijection from the leaf space of $\wT \cap L$ and points in $S^1(L) \setminus p$, and so that the leaf of $\wT \cap L$ corresponding to the point $q \in S^1(L)$ is a quasigeodesic joining $q$ and $p$. 
\end{defi}

The key point of the proof of Theorem \ref{teo.CAFabstract} is to show that the center branching foliation $\wc$ is a quasigeodesic fan foliation in both $\cs$ and $\cu$ since this allows to produce a (topological) Anosov flow rather easily (in fact, an expansive flow preserving transverse foliations, which is equivalent to being a topological Anosov flow). We refer the reader to \cite{BFP} for details on this, we will concentrate here in explaining how to obtain that centers form a \emph{quasigeodesic fan foliation} just by knowing that the leaves of $\wwc$ are uniform quasigeodesics in $\wcs$ and $\wcu$ (cf. Theorem \ref{teo.QG}). 

To prove this, we follow a path which is somewhat more direct than the one taken in \cite{BFP} since the $\R$-covered property simplifies the arguments. 

First, we notice that the branching foliations $\cs$ and $\cu$ must be minimal. Recall that being minimal means that there is no closed $\pi_1(M)$ invariant set in the leaf space of the (branching) foliation in the universal cover. See \cite[Appendix F]{BFFP-2} for more discussion. 

\begin{prop}\label{prop.minimality}
Let $f: M \to M$ be a partially hyperbolic diffeomorphism preserving branching foliations $\cs$ and $\cu$ which are uniform and $\RR$-covered and such that the center foliation $\wc$ is a quasigeodesic foliation both in $\cs$ and $\cu$. Then $\cs$ and $\cu$ are minimal. 
\end{prop}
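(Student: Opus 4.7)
The plan is to argue by contradiction and show $\cs$ is minimal; the case of $\cu$ follows by a symmetric argument. If $\cs$ is not minimal, then the $\pi_1(M)$-action on the leaf space $\cL^{cs}\cong\R$ admits a proper closed invariant subset $\Sigma$, corresponding to a proper sublamination $\Lambda\subsetneq\cs$. Two cases arise according to whether $\Sigma$ contains an isolated point (yielding a compact leaf of $\cs$) or $\Sigma$ is perfect (a Cantor-type minimal sublamination). I would aim to handle each case separately.

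First I would treat the compact leaf case: if $\cs$ has a compact leaf $L_0$, then since leaves of $\cs$ carry a $\mathrm{CAT}(-1)$ metric, $L_0$ is a closed hyperbolic surface of genus at least two. Up to passing to an iterate we may assume $f(L_0)=L_0$, so that some lift of an iterate of $f$ fixes the lift $\tilde L_0\in\wcs$. This contradicts the standing assumption that the chosen lift $\hat f$ translates $\cL^{cs}$: any other lift of an iterate of $f$ has the form $\gamma\circ\hat f^k$ for $\gamma\in\pi_1(M)$, and the quasigeodesic behavior of centers (with the rigid $\pi_1(M)$-equivariant endpoint map to $\Su_{\cs}$ coming from the center subfoliation also being quasigeodesic in $\cu$) prevents such a map from admitting a fixed leaf for any choice of $\gamma,k$. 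For the Cantor case, let $I=(L_a,L_b)$ be a complementary component of $\tilde\Sigma$ in $\cL^{cs}$, with $L_a,L_b\in\tilde\Lambda$. Fix any leaf $F\in\wcu$; by the $\R$-covered hypothesis on $\cs$ the leaf $F$ meets every leaf in $[L_a,L_b]$, and each intersection is a collection of center curves in $F$, yielding a continuous one-parameter family of centers. Quasigeodesicity of $\wc$ inside $F$ and an analog of Corollary \ref{cor.continuousendpoints} give that the endpoints of these centers vary continuously in $S^1(F)$. However, $\pi_1(M)$-invariance of $\Sigma$ forces the endpoints of $\tilde\Lambda$-centers to lie in an equivariantly distinguished closed subset of $S^1(F)$, distinct from the set of endpoints of gap centers; this produces a discontinuity and the desired contradiction.

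The main obstacle I expect is making the Cantor case fully rigorous: one must pinpoint precisely how the $\pi_1(M)$-orbits of endpoints of centers in $\tilde\Lambda$-leaves are genuinely distinguishable from those of gap centers in the universal circle, so that the continuity of endpoints provided by quasigeodesicity is genuinely violated. I anticipate that a clean treatment can be extracted by invoking the framework developed in \cite{BFP}, where minimality of $\cs$ and $\cu$ is part of the standard package associated to quasigeodesic fan foliations and the collapsed Anosov flow structure.
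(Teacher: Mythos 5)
There is a genuine gap, and in fact both halves of your case analysis are problematic. In the exceptional (Cantor) case, the key step is never established: you assert that $\pi_1(M)$-invariance of the sublamination forces the ideal points of centers lying in $\tilde\Lambda$-leaves to form an ``equivariantly distinguished'' closed subset of $S^1(F)$ that is disjoint from the endpoints of gap centers, and that this contradicts continuity of the endpoint map. But no such dichotomy follows from invariance alone: a closed, equivariant set of endpoints is perfectly compatible with a continuous endpoint map (the endpoints of gap centers could simply accumulate on, or even coincide with, those of boundary leaves of $\tilde\Lambda$), so no discontinuity is produced. Your fallback of invoking \cite{BFP} is not a proof of this proposition --- it is precisely the kind of statement one needs to prove here. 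In the compact-leaf case you appeal to a ``standing assumption'' that the good lift translates the leaf space of $\wcs$; this is not among the hypotheses of the proposition (which only assumes $\cs,\cu$ uniform, $\R$-covered, and $\wc$ quasigeodesic in both). Moreover, the claim that quasigeodesicity of centers prevents any lift of the form $\gamma\circ\hat f^k$ from fixing a leaf is unjustified and in fact false in the settings of this paper: Proposition \ref{prop.periodic} produces exactly such leaf-fixing lifts.

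For comparison, the paper's proof goes through a different mechanism: the quasigeodesic behavior of the center foliation implies that every leaf of $\wcs$ has \emph{cyclic} stabilizer in $\pi_1(M)$ (this in particular rules out compact leaves at once, since their stabilizers are surface groups), and then, if a proper minimal set existed, one constructs a solid torus region as in the proof of Proposition 6.1 of \cite{BFFP-3} and runs a volume-versus-length argument to reach a contradiction. If you want to salvage your outline, the missing ingredient is precisely this cyclic-stabilizer fact (or some substitute control on complementary regions of a proper minimal set); without it, neither of your two cases closes.
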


\begin{proof}
We argue for $\cs$ since $\cu$ is symmetric. To see this, we use the fact that since they are foliated by quasigeodesics then every leaf of $\wcs$ has cyclic stabilizer in $\pi_1(M)$, and is either a plane, annulus or M\"{o}bius 
band. 
Note that up to finite cover we can assume that
all foliations are orientable and transversally orientable.
In addition, minimality in a finite cover implies minimality in $M$, so it is no loss of generality to assume these orientability assumptions. 
Given the orientation hypothesis the leaves can only
be planes and annuli. 
If there was a proper minimal set one arrives at a contradiction
using a volume versus length argument, exactly
as is done in the proof of \cite[Proposition 6.1]{BFFP-3}.
We explain a bit more:
suppose there is a a proper minimal set
of say $\wc$, then one can construct a region contained in
the complement of the minimal set
which is either a ball or solid torus,  and such that
the region is mapped
inside itself by an iterate of $f$.
The ball or solid torus is obtained by looking at a connected
components of the set of points that are $\geq \eps$ away from the
minimal set for a suitably small $\eps$. Using that leaves
of $\cs$ are planes or annuli, and $\cs$ is $\R$-covered,
one shows this complementary region has to be contained
in either a ball or a solid torus. 
This contradicts \cite[Proposition 5.2]{HaPS}.
\end{proof}

Now, we close this section with a sketch of the proof of Theorem \ref{teo.CAFabstract} pointing to some results from \cite{BFP} when the arguments cannot be simplified. 

\begin{proof}[Sketch of proof of Theorem \ref{teo.CAFabstract}.] One can first argue similar to what is done
in \cite[\S 5]{CalegariPromoting} to see that the set of leaves of $\wcs$ on which the foliation $\wwc$ is a (weak)-quasigeodesic fan is a $\pi_1(M)$ and $\fh$-invariant closed set of the leaf space of $\wcs$ which is non-empty. 
Therefore this set is
everything because of minimality. By weak quasigeodesic fan we mean that 
all center leaves share a common ideal point, but
we allow several curves of the foliation in a leaf of $\wcs$ to joint the same pair of points. 

One gets the less powerful weak quasigeodesic fan property
because to apply arguments similar to those
in \cite[\S 5]{CalegariPromoting} one needs to tighten up the foliation to an equivariant geodesic foliation on leaves. The arguments in \cite{CalegariPromoting} are for geodesic and not
quasigeodesic leaves.

After this is done, 
it is however possible to show that the subset of the leaf space of $\wcu$ corresponding to the interval of centers in a $\wcs$ leaf
that join the same pair of points produces
an open and $\pi_1(M)$-invariant sublamination that cannot be the whole leaf space. So again using minimality we exclude this possibility. 
We refer to detailed proofs, which work with much more
generality, in \cite[Proposition 6.19]{BFP}. 

Once the center (branching) foliation $\wc$ is a quasigeodesic fan foliation in $\cs$ and $\cu$ we apply the approximation foliation result (Theorem \ref{teo-bi}) to obtain a true foliation $\wc_\eps$ 
which subfoliates the approximating foliations $\cs_\eps$ and $\cu_\eps$ with the same quasigeodesic fan property (this is where we use the transverse orientability assumption). One can then show that this gives an expansive flow in $M$ which, by virtue of preserving a pair of foliations, is topologically Anosov \cite[Theorem 5.9]{BFP}. Since the foliation is $\RR$-covered, the flow is transitive and therefore orbit equivalent to a true Anosov flow thanks to Shannon's result (cf. \S~\ref{ss.discretisedandcollapsed}) . 
This already proves the existence of an Anosov flow in $M$ and the notion called \emph{leaf space collapsed Anosov flow} in \cite{BFP}. 
In addition the maps given by the approximating foliation allow one to construct the collapsing map $h$, which must then be intertwining the action of $f$ with a self orbit equivalence associated to how it permutes the orbits of the flow.  
The construction of the semiconjugacy $h$ in \cite{BFP} is fairly complex due to the possibility of branching in the foliations and it is done in detail in \cite[\S 9]{BFP}. 
\end{proof}


\section{Funnel directions}\label{s.funnel}

In this section we obtain a couple of technical properties.

Let $f$ be a partially hyperbolic diffeomorphism satisfying
the hypothesis of Theorem \ref{teo.CAFabstract}.
In particular in every leaf $L$ of either $\wcs$ or $\wcu$,
the center foliation is a fan. 
The {\em stable funnel direction} of a center $c$ 
in a leaf $L$ of $\wcs$ is given by the orientation
in $c$ towards the funnel point in $L$. Similarly
one defines the unstable funnel direction.
By Corollary \ref{cor.continuousendpoints} the stable funnel
direction varies continuously and clearly it is invariant
by deck transformations. 
Notice that the stable funnel direction is defined a priori
for points in center leaves contained in center stable leaves and
not just on points. However any two center leaves through
a point $x$ in $\mt$ are connected by a continuous path
of center leaves through $x$. Since the stable directions
on these centers $-$ verified at $x$ vary continuously,
they all define the same direction at $x$. Therefore
the stable funnel direction depends only on the point.

Let $\cV$ be the universal circle of the center unstable 
foliation $\cu$. For each $U$ leaf of $\wcu$, let $\tau_U:
\cV \rightarrow S^1(U)$ be the canonical identification.

\begin{lema}\label{lem.notsamedirection}
The stable and unstable funnel directions disagree everywhere.
\end{lema}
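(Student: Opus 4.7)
The plan is to reduce the problem by connectedness of $\mt$ to showing disagreement at a single point, and then to identify each funnel direction with a specific time-direction of the Anosov flow produced by Theorem~\ref{teo.CAFabstract}, since forward and backward time are manifestly opposite.

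First I would observe that both funnel directions are continuous orientations of the line bundle $E^c$ on $\mt$: continuity of each follows from Corollary~\ref{cor.continuousendpoints} combined with the path-connectedness of the set of center leaves through a given point already exploited in the discussion preceding the lemma. Comparing two continuous orientations of a line bundle gives a continuous $\Z/2$-valued function on the connected space $\mt$, hence a constant function. So the lemma reduces to exhibiting one point of disagreement, equivalently ruling out agreement on all of $\mt$.

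Next I would invoke the collapsed Anosov flow structure from Theorem~\ref{teo.CAFabstract}: fix a topological Anosov flow $\phi_t:M\to M$, a self orbit equivalence $\beta$, and a continuous map $h:M\to M$ homotopic to the identity with $f\circ h=h\circ\beta$, so that $h$ sends orbits of $\phi_t$ injectively to $C^1$ curves tangent to $E^c$ and (by the footnote in Definition~\ref{def.cafw}) sends weak stable, respectively weak unstable, leaves of $\phi_t$ to $C^1$ surfaces tangent to $E^{cs}$, respectively $E^{cu}$. Because $\cs$ and $\cu$ are $\R$-covered and uniform, the flow $\phi_t$ is $\R$-covered, so by Theorem~\ref{teoAnosov} and the underlying work of Fenley and Barbot each lifted weak stable leaf $\widetilde W^{ws}$ of $\widetilde\phi_t$ is a plane in which all flow orbits share a common forward ideal point, and dually weak unstable leaves carry orbits with a common backward ideal point. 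I would then lift $h$ to $\widetilde h:\mt\to\mt$, choose orientations so that the forward flow direction on orbits of $\widetilde\phi_t$ corresponds, via $\widetilde h$, to the positive orientation on centers, and match ideal points as follows. The image $\widetilde h(\widetilde W^{ws})$ lies in a single leaf $L\in\wcs$. The image centers in $L$ are uniform quasigeodesics by Theorem~\ref{teo.QG}, so their positive ideal points in $S^1(L)$ are well-defined and all equal $p_L$ by definition of the funnel point. Since the forward ends of all orbits in $\widetilde W^{ws}$ approach a common ideal point, $\widetilde h$ sends this common ideal point to $p_L$, so the stable funnel direction agrees pointwise with the forward flow direction on centers. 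Exactly the symmetric argument for weak unstable leaves shows the unstable funnel direction agrees with the backward flow direction, and since forward and backward time give opposite orientations on any orbit, the two funnel directions disagree at every point of $\mt$.

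The hard part will be rigorously matching the common forward ideal point of orbits in a weak stable leaf with the funnel point of the image center stable leaf: the map $\widetilde h$ is only continuous and not a homeomorphism (cf.\ Remark~\ref{rem.incoherence}), so it does not automatically extend to the ideal boundaries of leaves, and the matching of ideal endpoints must be justified via the quasigeodesic control in Theorem~\ref{teo.QG} together with the orbit-to-center correspondence. To sidestep this, one can first establish the disagreement at a single periodic point using a pseudo-Anosov pair: apply Proposition~\ref{prop.periodic} to a regular pA-pair to produce $P=\gamma^m\hat f^n$ with $n>0$ fixing a center $c$ inside fixed leaves $L\in\wcs$ and $U\in\wcu$, use Proposition~\ref{prop.landingpointsperiodic} to characterize each endpoint of $c$ in $S^1(L)$ and $S^1(U)$ as attracting or repelling for $P_\infty$, and then use that $\hat f^n$ uniformly contracts strong stables and uniformly expands strong unstables to force the funnel points $p_L$ and $p_U$ to sit at opposite endpoints of $c$ with respect to its orientation. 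Once one point of disagreement is produced, the clopen argument from the first paragraph upgrades it to the whole $\mt$.
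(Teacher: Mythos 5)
Your opening reduction (both funnel directions are continuous orientations of $E^c$, so they agree everywhere or disagree everywhere) is exactly the first line of the paper's proof, but from there neither of your two routes is a proof. In Route 1 the step you yourself flag as ``the hard part'' is the whole lemma: the collapsing map $h$ of Definition \ref{def.cafw} is merely continuous and far from injective, so it does not act on ideal circles of leaves, and deciding which time direction of the constructed flow corresponds to the stable (resp.\ unstable) funnel point is not something Theorem \ref{teo.CAFabstract} hands you -- it is essentially equivalent to the statement being proved. In Route 2 the decisive sentence (``use that $\hat f^n$ contracts strong stables and expands strong unstables to force $p_L$ and $p_U$ to opposite endpoints of $c$'') is an assertion, not an argument; no mechanism is given for why agreement of the two funnel points at a periodic center is incompatible with the strong dynamics. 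Moreover the setup is incomplete: Proposition \ref{prop.periodic} only produces a $P$-fixed leaf of $\wcs$, not a $P$-fixed center nor a $P$-fixed leaf of $\wcu$ (one must argue via the fan bijection and extremal $\cu$-leaves through the fixed center), and applying Proposition \ref{prop.landingpointsperiodic} ``in $S^1(U)$'' requires the same pair to have super attracting/repelling fixed points on the universal circle of $\cu$ as well -- precisely the point the paper warns about after Theorem \ref{teo.hsdff}, namely that the actions of a good pair on the universal circles of $\cs$ and $\cu$ are not a priori related. Whether a fixed point of $P$ even exists on $c$ depends on which combination of attracting/repelling endpoints occurs, which you do not control.

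For comparison, the paper's proof is global and quite short: assuming the two directions coincide, it defines $\eta$ from the leaf space of $\wcu$ to the universal circle $\cV$ of $\cu$ by sending a leaf $U$ to its unstable funnel point. For $\cu$-leaves crossing a fixed leaf $L$ of $\wcs$ in centers between two given ones, the rays of those centers in the stable funnel direction are boundedly close in $L$, hence in $\mt$, so they define the same point of $\cV$; under the agreement hypothesis these rays also represent the unstable funnel points, so $\eta$ is constant on such an interval of the leaf space. Minimality of $\cu$ (Proposition \ref{prop.minimality}) and equivariance then make $\eta$ globally constant, producing a $\pi_1(M)$-invariant point of the universal circle, which contradicts \cite[Proposition 5.2]{FPmin}. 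If you want to salvage your approach you should either supply the boundary-matching argument for $h$, or carry out the periodic-center contradiction in full (invariant center, invariant $\cu$-leaf, control of $P$ on both circles, and an explicit trapped stable or unstable ray as in the uniqueness arguments of Section \ref{s.uniqueness}); as it stands the proposal has a genuine gap.
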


\begin{proof}{}
Since the stable and unstable funnel directions vary
continuously they either coincide everywhere or disagree
everywhere. Let us assume they coincide everywhere.
Let $\cJ$ be the leaf space of $\wcu$.

We consider a map $\eta: \cJ \rightarrow \cV$ defined as follows.
Given $U$ in $\wcu$, let $q_U \in S^1(U)$ be the unstable
funnel point of $U$. Let $\eta(U) = (\tau_U)^{-1}(q_U)$.

Let $L$ be a leaf of $\wcs$ and $e_1, e_2$ distinct
centers in $L$. Let $I$ be the interval of $\cJ$ of $\wcu$ leaves
intersecting $L$ in a center between $e_1, e_2$ including
the boundary leaves.
Let $U, U'$ be leaves in $I$ intersecting $L$ in centers
$c, c'$. Rays of $c, c'$ in the stable funnel direction
are a bounded distance from each other in $L$, hence in $\mt$.
By the definition of the universal circle of the center unstable
foliation, these rays define the same point in $\cV$.
By hypothesis in this proof the stable funnel direction
is also the unstable funnel direction in the center leaves.
This implies that $\eta$ is constant in $I$. 

By Proposition \ref{prop.minimality} for every $U$ leaf of
$\wcu$ there is a deck translate $\gamma(U)$ contained in
the interior of $I$. Hence the union of deck translates
of $I$ is all of the leaf space $\cJ$. This shows that $\eta$ is
constant. But then $\eta$ would be $\pi_1(M)$ invariant.
This contradicts \cite[Proposition 5.2]{FPmin}.
This finishes the proof.
\end{proof}

\begin{lema}\label{lem.centerconv}
Let $f$ be a partially hyperbolic diffeomorphism satisfying
the hypothesis of Theorem \ref{teo.CAFabstract}. Let $L$
be a leaf of $\wcs$. Then any two centers $c, c'$ in $L$
are asymptotic in $L$ in the stable funnel direction.
In addition if two distinct center leaves $c, c'$ in $L$
intersect in a point $x$, the following happens:
if $c_1, c_2$ are the rays of $c, c'$ respectively starting
in $x$ and in the stable funnel direction then $c_1 = c_2$.
\end{lema}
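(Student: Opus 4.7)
The plan is to treat the two assertions separately, with Part 1 being a direct consequence of the quasigeodesic fan structure and Part 2 requiring a more delicate argument involving the strong stable foliation and dynamics.

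For Part 1, I would argue as follows. By Theorem \ref{teo.QG}, every leaf of $\wc$ inside $L$ is a uniform quasigeodesic of $L$; by the quasigeodesic fan structure produced in the proof of Theorem \ref{teo.CAFabstract}, all such centers share the common ideal endpoint $p(L)\in S^1(L)$ on the stable funnel side. Lemma \ref{lem.track} (or equivalently the Morse lemma) gives that each such funnel ray is within bounded Hausdorff distance of the unique geodesic ray in $L$ to $p(L)$ starting at the basepoint, and since $L$ is $\mathrm{CAT}(-1)$ with the Candel metric, any two such geodesic rays are asymptotic. Therefore the funnel rays of $c$ and $c'$ fellow-travel and are asymptotic in the $L$-metric.

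For Part 2, I argue by contradiction: assume $c\ne c'$ intersect at $x$ but the funnel rays $c_1, c_2$ from $x$ are different as subsets of $L$. Both are quasigeodesic rays from $x$ sharing the ideal endpoint $p(L)$ and tangent to $E^c(x)$ with the same orientation, so by Part 1 they fellow-travel at uniformly bounded distance and start tangent at $x$. Fix a point $y$ on $c_1\setminus c_2$ and consider the strong stable leaf $s$ of $\ws$ through $y$ in $L$; by transversality of $\ws$ to $\wc$ and the fellow-travelling of $c_1, c_2$, $s$ must cross $c_2$ at some point $z$. This picks out a strong stable segment $\sigma\subset s$ from $y$ to $z$ bounding (together with subarcs of $c_1, c_2$) a strip-like region $A\subset L$ whose closure limits on $p(L)$. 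The contradiction will then come from iterating by some $P=\gamma^m \circ \fh^n$ fixing $L$, obtained from Proposition \ref{prop.periodic} under the full pseudo-Anosov hypothesis. Using that $p(L)$ corresponds to a fixed point of $P_\infty$ and that strong stables are uniformly contracted by $P$, the iterates $P^k(\sigma)$ produce strong stable segments of arbitrarily short length joining center leaves. The no-double-intersection property of Remark \ref{rem.novikov} forbids such a short segment between two genuinely distinct center leaves lying in the same local product box, yielding the contradiction.

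The main obstacle is the branching of $\wc$: at $x$ and potentially elsewhere along $c_1, c_2$ there may be several centers through a single point, so the strip $A$ must be interpreted in a way that respects the branching, and one must ensure that $s$ really crosses $c_2$ transversally at a point away from further branching. The Hausdorffness of the center leaf space in $L$ established in Theorem \ref{teo.hsdff}, together with Lemma \ref{lem.notsamedirection} (so that the funnel direction is genuinely distinct from the strong unstable direction and the contraction is compatible with the fellow-travelling), should be enough to make the iteration argument go through cleanly.
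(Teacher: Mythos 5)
There is a genuine gap in both halves of your argument. For the first statement, what you derive is strictly weaker than what the lemma asserts: the Morse lemma plus the common funnel ideal point only gives that the funnel rays of $c$ and $c'$ are a \emph{bounded} Hausdorff distance apart, not that they are asymptotic (distance tending to $0$). In a $\mathrm{CAT}(-1)$ leaf a quasigeodesic ray can stay at constant distance $1$ from a geodesic ray with the same ideal point (an equidistant curve), so ``same endpoint'' does not upgrade fellow-travelling to asymptoticity. The asymptotic statement is exactly what is used later (e.g.\ in the proof of Theorem \ref{teo-uniqueness}, where one needs a ray of $c_1$ to eventually enter the stable saturation of $c$), so it cannot be weakened. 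The paper proves it by contradiction and compactness: if points $x_n$ on $c$ converging to the funnel point stayed at distance $>\eps$ from $c'$, one translates by deck transformations, passes to a limit leaf $E$, and obtains two \emph{distinct} centers in $E$ with the same pair of ideal points, contradicting the quasigeodesic fan property in $E$ — the fan property in the original leaf $L$ alone does not suffice.

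For the second statement your argument has two unjustified steps. First, the strong stable leaf through $y\in c_1\setminus c_2$ need not cross $c_2$ at all: it can remain trapped in the region between the two rays, limiting on the common ideal point; this trapped configuration is precisely the hard case, and the paper handles it by iterating \emph{backwards} by a lift of $f$ (so stable lengths grow), translating by deck transformations, and obtaining in the limit a full stable leaf confined between two limit centers $e_1,e_2$, which contradicts either the asymptoticity from the first statement or the fan property. Second, even granting a stable segment $\sigma$ from $c_1$ to $c_2$, the contradiction you draw is not one: a short stable segment joining two distinct center leaves inside a local product box is perfectly possible (that is what transversality of $\wws$ and $\wwc$ produces everywhere), and Remark \ref{rem.novikov} only forbids a stable from meeting the \emph{same} center leaf twice. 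Moreover the map $P$ furnished by Proposition \ref{prop.periodic} fixes \emph{some} leaf of $\wcs$, not your given $L$, and the funnel point $p(L)$ has no reason to be a fixed point of $P_\infty$, nor are $c,c'$ $P$-invariant, so there is nothing invariant against which the contraction of $P^k(\sigma)$ yields a contradiction.
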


\begin{proof}{}
Suppose that the first statement
is not true. Then there are $L, c, c'$, and
$\eps > 0$ so that there are points $x_n$ (say in $c$) 
converging to the funnel point of $L$ so that $d_L(x_n, c') > \eps$.
Up to subsequence there are $\gamma_n$ in $\pi_1(M)$
so that $\gamma_n(x_n)$ converges to $x$. Then up to
subsequence $\gamma_n(L)$ converges to $E$,
 $\gamma_n(c)$ converges to a center $e$ in $E$, and 
$\gamma_n(c')$ converges to a center $e'$ in $E$. Since
$d_L(p_n,c') > \eps$ then $e, e'$ are distinct centers in $E$.
By construction and the uniform quasigeodesic property,
the centers
$e, e'$ have the same pair of ideal points in $S^1(E)$.
This contradicts Theorem \ref{teo.CAFabstract} that 
the center foliation in $E$ is a quasigeodesic fan.
This proves the second statement.

To prove the second statement, suppose that there are $c, c'$ 
center leaves in some leaf $L$ which intersect in $x$ but
so that the rays $c_1, c_2$ in the stable funnel direction
in $L$ are not
the same. We already know that the rays $c_1, c_2$ are asymptotic
in $L$.
Let $V$ be a component of $L -  (c_1 \cup c_2)$ which is between
$c_1$  and $c_2$. Then it contains a stable segment $s_0$
through a point $y_0$ in $V$.
As usual let $\hat f$ be a lift of $f$.
Take deck translates $\gamma_i$ of a subsequence $\hat f^{n_i}(y_0)$
converging to $y$ with $n_i \to -\infty$, so the
stable lengths increase. Up to another subsequence suppose that 
$\gamma_i \hat f^{n_i}(c_i)$ converges to curves $d_1, d_2$
which are contained in center leaves $e_1, e_2$ in the limit
center stable leaf $E$. 
Let $W$ be the limit of $\gamma_i \hat f^{n_i}(V)$ which is a region
between $d_1, d_2$.
The limit of $\gamma_i \hat f^{n_i}(s_0)$ is  
at least the full stable leaf $s$ through $y$ which is contained
in $W$.

It could be that $d_1, d_2$ have an
endpoint, which then would be the limit $z$ of $\gamma_i \hat f^{n_i}(x)$.
In this case $d_1, d_2$ are rays in $e_1, e_2$.
Otherwise $d_1, d_2$ are  the full leaves $e_1, e_2$.
In the first case the two rays of $s$ limit to the same point
in $S^1(E)$
which is the common ideal point of $d_1, d_2$ in $S^1(E)$.
But the two rays
of $s$ have to be at least some distance apart from each other 
or else they would intersect the same foliated box of
the center foliation, a contradiction.
The rays are in the region between $e_1$ and $e_2$. This would
imply that $e_1, e_2$ are not asymptotic in the stable funnel
direction. This contradicts the first statement that has already
been proved.
In the second case $d_1, d_2$ are the full leaves $e_1, e_2$.
But then the two distinct center leaves $e_1, e_2$ in $E$ have
both endpoints which are the same. This is impossible because
the center foliation is a quasigeodesic funnel in $E$.
This finishes the proof of the second statement.
\end{proof}

\section{Uniqueness of the branching foliations}\label{s.uniqueness}

In this section we show: 

\begin{teo}\label{teo-uniqueness}
Let $f: M \to M$ be a partially hyperbolic diffeomorphism preserving branching foliations $\cs$ and $\cu$ such that both $(f,\cs)$ and $(f,\cu)$ have full pseudo Anosov behavior (Definition \ref{defi.fullpA}). Then, if $\cs_2$ is another branching foliation such that $(f, \cs_2)$ has full pseudo-Anosov behavior then $\cs=\cs_2$.
\end{teo}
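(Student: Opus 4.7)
\medskip
\noindent
\textbf{Proof plan.}
Since $(f,\cs_2)$ has full pseudo-Anosov behavior, all the results of \S \ref{ss.pAandsubfoliations}--\S \ref{s.CAF} apply equally to the pair $(\cs_2,\cu)$. In particular, $\cs_2$ is uniform and $\R$-covered, the centers $\wc_2 := \wcsb \cap \wcu$ form a uniform quasigeodesic fan in every leaf of $\wcsb$ (by Theorem \ref{teo.QG}), and every leaf of $\wcsb$ admits a well-defined stable funnel direction (Lemma \ref{lem.notsamedirection}). The plan is to fix $x\in\mt$, pick any $L\in\wcs$ and any $L'\in\wcsb$ through $x$, and show $L=L'$; by the symmetry of the argument and the fact that every point lies in leaves of both foliations, this yields $\wcs=\wcsb$, and hence $\cs=\cs_2$.

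The first step is easy: because both branching foliations are tangent to the same bundle $E^{cs}=E^s\oplus E^c$, each of their leaves is saturated by strong stable leaves. Consequently $\ws(x)\subset L\cap L'$, and in fact for every $y\in L$ one has $\ws(y)\subset L$, and the same for $L'$. It will therefore be enough to prove that for every $y\in\ws(x)$ the \emph{center} in $L$ through $y$ equals the center in $L'$ through $y$, since then $L$ and $L'$ contain the same one-dimensional subfoliation by centers, all tangent to $E^c$ and attached along the shared stable leaves; by the uniqueness statement for the approximating foliations (the last item of the definition of being well approximated by foliations) and the continuity of $E^{cs}$, this forces $L=L'$.

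The key step is therefore the following: given $y\in\ws(x)$, the center leaf $c_L(y)$ of $\wc$ through $y$ inside $L$ equals the center leaf $c_{L'}(y)$ of $\wc_2$ through $y$ inside $L'$. For this I would proceed as in Lemma \ref{lem.centerconv}. Both rays of $c_L(y)$ and $c_{L'}(y)$ in the stable funnel direction are tangent to $E^c$ at $y$, are uniformly quasigeodesic in their respective center-stable leaves, and by Corollary \ref{cor.continuousendpoints} their ideal points in $\Su$ are controlled by the common funnel point. Assuming they differ on the positive ray, I would take a sequence of deck translates $\gamma_n$ making the two rays pass through a common compact fundamental domain and extract, in the limit, two distinct centers in a common center-stable leaf $E$ sharing both endpoints in $S^1(E)$, contradicting the quasigeodesic fan structure in $E$ (Theorem \ref{teo.CAFabstract}). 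The same argument in the opposite (unstable funnel) direction, using that centers cannot coincide with stables and that $\cu$ is common to both setups, completes the proof that $c_L(y)=c_{L'}(y)$.

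The main obstacle is this last comparison step: a priori $c_L(y)$ and $c_{L'}(y)$ are intersections of different center-stable leaves with (possibly different) leaves of $\wcu$, so one cannot immediately invoke Lemma \ref{lem.centerconv}, which was stated only for two centers in a common center-stable leaf. One must upgrade that lemma to the setting where the two centers come from different branching foliations tangent to $E^{cs}$. The crucial inputs are: (i) the funnel direction at $y$ is intrinsic to the bundles and the flow structure, and in particular agrees for $\cs$ and $\cs_2$ (this is where the full pseudo-Anosov behavior for \emph{both} foliations enters, via Lemma \ref{lem.notsamedirection}); and (ii) the uniform quasigeodesic bound from Theorem \ref{teo.QG} holds with a constant that depends only on the CAF structure, hence passes to limit center-stable leaves. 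Once (i) and (ii) are in hand, the compactness argument in $N=\mt/\langle\gamma\rangle$ used in \S \ref{ss.pApairsperiodic} produces the desired contradiction, and $L=L'$ follows.
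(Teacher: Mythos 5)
Your overall reduction -- compare the center foliations obtained by intersecting $\cs$ and $\cs_2$ with the common $\cu$, then upgrade equality of centers to equality of the center stable leaves -- is in the spirit of the paper's proof, and your closing step (recovering $L=L'$ from agreement of the centers along shared stable leaves) is close to the first, unnumbered lemma of the paper's argument, which does it via Lemma \ref{lem.centerconv} and the fan structure in leaves of $\wcu$ rather than via approximating foliations. The genuine gap is in your key comparison step, and the fix you sketch does not close it. Your limiting argument produces a leaf of $\wcu$ containing a center $c_1$ of $\wwca$ and a center $c_2$ of $\wwcb$ with the same pair of ideal points; but this is \emph{not} a contradiction with Theorem \ref{teo.CAFabstract}: the quasigeodesic fan property only forbids two distinct leaves of the \emph{same} center foliation from sharing both ideal points. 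A center of $\wwca$ and a center of $\wwcb$ sharing endpoints is exactly the configuration that has to be excluded, and it is where essentially all of the work in the paper's proof lies: Lemma \ref{lem.configunique} promotes such a configuration, using a conjugated full pA-pair, to a $P$-invariant one, with $P$-fixed centers $c_1\in\wwca$, $c_2\in\wwcb$ joining the attracting fixed points of $P_\infty$ and separated by a $P$-invariant unstable leaf $u$.

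Your input (i) is also unjustified. Lemma \ref{lem.notsamedirection} compares the stable and unstable funnel directions for a \emph{single} pair ($(\cs,\cu)$ or $(\cs_2,\cu)$); it says nothing about comparing $\cs$ with $\cs_2$. What is true -- and needs its own argument combining minimality with the small visual measure property (Lemma \ref{lem.fanpoint}) -- is that the funnel \emph{points} of $\wwca$ and $\wwcb$ in each leaf of $\wcu$ coincide; but the paper then proves (Lemma \ref{lema-orientinverse}) that if $\wwca\neq\wwcb$ the orientations toward this common funnel point are necessarily \emph{opposite}. So the assertion that ``the funnel direction is intrinsic and agrees for $\cs$ and $\cs_2$'' is precisely what fails in the hypothetical counterexample; it cannot be assumed, and in the paper it is this orientation clash that drives the final contradiction: at the $P$-fixed point on $u$ the centers of the two foliations point into opposite sides, the last intersection $y$ of $e_2$ with $c_1$ is $P$-fixed, the unstable ray at $y$ trapped between the two rays is invariant and expanding, hence limits on an attracting ideal point, and then a leaf of $\wwca$ entering the resulting region would have to cross the unstable leaf of $y$ twice, which is impossible (cf. Remark \ref{rem.novikov}). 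Neither this orientation analysis nor any substitute for it appears in your claims (i)--(ii), so the proposal as written does not prove the theorem.
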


Some parts will require less assumptions, 
but whenever shorter we will choose to give a direct proof in our specific setting. Of course there is a symmetric statement 
to show uniqueness of $\cu$.
One should compare this result to \cite[\S 12]{BFFP-3} were we get some uniqueness results for branching foliations in a different setting. Later, we will put these results together to get very strong unique integrability properties for partially hyperbolic diffeomorphisms of hyperbolic 3-manifolds. 

We start by showing that the induced center foliations $\wc_1$ (by intersection between $\cs$ and $\cu$) and $\wc_2$ (by intersection between $\cs_2$ and $\cu$) coincide. 

\subsection{Limit behavior} 
In this section $f: M \to M$ will be a partially hyperbolic diffeomorphism preserving a branching foliation $\cu$ tangent to $E^{cu}$ so that $\cu$ is subfoliated by two $f$-invariant one dimensional branching foliations $\wc_1$ and $\wc_2$ tangent to $E^c$ which are quasigeodesic fan foliations (cf. Definition \ref{def.qGfan}) obtained by intersecting with $f$-invariant branching foliations $\cs_1$ and $\cs_2$.  
In the proof of Theorem \ref{teo-uniqueness}, \  $\cs_1 = \cs$.

Notice that a priori we have four choices for funnel directions
on center leaves: two stable funnel directions (the pairs
$\cu, \cs_1$ and  $\cu, \cs_2$) and likewise two unstable funnel
directions (for the same pairs). Lemma \ref{lem.notsamedirection} shows
that for the same pair, the stable and unstable funnel directions
are opposite.

Here we show that a particular configuration holds if the foliations 
$\wc_1, \wc_2$ do not coincide. In the next section we will show that this is impossible.

For the next few results we only consider unstable funnel
directions or points. So for simplicity, unless otherwise
stated we refer to them as funnel directions or funnel points.
In addition the universal circle, still denoted by 
$\Su$, is the universal circle of $\cu$. Similarly, we use the
previous notation for the maps
$\Theta_L: \Su \rightarrow S^1(L)$ for $L \in \wcu$.

We need to show the following:

\begin{lema}\label{lem.fanpoint} 
The funnel points of $\wc_1$ and $\wc_2$ coincide. 
\end{lema}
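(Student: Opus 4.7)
The plan is to identify the two funnel point maps as continuous, $\pi_1(M)$- and $\hat f$-equivariant sections of the unstable universal circle over the leaf space $\cJ$ of $\wcu$, and then show these two sections must coincide. Concretely, define $\sigma_i : \cJ \to \cV$ by $\sigma_i(L) = \Theta_L^{-1}(p_i(L))$ for $i=1,2$. Continuity of each $\sigma_i$ follows from the quasigeodesic fan property of $\wc_i$ in $\cu$ together with Theorem \ref{teo.QG} and Corollary \ref{cor.continuousendpoints}: the funnel point is the unique common ideal point in $S^1(L)$ shared by the endpoints of all centers of $\wc_i \cap L$, and this is stable under continuous variation of $L$. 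Equivariance under $\pi_1(M)$ and $\hat f$ is inherited from the $f$-invariance of $\cs_1$, $\cs_2$, $\cu$ and their induced center subfoliations.

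Next I would invoke minimality. The set $C := \{L \in \cJ : \sigma_1(L) = \sigma_2(L)\}$ is closed in $\cJ$ by continuity of the $\sigma_i$, and $\pi_1(M)$-invariant by equivariance. Since $\cu$ is minimal (Proposition \ref{prop.minimality}), its leaf space contains no proper nonempty closed $\pi_1(M)$-invariant subset, so either $C = \emptyset$ or $C = \cJ$. It therefore suffices to exhibit one leaf of $\cJ$ that lies in $C$.

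To produce such a leaf, I would specialize to a $\wcu$-leaf $L_0$ fixed by some $P = \hat f^n \circ \gamma^m$, obtained by applying Proposition \ref{prop.periodic} to $(f, \cu)$. The action of $P$ on $S^1(L_0)$ has finitely many super-attracting and super-repelling fixed points, and both $p_1(L_0)$ and $p_2(L_0)$ must lie among these: each foliation $\wc_i$ is $P$-invariant, and the funnel point is the unique $P$-fixed ideal endpoint shared by every center of $\wc_i \cap L_0$. By Proposition \ref{prop.landingpointsperiodic}, a $P$-fixed center of $\wc_i$ in $L_0$ has its funnel endpoint singled out among its two ideal endpoints by the side on which the $P$-action is coarsely attracting (or coarsely repelling, depending on the sign of $n$); this characterization depends only on the data $(f, \cu, L_0, P)$ and not on which auxiliary center-stable foliation $\cs_i$ one uses, so it selects the same fixed point of $P$ on $S^1(L_0)$ for both $i=1$ and $i=2$.

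The step I expect to be hardest is this last dynamical matching, since the pseudo-Anosov pair structure leaves several candidate super-attracting points on $S^1(L_0)$ and one has to rule out that $p_1(L_0)$ and $p_2(L_0)$ sit on different prongs. The natural tool is an analog of Lemma \ref{lem.centerconv} applied inside $\wcu$: at a point $x \in L_0$ lying on a center of both $\wc_1$ and $\wc_2$, the two centers share the tangent direction $E^c(x)$, and a quasigeodesic ray in $L_0$ starting at $x$ in a prescribed $E^c$-direction has a uniquely determined ideal endpoint in $S^1(L_0)$; since the funnel rays of $c_1 \in \wc_1$ and $c_2 \in \wc_2$ through such an $x$ point in the same $E^c$-direction (the unique unstable funnel direction at $x$ inside $L_0$), their landing points $p_1(L_0)$ and $p_2(L_0)$ must agree. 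Combined with the dichotomy from minimality above, this gives $C = \cJ$ and finishes the proof.
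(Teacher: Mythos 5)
Your opening reduction is exactly the paper's: the set of leaves of $\wcu$ where the two funnel points agree is closed and $\pi_1(M)$-invariant by continuity of the endpoint maps, so by minimality of $\cu$ it suffices to exhibit one leaf where they coincide. Where you diverge is in producing that leaf, and this is where the proposal breaks down. The paper does it softly, with no periodic leaf at all: take any leaf $L$ with funnel points $p_1,p_2$, pick $\xi$ with $\Theta_L(\xi)\notin\{p_1,p_2\}$, let $x_n\to\Theta_L(\xi)$ in $L\cup S^1(L)$, renormalize by deck transformations, and note that the visual measure from $x_n$ of the arc of $S^1(L)$ between $p_1$ and $p_2$ not containing $\Theta_L(\xi)$ tends to zero; hence in the limit leaf the (continuously varying) funnel points are forced to coincide.

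Your dynamical matching step, which you correctly flag as the crux, does not work as stated. You assert that at a common point $x$ the funnel rays of $c_1\in\wc_1$ and $c_2\in\wc_2$ point in the same $E^c$-direction, ``the unique unstable funnel direction at $x$ inside $L_0$''. There is no such unique direction a priori: there are two unstable funnel directions, one for each pair $(\cu,\cs_1)$ and $(\cu,\cs_2)$, and the argument in \S\ref{s.funnel} that the funnel direction depends only on the point works inside a single branching center foliation (centers of the same foliation through $x$ are joined by a path of centers through $x$); it compares nothing between $\wc_1$ and $\wc_2$. In fact the paper proves afterwards (Lemma \ref{lema-orientinverse}) that when $\wc_1\neq\wc_2$ the two orientations toward the common funnel point disagree everywhere, so in precisely the case you need to rule out, the two funnel rays at a common point go in opposite $E^c$-directions: your premise is not merely unproven, it fails there. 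Moreover, even granting a common initial direction, the claim that a quasigeodesic ray from $x$ in a prescribed $E^c$-direction has a uniquely determined ideal endpoint is false: distinct integral curves of $E^c$ tangent at $x$ may diverge, and the asymptotics of Lemma \ref{lem.centerconv} are a consequence of the fan structure of one fixed foliation (via Hausdorffness of its leaf space), not of sharing a tangent vector, so they cannot be invoked across two different center foliations. Finally, knowing that $p_1(L_0)$ and $p_2(L_0)$ are fixed by $P_\infty$, or even that both are super attracting, does not pin them down, since $P_\infty$ has several attracting fixed points; compare Lemma \ref{lem.configunique}, where curves of the two foliations in a $P$-fixed leaf run between the two distinct attracting points. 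So the existence step needs a genuinely different argument, such as the compactness/visual-measure one above.
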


\begin{proof} 
By Corollary \ref{cor.continuousendpoints} we have that the set of leaves $L \in \wcu$ where the funnel points of $\wwca$ and $\wwcb$ coincide is closed and $\pi_1(M)$ invariant. Therefore, by minimality (Proposition \ref{prop.minimality}) we just need to show that there exist some leaf where they coincide.  

To do this, take a leaf $L$ where the funnel points differ (if there is no such leaf, there is nothing to prove). Denote by $p_i$ to the funnel point of $\tild{\wc_i}$ in $L$ (with $i=1,2$) and consider a point 
$\xi \in \Su$ with $\Theta_L(\xi)$ not in $\{p_1,p_2\}$. 

Choose a sequence $x_n \in L$ such that $x_n \to \Theta_L(\xi)$
in $L \cup S^1(L)$.
Now, composing with deck transformations $\gamma_n \in \pi_1(M)$ sending $x_n$ to a given bounded set, and up to extracting a subsequence  we have that $\gamma_n x_n \to x \in \mt$. Let $L_\infty \in \wcu$ be the limit of the leaves $\gamma_n L$ which is a leaf through $x$. 
The funnel points of $\gamma_n L$ are given by $\gamma_n p_1$ and
$\gamma_n p_2$. These converge to the funnel points in $L_\infty$.
Since the visual measure from $x_n$ of the interval between $p_1$ and $p_2$ in $S^1(L)$ that does not contain $\Theta_L(\xi)$ goes to zero with $n$ we deduce that the endpoint of the quasigeodesic fans in $L_\infty$ must coincide. This completes the proof.
\end{proof}

Finally we show the following result which is important to get a contradiction in our case, but we note that the proof may work in more generality. 

\begin{lema}\label{lem.configunique}
Assume that $(f,\cu)$ has full pseudo-Anosov behavior. If $\wc_1 \neq \wc_2$ then there exists a regular pA pair $(\fh,\gamma)$ and a leaf $L \in \wcu$ which is fixed by a conjugate $P$ of $P_0=\gamma^m \circ \fh^k$ (cf. Notation \ref{not-lift}) such that it contains two disjoint curves $c_1 \in \wwca$ and $c_2 \in \wwcb$ whose endpoints in $S^1(L)$ are $\Theta_L$ images
of super-attracting points of $P_\infty$ in $\Su$. 
In addition $P(c_i) = c_i$.
\end{lema}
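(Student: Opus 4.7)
The plan is to argue by constructing the desired configuration from the hypothesis $\wc_1 \neq \wc_2$. The starting ingredient is Lemma \ref{lem.fanpoint}: on each leaf $L \in \wcu$ the fans $\wwca \cap L$ and $\wwcb \cap L$ share a common funnel point $p(L) \in S^1(L)$. Choose a leaf $L_0 \in \wcu$ on which these fans disagree. Then there exists an ideal endpoint $\xi_0 \in S^1(L_0) \setminus \{p(L_0)\}$ such that the unique $\wwca$-leaf with ideal endpoints $p(L_0),\xi_0$ differs (as a subset of $L_0$) from the unique $\wwcb$-leaf with the same endpoints. Both these curves are uniform quasigeodesics by Theorem \ref{teo.QG}, are tangent to $E^c$, and lie at bounded Hausdorff distance in $L_0$ by the Morse lemma.

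The next step is to produce a pA pair adapted to this disagreement. Applying Proposition \ref{prop-fullpair} to a geodesic ray in $L_0$ aimed at $\Theta_{L_0}^{-1}(\xi_0)$, combined with Proposition \ref{prop-manypApairs} and the periodic commuting property built into full pseudo-Anosov behavior, I obtain a conjugate full pair $(\hat f,\gamma)$ and a lift $P=\gamma^m\circ\hat f^k$ whose super-attracting points $a_1,a_2\in\Su$ realize $\Theta_{L_0}^{-1}(p(L_0))=a_1$ with $\Theta_{L_0}^{-1}(\xi_0)$ in the basin of attraction of $a_2$. Proposition \ref{prop.periodic} then produces a $P$-fixed leaf $L\in\wcu$; taking $L$ as a forward $P$-limit of $L_0$ in the sense of Proposition \ref{prop.periodiccommuting}, and using that the $\wwca$- and $\wwcb$-leaves are $f$-invariant, the disagreement of the fans $\wwca\cap L$ and $\wwcb\cap L$ persists at the ideal endpoint $\Theta_L^{-1}(a_2)$. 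The unique $\wwca$- and $\wwcb$-curves in $L$ joining $\Theta_L^{-1}(a_1)$ to $\Theta_L^{-1}(a_2)$ are $P$-fixed by uniqueness within each fan, and provide candidate curves $c_1\in\wwca\cap L$, $c_2\in\wwcb\cap L$ with both ideal endpoints at super-attracting points of $P_\infty$, as required.

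The main obstacle is ensuring disjointness of $c_1$ and $c_2$: since they belong to distinct branching foliations, the no-crossings condition does not preclude an intersection. I would resolve this by an outermost-limit argument modeled on Proposition \ref{p-getconfig} and Addendum \ref{ad.config}: iterate arcs of the original differing curves by $P$ and extract $P$-fixed limit curves in each of $\wwca$ and $\wwcb$ among those joining $\Theta_L^{-1}(a_1)$ to $\Theta_L^{-1}(a_2)$, then select $c_1$ and $c_2$ as outermost representatives on opposite sides of the region separating the two original curves. If these outermost choices shared any point, both would carry the same tangent $E^c$ there; combining $P$-invariance with the contracting/expanding dynamics of $P$ toward $a_1$ and $a_2$ along each curve would force $c_1=c_2$ as sets, producing a leaf in $\wwca\cap\wwcb$. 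Propagating such a coincidence via the $P$-action together with the minimality of $\cu$ (Proposition \ref{prop.minimality}) then forces $\wwca=\wwcb$ throughout $\mt$, contradicting the standing assumption $\wc_1\neq\wc_2$. Hence the outermost $c_1$ and $c_2$ are disjoint and satisfy all conclusions of the lemma.
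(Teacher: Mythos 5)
Your proposal starts the same way as the paper (via Lemma \ref{lem.fanpoint}, producing two distinct curves, one in $\wwca$ and one in $\wwcb$, with the same pair of ideal points), but the way you then produce the configuration has genuine gaps. First, Proposition \ref{prop-fullpair} does not do what you ask of it: it only says that the base point $x_0$ and the ideal point $\xi$ lie in different basins of attraction \emph{or} different basins of repulsion of some conjugate $P$; it gives no way to arrange $\Theta_{L_0}^{-1}(p(L_0))=a_1$, i.e.\ to make the funnel point itself an attracting fixed point (the fixed points of conjugates form a countable, dynamically determined set), and you never address the repulsion alternative, where naive forward iteration is the wrong direction and the limiting curves would a priori join repelling points. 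Second, your candidate curves ``the unique $\wwca$- and $\wwcb$-curves in $L$ joining $\Theta_L^{-1}(a_1)$ to $\Theta_L^{-1}(a_2)$'' need not exist: a fan leaf must have the funnel point of $L$ as one endpoint, and you have not shown that the funnel point of the $P$-fixed leaf $L$ is one of $a_1,a_2$ rather than a repelling fixed point; nor have you shown that the ``disagreement persists'' in the limit, i.e.\ that the two limit curves are distinct.

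The paper's mechanism, which is the missing idea, is to place an \emph{unstable} segment $I$ in the region between the two original curves and blow it up under forward iteration of $f$ (plus deck transformations) to a full unstable leaf trapped between two center curves of $\wwca$ and $\wwcb$ sharing ideal points; then, after applying the full pair and iterating by $P^{\pm n}$ to a $P$-fixed leaf, one obtains $P$-invariant curves $c_1,c_2$ separated by a $P$-invariant unstable leaf $u$. This does two jobs at once: since $P$ expands unstables, $P(u)=u$ forces the ideal points to be super-attracting (this is where the ``attracting'' conclusion really comes from, not from prescribing $a_1,a_2$ in advance), and $u$ separates $c_1$ from $c_2$, giving disjointness for free. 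Your substitute disjointness argument does not work: for branching foliations two distinct leaves can share a point and the tangent direction $E^c$ there (non-unique integrability is exactly the phenomenon at stake), so ``same tangent at a common point plus $P$-invariance forces $c_1=c_2$'' is unjustified, and the final step ``one common leaf plus minimality of $\cu$ forces $\wwca=\wwcb$ everywhere'' is essentially the uniqueness statement one is trying to prove and is not a consequence of Proposition \ref{prop.minimality}. Without the separating invariant unstable leaf (or some replacement), neither the disjointness nor the super-attracting nature of the endpoints is established.
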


\begin{proof}
If $\wc_1 \neq \wc_2$, using Lemma \ref{lem.fanpoint} we know that there is a leaf $E' \in \Ft$ such that there are center curves $e_1 \in \wc_1$ and $e_2 \in \wc_2$ in $E'$ which share both endpoints and so that $e_1 \neq e_2$.  Denote by $p,q \in  S^1(E')$ the ideal points
of the curves $e_i$ ($i =1,2$). One can then define the region $Z$ between $e_1$ and $e_2$ as the union of connected components of $E' \setminus (e_1 \cup e_2)$ whose closure in $\hat E' = E' \cup S^1(E')$ is contained in $E' \cup \{p,q\}$. This is an open and non-empty set and we can then consider an unstable interval $I \subset E'$ (i.e. tangent to $E^u$) which is contained in $Z$. 

Fix $x$ in the interior of $I$.
Consider a lift $f_1$ of $f$ to $\mt$. Up to subsequence there 
are $\gamma_j$ in $\pi_1(M)$ so that $\gamma_j f^{n_j}_1(x)$
converges to $y$ in a leaf $E_0$, where $E_0$ is the 
limit of $\gamma_j f^{n_j}_1(E')$. We can assume that
$\gamma_j f^{n_j}_1(e_i), i = 1, 2$ also converge to centers $e'_i, 
\ i = 1,2$ 
in $E_0$. 
This is because the curves

$$\gamma_j f_1^{n_j}(e_1), \ \ \gamma_j f_1^{n_j}(e_2)$$
\noindent
 have
the same pair of ideal points in $S^1(\gamma_j f_1^{n_j}(E'))$ and
hence are a bounded Hausdorff distance from each other in
$\gamma_j f_1^{n_j}(E')$. Finally $\gamma_j f_1^{n_j}(x)$ is
between them in $\gamma_j f_1^{n_j}(E')$.
The limit of $\gamma^{n_j}_1(I)$ contains
the full unstable leaf $u'$ of $q$, which is then betweeen
$e'_1, e'_2$ in $E_0$. 

If the ideal points of $u'$ are distinct in $S^1(E_0)$,
let $u_1 = u'$, $e^1_i = e'_i$, $E = E_0$.

If the ideal points of $u'$ are the same point $z$ in $S^1(E_0)$
consider $y_n$ in $u'$ converging to $z$ in $E_0 \cup S^1(E_0)$ 
so that $\pi(y_n)$
converges in $M$. There are $\beta_n$ in $\pi_1(M)$ so that 
$\beta_n(y_n)$ converges to $q_0$, $\beta_n(E_0)$ converges,
and we let the limit of $\beta_n(E_0)$ be $E$.
We can also assume that $\beta_n(e'_i), i = 1,2$ converge,
and we let the limits be $e^1_i$.
Then $\beta_n(u')$ converges to at least one unstable leaf
$u_1$ in $E$ which separates $e^1_1$ from $e^1_2$ in $E$.
So in any case we obtain a leaf $E$ of $\wcu$ with 
two centers $e^1_1, e^1_2$ of $\widetilde \wc_i$ respectively
so that $e^1_1, e^1_2$ have the same ideal points in $S^1(E)$
and there is an unstable leaf $u_1$ in $L$ separating 
$e^1_1$ from $e^1_2$.

We assumed that $(f,\cu)$ has full pseudo Anosov behavior (cf. Definition \ref{defi.fullpA}). 
So there is a conjugate $P$ of the full regular pair so that
points in disjoint rays of $e^1_i$ are either in distinct
basins of attraction of $P$ or distinct basins of repulsion
of $P$. 
To get this use the last two bullet points of 
Definition \ref{defi.fullpair}.
In the case of attraction (repulsion) use iterates
$P^n$ as $n \rightarrow \infty$ (as $n \rightarrow -\infty$).
In either case we get $L$ is the limit of $P^n(E)$, and
up to subsequence $P^n(e^1_i)$ converges to $c_i$ center
leaves in $L$ which are invariant by $P$ and there is an
unstable leaf $u$ in $E$ separating $c_1$ from $c_2$ and $u$
invariant by $P$.
Since $P(u) = u$ the ideal points can only be super attracting.

This finishes the proof of the Lemma.
\end{proof}

\subsection{Orientability of the center foliation} 

Even if the funnel points coincide (cf. Lemma \ref{lem.fanpoint}), the orientation may be different in both. This will play a crucial role in the proof, so we introduce the following definition: 

\begin{defi}\label{def.orientfan}
Let $\cT$ be a quasigeodesic fan foliation of $\cF$ (cf. Definition \ref{def.qGfan}) and consider an orientation of the tangent space to $\cT$. We say that $\cT$ is oriented \emph{towards} the funnel point if every curve of $\wT \cap L$ is oriented in the direction of the funnel point. Otherwise, we say that $\cT$ is oriented \emph{against} the funnel direction. 
\end{defi}

\begin{remark}\label{rem.orientcenter}
Notice that the definition makes sense. First of all, the fact that $\cT$ is a quasigeodesic fan foliation implies that $\cT$ is orientable. Secondly, the funnel point varies continuously (cf. Corollary \ref{cor.continuousendpoints}), therefore, either $\wT \cap L$ is oriented in the direction of the funnel point everywhere or nowhere. 
\end{remark}

We will choose from now on an orientation in 
$E^c$ making that $\wc_1$ is oriented towards the funnel point. 
As remarked before we are always considering the unstable
funnel point and the direction in the center unstable leaves.

\begin{lema}\label{lema-orientinverse} 
Assume that $(f,\cu)$ has full pseudo-Anosov behavior. If $\wc_1 \neq \wc_2$ then $\wc_2$ is oriented against the funnel point. 
\end{lema}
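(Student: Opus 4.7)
The plan is to argue by contradiction: assume that $\wc_2$ is also oriented toward the common funnel point $p$ (which exists by Lemma~\ref{lem.fanpoint}), and derive that $\wc_1 = \wc_2$ as branching foliations, contradicting the hypothesis $\wc_1 \ne \wc_2$.

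First, I will invoke Lemma~\ref{lem.configunique} to obtain a regular pA pair $(\hat f, \gamma)$ whose associated power $P = \gamma^m \hat f^k$ fixes a leaf $L \in \wcu$ together with two disjoint quasigeodesics $c_1 \in \wc_1 \cap L$ and $c_2 \in \wc_2 \cap L$ whose four ideal endpoints in $S^1(L)$ are all $\Theta_L$-images of super-attracting fixed points of $P_\infty$; by Lemma~\ref{lem.fanpoint} the common funnel point $p$ is an endpoint of each $c_i$, and Proposition~\ref{prop.landingpointsperiodic} supplies an interior $P$-fixed repelling point $x_i$ on each $c_i$.

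The key construction will be to produce an extra $P$-invariant center in each fan using the orientation assumption. Let $e$ be the $\wc_2$-leaf through $x_1$ inside $L$, which is $P$-invariant after passing to a further iterate if branching at $x_1$ permutes $\wc_2$-leaves. Since $e$ is tangent to $E^c(x_1)$ with the same positive orientation as $c_1$ and $\wc_2 \cap L$ is a quasigeodesic fan with funnel $p$, the hypothesis that $\wc_2$ is oriented toward $p$ forces the positive ray of $e$ from $x_1$ to terminate at $p$. Applying the $\wcu$-analog of the second statement of Lemma~\ref{lem.centerconv} to the centers $c_1$ and $e$, both of which pass through $x_1$ and land at $p$ with the same unstable funnel direction, the two positive rays from $x_1$ must coincide as subsets of $L$; in particular, a non-trivial arc ending at $p$ lies simultaneously in $\wc_1$ and in $\wc_2$. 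A symmetric argument at $x_2$ gives a $\wc_1$-leaf $e''$ through $x_2$ with positive ray coinciding with that of $c_2$.

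Finally, using $P$-invariance of the constructed rays together with the equivariance and continuity of the endpoint map from Corollary~\ref{cor.continuousendpoints}, I will propagate the coincidence. Combining $\pi_1(M)$-translates of the shared rays with the minimality of $\cu$ (Proposition~\ref{prop.minimality}) and the abundance of conjugate pA pairs from Proposition~\ref{prop-manypApairs}, the set of points at which the $\wc_1$- and $\wc_2$-leaves share their positive ray toward the funnel point forms a $\pi_1(M)$-invariant closed set which is therefore all of $\mt$; together with the quasigeodesic fan bijection in each leaf of $\wcu$ this forces $\wc_1 = \wc_2$ as branching foliations, contradicting the standing hypothesis.

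The main obstacle will be the $\wcu$-version of the ray-coincidence assertion (the analog of the second statement of Lemma~\ref{lem.centerconv}) applied to two centers that come from \emph{different} branching foliations $\wc_1$ and $\wc_2$: the original proof relies on the no-topological-crossing property inside a single branching foliation, which is not directly available for a mixed pair. I expect to handle this by iterating $\hat f$ forward to grow unstable length, bringing a transverse strong unstable segment that would separate the two rays into a compact fundamental domain, and then using the quasigeodesic fan structure with shared funnel point $p$ to extract a pair of distinct quasigeodesic rays in a limit leaf that asymptote to a single point of $S^1$ of that leaf while bounding a region of positive unstable width, contradicting the fan bijection.
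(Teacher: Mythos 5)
Your overall strategy (prove the contrapositive: if $\wc_2$ were also oriented towards the common funnel point, then $\wc_1=\wc_2$) is legitimate in principle, but the step it hinges on is precisely where the argument breaks, and your proposed repair does not close it. The ``mixed'' version of the second statement of Lemma \ref{lem.centerconv} --- that a leaf of $\wwca$ and a leaf of $\wwcb$ through the same point $x_1$, both of whose positive rays land at the funnel point, must share that ray --- cannot be obtained by the limit argument you sketch. In the single-foliation case the contradiction at the end of that argument is that two \emph{distinct} leaves of the \emph{same} fan foliation would share both ideal points, violating the fan bijection. In your mixed situation the limiting objects are one curve of $\wwca$ and one curve of $\wwcb$ sharing their ideal points, with an unstable leaf of definite width trapped between them; this violates neither fan bijection, and in fact it is exactly the configuration that Lemma \ref{lem.configunique} shows does occur whenever $\wc_1\neq\wc_2$ (it is also the configuration Theorem \ref{teo-uniqueness} is ultimately trying to exclude). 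The paper only excludes it \emph{after} establishing the present orientation lemma, so to make your key step work you would need the conclusion you are trying to prove (or the full uniqueness theorem); as written the crux of your proposal is circular. The later propagation step has a similar looseness (minimality of $\cu$ applies to a closed invariant set of $\wcu$-leaves, not of points, and ``sharing a positive ray at some point'' must first be upgraded to a leafwise statement), but that is secondary to the main gap.

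For comparison, the paper's proof never needs any coincidence of rays: it uses the extra geometric feature coming out of the proof of Lemma \ref{lem.configunique}, namely a $P$-invariant unstable leaf $u$ joining $\Theta_L(a_1)$ to $\Theta_L(a_2)$ and \emph{separating} $c_1$ from $c_2$ in $L$, with its unique fixed point $x\in u$. One then takes $P$-fixed leaves $e_1\in\wwca$, $e_2\in\wwcb$ through $x$ and observes that the ray of $e_i$ entering the region between $u$ and $c_i$ must land at $a_1$ (landing at $a_2$ would produce a second leaf of $\tild{\wc_i}$ from $a_1$ to $a_2$, contradicting the fan property of that single foliation). Since the two regions lie on opposite sides of $u$, the directions towards the funnel point at $x$ for $e_1$ and $e_2$ are opposite, which is the statement. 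Your proposal discards the separating unstable $u$ entirely, and with it the mechanism that actually forces the orientations to disagree; I would suggest reworking the argument around that invariant unstable leaf rather than around a ray-coincidence claim.
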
 

\begin{proof} 
We work in the leaf $L$ given by Lemma \ref{lem.configunique} where we have disjoint centers $c_i \in \tild{\wc_i}$ fixed by $P$ which join the attracting points $a_1, a_2$ of $P$ in $\Su$ and which are separated by a fixed unstable leaf $u$ which also joins those points. Let $x \in u$ be the unique fixed point. Note that since both $\wc_1$ and $\wc_2$ share their funnel points, we can assume that $a_1$ is the funnel point for both. 

Let $e_i$ be a curve of $\tild{\wc_i}$ through the point $x \in u$ and fixed by $P$. Note that $x$ may belong to many curves in $\tild{\wc_i}$ but at least one must be fixed by $P$, we choose any such fixed curve. It is important to remark that $e_i \neq c_i$ since $c_i$ does not intersect $u$. 

Consider the ray of $e_i$ from $x$ pointing to the region between $u$ and $c_i$. This region has limit points only $a_1$ and $a_2$.
We claim that the endpoint of the ray must be $a_1$: if it were $a_2$ 
this is different than $a_1$ and so the other endpoint of $e_i$ 
would be $a_1$. Hence $e_i$ is another curve in $\tild{\wc_i}$ from $a_1$ to $a_2$ (besides $c_i$) 
 and this is inconsistent with being a quasigeodesic fan foliation. 

Since the regions between $u$ and $c_1$ and $u$ and $c_2$ are oriented differently from $x$, we deduce that the orientation of $e_2$ has to be against the funnel point. Since orientations coincide or disagree everywhere, this concludes. 
\end{proof}

\begin{figure}[ht]
	\begin{center}
		\begin{overpic}[scale=0.73]{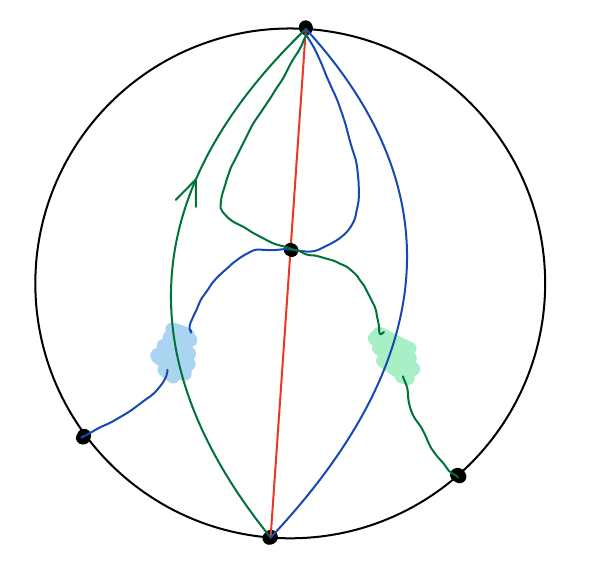}
		\end{overpic}
	\end{center}
	\begin{picture}(0,0)
	\put(0,120){$x$}
	\put(-1,95){\red{$u$}}
	\put(3,212){$a_1$}
	\put(-7,20){$a_2$}
	\put(-85,58){$r_1$}
	\put(68,44){$r_2$}
	\put(-53,120){\color{ForestGreen}{$c_1$}}
	\put(48,75){\color{ForestGreen}{$e_1$}}
	\put(-58,64){\blue{$e_2$}}
	\put(43,128){\blue{$c_2$}}
	\end{picture}
	\vspace{-0.5cm}
	\caption{{\small Proof of Lemma \ref{lema-orientinverse}.}}\label{f.orient}
\end{figure}

\subsection{Proof of Theorem \ref{teo-uniqueness}}

We first show: 

\begin{lema}
If $\cs_1 = \cs \neq \cs_2$ then there is a leaf $U \in \wcu$ such that the foliations $\wwca$ and $\wwcb$ are different. 
\end{lema}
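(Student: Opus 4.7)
I plan to prove the contrapositive. Assume that on every leaf $U \in \wcu$ the induced foliations $\wwca$ and $\wwcb$ coincide. Since every point of $\mt$ lies in some leaf of $\wcu$ and every center curve of $\wc_i$ is, by definition, contained in a $\wcu$-leaf, this hypothesis gives $\wc_1 = \wc_2$ as one dimensional branching foliations of $\mt$. I will then deduce $\cs = \cs_2$, contradicting the assumption that $\cs \neq \cs_2$.

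The key intermediate step is the following saturation identity: for every leaf $L \in \wcs$ (respectively $L \in \wcsb$) and every center curve $c$ contained in $L$,
\[
L \ \ = \ \ \bigcup_{y \in c} \wst(y),
\]
where $\wst(y)$ denotes the strong stable leaf through $y$. The inclusion $\supset$ is immediate: $L$ is tangent to $E^{cs} \supset E^s$, so the strong stable through any point of $L$ is tangent to $E^{cs}$ everywhere and stays inside $L$. For the reverse inclusion, pick $z \in L$ and any center $c_z \in \wwca \cap L$ through $z$. By Theorem \ref{teo.QG} both $c$ and $c_z$ are uniform quasigeodesics in $L$, and Lemma \ref{lem.centerconv} asserts that any two centers in $L$ are asymptotic in $L$ in the stable funnel direction, hence share the ideal point at the stable funnel point of $L$. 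Since the strong stable foliation inside the plane $L$ is one dimensional and transverse to the center foliation, the strong stable leaf $\wst(z) \subset L$ must cross $c$, so $z \in \wst(y)$ for some $y \in c$.

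Granting the saturation identity, the lemma follows at once. Given $L_1 \in \wcs$, choose a center $c \subset L_1$ of $\wwca$; since $\wc_1 = \wc_2$, the curve $c$ is also a leaf of $\wwcb$, so $c \subset L_2$ for some $L_2 \in \wcsb$. Applying the saturation identity to both $L_1$ and $L_2$ yields $L_1 = \bigcup_{y \in c} \wst(y) = L_2$, so every leaf of $\wcs$ is a leaf of $\wcsb$, and the symmetric argument gives the reverse inclusion; hence $\cs = \cs_2$.

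The step I expect to require the most care is showing that inside the plane $L$ the strong stable leaf $\wst(z)$ actually meets $c$. The intuition is that with the quasigeodesic fan structure on the centers of $L$ converging to a single common ideal point, any transversal to the center foliation has to cross every center in $L$. To make this rigorous I plan to combine the common ideal point property of all centers in $L$ (Lemma \ref{lem.centerconv}) with a dynamical obstruction: a strong stable ray that failed to cross $c$ would be forced to accumulate towards the stable funnel point from within the region bounded by $c$, and iterating by an appropriate lift $P$ would turn such a ray into a coarsely expanding stable ray, contradicting the contracting character of $\ws$ along the lines of Lemma \ref{lem.imposibleconf}.
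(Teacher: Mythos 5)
Your reduction to the contrapositive is the same as the paper's, but the engine of your argument --- the ``saturation identity'' $L=\bigcup_{y\in c}$ (strong stable of $y$) for \emph{every} center $c\subset L$ --- is precisely the point where the proof has a genuine gap. What Lemma \ref{lem.centerconv} and Theorem \ref{teo.QG} give you is only that any other center $c_1\subset L$ is \emph{asymptotic} to $c$ in the stable funnel direction; combined with the uniform local product structure of $\wws$ and $\wwc$ inside $L$, this shows that a \emph{ray} of $c_1$ towards the funnel point lies in the stable saturation $W$ of $c$, not that every point of $L$ does. Your identity is equivalent to the assertion that every strong stable leaf in $L$ crosses every center of the fan, and nothing in the paper establishes this. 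Topologically it is not forced: a transversal to a fan need not meet every leaf, and a stable leaf whose two ideal points lie in one complementary arc of $\{c^+,c^-\}$ (or coincide with the funnel point, a behavior the paper explicitly allows --- see the remark after Theorem \ref{teo.curvesgeneral} and the footnote in the proof of Proposition \ref{prop.centerssamepoint}) can be disjoint from $c$. Dynamically, your proposed fix does not run: for an arbitrary leaf $L$, center $c$ and stable $s$ there is no lift $P$ preserving this configuration, and the coarse-dynamics lemmas (Proposition \ref{p-getconfig}, Lemma \ref{lem.imposibleconf}) apply only to $P$-invariant objects in $P$-fixed leaves obtained by limiting arguments that need not preserve the non-crossing configuration. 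Even in a periodic leaf, a stable ray trapped on one side of $c$ and landing at the funnel point yields no contradiction: for $P$ a lift of a positive iterate of $f$, stables are coarsely contracted and may perfectly well land at a repelling fixed point of $P_\infty$; there is no ``coarsely expanding stable ray'' to be manufactured.

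The structural symptom is that your argument makes essentially no use of the hypothesis that $\wwca=\wwcb$ on \emph{center-unstable} leaves beyond placing $c$ inside some leaf of $\wcsb$; if correct it would prove the much stronger statement that any two branching center-stable leaves sharing a single center coincide. The paper's proof is built exactly to avoid needing that: after showing only a funnel-direction ray of each center $c_1\subset L$ lies in $W\subset E$, it passes to the leaf $V\in\wcu$ with $V\cap L=c_1$, uses Lemma \ref{lem.notsamedirection} to see that the shared ray points against the unstable funnel direction of $V$, so $c_1$ and $c_2=V\cap E$ have the same pair of ideal points in $S^1(V)$, and then invokes the hypothesis $\wwca=\wwcb$ on $V$ together with the quasigeodesic fan property in $V$ to conclude $c_1=c_2\subset E$, hence $L\subset E$ and $L=E$. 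To repair your proof you would either have to prove the saturation identity (which is unknown and doubtful in the double-translation/incoherent setting) or reintroduce the center-unstable step as the paper does.
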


\begin{proof} 
We show that if $\wwca = \wwcb$ in every leaf $U$ of $\wcu$, then
$\cs = \cs_2$.
Let $L$ in $\cs_1$ and $U$ in $\cu$  intersecting $L$ in a center
$c$ leaf of $\wwca$. Since $\wwca = \wwcb$, then there is $E$ leaf
of $\cs_2$ intersecting $U$ also in $c$.
We will show that $E = L$, hence every leaf of $\cs_1$ is also
a leaf of $\cs_2$ and vice versa, proving the result. 

Let $W$ be the union of the stable leaves intersecting $c$.
The foliations
$\cs_1, \cs_2$ have leaves which are stable saturated, hence 
$W$ is contained in both $L$ and $E$.
Let $p$ be the (stable) funnel point of $L$.
Lemma \ref{lem.centerconv} shows that for any other center
$c_1$ in $L$
then $c, c_1$ are asymptotic in the direction of $p$.
Hence $c_1$ has a ray towards $p$ contained in $W$ (so contained
in $E$). This ray defines direction 1 in $c_1$.
Let $V$ be a leaf of $\wcu$ so that $V \cap L = c_1$.
Let $c_2 = V \cap E$. Then $c_1, c_2$ share a ray in direction 1.
The unstable funnel direction in $V$ induces the opposite
direction (direction 2) in $c_1$ by Lemma \ref{lem.notsamedirection}. 
It follows that in $V$ the rays of $c_1, c_2$ corresponding
to direction 2 have the same ideal point $q$ in $S^1(V)$.

Notice that $c_1$ is a leaf of $\wwca$ and $c_2$ is a leaf
of $\wwcb$. Since these foliations are the same in $V$,
then $c_2$ is also a leaf of $\wwca$ in $V$. But then $c_1, c_2$
are leaves of $\wwca$ with same pair of ideal points in $V$
(in direction 1 they share a ray, in direction 2 they
both limit to $q$). Since $\wwca$ is a quasigeodesic fan
in $V$ it follows that $c_1 = c_2$. In other words $c_1$
is contained in $E$. Since this is true for any center in $L$
then $L \subset E$. Since $L$ is properly embedded this
implies that $L = E$.
This finishes the proof.
\end{proof}

Now we can apply what we showed before to prove uniqueness:

\begin{proof}[Proof of Theorem \ref{teo-uniqueness}]
By the previous Lemma it
is enough to show that $\wc_1 = \wc_2$, so assume by way
of contradiction that $\wc_1 \neq \wc_2$. 
Now use Lemma  \ref{lem.configunique}, which provides a $P$
and a leaf $L$ of $\wcu$ fixed by $P$, containing two
leaves $c_i$ of $\wc_i$ invariant by $P$ and an unstable
leaf $u$ in $L$ fixed by $P$ separating $c_1$ from $c_2$ in $L$. 
This is the 
setup of Lemma \ref{lema-orientinverse} 
and we use the same center curves $e_i$ through a fixed point $x \in u$
as in that lemma.

\begin{figure}[ht]
	\begin{center}
		\begin{overpic}[scale=0.73]{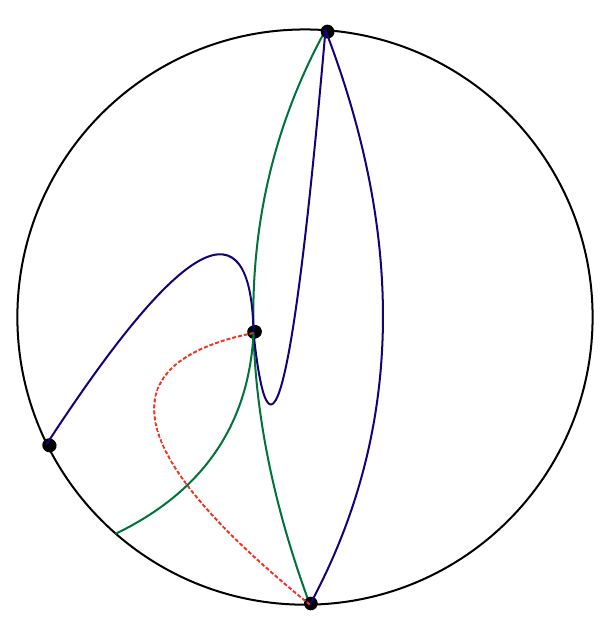}
		\end{overpic}
	\end{center}
	\begin{picture}(0,0)
	\put(4,240){$a_1$}
	\put(2,20){$a_2$}
	\put(-103,87){$r_1$}
	\put(-13,126){$y$}
	\put(-20,190){\color{ForestGreen}{$c_1$}}
	\put(-7,70){\color{ForestGreen}{$I_1$}}
	\put(-60,110){\red{$J$}}
	\put(-30,78){$V$}
	\put(-52,54){\color{ForestGreen}{$c'$}}
	\put(-72,138){\color{blue}{$I_2$}}
	\put(30, 100){\color{blue}{$c_2$}}
	\put(5,150){\color{blue}{$e_2$}}
	\end{picture}
	\vspace{-0.5cm}
	\caption{{\small Proof of uniqueness.}}\label{f.unique}
\end{figure}

Recall the setup of Lemma \ref{lema-orientinverse}: there are 
$4$ fixed points of $P$ on $S^1(L)$, which are $a_1, a_2$ (attracting)
and $r_1, r_2$ (repelling).
Since $e_2$ is fixed by $P$ its ideal points are 
fixed points of $P$ in $S^1(L)$. One of them is $a_1$.
The other ideal point $z$ of $e_2$ cannot be $r_2$ as $c_2$ 
separates $r_2$ from $e_2$. The point $z$ cannot be $a_2$ either since
$c_2$ already has ideal point $r_2$.
It follows that $z = r_1$, see figure \ref{f.unique}.
Here $r_1$ is the repelling fixed point of $P$ acting on 
$L \cup S^1(L)$ which is not separated by $u$ from $c_2$.
In particular, $e_2$ must intersect $c_1$. 

Let $y \in c_1 \cap e_2$ be the last point of intersection (when following $e_2$ towards the point $r_1$ or $c_1$ towards $a_1$ which are the positive orientations). The point $y$ must be fixed by $P$. Let $I_1$ be the ray
of $c_1$ from $y$ to the ideal point $a_2$,
and let $I_2$ be the ray of $e_2$ from $y$ to the ideal point
$r_1$. 
It follows that $I_1 \cup I_2$ separates $L$ in two components, one of which, that we call $Z$ has its closure in $\hat L$ containing the segment in $S^1(L)$ from $r_1$ to $a_2$ (and not intersecting any other of the fixed points of $P$ in $S^1(L)$). Notice that $I_1$ is in a leaf $c_1$ of
$\wwca$ and at $y$ the orientation in $c_1$ is pointing
away from $I_1$.
Conversely $I_2$ is contained in a leaf $E_2$ of $\wwcb$ 
and at $y$ its orientation is pointing into $I_2$ $-$ in other
words pointing away from $I_1$. This is
because Lemma \ref{lema-orientinverse}.
Since both $I_1$ and $I_2$ are oriented coherently at $y$ it follows that the unstable manifold $u(y)$ of $y$ has one ray $J$ inside $Z$. 
But $J$ is invariant under $P$ and it is expanding under $P$, hence
$J$ must have ideal point $a_2$. Consider the region $V$ of $L$
 bounded by the union of $I_1$ and the ray $J$.

If $c' \in \wwca$ is a curve intersecting $V$ it follows that it must intersect $u(y)$ twice, contradicting the fact that an unstable manifold cannot intersect the same leaf of $\wcs$ twice (cf. \S \ref{ss.bran}). This completes the proof. 
\end{proof} 

\section{Hyperbolic manifolds: Proof of Theorems A and B}\label{s.hyperbolic}
In this section $f: M \to M$ will be a partially hyperbolic diffeomorphism of a hyperbolic 3-manifold. Recall that a hyperbolic manifold is one obtained as a compact quotient of $\mathbb{H}^3$ by isometries. By Perelman's proof of Thurston's geometrization conjecture this is equivalent to being aespherical (i.e. $\pi_2(M)=\{0\}$) and homotopically atoroidal (i.e. no $\pi_1$-injective torus) with infinite fundamental group (see \cite[Appendix A]{BFFP-2}). Note that we will only use the atoroidal condition plus generalities about foliations.

\subsection{Dichotomy: Discretized Anosov or double translation}\label{ss.dichotomy}
Here we explain how the main results of \cite{BFFP-3} allow us to reduce the proof of Theorem \ref{teoB} to what we did so far.  

Up to finite cover and iterate, we have that $f$ must preserve branching foliations and be homotopic to the identity (this is because of Mostow rigidity, see e.g. \cite[Proposition A.3]{BFFP-2}). We will lift these assumptions in \S \ref{ss.teoAB} to prove the full theorem.

This is the main statement of \cite{BFFP-3} we will use. See \cite[Theorem 2.4]{BFFP-3}. 

\begin{teo}\label{BFFP}
Let $f : M \to M$ be a partially hyperbolic diffeomorphism homotopic to the identity of a hyperbolic 3-manifold $M$ which preserves branching foliations $\cs$ and $\cu$. Then, 
\begin{enumerate}
\item\label{it1hyp} either $f$ is a discretized Anosov flow, or,
\item\label{it2hyp} the pairs $(f,\cs)$ and $(f,\cu)$ have the commuting property (cf. \S \ref{ss.abundancepApairs}). 
\end{enumerate}
\end{teo}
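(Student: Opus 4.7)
The plan is to reduce the theorem to an application of \cite[Corollary 4.7]{BFFP-3} on each of the two branching foliations separately, and then to rule out the ``mixed'' possibilities using the atoroidal geometry of $M$. First, I would fix a good lift $\ft$ of $f$ coming from a chosen homotopy of $f$ to $\mathrm{id}_M$; by construction $\ft$ commutes with every deck transformation and lies at bounded distance from the identity in $\mt$. The branching foliations $\cs, \cu$ are both taut and Reebless, and in a closed hyperbolic $3$-manifold every such foliation is minimal (see \cite[Appendix A]{BFFP-2}); in particular no $\pi_1(M)$-invariant proper closed subset of the leaf space exists. Thus the dichotomy of \cite[Corollary 4.7]{BFFP-3} applies to each of $\cs$ and $\cu$: for each foliation $\cF \in \{\cs, \cu\}$, either $\ft$ fixes every leaf of $\Ft$, or $\cF$ is $\RR$-covered and uniform and $\ft$ acts as a translation on the leaf space $\cL_\cF$.

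Next I would handle the four possibilities. If $\ft$ translates both $\cs$ and $\cu$, there is nothing further to do: in case (1) of the commuting property (\S \ref{ss.abundancepApairs}), $\ft$ commutes with all deck transformations (being a good lift in a manifold with $\pi_1(M)$ solvable-free), and the translation ensures $\ft$ fixes no leaf, so both $(f,\cs)$ and $(f,\cu)$ have the commuting property, placing us in option~\eqref{it2hyp}. Suppose instead that $\ft$ fixes every leaf of $\wcs$. Because $f$ is homotopic to the identity and $\hat f$ preserves each center stable leaf, $\hat f$ also preserves the induced center foliation inside each leaf; combining this with the contraction in $E^s$, standard arguments (cf.\ \cite[\S 7]{BFFP-2}) show that $\hat f$ must in fact fix every center leaf of $\wwc$. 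One then shows the same analysis forces $\hat f$ to fix every leaf of $\wcu$: if instead $\ft$ translated $\cu$, one could lift a branching leaf of $\wcs$ fixed by $\ft$ and obtain a $\pi_1$-injective $T^2$ via a volume/length argument on a solid torus trapped between consecutive $\ft$-translates of a $\wcu$-leaf, contradicting atoroidality. The symmetric argument rules out the remaining mixed case.

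In the remaining possibility $\ft$ fixes every leaf of both $\wcs$ and $\wcu$, I would then show $f$ is a discretized Anosov flow. The intersection $\wcs \cap \wcu$ yields a one-dimensional branching foliation by center curves, each of which is preserved by $\ft$; since $\ft$ is at bounded distance from the identity and contracts along stables/expands along unstables, $\ft$ must restrict to each center leaf $c$ as a time-$t(x)$ map of a natural parametrization of $c$, with $t(x)$ varying continuously in $x$ (this is the argument of \cite[\S 5]{BFP}). The center curves therefore assemble into the orbit structure of a topological Anosov flow $\phi_t$ in the sense of \cite[Appendix G]{BFFP-2}, and $f = \phi_{t(\cdot)}$, i.e.\ $f$ is a discretized Anosov flow.

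The main obstacle is the bookkeeping in the mixed cases (one foliation fixed, the other translated): this is where the hyperbolicity hypothesis is genuinely used, and the precise argument requires turning what looks like a translation-versus-fixing incompatibility into the construction of an incompressible torus, which is delicate because branching of $\wcs$ or $\wcu$ has to be handled carefully. The other technical point is verifying that ``fixes every leaf'' really forces a flow-like structure rather than a degenerate one; for this the minimality of $\cs$ and $\cu$ and the taut structure (making each leaf Gromov hyperbolic via \cite[Theorem 5.1]{FPacc}) are crucial, as they prevent pathological configurations of fixed center curves inside a single leaf.
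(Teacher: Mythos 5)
There is a genuine gap, and it appears at the very first step. You reduce everything to \cite[Corollary 4.7]{BFFP-3}, which (as it is used in Example \ref{example1}) applies to \emph{minimal} foliations, and you justify minimality of $\cs$ and $\cu$ by asserting that every taut/Reebless (branching) foliation of a closed hyperbolic $3$-manifold is minimal, citing \cite[Appendix A]{BFFP-2}. That assertion is false (a hyperbolic manifold fibering over the circle carries a taut foliation by compact fibers, which is nowhere near minimal), it is not what that appendix contains, and minimality of $\cs,\cu$ is not known a priori: in this paper it is only established much later (Proposition \ref{prop.minimality}), after the center leaves are shown to form quasigeodesic fans. Without minimality the per-foliation alternative has a third possibility, namely that $\ft$ fixes some but not all leaves of $\wcs$ (resp.\ $\wcu$), and disposing of that case, of the mixed case, and of the case where $\ft$ fixes every leaf of both foliations is precisely the content of \cite[Theorem 2.4]{BFFP-3}; none of it is recovered by your sketches. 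In particular, your exclusion of mixed behavior by a ``volume versus length on a solid torus'' argument conflates two different things: that argument is used (here and in \cite[Proposition 6.1]{BFFP-3}) to prove minimality once leaves are known to have cyclic stabilizers, whereas the exclusion of mixed behavior in hyperbolic manifolds is a separate and substantially harder result (\cite[\S 12]{BFFP-3}). Similarly, the step ``$\ft$ fixes every leaf of both foliations $\Rightarrow$ discretized Anosov flow'' is the core of \cite{BFFP-2,BFFP-3} (fixed center leaves, Lefschetz index, expansivity of the center flow), and the brief appeal to \cite[\S 5]{BFP} does not substitute for it.

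You should also be aware that the paper does not reprove this statement at all: Theorem \ref{BFFP} is quoted as \cite[Theorem 2.4]{BFFP-3}, and the only addition made in \S \ref{ss.dichotomy} is the remark that the good lift $\ft$ (obtained by lifting a homotopy to the identity) commutes with all deck transformations and, in case (\ref{it2hyp}), acts freely on the leaf spaces, which is exactly what the commuting property asks; the $\RR$-covered and uniform properties are part of the cited statement. So the intended ``proof'' is the citation plus that observation, and an independent derivation along your lines would have to reproduce the substance of \cite{BFFP-3}, not just its Corollary 4.7.
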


Notice that in case (ii) of this theorem we are in option
(i) of \S \ref{ss.abundancepApairs}. 
In particular there is
a lift $\ft$ of $f$ to $\mt$ which commutes with all deck
transformations and $\ft$ acts freely on the leaf space of $\wcs$.
This global commuting property of $\ft$ immediately implies
that for any $\gamma$ deck transformation, then $\gamma$
preserves transversal orientations to $\wcs$ and $\wcu$. 
Hence the orientation conditions of Theorem \ref{teo.CAFabstract}
are satisfied.

We need to make some comments to explain how this follows directly from \cite{BFFP-3}. When $f$ is homotopic to the identity we call a lift $\ft: \mt \to \mt$ a \emph{good lift} if it commutes with all deck transformations. Such a lift can be obtained by lifting a homotopy to the identity. See \cite[Definition 2.3]{BFFP-3}. The good lift has the property required for $(f,\cs)$, $(f,\cu)$ to have the commuting property (the fact that $\cs$ and $\cu$ are $\RR$-covered and uniform are direct consequences of \cite[Theorem 2.4]{BFFP-3}). 

Notice that if $f$ is a discretized Anosov flow then it is a collapsed Anosov flow, so we need to analyse only the second situation which we call \emph{double translation}.  
 
\subsection{Regulating pseudo-Anosov flows}\label{ss.anosovPA} 
We state here the results that follow from \cite{Thurston,CalegariPA,Fen2002}. We remark that these results depend only on the fact that $M$ is atoroidal and not on its geometry (said otherwise, they depend on the coarse geometry and not on the precise hyperbolic metric). 

See \cite[Proposition 10.1]{BFFP-3} 
for the adaptation to branching foliations: 

\begin{teo}\label{teo-regpA} 
Let $\cF$ be a $\RR$-covered, transversely oriented
 and uniform branching foliation of a hyperbolic 3-manifold $M$. Then, there exists a pseudo-Anosov flow $\Phi_t: M \to M$ transverse and regulating to $\cF$. 
\end{teo}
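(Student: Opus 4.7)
The plan is to reduce to the classical theorem for genuine $\R$-covered uniform foliations in atoroidal 3-manifolds and then transfer the resulting flow back to the branching setting. First I would invoke the Burago-Ivanov approximation (Theorem \ref{teo-bi} / the discussion in \S \ref{ss.bran}) to replace $\cF$ by an approximating foliation $\cF_\eps$. Since the leaf spaces of $\Ft$ and $\widetilde{\cF_\eps}$ are canonically identified, since $h_\eps$ is $\eps$-close to the identity, and since Hausdorff distances between corresponding leaves in $\mt$ are controlled, the properties of being $\R$-covered, uniform, and transversely oriented all pass from $\cF$ to $\cF_\eps$. Thus $\cF_\eps$ is a genuine foliation to which the classical theory applies.

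For a genuine $\R$-covered, uniform, transversely oriented foliation on a hyperbolic $3$-manifold, the existence of a transverse regulating pseudo-Anosov flow is due to Thurston \cite{Thurston}, with complete constructions given by Calegari in \cite{CalegariPA} and by the first author in \cite{Fen2002}. The construction proceeds through the universal circle $\Su$ of $\cF_\eps$, which carries a natural action of $\pi_1(M)$ and two transverse $\pi_1(M)$-invariant laminations whose complementary regions produce the singular orbits of a pseudo-Anosov flow $\Phi_t$. The regulating property, namely that every orbit of $\widetilde \Phi_t$ meets every leaf of $\widetilde{\cF_\eps}$ exactly once, comes from the uniformity of the foliation and the fact that in an atoroidal manifold there are no transverse tori to obstruct the product structure.

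The final and genuinely new step is to check that $\Phi_t$ is transverse and regulating not only to $\cF_\eps$ but also to the original branching foliation $\cF$; this is the content of \cite[Proposition 10.1]{BFFP-3}, which I would follow. The key observation is that the two-plane distributions $E_\eps$ and $E$ make an angle less than $\eps$ everywhere, so a vector field generating $\Phi_t$ (which is uniformly transverse to $E_\eps$) remains uniformly transverse to $E$ provided $\eps$ was chosen small enough relative to the transversality angle. The regulating property transfers because for any leaf $L \in \Ft$ there is a unique leaf $L_\eps \in \widetilde{\cF_\eps}$ with $h_\eps(L_\eps)=L$, and $L_\eps, L$ are at bounded Hausdorff distance; an orbit of $\widetilde \Phi_t$ that meets $L_\eps$ transversely therefore meets $L$ as well, in the branching sense.

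The main obstacle I anticipate is the branching itself: at a point $x$ lying on several leaves of $\Ft$, one needs transversality to all of them simultaneously, not just to one. This is handled by the fact that all leaves of $\Ft$ through $x$ share the same tangent plane $E(x)$, so uniform transversality to $E_\eps$ (and hence to $E$) implies transversality to every leaf of $\Ft$ through $x$. A secondary subtlety is ensuring that the flow generated in $M$ (not just on a cover) is well-defined without passing to covers; but the construction on $\widetilde{\cF_\eps}$ is equivariant under $\pi_1(M)$, so the flow descends to $M$ directly, and transversality plus regulating being local and equivariant properties, they descend as well.
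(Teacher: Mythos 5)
Your proposal follows essentially the same route as the paper: Theorem \ref{teo-regpA} is not proved from scratch here but is quoted from the constructions of \cite{Thurston,CalegariPA,Fen2002} for genuine $\R$-covered uniform foliations, with the adaptation to branching foliations deferred to \cite[Proposition 10.1]{BFFP-3}, which is exactly the reduction via the Burago--Ivanov approximating foliation that you outline. The only caveat is that the transfer of transversality and the regulating property from $\cF_\eps$ back to $\cF$ (in particular the order of quantifiers between the transversality angle of the constructed flow and the choice of $\eps$) is precisely the content of that cited proposition, so your sketch of that step should be read as following \cite{BFFP-3} rather than as an independent argument.
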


Recall that the condition of being regulating means that in the universal cover $\mt$, given two leaves $L, L' \in \Ft$ there is a uniform time $t_0:= t_0(L,L')$ such that for every $x \in L$ it holds that $\Phi_t(x) \in L'$ for some $|t|< t_0$. We state the following relevant properties about pseudo-Anosov flows in hyperbolic 3-manifolds that follow from previous work by several authors (we give a very short sketch of the proof pointing to some references for more details): 

\begin{teo}\label{teo.pAdynamics}
Let $\Phi_t: M \to M$ be a pseudo-Anosov flow in a hyperbolic 3-manifold. Then, $\Phi_t$ is transitive and therefore both the weak stable and weak unstable (singular) foliations are minimal. Moreover, if $\Phi_t$ is regulating to an $\RR$-covered foliation it cannot be an Anosov flow. 
\end{teo}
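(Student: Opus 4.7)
The plan is to address the three assertions separately, relying on established results for pseudo-Anosov flows and on the geometry of $\RR$-covered foliations in atoroidal manifolds.

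For transitivity, I would invoke the theorem (due to Mosher, extending the Anosov case attributed to Brunella \cite{Brunella}) that every pseudo-Anosov flow on an atoroidal closed $3$-manifold is transitive. Since hyperbolic $3$-manifolds are atoroidal, $\Phi_t$ has a dense orbit.

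For minimality of the weak foliations, given transitivity, fix a dense orbit $\mathcal{O}$ and consider its weak-stable saturation $\cF^{ws}(\mathcal{O})$, which is a dense, weak-stable saturated subset of $M$. Using the local product structure of pseudo-Anosov flows away from and near singular orbits, together with the fact that forward flowing exponentially contracts weak-stable leaves while expanding weak-unstable ones, one shows that the closure of any weak-stable leaf $F$ is a weak-stable saturated compact set whose weak-unstable saturation coincides with the closure of a weak-unstable leaf; standard arguments (cf. \cite{FenleyAnosov}) then promote density of a single leaf to density of every leaf. The same reasoning applied to the reversed flow gives minimality of $\cF^{wu}$.

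The third assertion is where the genuine content lies, and I expect it to be the main obstacle. The plan is to argue by contradiction: suppose $\Phi_t$ is a true Anosov flow that is transverse and regulating to the $\RR$-covered uniform foliation $\cF$. Since $M$ is hyperbolic (hence atoroidal), $\Phi_t$ cannot be orbitally equivalent to a suspension (those live on torus bundles), so by Theorem \ref{teoAnosov} the flow $\Phi_t$ is either skewed or not $\R$-covered. The regulating property provides, for each leaf $L$ of $\Ft$, a $\pi_1(L)$-equivariant homeomorphism between $L$ and the orbit space $\cP \cong \R^2$ of $\widetilde \Phi$, under which the weak foliations of $\widetilde \Phi$ induce two transverse one-dimensional foliations on $L$, invariant under the action of $\pi_1(M)$. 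This product picture forces $\Phi_t$ to be $\R$-covered and hence skewed.

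The contradiction should then come from the action of $\pi_1(M)$ on the universal circle $\Su$ of $\cF$ (see \S\ref{ss.universalS1}). For a skewed Anosov flow, every periodic orbit contributes two fixed points (one attracting, one repelling) on $\Su$, corresponding to a regular $2$-prong structure. However, in a hyperbolic $3$-manifold the $\pi_1(M)$-action on $\Su$ is extremely rich: by the work of Thurston, Calegari and Fenley \cite{Thurston,CalegariPA,Fen2002}, one finds deck transformations whose fixed-point dynamics on $\Su$ must be modeled on $p$-prong singularities with $p \geq 3$, since the combinatorics of the action at infinity (together with the branching produced by non-Hausdorffness phenomena of the weak leaf spaces, which are inevitable in atoroidal settings other than suspensions) cannot be realized by a purely $2$-prong flow. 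This contradicts $\Phi_t$ being Anosov and forces the existence of at least one singular orbit; therefore $\Phi_t$ cannot be a genuine Anosov flow, completing the proof.
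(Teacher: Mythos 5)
Your treatment of transitivity (via Mosher \cite{Mos}: a non-transitive pseudo-Anosov flow admits a transverse incompressible torus, impossible in a hyperbolic manifold) and of minimality of the weak foliations is essentially the paper's route; the paper gets minimality from the closing lemma plus the inclination lemma, and your leaf-closure sketch is a standard variant of the same soft argument, so no issue there.

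The third assertion is where your proposal has a genuine gap, and it is exactly the step that carries all the content. Your concluding ``contradiction'' -- that the $\pi_1(M)$-action on $\Su$ ``must be modeled on $p$-prong singularities with $p\geq 3$'' and hence ``cannot be realized by a purely $2$-prong flow'' -- is not an argument: it is a restatement of the conclusion (the regulating flow must have a singular orbit), and you give no mechanism forcing prongs of order $\geq 3$ out of the action at infinity. Indeed, nothing in the general theory of the universal circle rules out an action generated entirely by $4$-fixed-point (regular, $2$-prong type) elements; note also that your bookkeeping is off: a regular periodic orbit of a transverse regulating (pseudo-)Anosov flow contributes $2p=4$ fixed points on $\Su$ (two attracting and two repelling, the ideal points of its stable and unstable leaves in the orbit space -- compare Lemma \ref{lema.superattracting} and Theorem \ref{t.JSJatoroidal}), not two. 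In addition, your intermediate claim that the product identification of leaves of $\Ft$ with the orbit space ``forces $\Phi_t$ to be $\R$-covered and hence skewed'' is itself a nontrivial assertion that you do not justify; it corresponds to a genuine theorem in Barbot's work on Anosov flows transverse to foliations, not to a formal consequence of regulating. The paper does not attempt a prong-count at infinity at all: it quotes \cite[Proposition D.4]{BFFP-2}, which deduces the non-Anosov statement from earlier results of Barbot and Fenley on ($\RR$-covered, skewed) Anosov flows and regulating transverse flows in hyperbolic manifolds. To repair your proof you would either have to reproduce those specific results or cite them; as written, the decisive step is asserted rather than proved.
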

\begin{proof}
If a pseudo-Anosov flow in a 3-manifold is not transitive, then it has an incompressible torus (or Klein bottle) transverse to the flow (see \cite{Mos}).
Since $M$ is hyperbolic then this is impossible. 

Once that a pseudo-Anosov flow is transitive, the minimality of the singular foliations follows, since a closed set saturated by unstable (resp. stable) leaves is an attractor (resp. repeller). Finally, in \cite[Proposition D.4]{BFFP-2} we explain how the fact that pseudo-Anosov flows transverse and regulating to $\RR$-covered foliations in hyperbolic 3-manifolds cannot be Anosov follows from previous work by Barbot and the first author. 
\end{proof}

We note that this will provide pA-pairs for $(f,\cs)$ and $(f,\cu)$ in case (\ref{it2hyp}) of Theorem \ref{BFFP} since for every periodic orbit of the transverse and regulating pseudo-Anosov flow one can construct a pA-pair associated to it using the deck transformation associated to the orbit. Note that $\ft$ acts trivially in the universal circle, so one needs only to care about the action of the deck transformation (see \cite[Proposition 10.2]{BFFP-3}). Using Corollary \ref{cor.periodic} one deduces that both $(f,\cs)$ and $(f,\cu)$ have the periodic commuting property. 
In Lemma \ref{lema.superattracting} we show how this produces the announced pA pairs.

\begin{remark} Let $\gamma$ be a deck transformation.
A point $p \in \Su$ is superattracting for $\gamma_{\infty}$
(the induced action on $\Su$),
if and only if for some (and hence for any) $L$ in $\Ft$ if $\gamma_L$ 
is an expression in $L$ of the action of $\gamma$, then the
following happens: there is a neighborhood basis of $\Theta(p)$ in
$L \cup S^1(L)$ defined by geodesics $\ell_i$ in $L$ so that
the minimum distance in $L$ between points in $\ell_i$ and 
$\gamma_L(\ell_i)$ goes to infinity with $i$.
See also proof of Proposition \ref{prop.superattracting}.
\end{remark}

\begin{lema}\label{lema.superattracting}
Let $\Phi_t$ be a pseudo-Anosov flow transverse to $\cF$ as
in Theorem \ref{teo.pAdynamics}. Let $\gamma$ be a deck
transformation associated with a periodic orbit of $\Phi_t$.
Then some power of $\gamma$ has fixed points in $\Su$ and so that all
fixed points are either super attracting or super repelling.
\end{lema}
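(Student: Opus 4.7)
The plan is to identify the fixed points of (a power of) $\gamma_\infty$ on $\Su$ with the ideal endpoints of the prongs of the weak stable and weak unstable leaves of the lifted periodic orbit, and then to upgrade the standard attracting/repelling behavior to super attracting/repelling using the pseudo-Anosov expansion rate together with the regulating property.

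First I would let $\tilde\alpha \subset \mt$ be the lift of the periodic orbit associated to $\gamma$ and consider the weak stable and weak unstable leaves $W^s(\tilde\alpha), W^u(\tilde\alpha)$ of $\tilde\Phi_t$. These are $\gamma$-invariant, and after replacing $\gamma$ by a positive power I can assume each of their prongs is individually $\gamma$-invariant. Since $\Phi_t$ is transverse and regulating to $\cF$, every such prong is a $\tilde\Phi_t$-saturated half-plane that intersects every leaf $L$ of $\Ft$ in a properly embedded curve with a unique ideal endpoint in $S^1(L)$, and these endpoints piece together across leaves, via the regulating flow, to give a well-defined point of $\Su$. If $\tilde\alpha$ has $p$ prongs (with $p=2$ for a regular orbit), this yields $2p$ points of $\Su$ fixed by $\gamma_\infty$, alternating stable/unstable in the cyclic order.

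Next I would argue that, modulo taking a further iterate, these are all the fixed points of $\gamma_\infty$ and that stable prong points are repelling while unstable prong points are attracting in the ordinary sense; this follows from the exponential contraction of nearby flow lines onto $W^s$ in forward time and a standard argument identifying $\gamma_\infty$-invariant directions in $S^1(L)$ with prong endpoints (any non-prong invariant direction would be translated a definite amount by the composition $\tau_{\gamma(L),L}\circ\gamma$, contradicting invariance).

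The hard step will be promoting attracting to super attracting. Fix an unstable prong endpoint $\xi \in \Su$; by Lemma \ref{lem.superattracting}, super attracting is independent of the chosen leaf $L$. The visual distance $d(y,\xi)$ in an $S^1(L)$-model of $\Su$ is comparable to $e^{-r(y)}$, where $r(y)$ is the distance in $L$ at which the geodesic ray from a basepoint to $\Theta_L(y)$ departs from the geodesic ray to $\Theta_L(\xi)$. The self-quasi-isometry $\tau_{\gamma(L),L}\circ\gamma$ of $L$ behaves near the unstable prong curve like a pseudo-Anosov normal form: it translates a bounded amount along the prong and expands transversally by a factor $\lambda^T>1$, where $\lambda$ is the pseudo-Anosov eigenvalue and $T$ is the period. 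This transverse exponential expansion forces $r(\gamma_\infty(y))-r(y)\to\infty$ as $y\to\xi$, which is precisely super attraction in the visual metric. The main difficulty is bridging two a priori separate geometric structures on $L$: the Gromov hyperbolic visual geometry, which is intrinsic and two-dimensional, and the pseudo-Anosov contraction rate, which is three-dimensional and dynamical. Making the bridge quantitative $-$ in particular controlling the identification $\tau_{\gamma(L),L}$ finely enough transverse to the prong $-$ is where the regulating hypothesis is essential, since it provides the needed bi-Lipschitz comparison between flow time and leaf distance along flow-asymptotic curves.
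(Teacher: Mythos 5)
Your identification of the fixed points with the ideal points of the prongs of the lifted periodic orbit matches the paper's setup, but the step the lemma is really about --- upgrading attracting to \emph{super} attracting --- is not actually carried out, and the local model you assert is wrong. Work in a leaf $L$ with $x=\widetilde\alpha\cap L$ and let $h=\gamma\circ\tau_{12}$ be the leafwise representative of $\gamma$ (flow from $L$ to $\gamma^{-1}(L)$, then apply $\gamma$). Near the unstable prong curve $\eta\subset L$ this map does \emph{not} ``translate a bounded amount along the prong and expand transversally'': it expands leafwise length along the unstable prong exponentially and contracts the transverse (stable) direction, so the displacement of $y\in\eta$ along $\eta$ is unbounded as $y$ escapes towards the ideal point $p$. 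It is exactly this unbounded along-prong displacement that produces super attraction; a map that translated boundedly along the prong would behave like a hyperbolic isometry of $\HH^2$ along its axis, whose attracting fixed point has derivative $e^{-T}\neq 0$ and is therefore \emph{not} super attracting in the sense of Definition \ref{def.superattract}. So your asserted mechanism (``transverse exponential expansion forces $r(\gamma_\infty(y))-r(y)\to\infty$'') rests on a garbled normal form, and the quantitative bridge you yourself flag as ``the main difficulty'' is precisely what is missing: the regulating property only gives bounded flow time between leaves (hence that $\tau_{12}$ moves points a bounded distance in $\mt$); it provides no ``bi-Lipschitz comparison between flow time and leaf distance,'' and by itself it does not convert three-dimensional pseudo-Anosov rates into decay of visual size on $S^1(L)$.

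The paper closes this gap with specific ingredients you do not supply. First, the leafwise singular foliations $\cG^s_L,\cG^u_L$ obtained by intersecting the lifted weak stable/unstable foliations with $L$ have non-singular leaves that are \emph{uniform quasigeodesics} in $L$ (\cite[Fact 8.3]{BFFP-2}); hence each stable leaf $\ell_y$ through $y\in\eta$ is boundedly close to a geodesic $g_y$ which meets the geodesic $\nu$ tracking $\eta$ at an angle bounded below, with $y$ a bounded distance from $g_y\cap\nu$, so the $\ell_y$ form a neighborhood basis of $\Theta_L(p)$ in $L\cup S^1(L)$ by uniform quasigeodesics. Second, after replacing $\gamma$ by a power so that the Hausdorff distance between $L$ and $\gamma(L)$ is large, $h$ expands length along $\cG^u_L$ and contracts length along $\cG^s_L$ exponentially (\cite[Fact 8.4]{BFFP-2}), so the distance along $\eta$ from $y$ to $h(y)$ goes to infinity, and then the angle bound forces the minimum distance between $\ell_y$ and $h(\ell_y)$ to go to infinity. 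Super attraction then follows from the geometric characterization recorded in the remark preceding the lemma (a neighborhood basis of $\Theta_L(p)$ by geodesics pushed arbitrarily far by a leafwise representative of $\gamma$), which is equivalent to, and replaces, your intended estimate $d(h(y),\xi)/d(y,\xi)\to 0$. Without the uniform quasigeodesic property, the angle bound, and the leafwise expansion estimate for a suitable power of $\gamma$, that estimate is unsupported. (The fact that \emph{all} fixed points of the power of $\gamma$ on $\Su$ are prong ideal points is likewise not something your ``translated a definite amount'' remark establishes; it is taken from the prior analysis of these actions in \cite[\S 8]{BFFP-2}.)
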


\begin{proof}{}
We follow the setup in \cite{BFFP-2}.
Fix $L$ in $\widetilde \cF$. Let $\cG^s_L, \cG^u_L$ 
be the singular one dimensional foliations in $L$ induced by
intersecting the stable and unstable $2$-dimensional singular
foliations of $\Phi_t$ lifted to $\mt$ with $L$. 
The non singular leaves are uniform quasigeodesics 
\cite[Fact 8.3]{BFFP-2}.
Let $\cB^s, \cB^u$ be the geodesic laminations in $L$ 
obtained by pulling tight the leaves of $\cG^s_L, \cG^u_L$
respectively. Each non singular leaf of $\cG^s_L$ is a uniformly
bounded Hausdorff distance in $L$ from a unique leaf of 
$\cB^s$. A $p$-prong leaf of $\cG^s_L$ generates $p$ leaves
of $\cB^s$.

The deck transformation
$\gamma$ is associated with a periodic orbit $\alpha$ of $\Phi_t$
and fixes a lift $\widetilde \alpha$ to $\mt$. Up to taking a power
assume that $\gamma$ fixes all prongs of $\widetilde \alpha$.
Assume that $\gamma$ is associated with the negative direction
of $\alpha$. As in \cite[Section 8]{BFFP-2} let $\tau_{12}: L 
\rightarrow \gamma^{-1}(L)$ be the map obtained by flowing
$x$ in $L$ along its $\widetilde \Phi_t$ flow line until it
hits $\gamma^{-1}(L)$. Notice that 
$$d_{\mt}(x,\tau_{12}(x)), \ \ x \in L$$ 
\noindent is bounded. Then $\gamma \circ \tau_{12}$ is a 
representative in $L$ of the action of $\gamma$.
Let $h = \gamma \circ \tau_{12}$.

Let $x = \widetilde \alpha \cap L$, which is the only
fixed point of $h = \gamma \circ 
\tau_{12}$. 
Fix an unstable prong $\eta$ of $x$ with ideal point $p$
in $S^1(L)$. We will prove that $p$ is a super attracting
fixed point of $\gamma$. For a stable prong we get a super repelling fixed
point.
Up to applying a power of $\gamma$ we can assume that 
the Hausdorff distance between $L$ and $\gamma(L)$ is very
big. 
This is okay since the lemma claims the result for a power of 
$\gamma$.
Then by \cite[Fact 8.4]{BFFP-2} the map $h$ expands
length along $\cG^u_L$ exponentially and contracts length
along $\cG^s_L$ exponentially (see also \cite{Fen2002}).
This means that length along $\eta$ from $y$ to $h(y)$ 
goes to infinity as $y$ escapes in $\eta$. 
We consider a basis neighborhood of $p$ defined by leaves of
$\cG^s_L$ intersecting $\eta$: given $y$ in $\eta$ let $\ell_y$
the leaf of $\cG^s_L$ through $y$.

Given $y$ in $\eta$ let $g_y$ be the geodesic associated with 
$\ell_y$: it is a bounded Hausdorff distance in $L$ from $\ell_y$.
Let $\nu$ be the geodesic in $L$ associated with 
$\cG^u_L(x)$ (for simplicity assume $x$ is non singular,
otherwise there are 2 such geodesics associated with the
ray $\eta$). Then the angle between $\nu$ and any
$g_y$ is bounded below by $a_0 > 0$. 
Also the point $y$ is a 
bounded distance from the intersection between $\nu$ and
$g_y$. 

These facts imply that the minimum distance between
points in $\ell_y$ and $h(\ell_y)$ goes to infinity
as $y$ escapes in $\eta$.

This proves that $p$ is a superattracting point.
This finishes the proof.
\end{proof}

\begin{remark}
Note that the pseudo-Anosov flows associated to $\cs$ and $\cu$ given by Theorem \ref{teo-regpA} may be different and not even share the same homotopy classes of periodic orbits. This will not be an issue, and we will obtain a posteriori, that both pseudo-Anosov flows are orbit equivalent since this is the case always for the weak stable and unstable foliations of an $\RR$-covered Anosov flow in a hyperbolic 3-manifold. 
\end{remark}

\subsection{Existence of full pseudo-Anosov pairs}
Here we show: 

\begin{prop}\label{p.fullpAhyperbolic}
Let $f: M \to M$ be a partially hyperbolic diffeomorphism of a hyperbolic 3-manifold with $f$ homotopic to the identity and preserving
transversely oriented branching foliations $\cs$ and $\cu$.
Suppose that both $(f,\cs)$ and $(f,\cu)$ have the periodic commuting property. Then, both pairs have full pseudo-Anosov behavior (cf. Definition \ref{defi.fullpA}). In particular, $f$ is a collapsed Anosov flow. 
\end{prop}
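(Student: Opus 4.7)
The plan is to use the regulating pseudo-Anosov flow structure to verify the two conditions of Definition \ref{defi.fullpA}, and then invoke Theorem \ref{teo.CAFabstract}. By Theorem \ref{teo-regpA} applied to $\cs$ and $\cu$ (which are $\R$-covered, uniform, and transversely oriented by the double translation hypothesis of Theorem \ref{BFFP}), there are regulating pseudo-Anosov flows $\Phi^s, \Phi^u$ transverse to $\cs, \cu$ respectively. Since $M$ is hyperbolic, Theorem \ref{teo.pAdynamics} ensures both flows are transitive with minimal weak (singular) foliations. Because $f$ is homotopic to the identity, every admissible good pair is of the form $(\ft, \gamma)$ with $\ft$ the good lift (acting trivially on both universal circles) and $\gamma$ a deck transformation translating the relevant leaf space.

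For clause (1), I claim that for every such deck transformation $\gamma$, some power $\gamma^m$ acts on $\Su$ with all fixed points super attracting or super repelling. Fix a leaf $L \in \Ft$ and consider the singular trace foliations $\cG^s_L, \cG^u_L$ obtained by intersecting the weak foliations of $\Phi^s$ with $L$; non-singular leaves are uniform quasigeodesics in $L$ by \cite[Fact 8.3]{BFFP-2}. Any deck transformation induces, via the identification map $\tau_{\gamma^{-1}(L),L}$, a quasi-isometric representative of its action on $L$. Taking $m$ sufficiently large so that $\gamma^m(L)$ is far from $L$ in the leaf space, the analysis of Lemma \ref{lema.superattracting} (expansion/contraction of the trace foliations \cite[Fact 8.4]{BFFP-2}) applies essentially verbatim: at any fixed point of $\gamma^m$ on $S^1(L)\cong\Su$, one obtains exponential contraction or expansion of stable prongs that forces the super attracting/repelling property.

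For clause (2) I need to produce a full regular pA pair. Take $\gamma_0$ the deck transformation of a regular periodic orbit of $\Phi^s$; by Lemma \ref{lema.superattracting} the pair $(\ft, \gamma_0)$ is a regular pA pair, with the four attracting/repelling points being the ideal points of the stable and unstable prongs of $\widetilde\alpha \cap L$. To verify the full pair condition of Definition \ref{defi.fullpair}, given a geodesic ray $\eta \subset L$ starting at $x_0$, I must produce deck translates $\beta_n$ with $\beta_n(L)$ in a fixed compact interval $\cI$ of $\cL_\cF$, such that the associated geodesic quadrilaterals $g_n^a, g_n^r$ intersect $\eta$ at points $x_n$ satisfying $d_L(x_0, x_n) > n$, an angle bound $\geq \alpha_0$, and $d_L(x_n, g_n^a \cap g_n^r) < d_0$. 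The key input here is density of regular periodic orbits of $\Phi^s$ (from transitivity), combined with the uniformity of the trace foliation quasigeodesics, which allows one to place conjugate quadrilaterals approximating any desired position and direction in any leaf; choosing $\beta_n$ to also keep $\beta_n(L)$ in a fixed compact piece of the leaf space uses compactness of $M$.

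Once both $(f,\cs)$ and $(f,\cu)$ are shown to have full pA behavior, the conclusion that $f$ is a collapsed Anosov flow follows directly from Theorem \ref{teo.CAFabstract}. The hardest step is clause (2): while density of periodic orbits readily provides deck translates approximating any geometric configuration, upgrading this to the uniform quantitative estimates of Definition \ref{defi.fullpair} (angles bounded below, quadrilateral corner within $d_0$, distance along $\eta$ arbitrarily large, leaf remaining in a fixed compact interval) requires careful synthesis of the regulating property of $\Phi^s$, the uniform quasigeodesic behavior of the trace foliations on leaves, the visual metric estimates on $S^1(L)$, and the cocompactness of the $\pi_1(M)$-action on $\mt$.
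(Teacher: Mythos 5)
Your skeleton coincides with the paper's (regulating pseudo-Anosov flow from Theorem \ref{teo-regpA}, the pair $(\ft,\gamma)$ with $\gamma$ associated to a regular periodic orbit, Lemma \ref{lema.superattracting}, then Theorem \ref{teo.CAFabstract}), but the two verifications that constitute the actual content are not carried out. The full-pair condition (Definition \ref{defi.fullpair}) is precisely the hard quantitative point, and you leave it as an acknowledged sketch; worse, the mechanism you propose --- density of regular periodic orbits letting you ``place conjugate quadrilaterals approximating any desired position and direction'' --- would fail. Inside a fixed leaf $L$ the traces of the stable/unstable annuli of periodic orbits are, up to bounded Hausdorff distance, leaves of the leafwise geodesic laminations $\cB^s_L,\cB^u_L$ obtained by pulling tight the trace foliations of the flow, so the directions available near a given point are only the two directions dictated by these laminations: you cannot prescribe a direction freely. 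The paper's argument uses instead that the complementary regions of $\cB^s_L\cup\cB^u_L$ are relatively compact with uniformly bounded diameter, so a geodesic ray $\eta$ crosses lamination leaves with definite frequency, and it makes an angle bounded below with \emph{at least one} of the two laminations (it cannot be nearly tangent to both); then minimality, i.e.\ the $\eps$-density in $M$ of the fixed annuli $A^s,A^u$ of the chosen periodic orbit, upgrades each such crossing to a crossing with a deck translate $\beta_n^{-1}$ of the fixed lift, which yields simultaneously the angle bound $\alpha_0$, the bound $d_L(x_n,g^a_n\cap g^r_n)<d_0$ (the corner lies near the translated lifted orbit), the escape $d_L(x_0,x_n)>n$, and the compact leaf-space interval for $\beta_n(L)$. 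Without this dichotomy-plus-minimality argument, clause (2) of Definition \ref{defi.fullpA} is simply asserted, not proved.

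Your treatment of clause (1) for an arbitrary deck transformation $\gamma$ acting freely on the leaf space also does not work ``essentially verbatim'' from Lemma \ref{lema.superattracting}: that proof crucially uses that $\gamma$ fixes a lifted periodic orbit, so the leafwise representative $h=\gamma\circ\tau_{12}$ has a fixed point $x=\widetilde\alpha\cap L$ whose unstable prong, together with the stable trace leaves crossing it, furnishes an $h$-invariant neighborhood basis of the fixed ideal point on which the expansion/contraction of \cite[Fact 8.4]{BFFP-2} is applied. A general $\gamma$ need not fix any point, prong, or trace leaf in $L$, and its fixed points in $\Su$ need not be ideal points of prongs, so there is no invariant structure at infinity on which to run that computation. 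The super attracting/repelling property for general deck transformations requires a genuinely different, coarse-geometric argument --- a displacement estimate of the type of Claim \ref{claim1} (cf.\ \cite[Lemma 8.5]{BFFP-2}) combined with the perpendicular-geodesic argument of Claim \ref{claim.superattracting}, as developed in \S\ref{ss.generalPH} --- whereas the paper's proof of this proposition itself only needs (and only verifies) the periodic-orbit pairs via Lemma \ref{lema.superattracting}. So both clauses, as you argue them, have gaps: clause (2) is missing its key geometric input, and clause (1) is justified by a lemma whose hypotheses your general $\gamma$ does not satisfy.
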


\begin{proof}
This follows from the existence of a regulating pseudo-Anosov flow. We discuss the arguments to get the statements in our current framework. The fact that $(f,\cs)$ and $(f,\cu)$ have the periodic commuting property follows from Corollary \ref{cor.periodic} and \cite[Proposition 10.2]{BFFP-3} 
as explained in the previous section. 

Let $\Phi^{cs}_t$ be the pseudo-Anosov flow given by Theorem \ref{teo-regpA} for the 
branching foliation $\cs$ (the same arguments apply for $\cu$). 
To obtain the existence of a full pA pair (cf. Definition \ref{defi.fullpair}) we use the fact that the singular foliations of the pseudo-Anosov flow are minimal.
The good pairs we will be using are $(\ft, \gamma)$ where $\gamma$
is a deck transformation associated with a regular periodic orbit of 
$\Phi^{cs}_t$ and $\ft$ is the good lift of $f$ to $\mt$.
Since $\Phi^{cs}_t$ is regulating for $\cs$, then
$\gamma$ acts freely on the leaf space of $\wcs$.
Hence $(\ft, \gamma)$ is a good pair.
Up to a power assume that $\gamma$ preserves
all the prongs of the periodic orbit when lifted to the
universal cover. Lemma \ref{lema.superattracting} 
any $P = \ft^m \gamma^n$ ($n$ non zero) has periodic points
when acting on the universal circle of $\cs$. If there are
fixed points 
then they are all either super attracting or super repelling
if $| n |$ is sufficiently big.
This is achievable, because any power of $\gamma$ satisfies
this, and $\ft$ moves
points a bounded distance.
Hence $(\ft, \gamma)$ is a regular pA-pair for $(f,\cs)$.

Now we explain why this provides a full pair.
For each leaf $L$ of $\wcs$ let 
$\cB^s_L, \cB^u_L$ be the geodesic laminations in $L$ obtained by
pulling tight in $L$ the leaves of the stable and unstable foliations of 
$\widetilde \Phi^{cs}_t$ intersected with $L$.
The complementary regions of each of 
these geodesic laminations in $L$ are finite sided ideal polygons,
and the complementary regions of the union are relatively
compact polygons with bounded diameter. The union of these 
over $L$ projects to transverse laminations in $M$ $-$ for
details on these laminations see \cite{Fen2002}\footnote{In \cite{Fen2002} the leafwise geodesic laminations are constructed first, before the
pseudo-Anosov flow, via an analysis of the action of $\pi_1(M)$ on the universal circle of the foliation $\cs$. Then these laminations 
blow down to singular foliations producing a pseudo-Anosov flow. In \cite{Fen2002} this is worked out for (non branching) 
foliations. The case of $\cs$ a branching foliation is worked out in \cite{BFFP-3}.}.
These laminations are minimal. For each $\eps > 0$ there is
a diameter $d_0 > 0$ so that disks or annuli of size $d_0$ in
any of these laminations are $\eps$ dense in $M$.
Choose $\eps$ much smaller than the product foliation
size of all the foliations or laminations involved.
Given the deck transformation $\gamma$ associated to a regular
periodic orbit $\mu$, then the stable and unstable leaves of $\mu$
are annuli or M\"{o}bius bands producing like sets in the
leafwise geodesic laminations. A fixed compact annulus or 
M\"{o}bius band (denoted by
$A^s, A^u$) band near the blow up of the periodic orbit
is $\eps$ dense in $M$.
The $A^s, A^u$ intersect in a core closed curve corresponding
to the blow up (or pre-image) of the periodic orbit $\mu$.

Let now $\eta$ be a geodesic ray in $L$. By the above there is
a length $d_1 > 0$ so that any segment of length $\geq d_1$ 
in $\eta$ intersects one of the laminations $\cB^s_L$ or $\cB^u_L$.
There is $\alpha_1 > 0$ so that the intersection
with at least one of $\cB^s_L$ or $\cB^u_L$ has angle $> \alpha_1$.
This implies that $\eta$ intersects either a lift of $A^s$ or $A^u$
making an angle $> \alpha_1$. This lift is given by a deck
translate $\beta^{-1}$ of a fixed lift of either $A^s$ or $A^u$.
This implies that the conditions of Definition 
\ref{defi.fullpair} are satisfied.

After we showed that both pairs have full pseudo-Anosov behavior, the fact that $f$ is a collapsed Anosov flow follows from Theorem \ref{teo.CAFabstract}. 
\end{proof}

\subsection{Proof of Theorems A and B}\label{ss.teoAB}
Theorem \ref{teoA} follows immediately from Theorem \ref{teoB} since the existence of a collapsed Anosov flow in $M$ explicitely asks for the existence of a (topological) Anosov flow in $M$. Notice that in hyperbolic manifolds every topological Anosov flow is transitive, and therefore the existence of a topological Anosov flow implies the existence of an Anosov flow (cf. \S \ref{ss.discretisedandcollapsed}). 

To show Theorem \ref{teoB} we need to be careful since the existence of branching foliations is ensured by Theorem \ref{teo-bi} only after some iterate
and finite lift. 

\begin{proof}[Proof of Theorem \ref{teoB}]
As explained we can assume that if $f: M \to M$ is a partially hyperbolic diffeomorphism in a hyperbolic 3-manifold, then Theorem \ref{teoB} holds for the lift of some iterate of $f$ to a finite cover (see Theorem \ref{BFFP} and Proposition \ref{p.fullpAhyperbolic}). We denote the finite cover of $M$ as $M_0$ and $f_0$ to the lift of the finite iterate of $f$ to $M_0$. 
The lift $f_0$ is chosen so that it is a lift of an iterate of $f$
which is homotopic to the identity in $M$.
We emphasize that the finite cover is considered so that all bundles are orientable. In the \emph{double translation case} we will show a posteriori that this finite cover is indeed not necessary as the bundles were orientable in the first place. 
Up to taking a further cover and lift of further of iterate
we may assume that $M_0$ is a regular cover of $M$.

We want to show that $f$ preserves branching foliations so that Theorem \ref{BFFP} applies and this completes the proof together with Proposition \ref{p.fullpAhyperbolic}. 

For this, we lift the branching foliations $\cs$, $\cu$ preserved by $f_0$ to $\mt$ which is the common universal cover of $M$ and $M_0$ and denote the lifts as $\wcs$ and $\wcu$.  Let $\tild{f_0}$ the good lift of $f_0$ to $\mt$. We need to show first that deck transformations $\pi_1(M)$ preserve $\wcs$ and $\wcu$ (we know that the subgroup $\pi_1(M_0) < \pi_1(M)$ does preserve them).

\vskip .1in
We first assume that we are in the situation of Theorem \ref{BFFP} (\ref{it2hyp}).

We consider then the pair of foliations $\cs_2$ and $\cu_2$ in $M_0$ obtained by projecting to $M_0$ the foliations
$\gamma \wcs$ and $\gamma \wcu$ for some $\gamma \in \pi_1(M)$. 
The reason why these project to $M_0$ is because $\pi_1(M_0)$
is a normal subgroup of $\pi_1(M)$ so $\pi_1(M_0)$ preserves
$\gamma \wcs, \gamma \wcu$.
Since $\ft_0$ commutes with all deck transformations,
then $f_0$ preserves $\cs_2, \cu_2$.
By Theorem \ref{teo-uniqueness} it is enough to show that the pairs $(f_0,\cs_2)$ and $(f_0,\cu_2)$ have full pseudo-Anosov behavior. But this follows as in Proposition \ref{p.fullpAhyperbolic} once we show that $\tild{f_0}$ acts as a translation on $\gamma \wcsb$ and $\gamma \wcub$ which is direct since $\tild{f_0}$ commutes with $\gamma$. 

Since the foliations are invariant by deck transformations of $M$, and $\tild{f_0}$ acts as a translation and commutes with deck transformations,
it follows that deck transformations of $\pi_1(M)$ must preserve the orientation transverse to both $\wcs$ and $\wcu$. Since the center direction is orientable because of the existence of a funnel point (that must also be preserved by deck transformations) we deduce that all bundles were orientable in $M$ and therefore the finite lift was not necessary to make the bundles orientable. 

Finally, in this case, taking the iterate is not necessary. For this it is enough to show that the foliations $f(\cs)$ and $f(\cu)$ are equal to $\cs$ and $\cu$, but this follows by the same argument applying Theorem \ref{teo-uniqueness}. (See also \cite[Theorem B]{BFP}.)

This finishes the analysis of the case when $\widetilde f_0$ 
acts as a translation in the leaf spaces of $\wcs, \wcu$.

\vskip .1in
We now deal with the case that $\widetilde f_0$ fixes every
leaf of $\wcs$ and of $\wcu$.
Here we use
\cite[Theorem 12.1]{BFFP-3}. It shows that $f$ is dynamically
coherent preserving actual foliations, center stable and center unstable.
The center foliation is the intersection of these, and
hence it is preserved by $f$ as well.
In addition \cite[Theorem 12.1]{BFFP-3} shows that
a finite iterate of $f$ is a discretized Anosov
flow preserving each leaf of the center foliation.
In this case let $h$ be the identity.
The self orbit equivalence $\beta$ is $f$ itself since it
preserves the center foliation.
Orbits of the flow are tangent to the center direction,
showing that $f$ is a collapsed Anosov flow.

This completes the proof of Theorem \ref{teoB}. 
\end{proof}

\subsection{Unique integrability properties}\label{ss.uniqueintegrable} 
We state here a strong geometric consequence of our study: 

\begin{teo}\label{t.unihyp}
Let $f: M \to M$ be a partially hyperbolic diffeomorphism in a hyperbolic 3-manifold. Then, $f$ admits a unique pair $\cs$, $\cu$ of $f$-invariant branching foliations tangent respectively to $E^{cs}$ and $E^{cu}$. Moreover, every curve $c$ tangent to $E^c$ in $\mt$ is contained in the intersection of a leaf $L \in \wcs$ and a leaf $F \in \wcu$ (which is connected). 
\end{teo}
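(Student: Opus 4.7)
The plan is to combine Theorem \ref{teoB} (established in \S\ref{ss.teoAB}) with the uniqueness machinery of Theorem \ref{teo-uniqueness} and the completeness property of branching foliations. The proof of Theorem \ref{teoB} already produces $f$-invariant branching foliations $\cs$ and $\cu$ on $M$ itself: in the double translation branch of Theorem \ref{BFFP}, one uses conjugation by deck transformations of the finite cover together with Theorem \ref{teo-uniqueness} to descend the branching foliations to $M$ and make them $f$-invariant (not merely invariant under a finite iterate of a lift on a cover), while in the discretized Anosov flow branch \cite[Theorem 12.1]{BFFP-3} yields $f$-invariant true foliations tangent to $E^{cs}$ and $E^{cu}$. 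So existence is in hand and it remains to prove uniqueness of the pair and the integrability statement.

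For uniqueness, split again according to Theorem \ref{BFFP}. In the discretized Anosov case, \cite[Theorem 12.1]{BFFP-3} gives dynamic coherence, so $\cs$ and $\cu$ are actual foliations whose leaves are strong stable/unstable saturations of flow orbits; any other $f$-invariant branching foliation tangent to $E^{cs}$ must also saturate the strong stable foliation along center curves, and a standard argument forces it to coincide with $\cs$ (likewise for $\cu$). In the double translation case, Proposition \ref{p.fullpAhyperbolic} shows that both $(f,\cs)$ and $(f,\cu)$ have full pseudo-Anosov behavior, so Theorem \ref{teo-uniqueness} directly yields uniqueness. No further orientability hypothesis is needed: the proof of Theorem \ref{teoB} also established that all bundles are orientable on $M$, so the Burago--Ivanov construction applies directly to $f$.

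For the second statement, let $c$ be a $C^1$ curve in $\mt$ tangent to $E^c$. Then $c$ is tangent at every point to both $E^{cs}$ and $E^{cu}$. Using the completeness of branching foliations (cf.\ property (iv) in Subsection \ref{ss.bran}, together with the approximating foliations $\cs_\eps, \cu_\eps$ and the fact that an integral curve of $E^{cs}_\eps$ through a point is unique and is taken by $h_\eps$ into a leaf of $\wcs$), any $C^1$ curve tangent to $E^{cs}$ is contained in some leaf of $\wcs$, and similarly for $\wcu$; hence there exist $L \in \wcs$ and $F \in \wcu$ with $c \subset L \cap F$. The intersection $L \cap F$ is a closed set tangent to $E^c$ and is therefore a disjoint union of center leaves of $\wwc$. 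To establish connectedness we argue by contradiction: suppose $c_1, c_2$ are two distinct center leaves in $L \cap F$. If $c_1 \cap c_2 \neq \emptyset$ at some point $x$, Lemma \ref{lem.centerconv} shows that the rays of $c_1, c_2$ from $x$ in the stable funnel direction of $L$ coincide, and symmetrically the rays in the unstable funnel direction of $F$ coincide; by Lemma \ref{lem.notsamedirection} these are opposite directions, so $c_1 = c_2$. If $c_1, c_2$ are disjoint then by Lemma \ref{lem.centerconv} they are asymptotic to the stable funnel point $p_L^{cs}$ in $L$ on one side and, symmetrically, to the unstable funnel point $p_F^{cu}$ in $F$ on the other. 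The region bounded by $c_1$ and $c_2$ in $L$ then contains a strong unstable segment whose saturated unstable leaf would have to intersect $F$ twice, contradicting Remark \ref{rem.novikov}.

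The main obstacle will be the connectedness step just described, i.e.\ ruling out two disjoint center leaves lying simultaneously in a common center stable leaf and a common center unstable leaf. This is precisely where the geometry developed in \S\S\ref{s.Hausdorff}--\ref{s.uniqueness} becomes indispensable: one must combine the Hausdorff leaf space for $\wwc$ inside both $L$ and $F$, the quasigeodesic fan structure with matched funnel points, Lemma \ref{lem.notsamedirection} on opposite funnel directions, and the transversality constraint of Remark \ref{rem.novikov}. Once this is established, unique integrability of $E^c$ through intersections of branching leaves is an immediate consequence of the completeness of $\cs$ and $\cu$.
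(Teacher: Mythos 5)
Your overall architecture (dichotomy plus Theorem \ref{teo-uniqueness}) is the right one, but there are two genuine gaps. First, in the uniqueness step you apply Theorem \ref{teo-uniqueness} to a competing foliation $\cs_2$ ``directly'', but that theorem requires $(f,\cs_2)$ itself to have full pseudo-Anosov behavior, i.e.\ that the good lift $\gt$ acts as a translation on $\widetilde{\cW^{cs}_2}$. Nothing in the dichotomy of Theorem \ref{BFFP} tells you how $\gt$ acts on a \emph{different} branching foliation: a priori $\gt$ could fix every leaf of $\widetilde{\cW^{cs}_2}$ while translating $\wcu$ (mixed behavior), and then neither Theorem \ref{teo-uniqueness} nor your ``standard argument'' applies. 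The paper closes exactly this hole by invoking the impossibility of mixed behavior in hyperbolic $3$-manifolds (\cite[\S 12]{BFFP-3}), which forces $(g,\cs_2,\cu)$ to be of the same type as $(g,\cs,\cu)$; and in the discretized Anosov flow branch the coincidence $\cs_2=\cs$ is not a routine saturation argument but is \cite[Lemma 7.6]{BFFP-3}. As written, your discretized Anosov case has no actual proof of uniqueness, and your double-translation case is missing the hypothesis verification.

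Second, the step ``any $C^1$ curve tangent to $E^{cs}$ is contained in a leaf of $\wcs$'' does not follow from property (iv) of \S\ref{ss.bran} or from the approximating foliations: property (iv) is only a closure-under-limits condition, and a curve tangent to $E^{cs}$ is \emph{not} tangent to $E_\eps$, so it does not lie in a leaf of $\cF_\eps$ and $h_\eps$ gives you nothing. This containment is precisely the nontrivial content of \cite[Proposition 10.6]{BFP}, which moreover needs orientable bundles; the paper therefore proves the curve statement by passing to the orientable finite cover $M_0$, applying that proposition there, and using the already-established fact that the foliations on $M_0$ descend to $\cs,\cu$ on $M$. Related to this, your claim that ``all bundles are orientable on $M$'' was only proved in the double-translation branch of Theorem \ref{teoB}; in the discretized Anosov flow branch it can fail, and there your connectedness argument (funnel points, Lemmas \ref{lem.notsamedirection} and \ref{lem.centerconv}) is unavailable since those lemmas are proved under the full pseudo-Anosov hypotheses. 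So the second statement needs either the citation route of the paper or a separate argument covering the discretized Anosov flow case.
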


\begin{proof}

Suppose that $f^k$ is a positive iterate homotopic to the identity
and let $g = f^k$. 
Let $\widetilde g$ be the good lift to $\mt$.
We start by proving uniqueness of the branching foliations.

Suppose first $g, \cs, \cu$ is a double translation and suppose that
$f$ preserves another branching foliation $\cs_2$.
Then $g$ also preserves $\cs_2$.
Mixed behavior in general means that $\gt$ fixes leaves of one
foliation (of the pair $\wcu, \widetilde {\cW^{cs}_2}$), 
 but not the other.
But 
mixed behavior is impossible in 
hyperbolic 3-manifolds \cite[\S 12]{BFFP-3}. 
Since $\gt$ acts as a translation on $\wcu$,  then
$\gt$ also acts as a translation on $\widetilde {\cW^{cs}_2}$ so $\cs_2, \cu$
is a double translation pair. 
Then $(g,\cs_2)$, $(g,\cu)$ have the periodic
commuting property (cf Def. \ref{defi.periodicproperty}).
Theorem \ref{teo-uniqueness} implies that
$\cs_2 = \cs$.

Suppose now that $g, \cs, \cu$ is a discretized Anosov flow
and let $\cs_2$ preserved by $f$. 
Then $g$ also preserves $\cs_2$.
Again mixed behavior cannot occur, and now $\gt$ fixes
every leaf of $\wcu$, so it fixes every leaf of $\widetilde {\cW^{cs}_2}$.
It follows
that $g, \cs_2, \cu$ is also a discretized Anosov flow.
Then $\cs_2 = \cs$ follows from \cite[Lemma 7.6]{BFFP-3}.

The statement about curves tangent to $E^c$ is proved
from uniqueness of branching foliations \cite[Proposition 10.6]{BFP} 
as follows:
Let $c$ be a curve tangent to $E^c$.
Following previous notation let $f_0$ be a lift of a
a finite iterate of $f$ to a finite lift $M_0$ of $M$
so that all bundles are orientable in $M_0$ and $f_0$ preserves
the orientability of the bundles. In addition suppose the original
finite iterate of $f$ is homotopic to the identity.
Then $c$ lifts to $c'$ in $M_0$ tangent to the center bundle.
 \cite[Proposition 10.6]{BFP} requires the orientability of the bundles
which is attained by $f_0$ in $M_0$, hence $c'$ is obtained
as the intersection of a leaf of the center stable foliation and
a leaf of the center unstable foliation in $M_0$. But we proved
that these foliations in $M_0$ project to $\cs, \cu$ in $M$.
This proves the result for curves tangent to $E^c$.
\end{proof}

Immediate consequences are the following: 

\begin{cor}
Let $f: M \to M$ be a partially hyperbolic diffeomorphism in a hyperbolic 3-manifold. Then $f$ is a discretized Anosov flow if and only if the bundle $E^c$ is (uniquely) integrable. 
\end{cor}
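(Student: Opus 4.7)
The forward implication is immediate from the definitions. If $f$ is a discretized Anosov flow then $f(x) = \phi_{\tau(x)}(x)$ for a topological Anosov flow $\phi_t$ (cf. \S\ref{ss.discretisedandcollapsed}), and $E^c$ is the line field tangent to the orbits of $\phi_t$. Orbits of a continuous non-singular flow are uniquely defined through every point, so $E^c$ is uniquely integrable.

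For the converse, assume $E^c$ is uniquely integrable. By Theorem \ref{teoB}, $f$ is a collapsed Anosov flow, and by Theorem \ref{t.unihyp} there is a unique pair of $f$-invariant branching foliations $\cs, \cu$ tangent to $E^{cs}, E^{cu}$, with the additional property that every curve tangent to $E^c$ in $\mt$ is contained in an intersection of a leaf of $\wcs$ with a leaf of $\wcu$. The plan is to show that $\cs$ (and symmetrically $\cu$) is a true (non-branching) foliation and then invoke the dichotomy of Theorem \ref{BFFP} to exclude the double translation alternative.

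To see that $\cs$ is a true foliation I would argue by contradiction. Assume there exist distinct $L_1, L_2 \in \wcs$ both containing a point $x \in \mt$. Since every leaf of $\wcs$ is saturated by strong stable manifolds, both $L_1$ and $L_2$ contain $\wws(x)$. For any $y \in \wws(x)$ and any $F \in \wcu$ through $y$, Theorem \ref{t.unihyp} tells us that the unique $E^c$-integral curve $c_y$ through $y$ is contained in the intersection of a leaf of $\wcs$ with a leaf of $\wcu$. Using that $c_y$ is tangent to $E^c \subset E^{cs} = T L_i$ at $y$ together with unique integrability of $E^c$, one shows $c_y \subset L_1 \cap L_2$ locally near $y$. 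Varying $y$ along $\wws(x)$ and sweeping by these center curves, we recover a full 2-dimensional neighborhood of $x$ in $L_1 \cap L_2$. The ``no topological crossings'' condition (\S\ref{ss.bran}, property (iii)) then forces $L_1 = L_2$, a contradiction. Hence $\cs$ has no genuine branching, and similarly for $\cu$.

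Once we know $\cs, \cu$ are true foliations, I would pass to a suitable finite cover and finite iterate of $f$ so that the lift is homotopic to the identity and all bundles are oriented, as at the end of the proof of Theorem \ref{teoB}. Theorem \ref{BFFP} then gives the dichotomy: either the lift is a discretized Anosov flow, or it falls in the double translation case. In the latter case Proposition \ref{prop.periodic} together with Remark \ref{rem.incoherence} asserts that $\cs$ cannot be a true foliation, contradicting the previous step. Therefore the lift is a discretized Anosov flow, and descent via the uniqueness statement in Theorem \ref{t.unihyp} (applied as in the proof of Theorem \ref{teoB}) implies that $f$ itself is a discretized Anosov flow. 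The main obstacle is the third paragraph: making rigorous, in the purely topological setting of branching foliations tangent to a continuous distribution, the propagation argument that promotes agreement of $E^c$-integral curves through points of $\wws(x)$ into agreement on an open 2-dimensional piece of $L_1 \cap L_2$. The delicate point is that two distinct branching leaves through a common point could a priori separate along directions other than $E^s$ or $E^c$, so one must genuinely use both one-dimensional integrabilities (of $E^s$ and of $E^c$) and the Reebless branching structure to trap them inside a common plaque.
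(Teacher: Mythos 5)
The central step of your converse does not work. Having produced two distinct leaves $L_1,L_2\in\wcs$ through $x$ and an open $2$-dimensional plaque contained in $L_1\cap L_2$, you invoke the no topological crossings condition (property (iii) of \S\ref{ss.bran}) to conclude $L_1=L_2$. But that condition only asserts that each leaf lies in one of the closed half-spaces $L^{+}$ or $L^{-}$ determined by the other; it is perfectly compatible with two \emph{distinct} leaves agreeing on an open set, or even on a half-leaf, and separating elsewhere. Such merging is exactly what "branching" means, and it is the picture that genuinely occurs in the double translation case (compare Lemma \ref{lem.centerconv}, where distinct center leaves through a common point share an entire ray). So the plaque you construct yields no contradiction, you cannot conclude that $\cs$ is a true foliation, and more fundamentally it is neither proved by you nor asserted in the paper that unique integrability of the one-dimensional bundle $E^c$ alone forces non-branching of the two-dimensional bundle $E^{cs}$. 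Consequently the reduction to Remark \ref{rem.incoherence} (which rules out double translations with a genuine $\cs$ foliation) is not available, and the implication "$E^c$ uniquely integrable $\Rightarrow$ discretized Anosov flow" is not established by your argument.

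Two further points. First, the forward direction is not "immediate from the definitions": uniqueness of the orbits of a continuous flow does not give unique integrability of the tangent line field, since a continuous line field tangent to a foliation may admit integral curves that are not contained in single leaves; here one really uses Theorem \ref{t.unihyp}, which says that every curve tangent to $E^c$ in $\mt$ lies in the (connected) intersection of a leaf of $\wcs$ with a leaf of $\wcu$, i.e.\ in an orbit when $f$ is a discretized Anosov flow. Second, the paper's proof takes a different and shorter route that avoids your problematic step altogether: by the dichotomy underlying Theorem \ref{teoB} (Theorem \ref{BFFP}), if $f$ is not a discretized Anosov flow then an iterate is a double translation, and for double translations it is proved in \cite[Theorem B]{BFFP-2} that $E^c$ does not integrate to a foliation at all (so in particular is not uniquely integrable); the uniqueness statements of Theorem \ref{t.unihyp} then take care of the forward direction and of the passage between $f$, its iterates and finite covers. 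If you want to keep your strategy, the missing ingredient is an argument that unique integrability of $E^c$ rules out the double translation case directly, e.g.\ by exhibiting in that case two distinct center leaves through a common point; this is essentially the content of the cited result, not something recoverable from property (iii) of branching foliations.
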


This follows because in \cite[Theorem B]{BFFP-2} we prove that for 
double translations $E^c$ cannot integrate to a foliation. By the uniqueness properties given by Theorem \ref{t.unihyp} the result follows. 

One can also get a result in the direction of the plaque expansivity conjecture \cite{HPS} in a concrete setting. 

\begin{cor}\label{coro-plaqueexp}
Let $f: M \to M$ be a diffeomorphism of a hyperbolic 3-manifold so that $\cT$ is a one-dimensional normally hyperbolic foliation preserved
by $f$. Then $f$ is dynamically coherent and plaque expansive.  
\end{cor}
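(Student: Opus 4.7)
The plan is to use the uniqueness of branching foliations together with the preceding corollary characterizing discretized Anosov flows via unique integrability of $E^c$. Since $\cT$ is a one-dimensional normally hyperbolic $f$-invariant foliation, its tangent line field is the center distribution $E^c$ of a partially hyperbolic splitting, and $\cT$ is a genuine (non-branching) foliation tangent to $E^c$. By Theorem \ref{t.unihyp}, $f$ admits a unique pair of invariant branching foliations $\cs$ and $\cu$, and every curve tangent to $E^c$ in $\mt$ lies inside the connected intersection $L \cap F$ of a leaf $L \in \wcs$ and a leaf $F \in \wcu$.

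The first step is to upgrade this to unique integrability of $E^c$. For each $x \in \mt$ the set $L_x \cap F_x$ is connected and, being a one-dimensional intersection of two $C^1$ surfaces tangent to transverse 2-planes meeting along $E^c$, is itself a $C^1$ curve tangent to $E^c$. Every integral curve of $E^c$ through $x$ is contained in this single curve, so through each point there is a unique maximal integral curve of $E^c$; in other words $E^c$ is uniquely integrable and the leaves of $\cT$ are exactly the projections of these intersections. By the corollary immediately preceding the statement, $f$ is then a discretized Anosov flow, so we can write $f(x)=\phi_{t(x)}(x)$ for some topological Anosov flow $\phi_t$ and some continuous function $t:M\to\RR$.

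From here both conclusions are essentially classical. The weak stable and weak unstable foliations $\cF^{ws}$ and $\cF^{wu}$ of $\phi_t$ are $f$-invariant, tangent to $E^{cs}$ and $E^{cu}$ respectively, and true (non-branching) foliations; by uniqueness (Theorem \ref{t.unihyp}) they must coincide with $\cs$ and $\cu$, which thus witness dynamical coherence of $f$, with $\cT$ equal to the orbit foliation of $\phi_t$. For plaque expansivity, I will use expansivity of the topological Anosov flow $\phi_t$: given sequences $(x_n)$ and $(y_n)$ $\eps$-respecting the center plaques (that is, $x_{n+1}$ and $y_{n+1}$ belong to the local $\cT$-plaques through $f(x_n)$ and $f(y_n)$) and with $d(x_n,y_n)<\eps$ for all $n\in\ZZ$, the plaque condition means that $(x_n)$ and $(y_n)$ are obtained by small flow reparametrizations of actual $\phi_t$-orbits through $x_0$ and $y_0$; expansivity of $\phi_t$ then forces $y_0$ and $x_0$ to lie on the same flow orbit, i.e.\ in the same center plaque.

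The conceptual heart of the argument is already in Theorem \ref{t.unihyp}, and the main obstacle is purely to verify that the connectedness of $L\cap F$ really does yield pointwise uniqueness of integral curves of $E^c$ rather than merely a common envelope; this is precisely the step that rules out the double translation case, where Remark \ref{rem.incoherence} and the failure of unique integrability of $E^c$ would otherwise be compatible with the \emph{a priori} existence of some weaker foliated object. The plaque expansivity step is standard once $f$ is known to be a discretized Anosov flow, and I expect no surprises there.
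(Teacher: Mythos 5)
Your first step contains a genuine gap, and it sits exactly at the point you yourself flag as ``the main obstacle.'' Theorem \ref{t.unihyp} does not yield unique integrability of $E^c$. It says that every curve tangent to $E^c$ is contained in the intersection of \emph{some} leaf of $\wcs$ with \emph{some} leaf of $\wcu$; since $\cs$ and $\cu$ are a priori branching foliations, there may be several leaves of each through a given point $x$, and two different integral curves of $E^c$ through $x$ may lie in intersections of two \emph{different} pairs of leaves. Writing $L_x\cap F_x$ as if there were a single such pair presupposes the uniqueness you are trying to prove. Notice also that this step nowhere uses the hypothesis that $\cT$ is an honest (non-branching) invariant foliation: if it were correct, it would show that \emph{every} partially hyperbolic diffeomorphism of a hyperbolic $3$-manifold has uniquely integrable $E^c$ and hence, by the preceding corollary, is a discretized Anosov flow. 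But Theorem \ref{t.unihyp} applies equally in the double translation case, where $f$ is dynamically incoherent and $E^c$ is not uniquely integrable (cf. Remark \ref{rem.incoherence} and the discussion of double translations); so the implication you claim cannot hold, and the hypothesis on $\cT$ must enter before you can invoke the previous corollary.

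The paper closes this gap the other way around: since each leaf of $\cT$ is a curve tangent to $E^c$, it lies in the intersection of leaves of $\cs$ and $\cu$, so $\cT$ must be the center (branching) foliation associated to the unique invariant pair $\cs,\cu$; because $\cT$ has no branching, neither do $\cs$ and $\cu$, which are therefore genuine foliations and give dynamical coherence directly. Only then is the double translation case excluded (it is incoherent), so by Theorem \ref{teoB} an iterate of $f$ is a discretized Anosov flow, and plaque expansivity is quoted from \cite{Martinchich}. Your closing sketch of plaque expansivity (pseudo-orbits respecting center plaques remain on single flow orbits, then expansivity of the flow applies) is essentially the standard argument behind that citation and is fine once the discretized Anosov flow structure is available; what needs repair is the passage from Theorem \ref{t.unihyp} to unique integrability, which should be replaced by the use of the non-branching hypothesis on $\cT$ as above.
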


We refer the reader to \cite{HPS} for a definition of $\cT$ being
a one-dimensional normally hyperbolic foliation, which in particular
implies that $f$ is partially hyperbolic, and that the tangent
space of the foliation $\cT$ is the center bundle.

\begin{proof} Theorem \ref{t.unihyp} shows that $f$ preserves a unique pair of 
branching foliations $\cs, \cu$ and any curve tangent to $E^c$ is contained
in the intersection
of a leaf of $\cs$ and a leaf of $\cu$.
It follows that $\cT$ has to be the center foliation associated
with these branching foliations.
Since $\cT$ is a foliation (as opposed to a branching one dimensional
foliation) it follows that $\cs, \cu$ are also foliations, and
do not have branching.
This shows that $f$ is dynamically coherent.  

Using Theorem \ref{teoB} we get that an iterate of $f$ is a discretized Anosov flow. These are plaque expansive \cite{Martinchich}. 
\end{proof}

\section{Seifert manifolds: Proof of Theorem C}\label{s.seifert}
In this section we consider a partially hyperbolic diffeomorphism $f: M \to M$ where $M$ is a Seifert manifold and such that the induced action of $f$ in the base is pseudo-Anosov. As in the statement of Theorem C, we will assume that $M$ is Seifert over a hyperbolic orientable orbifold $\Sigma$. We note that in contrast with the hyperbolic case (Theorem \ref{teoB}) the arguments here do not rely on \cite{BFFP-3} and this result can be considered self contained.

In \cite[\S 7]{HPS} it is shown that under these hypothesis, the manifold $M$ is orientable and the bundles $E^s, E^c, E^u$ of $f$ are also orientable. Moreover, up to considering an iterate, $f^k$ it follows that $f^k$ preserves orientation of all bundles and thus we can apply Theorem \ref{teo-bi} to get branching foliations $\cs$ and $\cu$ invariant under $f^k$. (Note that one can take $k=2$.)

Using \cite[\S 5.3]{HaPS} we get that the branching foliations are \emph{horizontal}, in particular, they are $\RR$-covered, uniform and by hyperbolic leaves. Moreover, it follows that in $\mt$, the universal cover of $M$ the action of $\delta \in \pi_1(M)$ associated to the fiber of the circle bundle acts freely on the leaf space of both $\wcs$ and $\wcu$. Using Thurston's classification of surface diffeomorphisms \cite{Thurston-surfaces} one deduces: 

\begin{prop}\label{prop.fullpASeif}
The pairs $(f^k, \cs)$ and $(f^k, \cu)$ have full pseudo-Anosov behavior. 
\end{prop}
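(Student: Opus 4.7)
The plan is to exploit that $f_0$ induces a pseudo-Anosov homeomorphism $\phi$ on the hyperbolic base surface $\Sigma_0$, together with the fact that since $\cs$ and $\cu$ are horizontal, the universal circle $\Su$ of each is canonically identified with $\partial\widetilde{\Sigma_0}$ in a $\pi_1(M_0)$-equivariant way. The fiber generator $\delta$ of the center of $\pi_1(M_0)$ commutes with every deck transformation, acts as the identity on $\Su$, and acts freely on the leaf spaces of $\wcs$ and $\wcu$, so case (2) of Example \ref{example2} gives both pairs the commuting property, with admissible good pairs of the form $(\delta^m \ft_0, \delta)$ for any lift $\ft_0$ of $f_0$ and $m$ large enough to ensure that $\delta^m \ft_0$ translates the leaf space. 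By Corollary \ref{cor.periodic}, once we produce a single admissible pA-pair, these pairs automatically have the periodic commuting property.

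To construct a regular pA-pair, I would select a regular (non-singular) periodic point $p$ of $\phi$ of period $n$. Classical pseudo-Anosov theory provides a deck transformation $\mu \in \pi_1(\Sigma_0)$ such that $\mu \circ \widetilde\phi^n$ fixes a lift $\tilde p$; its action on $\partial\widetilde{\Sigma_0}$ has exactly four fixed points, namely the ideal endpoints of the two stable and two unstable prongs of the invariant foliations of $\phi$ at $p$, alternating attracting (from unstable prongs) and repelling (from stable prongs). Pulling $\mu$ back to $\pi_1(M_0)$ and choosing appropriate $k, m$, the lift $P = \delta^k (\delta^m \ft_0)^n$ induces on $\Su$ precisely this homeomorphism, and the exponential contraction/expansion of $\phi$ along prongs translates into the super-attracting/super-repelling property of Definition \ref{def.superattract}: this is because, thanks to Proposition \ref{prop.quasiisome} and Remark \ref{rem:holder}, any visual metric on $\Su$ coming from a leaf of $\wcs$ is quasi-symmetrically equivalent to the standard visual metric on $\partial\widetilde{\Sigma_0}$, and under a quasi-symmetric homeomorphism the super-attraction condition is preserved (compare Lemma \ref{lem.superattracting}). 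Taking sufficiently large iterates if necessary, the same argument shows that the fixed points of any admissible good pair are either super attracting or super repelling, handling clauses (i) and (ii) of Definition \ref{defi.fullpA}.

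For fullness (Definition \ref{defi.fullpair}), I would use that the stable and unstable geodesic laminations $\cB^s, \cB^u$ of $\phi$ on $\Sigma_0$ are minimal: there are constants $\alpha_0 > 0$ and $d_0 > 0$ such that every geodesic segment of length $\geq d_0$ in $\Sigma_0$ is crossed transversely, at angle $\geq \alpha_0$, by some leaf of $\cB^s$ or $\cB^u$. Given a geodesic ray $\eta$ in a leaf $L$ of $\Ft$, its projection to $\Sigma_0$ inherits uniformly bi-Lipschitz behavior (the projection of each leaf of $\Ft$ to $\Sigma_0$ is a quasi-isometry because $\cs$ is horizontal and uniform), so $\eta$ is crossed at bounded intervals by preimages of leaves of $\cB^s \cup \cB^u$. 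Each such crossing corresponds to a deck translate $\beta_n \in \pi_1(M_0)$ of the fixed axis of our pA-pair, and the horizontal structure ensures that $\beta_n(L)$ stays in a compact interval of the leaf space; the geodesics $g_n^a, g_n^r$ of Definition \ref{defi.fullpair} are then precisely the leaves of $\cB^s, \cB^u$ making the crossings. This verifies that the regular pA-pair is full.

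The main obstacle in carrying this out rigorously is the identification of the super-attracting structure on $\Su$ with the attracting dynamics of $\mu\circ\widetilde\phi^n$ on $\partial\widetilde{\Sigma_0}$: one must check that the angle metric on $S^1(L)$ inherited from the Candel metric on $L$ is compatible, up to quasi-symmetry with uniform constants, with the standard visual metric on $\partial\widetilde{\Sigma_0}$. This comparison was established in a closely related form in \cite[Appendix A]{BFFP-4}, and the argument transfers directly to our setting since all leaves of $\Ft$ are uniformly Gromov hyperbolic and project homeomorphically onto $\widetilde{\Sigma_0}$ via the bundle projection composed with the quasi-isometry of Proposition \ref{prop.quasiisome}. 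Once this comparison is recorded, the three paragraphs above assemble into a verification of all clauses of Definition \ref{defi.fullpA} for both $(f_0,\cs)$ and $(f_0,\cu)$.
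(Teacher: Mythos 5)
Your argument is correct and is essentially the paper's proof unpacked: the paper simply invokes \cite[Lemmas 6.2 and 6.4]{Casson} and says the verification proceeds ``the same way as in Proposition \ref{p.fullpAhyperbolic}'', which is exactly what you do — using the fiber generator for the commuting property, lifts of the pseudo-Anosov base map at regular periodic points (with the quasi-symmetric invariance of super attraction, as in Lemma \ref{lem.superattracting} and \cite[Appendix A]{BFFP-4}) for the regular pA-pair, and minimality of the invariant laminations for fullness.
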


\begin{proof}
Since $\cs$ is horizontal, $\delta$ acts freely on $\wcs$.  As explained in Remark \ref{example2}, 
for any lift $\tilde{f}$ of $f^k$ to $\mt$, and for large enough $|m|$, then $(\delta^m \tilde{f}, \delta)$ is a good pair (cf.Definition \ref{good}).
Notice that $\delta$ acts as the identity on the
universal circle of $\cs$. 
Any pair obtained is an admissible pair (cf. Definition \ref{defi.admis}). 
It is easy to see they have the periodic
commuting property (Definition \ref{defi.periodicproperty}).

Finally one can check the full pseudo-Anosov behavior of 
$(f^k, \cs)$ (cf. Definition \ref{defi.fullpA}) using  \cite[Lemmas 6.2 and 6.4]{Casson} the same way as in Proposition \ref{p.fullpAhyperbolic}.  The same argument applies to $\cu$. 
\end{proof}

We deduce from Theorem \ref{teo.CAFabstract}: 

\begin{cor}\label{cor.collapsedSeifor}
The diffeomorphism $f$ is a collapsed Anosov flow.  
\end{cor}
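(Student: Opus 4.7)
The plan is to simply verify that all hypotheses of Theorem \ref{teo.CAFabstract} are in place for $f_0$ and then invoke it. I will collect three ingredients established in the preceding discussion: (i) existence of the two branching foliations, (ii) their $\RR$-covered and uniform character, and (iii) full pseudo-Anosov behavior for both pairs.

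For (i), since we passed to the finite cover $M_0$ precisely so that $E^s, E^c, E^u$ are oriented and $Df_0$ preserves these orientations, Theorem \ref{teo-bi} of Burago--Ivanov supplies the branching foliations $\cs$ and $\cu$ tangent respectively to $E^{cs}$ and $E^{cu}$, both well approximated by true foliations. For (ii), the assumption that $f_0$ acts as a pseudo-Anosov in the base combined with \cite[\S 5.3]{HaPS} forces both $\cs$ and $\cu$ to be horizontal branching foliations on the (circle bundle) Seifert manifold $M_0$; horizontality immediately gives the $\RR$-covered and uniform properties and the hyperbolicity of leaves (via the Candel metric discussed in \S \ref{ss.Candel}). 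For (iii), this is exactly the content of the immediately preceding Proposition, which was proved using Casson--Bleiler style analysis of pseudo-Anosov surface homeomorphisms.

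Once these three points are in hand, Theorem \ref{teo.CAFabstract} applies verbatim to $f_0: M_0 \to M_0$ and yields that $f_0$ is a collapsed Anosov flow. There is no real obstacle at this stage, as all the hard technical work (Hausdorffness of the center leaf space in $\wcs$ and $\wcu$, quasigeodesic behavior of centers, construction of the quasigeodesic fan structure, and production of the expansive flow and the semiconjugacy $h$) has already been carried out in \S \ref{s.Hausdorff}--\S \ref{s.CAF} in the abstract setting to which we now appeal.
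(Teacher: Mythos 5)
Your proposal is correct and follows exactly the paper's route: the corollary is a direct application of Theorem \ref{teo.CAFabstract}, whose hypotheses (existence of the invariant branching foliations from Theorem \ref{teo-bi} on the orientable cover, their horizontality hence $\RR$-covered and uniform character via \cite[\S 5.3]{HaPS}, and full pseudo-Anosov behavior from the preceding Proposition) are precisely the ingredients assembled in \S \ref{s.seifert}. Nothing further is needed.
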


\begin{proof}
It follows from Theorem \ref{teo.CAFabstract} and the analysis above that $f^k$ is a collapsed Anosov flow with respect to the branching foliations $\cs$, $\cu$. We must show that these branching foliations are also $f$ invariant and this concludes. But since $f(\cs)$ is also $f^k$-invariant and the argument of Proposition \ref{prop.fullpASeif} applies, we can invoque Theorem \ref{teo-uniqueness} to deduce that $f(\cs)=\cs$. The same argument applies to $\cu$ and this completes the proof of the Corollary. 
\end{proof}

\begin{remark}\label{rem.uniqueSeif}
One also obtains unique integrability results analogous to those of Theorem \ref{t.unihyp}. We remark that since the argument in Proposition \ref{prop.fullpASeif} applies to \emph{any} branching foliation invariant under $f^k$, we can use the results of \cite[Proposition 10.6]{BFP} in order to deduce that the curves in the branching foliation obtained as intersection of $\cs$ and $\cu$ are \emph{all} the complete curves tangent to $E^c$. 
\end{remark}

\section{Further results}\label{s.further}
In this section we give a couple of applications of pseudo-Anosov pairs to partially hyperbolic diffeomorphisms in other 3-manifolds or isotopy classes to show the flexibility of the tools developed here. We hope other applications can be found. 

\subsection{General partially hyperbolic diffeomorphisms homotopic to the identity}\label{ss.generalPH}
 Theorem \ref{teo-regpA} in \cite{CalegariPA,Fen2002} for atoroidal manifolds has been extended recently by the first author to more general manifolds \cite{FenleyFlow}. In particular, it will allow us to extract the following result that holds in a larger class of 3-manifolds:    

\begin{teo}\label{t.JSJatoroidal}
Let $\cF$ be a transversely oriented,
$\RR$-covered, uniform foliation on a 3-manifold with an atoroidal piece. Then, there exists a deck transformation $\gamma \in \pi_1(M)$ which acts as a translation on the leaf space of $\Ft$ and the induced action in the universal circle $\Su$ of $\cF$ has exactly exactly $4$ fixed points:
two super attracting and two super repelling fixed points. 
\end{teo}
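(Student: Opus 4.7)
The strategy is to apply the first author's extension of the existence of a regulating pseudo-Anosov flow to this broader setting, locate a regular (non-singular) periodic orbit of that flow inside the atoroidal piece, and then argue in the style of Lemma~\ref{lema.superattracting} that the associated deck transformation has super attracting/repelling fixed points on $\Su$.

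More precisely, first I would invoke \cite{FenleyFlow}, which extends Theorem~\ref{teo-regpA} to arbitrary closed $3$-manifolds: for our transversely oriented, $\R$-covered, uniform foliation $\cF$, there is a pseudo-Anosov flow $\Phi_t\colon M\to M$ transverse and regulating to $\cF$. Regulating immediately gives the translation property: if $\alpha$ is any periodic orbit of $\Phi_t$ and $\gamma_\alpha\in \pi_1(M)$ is a deck transformation fixing a lift $\widetilde\alpha$, then $\gamma_\alpha$ preserves each component of $\mt \setminus \widetilde\alpha$ and acts freely on the leaf space $\cL$ of $\Ft$; after replacing $\gamma_\alpha$ by $\gamma_\alpha^2$ if necessary, it acts as an orientation-preserving translation on $\cL\cong \R$.

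Second, I would use the atoroidal piece hypothesis to produce a regular periodic orbit of $\Phi_t$ contained in the interior of that piece $M_0$. By the JSJ/Mosher machinery, the pseudo-Anosov flow is adapted to the decomposition, and any atoroidal component carries a transitive sub-dynamics (using Mosher's argument that otherwise one would produce an incompressible torus transverse to the flow inside $M_0$, contradicting atoroidality). Transitivity on $M_0$, combined with the Anosov closing lemma applied to the non-singular part (which is open and dense in $M_0$), yields a non-singular periodic orbit $\alpha\subset M_0$. Let $\gamma=\gamma_\alpha^N$, where $N$ is chosen so that the iterate preserves each of the four prongs (two stable, two unstable) of the lift $\widetilde\alpha$.

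Third, to compute the action of $\gamma$ on $\Su$, I would follow Lemma~\ref{lema.superattracting} verbatim. Fix $L\in \Ft$ and consider the singular $1$-dimensional foliations $\cG^s_L,\cG^u_L$ obtained from intersecting $\widetilde\Phi_t$-invariant singular $2$-dimensional foliations with $L$. By \cite[Fact 8.3]{BFFP-2} together with \cite{Fen2002}, the non-singular leaves are uniform quasigeodesics in $L$, so they tighten to bona fide geodesic laminations $\cB^s_L,\cB^u_L$; the deck transformation $\gamma$ expresses on $L$ as $h=\gamma\circ \tau_{12}$ with $\tau_{12}$ the holonomy along $\widetilde\Phi_t$ between $L$ and $\gamma^{-1}(L)$. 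Since $\alpha$ is regular, $\widetilde\alpha\cap L$ is a single fixed point of $h$ with two unstable prongs and two stable prongs; by \cite[Fact 8.4]{BFFP-2}, $h$ expands lengths exponentially along $\cG^u_L$ and contracts along $\cG^s_L$ once the Hausdorff distance from $L$ to $\gamma(L)$ is taken large (another power of $\gamma$). The argument in Lemma~\ref{lema.superattracting} then shows, ray by ray, that each of the two unstable ideal points is super attracting and each of the two stable ideal points is super repelling; and no other fixed points can exist because any fixed point of $h$ on $S^1(L)$ must come from a $\gamma$-invariant leaf of $\cG^s_L\cup\cG^u_L$ through $\widetilde\alpha\cap L$, of which there are exactly four.

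The main obstacle I anticipate is step two: pinning down that a regular periodic orbit actually sits inside the atoroidal piece. One must rule out the possibility that $\Phi_t$ has trivial dynamics on $M_0$ (every orbit escaping into neighboring Seifert pieces) or that the periodic orbits available in $M_0$ are all singular. Both obstructions should be removable using the interaction between $\Phi_t$ and the JSJ tori established in \cite{BarbotHDR, BarthelmeAnosovsurv} and the classical fact that non-singular periodic orbits are dense in the non-wandering set of a pseudo-Anosov flow, but the bookkeeping — particularly ensuring that the chosen $\gamma$ acts freely on $\cL$ in the right way — is the step most in need of care.
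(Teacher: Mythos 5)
There is a genuine gap, and it sits in your very first step. You invoke \cite{FenleyFlow} as if it extended Theorem \ref{teo-regpA} to arbitrary closed $3$-manifolds, i.e.\ as if it produced a pseudo-Anosov flow $\Phi_t:M\to M$ transverse and regulating to $\cF$ on the whole manifold. That is not what is available: the existence of a globally defined transverse regulating pseudo-Anosov flow is a theorem for atoroidal (hyperbolic) manifolds, and in the presence of a non-trivial JSJ decomposition no such global flow is produced; what \cite{FenleyFlow} provides (and what the paper actually uses, via its Proposition 5.2) is a deck transformation $\gamma$ associated with an atoroidal piece, acting freely on the leaf space and with at least four fixed points on $\Su$, together with leafwise lamination data adapted to that piece. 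Once the global flow is gone, your steps two and three collapse as written: there is no regular periodic orbit of a global $\Phi_t$ to locate, and \cite[Facts 8.3 and 8.4]{BFFP-2} (uniform quasigeodesity of the induced singular foliations $\cG^s_L,\cG^u_L$ in leaves, and exponential expansion/contraction of $h=\gamma\circ\tau_{12}$ along them) are established in the hyperbolic setting where the flow is pseudo-Anosov and regulating; you cannot apply Lemma \ref{lema.superattracting} ``verbatim'' in the JSJ setting. Your final assertion that there are \emph{exactly} four fixed points ``because any fixed point of $h$ must come from a $\gamma$-invariant leaf of $\cG^s_L\cup\cG^u_L$ through $\widetilde\alpha\cap L$'' is likewise unsupported without that global structure.

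The paper's route is different and worth noting: it takes the deck transformation and its (at least four) fixed points on $\Su$ directly from \cite[Proposition 5.2]{FenleyFlow}, and then proves the super attracting/repelling behavior by a purely large-scale geometric argument that avoids the transverse flow and laminations almost entirely. The two ingredients are: (i) a coarse ``axis'' statement (Claim \ref{claim1}, modeled on \cite[Lemma 8.5]{BFFP-2}) saying that if $\gamma$ fixes an atoroidal piece, does not fix the lift of any JSJ torus, and acts as a translation on the leaf space, then the induced quasi-isometry $g=\gamma\circ\tau_{L,\gamma^{-1}L}$ of each leaf $L$ moves every point outside a bounded disk by a definite amount; and (ii) a hyperbolic-geometry-in-leaves argument (Claim \ref{claim.superattracting}) comparing a geodesic ray $r_0$ toward a fixed ideal point with its quasigeodesic image $g(r_0)$ and with geodesics orthogonal to $r_0$, which shows that \emph{every} fixed point of $\gamma$ on $\Su$ is super attracting or super repelling. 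If you want to salvage your write-up, replace your global-flow step by the citation of \cite[Proposition 5.2]{FenleyFlow} for the choice of $\gamma$ and its fixed points, and replace the appeal to \cite[Facts 8.3/8.4]{BFFP-2} by an argument of this coarse-geometric type (or by the lamination data inside the atoroidal piece), since that is where the actual work of the theorem lies.
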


As a consequence, we get: 

\begin{teo}\label{t.doubletrans}
Let  $f: M \to M$ is a partially hyperbolic diffeomorphism homotopic to the identity on a 3-manifold having some atoroidal piece in the JSJ decomposition preserving a branching foliation $\cs$ so that the good lift $\ft$ of $f$ is a translation on the leaf space of $\wcs$. Then both the center (branching) foliation and the strong stable foliation have small visual measure inside the leaves of $\cs$ (cf. Theorem \ref{teo.curvesgeneral}).  
In particular for any ray $r$ of a
center leaf $c$ in a leaf $L$ of $\wcs$, then $r$ accumulates
in a single point in $S^1(L)$.
\end{teo}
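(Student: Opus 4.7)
The plan is to exhibit an admissible regular pseudo-Anosov pair for $(f,\cs)$ and then invoke the abstract machinery already proved in Sections \ref{ss.pAandsubfoliations}--\ref{s.pApairsandPH}; the crucial new input here is Theorem \ref{t.JSJatoroidal}, which supplies a deck transformation whose action on $\Su$ is already super-dynamical, so that once this is in place the remaining verifications are essentially formal.

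First I would apply Theorem \ref{t.JSJatoroidal} to the uniform $\R$-covered foliation $\cs$ (passing if necessary to a finite cover of $M$ on which the partially hyperbolic bundles are coherently oriented so that $\cs$ is transversely oriented) to produce a deck transformation $\gamma \in \pi_1(M)$ which acts as a translation on the leaf space $\cL^{cs}$ of $\wcs$ and whose induced action $\gamma_\infty$ on $\Su$ has exactly four fixed points alternating between super-attracting and super-repelling. Because $f$ is homotopic to the identity, its good lift $\ft$ commutes with every deck transformation, is a bounded distance from the identity in $\mt$, and hence acts as the identity on $\Su$; together with the hypothesis that $\ft$ is a translation on $\cL^{cs}$, this shows that $(f,\cs)$ satisfies the commuting property of Subsection \ref{ss.abundancepApairs} and that the good pair $(\ft,\gamma)$ is admissible in the sense of Definition \ref{defi.admis}. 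Taking $P = \gamma$ (i.e.\ $n=0$ and $m=1$ in Notation \ref{not-hn}) yields $P_\infty = \gamma_\infty$, which already has the required four alternating super-attracting/super-repelling fixed points, so $(\ft,\gamma)$ is an admissible regular pA-pair in the sense of Definition \ref{def-pApair}.

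With this pA-pair in hand the conclusions are immediate: the small visual measure statement for $\wc$ and $\ws$ inside leaves of $\cs$ is precisely the content of Theorem \ref{teo.curvesgeneral}, and the fact that every ray of a center leaf (resp.\ strong stable leaf) accumulates in a single ideal point of $S^1(L)$ follows from Proposition \ref{prop-landing} applied with $\cT = \wc$ (resp.\ $\cT = \ws$), whose hypotheses have all been verified above. The one technical issue I would handle carefully, rather than a genuine conceptual obstacle, is orientability: Theorem \ref{teo-bi} of Burago--Ivanov requires oriented bundles in order to produce the companion branching foliation $\cu$ needed to form $\wc$ as the intersection $\wcs \cap \wcu$, and Theorem \ref{t.JSJatoroidal} is phrased for a transversely oriented foliation. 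This is dispatched by running the entire argument in an appropriate finite cover of $M$, since the small visual measure property and the landing property are metric statements on leaves of the lifted foliation and descend immediately to $M$ under the covering projection.
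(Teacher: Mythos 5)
Your argument is correct and follows essentially the same route as the paper: Theorem \ref{t.JSJatoroidal} supplies the deck transformation $\gamma$, the good lift $\ft$ (acting trivially on $\Su$ and freely on the leaf space) makes $(\ft,\gamma)$ an admissible regular pA-pair, and Theorem \ref{teo.curvesgeneral} together with Proposition \ref{prop-landing} then gives both conclusions. The only differences from the paper's proof are minor: the paper derives transverse orientability of $\cs$ (and, via a citation to \cite{BFFP-3}, the $\R$-covered and uniform properties that you tacitly assume when invoking Theorem \ref{t.JSJatoroidal}) directly from the translation hypothesis rather than passing to a finite cover, and the dynamical arguments downstream use lifts $P=\gamma^m\circ\ft^{k}$ with $k>0$, which act on $\Su$ exactly as your $P=\gamma$ does since $\ft$ is the identity on $\Su$.
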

\begin{proof}
By translation we mean it has no fixed points on the leaf space
of $\wcs$. This was analyzed in \cite[Proposition 4.6]{BFFP-3},
where it is proved that this implies that $\cs$ is $\R$-covered
and uniform.
The translation of $\ft$ also implies 
that $\cs$ is transversely orientable.
We can apply Theorem \ref{t.JSJatoroidal} and we get that $(f,\cs)$ has the periodic commuting property and it has at least one (regular) pA pair. Therefore, Theorem \ref{teo.curvesgeneral} applies and we get the statement. 
\end{proof}

We note that for discretized Anosov flows the center foliation also has small visual measure in center stable leaves, but the strong stable foliation does not, which looks as something quite remarkable about Theorem
\ref{t.doubletrans} that needs to be better understood. The previous result complements well with \cite[Theorem 1.2]{BFFP-3}. 

We now explain the proof of Theorem \ref{t.JSJatoroidal}. 
This is proved in \cite[Proposition 5.2]{FenleyFlow}.  
We give an alternate proof of super attracting/repelling behavior
which uses less of the transverse
regulating flow and the transverse lamination and relies only on large
scale geometry. 
We put this alternate proof here as it may be useful 
in other contexts.
In particular the proofs of Claim \ref{claim1} and 
Claim \ref{claim.superattracting} 
work even when the deck transformation $\gamma$
has two fixed points in the universal circle $\Su$.
The super attracting property proved in 
\cite[Proposition 5.2]{FenleyFlow} only works for $\gamma$ associated 
with an orbit of the flow which necessarily has (up to finite
iterate) at least four fixed points in $\Su$.

\begin{proof}[Proof of Theorem \ref{t.JSJatoroidal}]
For simplicity we assume that $M$ is orientable,
which can be accomplished by taking a double cover.
We will divide the proof in three steps. 
We assume some background on 3-manifolds, see \cite[Appendix A]{BFFP-2} and \cite[Appendix A]{BFFP-3}.  First we show the following claim reminiscent of \cite[Lemma 8.5]{BFFP-2}. Recall that for leaves $L,E \in \Ft$ we have a quasi-isometry $\tau_{L,E}: L \to E$ given by Proposition \ref{prop.quasiisome}. 

\begin{claim} \label{claim1}
Let $\gamma \in \pi_1(M)$ be a deck transformation of $M$ acting increasingly in the leaf space of $\Ft$ and such that $\gamma$ fixes an atoroidal piece $P$ and does not fix the lift of a JSJ tori. Then for every $R>0$ there is $K>0$ such that if $L \in \Ft$ is some leaf and we denote $g: L \to L$ to be the quasi-isometry given by $\gamma \circ \tau_{L,\gamma^{-1}L} : L \to L$ then there is a disk $D$ of radius $R$ in $L$ such that if $y \notin D$ then $d(y,g(y))> K$. 
\end{claim}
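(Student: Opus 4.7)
The plan is to proceed by contradiction, produce coarse limits of the configurations witnessing the failure, and then use the atoroidal hypothesis to obstruct the limit. The strategy mirrors the way one shows that loxodromic elements of hyperbolic groups have positive translation length, but adapted to the coarse action of $\gamma$ on a leaf of $\Ft$ that only intersects the atoroidal piece in a subregion.

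Suppose the claim fails. Then there exists $R > 0$ such that for every $K > 0$ one can find a leaf $L_K \in \Ft$ in which the set $\{y \in L_K : d_{L_K}(y, g(y)) \leq K\}$ is not contained in any disk of radius $R$. In particular, for each $K$ there are two points $y_K, z_K \in L_K$ with $d_{L_K}(y_K, z_K) > R$ and displacement at most $K$ under $g$. Taking a sequence $K_n \to 0$ (or simply $K_n$ fixed and arguing via many leaves) gives sequences $y_n, z_n$ of increasingly good almost-fixed-point pairs of the quasi-isometries $g_n$ in leaves $L_n \in \Ft$.

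Next I would normalize via deck transformations $\alpha_n \in \pi_1(M)$, so that $\alpha_n y_n$ lies in a fixed compact fundamental domain of $M$. Using that $\cF$ is $\R$-covered and uniform, after passing to a subsequence the leaves $\alpha_n L_n$ converge in $\Ft$ to a leaf $E$, and $\alpha_n y_n \to y_\infty \in E$. The companion points $\alpha_n z_n$ either converge to a second point $z_\infty \in E$ at distance $\geq R$ from $y_\infty$, or escape to an ideal point $\eta \in S^1(E)$; in either case the conjugated quasi-isometries $h_n := \alpha_n \circ g_n \circ \alpha_n^{-1}$, which are induced by the conjugate deck transformations $\delta_n := \alpha_n \gamma \alpha_n^{-1} \in \pi_1(M)$, have displacement at $\alpha_n y_n$ (and at $\alpha_n z_n$) tending to zero. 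All the $\delta_n$ translate the leaf space of $\Ft$ by the same combinatorial amount as $\gamma$ does, since they are conjugate to $\gamma$.

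The concluding step forces a contradiction from the atoroidal hypothesis. If a subsequence of the $\delta_n$ is eventually constant equal to some $\delta \in \pi_1(M)$, then $\delta$ fixes $y_\infty$ exactly and has a fixed point (or coarse fixed point) at the ideal boundary, which is incompatible with $\delta$ being a nontrivial element translating the leaf space and acting loxodromically on the atoroidal piece $P$ that contains a representative of its axis. Otherwise infinitely many $\delta_n$ are distinct; their coarse axes must then all pass through a uniformly bounded region in $E$ with bounded displacement near $y_\infty$, and a standard volume-versus-length argument using the JSJ decomposition (as in the torus theorem) produces a $\pi_1$-injective torus or annulus in $M$ contained in the closure of the JSJ piece of $\gamma$. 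Because $\gamma$ fixes the atoroidal piece $P$ and does not fix any lift of a JSJ torus, this torus cannot be essential in $P$ and cannot be a JSJ torus, contradicting irreducibility of the JSJ decomposition.

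The main obstacle is the last step, specifically the case of distinct conjugates $\delta_n$. Turning "bounded coarse displacement of many conjugates through a common compact region" into an actual $\pi_1$-injective torus in $M$ requires that the coarse axes of the $\delta_n$ in $E$ align within a bounded error, and that this alignment persists under the pointed Gromov–Hausdorff limit and descends to a genuine embedded surface in the compact manifold $M$. Executing this cleanly is where the $\R$-covered and uniform properties of $\cF$ and the Morse lemma in the CAT$(-1)$ leaves do the heavy lifting, and is the heart of why the atoroidal assumption on the piece containing $\gamma$ is the correct hypothesis for the statement.
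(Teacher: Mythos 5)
Your reduction to almost-fixed-point pairs and the renormalization via deck transformations are fine as far as they go, but the concluding step --- the only place where the hypotheses on $\gamma$ are used --- does not work, and it is where all the content of the claim lies. In the eventually-constant case you assert that $\delta$ ``fixes $y_\infty$ exactly,'' but an (almost) fixed point of the leafwise map $h=\delta\circ\tau_{E,\delta^{-1}E}$ is not a fixed point of the deck transformation $\delta$: it only says that $d_{\mt}(y_\infty,\delta y_\infty)$ is bounded by the transversal displacement of $\tau$, which is not contradictory --- every nontrivial deck transformation moves points near its coarse axis a bounded amount, and a loxodromic element having ideal fixed points is of course not incompatible with translating the leaf space. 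More seriously, the configuration your limit produces (two points of bounded displacement at leafwise distance $>R$, for the single $R$ furnished by the negation) is not contradictory on its face; to exclude it you must know that the set where a conjugate of $\gamma$ has bounded displacement in $\mt$ is a bounded neighborhood of a coarse axis, and that the intersection of such a neighborhood with a leaf is uniformly bounded. That is precisely the statement being proved, so the contradiction scheme has not bought you anything. The second branch fares no better: under the uniformity you are implicitly using (a uniform bound on the transversal displacement of $\tau_{L,\gamma^{-1}L}$ over leaves), the $\delta_n$ move a point of a fixed compact set a uniformly bounded distance, so proper discontinuity forces them into a finite set and the ``infinitely many distinct $\delta_n$'' case does not occur; and in any event the proposed ``volume-versus-length'' production of an essential torus is not an argument as written, nor would an essential torus in $M$ contradict anything, since $M$ itself is toroidal --- only the piece $P$ is atoroidal.

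For comparison, the paper's proof is direct and short: the hypotheses force $\gamma$ to be a non-peripheral element of $\pi_1(P)$ with $P$ atoroidal, hence hyperbolic, so $\gamma$ has a coarse axis and $d_{\mt}(x,\gamma x)\to\infty$ as $x$ leaves a bounded neighborhood of that axis; since $g=\gamma\circ\tau_{L,\gamma^{-1}L}$ differs from $\gamma$ by a map moving points a bounded distance, the same holds for $g$; and the part of $L$ lying in such a neighborhood of the axis is contained in a disk of uniformly bounded radius because $\gamma$ acts freely on the leaf space, so $L$ projects properly into $\mt/\langle\gamma\rangle$. This is exactly the argument of \cite[Lemma 8.5]{BFFP-2}, to which the paper reduces (with the alternative of running the lamination argument of Lemma \ref{lema.superattracting} using \cite{FenleyFlow}). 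If you want to keep your compactness framework, it can at best deliver uniformity of the constants over leaves; the axis-plus-properness input above still has to be supplied and cannot be replaced by the torus-theorem step you sketch.
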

\begin{proof}
Notice that $\gamma$ is in $\pi_1(P)$ and does not represent
a peripheral curve in $P$.
The proof is the same as \cite[Lemma 8.5]{BFFP-2} once one notices that the hypothesis on $\gamma$ forces the existence of an axis for the action on the atoroidal piece (which admits a hyperbolic structure). This also follows from an argument similar to Lemma \ref{lema.superattracting} using the laminations constructed in \cite{FenleyFlow}. 
\end{proof}

Now, using some hyperbolic geometry on the leaves we can show: 

\begin{claim}\label{claim.superattracting} 
If $\gamma$ is a deck transformation as in the previous claim, then every fixed point of $\gamma$ acting on $\Su$ is either super attracting or super repelling. 
\end{claim}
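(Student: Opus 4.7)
My plan uses the characterization of super attracting/repelling points for deck transformations given in the remark preceding Lemma~\ref{lema.superattracting}: the fixed point $p \in \Su$ is super attracting (resp.\ super repelling) for $\gamma_\infty$ if and only if there is a neighborhood basis of $\Theta_L(p)$ in $L \cup S^1(L)$ by geodesics $\ell_i$ of $L$ satisfying $d_L(\ell_i, g(\ell_i)) \to \infty$ (resp.\ $d_L(\ell_i, g^{-1}(\ell_i)) \to \infty$), where $g = \gamma \circ \tau_{L,\gamma^{-1}L}$. The first step is to read Claim~\ref{claim1} in the strong form that the set $\{y \in L : d_L(y, g(y)) \leq K\}$ is bounded in $L$ for every $K > 0$, equivalently $d_L(y, g(y)) \to \infty$ as $y$ escapes every compact subset of $L$. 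This is the actual content of (the proof of) Claim~\ref{claim1}, which, like \cite[Lemma 8.5]{BFFP-2}, uses that $\gamma$ is a non-peripheral element of $\pi_1(P)$ in the atoroidal piece to exclude the existence of any invariant quasi-axis in $L$.

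Fix $q = \Theta_L(p) \in S^1(L)$. After replacing $\gamma$ by a suitable power (which preserves super attraction/repulsion), I will assume $g_\infty$ preserves the orientation of $S^1(L)$ near $q$ and that $q$ is topologically either attracting or repelling for $g$ on $\hat L = L \cup S^1(L)$; the two cases are symmetric by passing to $g^{-1}$, so assume the attracting case. Then there is a nested neighborhood basis $\{V_n\}$ of $q$ in $\hat L$, with $V_n$ bounded by a geodesic $\ell_n$ of $L$ whose endpoints $a_n, b_n \in S^1(L) \setminus \{q\}$ approach $q$ from opposite sides, and with $g(V_n) \subset V_n$. Since $\ell_n \subset V_n$ shrinks to $\{q\}$ in $\hat L$, the geodesic $\ell_n$ escapes every compact subset of $L$.

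The crux is to prove $d_L(\ell_n, g(\ell_n)) \to \infty$. Arguing by contradiction, assume $d_L(\ell_n, g(\ell_n)) \leq M$ along a subsequence and pick $y_n, w_n \in \ell_n$ with $d_L(y_n, g(w_n)) \leq M$. Since $g$ is a $C$-quasi-isometry between leaves with $C$ depending only on the Hausdorff distance between $L$ and $\gamma^{-1}(L)$, the triangle inequality gives
\[
d_L(y_n, g(y_n)) \leq d_L(y_n, g(w_n)) + d_L(g(w_n), g(y_n)) \leq M + C\, d_L(w_n, y_n) + C.
\]
If $d_L(w_n, y_n)$ remains bounded along a subsequence, then $d_L(y_n, g(y_n))$ is bounded while $y_n$ escapes to infinity in $L$, contradicting the strong form of Claim~\ref{claim1}. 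If instead $d_L(w_n, y_n) \to \infty$, the CAT$(-1)$ geometry of $L$ forces $y_n$ and $w_n$ to lie near the two different endpoints of $\ell_n$ in $\hat L$; say $y_n$ is close to $a_n$ and $w_n$ close to $b_n$. Since $g_\infty$ preserves orientation and fixes $q$, it preserves each component of $S^1(L) \setminus \{q\}$, so $g_\infty(b_n)$ lies on the same arc as $b_n$, which is the opposite arc from $a_n$. But $g(w_n)$ lies near $g_\infty(b_n)$ in $\hat L$ while being within $M$ of $y_n$, which lies near $a_n$ on the opposite arc; in CAT$(-1)$ this is incompatible with $a_n, b_n$ being close enough to $q$ on opposite sides, producing the desired contradiction. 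Hence $d_L(\ell_n, g(\ell_n)) \to \infty$ and $p$ is super attracting; the topologically repelling case gives super repulsion via the same argument applied to $g^{-1}$.

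\textbf{Main obstacle.} The main obstacle is the case analysis upgrading the pointwise displacement estimate (Claim~\ref{claim1}, strong form) to the minimum distance estimate $d_L(\ell_n, g(\ell_n)) \to \infty$. The delicate case is when $d_L(w_n, y_n) \to \infty$: one must combine the CAT$(-1)$ geometry of $L$ (to force $y_n, w_n$ to approach opposite endpoints of $\ell_n$) with the orientation-preserving dynamics of $g_\infty$ on $S^1(L)$ (to place $g(w_n)$ and $y_n$ on incompatible sides of $q$). Getting the constants uniform in $n$ and quantifying ``close to $q$ on opposite sides'' to reach the geometric contradiction is the technical core of the argument.
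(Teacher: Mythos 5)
Your overall framework (verify the super attracting/repelling criterion of the remark by producing a neighborhood basis of geodesics $\ell_n$ with $d_L(\ell_n,g(\ell_n))\to\infty$, feeding in Claim \ref{claim1} as a displacement estimate) is the same as the paper's, and your case (a) is fine; but there are two genuine gaps. First, the reduction ``assume $q$ is topologically attracting or repelling for $g$ on $\hat L$'' is not available: no power of $\gamma$ turns a semi-stable fixed point (attracting on one side of $q$, repelling on the other) or a non-isolated fixed point into a topologically attracting or repelling one, and excluding exactly this behavior is part of what the claim asserts. In the semi-stable scenario your nested basis with $g(V_n)\subset V_n$ does not exist, and worse, for geodesics $\ell_n$ with endpoints $a_n,b_n\to q$ from opposite sides the image $g(\ell_n)$ would actually cross $\ell_n$ (one endpoint is pushed toward $q$, the other away), so $d_L(\ell_n,g(\ell_n))=0$; the content of the claim is that Claim \ref{claim1} forbids this, and your argument never engages with that case. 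The paper avoids the issue by starting from an arbitrary fixed point, taking geodesics $\alpha_n$ orthogonal to a geodesic ray $r_0$ ending at $\Theta_L(\xi)$, and deriving the one-sided (in fact two-sided super) behavior as a consequence rather than assuming it.

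Second, the crux step fails in your case (b). If $d_L(y_n,w_n)\to\infty$, $\mathrm{CAT}(-1)$ geometry does not force $y_n$ and $w_n$ to approach the two distinct ideal endpoints of $\ell_n$ (they can be far apart along the same ray of $\ell_n$), and in $\hat L$ every point of $\ell_n$ converges to $q$ anyway, so the ``opposite arcs'' reasoning produces no contradiction: $g(w_n)$ and $y_n$ can both be visually close to $q$ while within $M$ of each other in $d_L$, with no conflict with orientation preservation or with $g_\infty(q)=q$ (visual closeness in $\hat L$ and metric closeness in $L$ are unrelated here). The dangerous configuration --- $g$ translating points a long way ``along'' $\ell_n$ so that $g(\ell_n)$ swings back $d_L$-close to $\ell_n$ at a shifted location --- is precisely what must be excluded, and pointwise displacement bounds (your case (a)) do not touch it. This is where the paper's proof inserts its key estimate: it straightens $g(\alpha_n)$ to a geodesic and shows, by a quasi-isometry/asymptoticity contradiction using $r_1\sim r_0$, that this geodesic crosses $r_0$ at an angle bounded below; combined with Claim \ref{claim1} (the displacement of $\alpha_n\cap r_0$ tends to infinity) and the $1$-Lipschitz nearest-point projection onto $r_0$, this yields the minimum-distance divergence. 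Without a substitute for that angle control, case (b) remains open and the proof does not close.
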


\begin{proof} 
Take $\xi \in \Su$ and assume that it is fixed by the action of $\gamma$. Consider a leaf $L \in \Ft$ and a geodesic ray $r_0$ whose endpoint is $\hat \xi=\Theta_L(\xi)$ in $S^1(L)$. 
Let $g = \gamma \circ \tau_{L,\gamma^{-1}(L)}$. 
It extends to a homeomorphism
of $L \cup S^1(L)$ still denoted by $g$.
Since the action of $\gamma$ in $\Su$ is given by the action of $g$ as defined above in $S^1(L)$ via the identification of $\Theta_L$ we get that $g(r_0)$ is a quasi-geodesic ray that also lands in $\hat \xi$. 
Let $r_1$ be the geodesic with same starting point and ideal point
as $g(r_0)$. Notice that $r_1$ is asymptotic with $r_0$.

Fix a sequence of neighborhoods of $\hat \xi$ in $S^1(L)$ given by intervals $[a_n,b_n]$ in $S^1(L)$ so that the geodesics $\alpha_n$ joining $a_n,b_n$ converge to $\hat \xi$ and are orthogonal to $r_0$. 
It follows that $g(\alpha_n)$ is a quasigeodesic which makes a uniform (coarse) angle with $g(r_0)$. 
In other words if $\ell_n$ is the geodesic in $L$ with same ideal
points as $g(\alpha_n)$ then the angle between $\ell_n$ and $r_0$ is
bounded below by $a_0 > 0$.
This is because if the angle goes to $0$, then  one gets points $x_n, y_n$ in
$r_1, \ell_n$ respectively  which are very close in $L$
and very far away from the
intersection of $r_1, \ell_n$. 
In addition $x_n$ converging to $\Theta_L(\xi)$.
Pulling back by $g^{-1}$ (using that $r_1, r_0$ are asymptotic)
one gets points in $r_0, \alpha_n$ which
are boundedly close in $L$ but the points in $\alpha_n$ very far
from $r_0$. This is a contradiction to $g$ being a quasi-isometry..

Using the previous claim we obtain the desired result. See \cite[Lemma A.10]{BFFP-4} for a similar argument in a slightly different setting.
\end{proof}

Finally, \cite[Proposition 5.2]{FenleyFlow} gives a deck transformation fixing an atoroidal piece and with at least four fixed points at infinity. This completes the proof of Theorem \ref{t.JSJatoroidal}.
\end{proof}

\begin{remark}\label{r.dtransincoh}
Note that in the setting of the Theorem \ref{t.doubletrans} we also get that $f$ cannot be dynamically coherent (see Remark \ref{rem.incoherence}). 
\end{remark}

Let us now prove Theorem \ref{teoD} (we assume familiarity with some arguments from \cite{BFFP-2,BFFP-3}).

\begin{proof}[Proof of Theorem \ref{teoD}]
Under the assumptions of Theorem \ref{teoD}, it is shown in \cite{BFFP-3} that if $f$ is not a discretized Anosov flow, then $f$ is not dynamically coherent and one of the two branching foliations is $\RR$-covered, uniform and the good lift $\ft$ of $f$ acts as a translation in the leaf space. This implies as in Example \ref{example1} that we have a good pseudo-Anosov pair. Therefore, we can apply Theorem \ref{teo.curvesgeneral} to the corresponding foliation to deduce Theorem \ref{teoD}. 
\end{proof}

\subsection{Results in Seifert manifolds with only one pseudo-Anosov component}\label{ss.seifertgral}

There is also a partial statement similar to Theorem \ref{t.doubletrans} where we replace Theorem \ref{t.JSJatoroidal} with the results in \cite[Appendix A]{BFFP-2}.

\begin{teo}\label{t.seifpA}
Let  $f: M \to M$ be a partially hyperbolic diffeomorphism of a Seifert manifold so that the induced action on the base has some pseudo-Anosov component preserving branching foliations $\cs$ and $\cu$, which
are horizontal.  Then, both the center (branching) foliation and the strong stable foliation have small visual measure inside the leaves of $\cs$ (cf. Theorem \ref{teo.curvesgeneral}).  
Similarly for the center and unstable foliations in center unstable
leaves.
\end{teo}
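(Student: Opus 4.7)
The plan is to mirror the proof of Theorem \ref{t.doubletrans}, replacing the appeal to Theorem \ref{t.JSJatoroidal} with the analogous results on fixed points at infinity for deck transformations coming from pseudo-Anosov pieces, established in \cite[Appendix A]{BFFP-2}. The conclusion will then follow by applying Theorem \ref{teo.curvesgeneral}, which only requires the existence of a single admissible regular pseudo-Anosov pair in order to deduce the small visual measure statement for both $\wc$ and $\ws$ inside leaves of $\cs$.

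First, I would verify that the setup of subsection \ref{ss.pApairsandPH} and Theorem \ref{teo.curvesgeneral} is in force. Since $\cs$ and $\cu$ are horizontal in a Seifert manifold with hyperbolic base, the results of \cite{HaPS} cited in \S\ref{s.seifert} imply that they are both $\R$-covered and uniform with hyperbolic leaves, and that the deck transformation $\delta \in \pi_1(M)$ corresponding to a regular Seifert fiber is central (up to finite index), acts freely on the leaf spaces of $\wcs$ and $\wcu$, and acts as the identity on the universal circle $\Su$ of $\cs$. As discussed in Example \ref{example2}, for any lift $\fh$ of $f$ and $m$ sufficiently large, the pair $(\delta^m \fh, \delta)$ is an admissible good pair, so in particular $(f,\cs)$ has the commuting property in the sense of condition (ii) of \S\ref{ss.abundancepApairs}.

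Next I would produce an admissible regular pseudo-Anosov pair from the pseudo-Anosov component. Let $S_0$ be a component of the base on which the induced map is pseudo-Anosov, and let $\mu$ be a regular periodic orbit of this pseudo-Anosov map on $S_0$. Lifting a suitable power of the element of $\pi_1(S_0)$ represented by $\mu$ to $\pi_1(M)$ via the Seifert fibration yields a deck transformation $\gamma$ which acts freely on the leaf spaces of $\wcs$ and $\wcu$. The role of \cite[Appendix A]{BFFP-2} is precisely to show that some positive power of $\gamma$ acts on $\Su$ with at least four fixed points, all of which are either super attracting or super repelling in the sense of Definition \ref{def.superattract}; the argument is parallel to Lemma \ref{lema.superattracting}, applied now to the leafwise geodesic laminations obtained by intersecting the stable/unstable laminations of the pseudo-Anosov component with leaves of $\wcs$. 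Combined with $\fh$ from the previous step, this produces an admissible regular pA-pair for $(f,\cs)$ as in Definition \ref{def-pApair}, with $P$ a suitable power of $\delta^m \fh$ composed with a power of $\gamma$.

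Once the admissible regular pA-pair is in place, Theorem \ref{teo.curvesgeneral} applies verbatim and yields that both $\wc$ and $\ws$ have small visual measure in $\cs$. The main obstacle is the middle step: unlike in Theorem \ref{teoC}, where the base action is globally pseudo-Anosov and every regular periodic orbit produces a well behaved deck transformation on $\Su$, here one must control the dynamics at infinity coming from a single pseudo-Anosov piece of a possibly reducible base action, and this is exactly what \cite[Appendix A]{BFFP-2} is designed to do. Verifying that the lamination structure on leaves of $\wcs$ induced by the pseudo-Anosov component is rich enough to translate leafwise contraction/expansion into super attracting/repelling behavior on the full universal circle $\Su$ is the delicate point; once it is settled, no further work is needed, since Theorem \ref{teo.curvesgeneral} gives the conclusion directly.
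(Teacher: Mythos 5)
Your overall skeleton is the right one, and it matches the paper's (very short) intended argument: horizontality gives that $\cs$ and $\cu$ are $\R$-covered and uniform with hyperbolic leaves, the fiber element $\delta$ is central, acts freely on the leaf space of $\wcs$ and trivially on $\Su$, so $(f,\cs)$ has the commuting property of type (ii); once a single admissible regular pA-pair is produced, Corollary \ref{cor.periodic} upgrades this to the periodic commuting property and Theorem \ref{teo.curvesgeneral} gives the conclusion for both $\wc$ and $\ws$. The paper indeed proves Theorem \ref{t.seifpA} by running the proof of Theorem \ref{t.doubletrans} with only the source of the pA-pair replaced.

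However, your construction of that pA-pair — the one genuinely nontrivial step — does not work. First, a regular periodic orbit of the pseudo-Anosov component is a finite set of points of the base homeomorphism and does not represent an element of $\pi_1(S_0)$; the ``deck transformation associated to a periodic orbit'' mechanism belongs to the hyperbolic 3-manifold case, where periodic orbits of the transverse regulating flow are loops in $M$. Second, even if you take some non-peripheral $\gamma\in\pi_1(M)$ lying over the pseudo-Anosov piece, horizontality identifies $\Su$ equivariantly with the circle at infinity of the universal cover of the hyperbolic base, on which $\pi_1(M)$ acts through the base group by (quasi-symmetric conjugates of) isometries; a hyperbolic element therefore has exactly two fixed points on $\Su$, never the $2p\geq 4$ alternately super attracting/repelling points required in Definition \ref{def-pApair}, so no power of a deck transformation can carry the pseudo-Anosov behavior at infinity, and there is no analogue of Lemma \ref{lema.superattracting} for it (there is no transverse pseudo-Anosov flow here whose two-dimensional laminations you could intersect with leaves of $\wcs$; the invariant laminations live on the base). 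Third, by Definition \ref{defi.admis} an admissible pair in this setting must be of the form $(\delta^m\fh,\delta)$, and since $\delta$ acts trivially on $\Su$, any allowed $P=\delta^{m'}\circ(\delta^m\fh)^n$ acts at infinity exactly as $\fh^{n}$ does; composing with an extra power of $\gamma$, as in your last sentence, is not permitted by Definition \ref{def-pApair}. The super attracting/repelling fixed points must therefore come from the Nielsen--Thurston dynamics of a suitably chosen lift of an iterate of $f$ itself: one chooses the lift so that the induced lift of the base homeomorphism fixes a lift of a regular periodic point of the pseudo-Anosov component together with its prongs, and the appendix results the paper invokes (playing the role that \cite[Lemmas 6.2 and 6.4]{Casson} play in the proof of Theorem \ref{teoC}, in the spirit of \cite[Appendix A]{BFFP-4}) assert that such a lift acts on $\Su$ with exactly four fixed points, alternately super attracting and super repelling. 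With the pA-pair built this way, the rest of your argument goes through unchanged.
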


We remark that the desired properties 
are independent of taking a finite
cover and lift of iterate, so we can assume orientation properties. We note that this result is new even for the examples of \cite{BGHP} where this behavior of the strong foliations was unknown.  
The horizontality condition implies in particular that $\cs, \cu$ are
$\R$-covered, which is needed to apply the results in this article (see \cite{HaPS} for conditions under which the assumption is met). Note that incoherence in this setting (cf. Remark \ref{r.dtransincoh}) was shown in \cite{BFFP-4}. Theorem \ref{teoE} is a direct consequence of Theorem \ref{t.seifpA}.

\end{document}